\def\a{\alpha} 
\def\b{\beta}
\def\e{\epsilon} 
\def\O{\Omega} 
\def\l{\lambda}
\def\M{{\mathcal{M}}} 
\def\DD{{\mathcal{D}}} 
\def\PPP{{\mathcal{P}}}
\def\soc{{\rm soc}} 
\def\Inc{{\rm Inc}}
\def\Sym{{\rm Sym}} 
\def\F{\mathbb{F}}
\def\D{\Delta}
\def\G{\Gamma} 
\def\no{\noindent}
\def\Aut{{\rm Aut}} 
\def\e{\epsilon} 
\def\la{\langle} 
\def\ra{\rangle} 
\def\a{\alpha} 
\def\b{\beta}
\def\e{\epsilon} 
\def\O{\Omega} 
\def\l{\lambda}
\def\M{{\cal M}}
\def\soc{{\rm soc}} 
\def\F{\mathbb{F}}
\def\D{\Delta}
\def\G{\Gamma} 
\def\M{\mathcal M}
\def\no{\noindent}
\newtheorem{thm}{Theorem}[section] 
\newtheorem{prop}[thm]{Proposition}
\newtheorem{definition}[thm]{Definition}
\newtheorem{hyp}[thm]{Hypothesis}
\newtheorem{lem}[thm]{Lemma} 
\newtheorem{lemma}[thm]{Lemma} 
\newtheorem{coroll}[thm]{Corollary} 
\newtheorem{remark}[thm]{Remark} 
\newtheorem{construction}[thm]{Construction} 
\newtheorem{prob}[thm]{Problem} 
\numberwithin{table}{section}
\numberwithin{equation}{section}
\newcommand\GL{\operatorname{GL}}
\newcommand\AGL{\operatorname{AGL}}
\newcommand\ASL{\operatorname{ASL}}
\newcommand\PSL{\operatorname{PSL}}
\newcommand\AGaL{\operatorname{A\Gamma L}}
\newcommand\PGL{\operatorname{PGL}}
\newcommand\PG{\operatorname{PG}}
\newcommand\PGaL{\operatorname{P\Gamma L}}
\newcommand\GaL{\operatorname{\Gamma L}}
\newcommand\SU{\operatorname{SU}}
\newcommand\SO{\operatorname{SO}}
\newcommand\POm{\operatorname{P\Omega}}
\newcommand\PSO{\operatorname{PSO}}
\newcommand\PSU{\operatorname{PSU}}
\newcommand\PGU{\operatorname{PGU}}
\newcommand\Sp{\operatorname{Sp}}
\newcommand\PSp{\operatorname{PSp}}
\newcommand\SL{\operatorname{SL}}
\newcommand\Sz{\operatorname{Sz}}
\newcommand\Soc{\operatorname{soc}}
\newcommand\Alt{\operatorname{Alt}}
\newcommand\Out{\operatorname{Out}}
\newcommand\hatB{\widehat{B}}
\newcommand\BB{\mathcal{B}}
\newcommand\Or{\operatorname{O}}
\newcommand\PGSp{\operatorname{PGSp}}
\begin{document}
	
	\title{Maps, simple groups, and arc-transitive graphs}
	
	\author{Martin W. Liebeck}
	\address{M.W. Liebeck, Department of Mathematics,
		Imperial College, London SW7 2BZ, UK}
	\email{m.liebeck@imperial.ac.uk}
	
	\author{Cheryl E. Praeger}
	\address{C. E. Praeger, Department of Mathematics and Statistics,
		The University of Western Australia, Perth, WA 6009, Australia}
	\email{cheryl.praeger@uwa.edu.au}

	\no \footnote {\emph{Keywords:} Graph embedding; simple group; group factorisation; arc-transitive graph; arc-transitive map.\\
		\emph{Funding:} This work was supported by the Australian Research Council [Grants DP230101268; DP160102323].}
	
	\begin{abstract} We determine all factorisations $X=AB$, where $X$ is a finite almost simple group and $A,B$ are core-free subgroups such that $A\cap B$ is cyclic or dihedral. As a main application, we classify the graphs $\G$ admitting an almost simple arc-transitive group $X$ of automorphisms, such that $\G$ has a 2-cell embedding as a map on a closed surface admitting a core-free arc-transitive subgroup $G$ of $X$. We prove that apart from the case where $X$ and $G$ have socles $A_n$ and $A_{n-1}$ respectively, the only such graphs are the complete graphs $K_n$ with $n$ a prime power, the Johnson graphs $J(n,2)$ with $n-1$ a prime power, and 14 further graphs. In the exceptional case, we construct infinitely many graph embeddings.
	\end{abstract} 
	
	\date{}
	\maketitle

	\begin{center}
		{\small {\it Dedicated to the memory of our friend and colleague Tony Gardiner}}
	\end{center}
	
	\vspace{4mm}
	
	\section{Introduction}\label{s:intro}
	
	A \emph{(simple) graph} $\Gamma=(\Omega,E)$ consists of a set $\Omega$ of vertices, and a set $E$ of unordered pairs of vertices, which we call edges. The graphs we consider are all connected and have automorphism group $\Aut(\G)$ transitive on \emph{arcs}, that is ordered pairs of adjacent vertices. A \emph{map} $\M = (\O,E,F)$ is a $2$-cell embedding of a simple graph $\G=(\Omega,E)$ in a closed surface with face set $F$; and the automorphism group $\Aut(\M)$ of $\M$ is the subgroup of $\Aut(\G)$ consisting of those graph automorphisms which extend to self-homeomorphisms of the supporting surface (see \cite[Section 6]{J15}).  Avoiding degenerate situations we assume that each edge of $\G$ is adjacent with exactly two faces, in which case the only element of $\Aut(\M)$ fixing a \emph{flag} (a mutually incident vertex-edge-face triple) is the identity. A consequence is that, if $G\leq \Aut(\M)$ and $G$ is arc-transitive on $\G$, then a vertex-stabiliser $G_{\a}$ is cyclic or dihedral, and $G_{\a\b}\leq C_2$ for an arc $(\a,\b)$. More details are given in Section~\ref{prelim}. 
	
	A group theoretic approach to studying graph embeddings was pioneered by Tony Gardiner, to whom this paper is dedicated. In 1990 Tony and his colleagues \cite{GNSS} showed that the problem of embedding a graph $\G$ as a \emph{flag-regular map} $\M$ (that is, where $\Aut(\M)$ is regular on flags) could be posed as an equivalent problem concerning subgroups of $\Aut(\G)$.   We address here a more general problem for the case where $\Aut(\M)$ is assumed to be transitive on arcs.  
	Such embeddings have been studied extensively for some particular families of graphs. For example, edge-transitive embeddings of the complete graphs $K_n$ have been classified completely by Jones \cite[Theorem 1.8]{J21} building on constructions and classifications  in \cite{B71, Ja83, JJ85}. In particular, $n$ must be either 6 or a prime-power, and in both cases arc-transitive embeddings exist. Similarly flag-regular embeddings of complete bipartite graphs $K_{n,n}$, $n$-dimensional cubes $Q_n$, and merged Johnson graphs have  also been classified (see \cite{CCDKNW, J05, J10} and references therein), and graphs from several of these families arise in our work. 
	
	Our focus in this paper is on embeddings of connected simple graphs $\Gamma$ that have an arc-transitive {\it almost simple} group of automorphisms $X$, with simple socle $X_0$. There are many such graphs of interest, and of course the concept is very general: it just amounts to saying that $\G$ is a connected self-paired orbital graph of $X$.
	Let $\M$ be an embedding of  $\Gamma$, and assume that $G \le \Aut(\M) \cap X$ is also arc-transitive. Then either $G$ contains the socle $X_0$ or it does not. In the first case there is a great deal of information about the structure and action of $G$ and $X$ (see for example Proposition \ref{cri}). We shall focus on the second case, in which $G$ does not contain the socle $X_0$. We prove that apart from the case where the socles of $X$ and $G$ are $A_n$ and $A_{n-1}$ respectively, the possibilities for the graph $\G$ are very restricted: namely, $\G$ must be a complete graph $K_n$ with $n$ a prime power, a Johnson graph $J(n,2)$ with $n-1$ a prime power, or one of 14 further graphs. In the exceptional case $(A_n,A_{n-1})$, we construct infinitely many graph embeddings.
	
	The strategy of our proof is based on the theory of group factorisations. In Proposition \ref{cri}, we show that the above situation pertains (namely $X \le \Aut(\G)$, $G \le \Aut(\M) \cap X$ both arc-transitive), precisely when the following conditions hold for $\G$ and the groups $X$ and $G$, where $(\a,\b)$ is an arc:
	\begin{itemize}
		\item[{\rm (i)}] $X = GX_\a$ and $X_\a = G_\a X_{\a\b}$;
		\item[{\rm (ii)}] $G_\a$ is cyclic or dihedral, and $G_{\a\b} = 1$ or $C_2$;
		\item[{\rm (iii)}] there is an involution $g \in N_G(X_{\a\b})$ such that $\la X_\a,g \ra = X$.
	\end{itemize}
	Writing $A=G$, $B = X_\a$, parts (i) and (ii) lead to a factorisation $X=AB$ with $A\cap B$ cyclic or dihedral; we call such an expression a {\it cyclic/dihedral} factorisation of $X$. Motivated by this application (and potentially others), in this paper we classify all such factorisations of almost simple groups $X$. Using this classification, we then find all examples satisfying properties (i)-(iii), and hence classify all the corresponding arc-transitive embeddings. When $X_0$ is an alternating group $A_n$, there are infinitely many arc-transitive graphs with such embeddings (see Theorem \ref{t:maps-an}), but when $X_0$ is a simple group of Lie type, there are precisely five graphs, and when $X_0$ is a sporadic simple group there are precisely four more (see Theorems \ref{t:classical-maps} and \ref{spormaps}).
	
	We now state our main results: first the cyclic/dihedral factorisations, and then the consequences for map embeddings. The factorisation results refer to tables, all of which can be found in Section \ref{tables} at the end of the paper.

	\subsection{Factorisations of $A_n$ and $S_n$}
	
	Let $X=A_n$ or $S_n$, where $n\geq5$, and suppose that $X=AB$ with $A, B$ core-free subgroups of $X$ such that $A\cap B$ is cyclic or dihedral. Let $\Omega=\{1,\dots,n\}$ with $X$ acting naturally. 
	All proper core-free subgroups $A, B$  such that $X=AB$ are given in \cite{WW} (or see \cite[Theorem D]{LPS90}). Apart from some exceptional cases with $n\leq 10$, all examples, possibly with $A$ and $B$ interchanged, satisfy
	\begin{equation}\label{e:gencase}
		A_{n-k}\unlhd A\leq S_{n-k}\times S_k, \ \mbox{with $B$ $k$-homogeneous on $\Omega$, for some $k\in\{1,\dots,5\}$.}
	\end{equation}
	We refer to this as the `general case'.
	%Our aim is to prove the following theorem.
	
	\begin{thm}\label{t:Anfactns}
		Let $X=A_n$ or $S_n$, or $A_6\leq X\leq \Aut(A_6)$ if $n=6$, and suppose that $X=AB$ with $A,B$ core-free and $A\cap B$ cyclic or dihedral.
		
		(a) In the general case where \eqref{e:gencase} is valid for $A$, one of the following holds: 
		\begin{enumerate}
			\item[(a.1)] $k=1$ and,  for some $x\in\Omega$, either
			\begin{enumerate}
				\item[(i)] $(X,A)=(S_n,S_{n-1})$ or $(A_n,A_{n-1})$: here $A=X_x$ and  $B$ is a transitive subgroup of $X$ with $B_x$ cyclic or dihedral; or
				\item[(ii)] $(X,A) = (S_n, A_{n-1})$: here $A_n=A(B\cap A_n)$, $B\cap A_n$ is a transitive subgroup of $A_n$, $B$ contains an odd permutation, and $(B\cap A_n)_x$ is cyclic or dihedral.
			\end{enumerate}
			\item[(a.2)] $k\in\{2,3,4,5\}$, $B^\Omega$ is $k$-homogeneous, and $(X,A,B)$ satisfy one of the Lines of Table~\ref{tab:Anfactns}, where $x, y, z,w,v$ are pairwise distinct points of $\Omega$.
		\end{enumerate}
		
		(b) In the exceptional cases, with $n\leq 10$, either $(X,A^\tau,B^\tau)$ are as in case $(a)$, for some automorphism $\tau$ of $A_n$, or $(n,A,B, A\cap B)$ are as in the table below.
		%Conversely, in all cases listed there are examples.
	\end{thm}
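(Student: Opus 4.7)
The plan is to start from the classification of maximal factorisations of $X=A_n$ and $S_n$ due to Wiegold and Williamson \cite{WW}, conveniently repackaged in Theorem~D of \cite{LPS90}. Any core-free factorisation $X=AB$ refines a maximal one, so, up to interchanging $A$ and $B$, either we lie in the general case \eqref{e:gencase} for some $k\in\{1,\ldots,5\}$, or $n\leq 10$ and the containing maximal factorisation appears on a short exceptional list. This dichotomy matches parts (a) and (b) of the theorem, and within part (a) the proof splits according to the value of $k$.

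For (a.1) with $k=1$, the condition $A_{n-1}\unlhd A\leq S_{n-1}$ forces $A$ to be a one-point stabiliser $X_x$; then $X=AB$ is equivalent to $B$ being transitive on $\Omega$, and $A\cap B=B_x$. Subcases (i) and (ii) arise by sorting the allowed pairs $(X,A)$: when $X=S_n$ and $A=A_{n-1}$, the factorisation forces $B$ to contain odd permutations, so $A\cap B$ is most naturally expressed in terms of the intersection $B\cap A_n$. This part is essentially bookkeeping once the form of $A$ is fixed.

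For (a.2) with $k\in\{2,3,4,5\}$, the factorisation $X=AB$ combined with $A\leq S_{n-k}\times S_k$ forces $B$ to be transitive on unordered $k$-subsets, i.e.\ $k$-homogeneous on $\Omega$. If $\Delta$ is the $k$-set fixed setwise by $A$, then $A\cap B$ is a subgroup of $B_\Delta$ of index at most $2$. I therefore propose to enumerate the $k$-homogeneous groups of degree $n$---these are classified via CFSG---and for each candidate compute the setwise stabiliser $B_\Delta$ from the standard subgroup structure of the group in question (for example $B_\Delta$ is dihedral of order $2(q\pm 1)$ when $B=\PGL_2(q)$ acts on $\PG(1,q)$, and is $\AGL_1(q)$-shaped for an affine $B$), then retain precisely those triples for which $A\cap B$ is cyclic or dihedral. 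The resulting list is Table~\ref{tab:Anfactns}.

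The main technical obstacle is the $k=2$ case, where the list of $2$-transitive groups of degree $n$ is the longest and one must carefully compute $A\cap B$---not just $B_\Delta$, but also whether an involution swapping the two points of $\Delta$ lies inside---since this is exactly what distinguishes cyclic from dihedral. For part (b), the remaining work is a case-by-case check, degree by degree, of the exceptional triples in the Wiegold--Williamson list for $5\leq n\leq 10$: for each such $(X,A,B)$ I identify $A\cap B$ directly from explicit subgroup data (e.g.\ the Atlas descriptions for $A_6$, the inclusions $\AGL_3(2)<A_8$ and $\PSL_2(p)<A_p$, and the factorisations involving Mathieu groups), retaining only those triples whose intersection is cyclic or dihedral to produce the residual table of part (b).
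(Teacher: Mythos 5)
Your proposal follows essentially the same route as the paper: reduce to the Wiegold--Williamson/\cite{LPS90} list of factorisations, settle $k=1$ by transitivity, for $k\ge 2$ run through the classified $k$-homogeneous groups and compute the relevant stabilisers of the $k$-set $\Delta$ (the paper isolates the $k=2$ computation, including the odd-permutation bookkeeping, as Lemma~\ref{lem:k2-B2hom}), and dispose of the exceptional degrees $n\le 10$ by direct case-checking (done by Magma in the paper). One correction: your claim that $A\cap B$ has index at most $2$ in $B_\Delta$ is false for $k\ge 3$ --- for instance $B=M_{11}$ with $k=4$ has $B_\Delta\cong S_4$ while $A\cap B$ ranges over cyclic or dihedral subgroups $S\le S_4$ of large index --- and using it as a filter would wrongly discard several lines of Table~\ref{tab:Anfactns}; the correct constraint is from below, namely that $A\cap B$ contains (up to index $2$) the pointwise stabiliser $B_{(\Delta)}$, which must therefore be nearly cyclic or dihedral, and this is the criterion the paper actually applies.
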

	
	\[
	\begin{array}{l|l|l|l}
		\hline
		n & A & B & A\cap B \\
		\hline
		6 & 3^3.[4a_4] & 5.[b_8] & 1,\,2 \\
		8 & \AGL_3(2) & 15.[a_4] & 1,\,2,\,4 \\
		10 & (S_5 \wr S_2) \cap A_{10} & \PSL_2(8) & 2^2 \\
		\hline 
	\end{array}
	\]
	
	In the table, the notation $a_r,b_r$ means a divisor of $r$, and $[m]$ just denotes a group of order $m$.
	
	We remark that there are additional restrictions in some cases on the groups $A, B$ in Theorem~\ref{t:Anfactns} -- see Remark \ref{r:Anfactns}. Remark \ref{r:Anfactns}(c,d) shows that there exist examples of groups $B$ satisfying the conditions of parts (a.1)(i) and (a.1)(ii) of the theorem, except possibly for (a.1)(ii) when $n \equiv 1 \hbox{ mod }4$.

	\subsection{Factorisations of groups of Lie type}
	
	Our main result here classifies all cyclic/di\-hedral factorisations of almost simple groups of Lie type.
	
	\begin{thm}\label{classgen}
		Let $X$ be an almost simple group with socle $X_0$, a simple group of Lie type, and suppose that $X = AB$ with $A,B$ core-free and $A \cap B$ cyclic or dihedral. Then $X$, $A$, $B$, and $A \cap B$ are as in Tables 
		$\ref{families}$ and $\ref{excep-psl}-\ref{excep-sp}$. Conversely, for each line in the tables there exists such a factorisation for
		some almost simple group with socle $X_0$.
	\end{thm}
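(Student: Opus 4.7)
My plan is to reduce to the classification of maximal factorisations of almost simple groups due to Liebeck--Praeger--Saxl, and then prune the list using the very strong restriction that $A\cap B$ is cyclic or dihedral. The first step is the standard observation that any core-free factorisation $X=AB$ is refined by a maximal factorisation $X=M_AM_B$, with $A\le M_A$, $B\le M_B$ and $M_A,M_B$ maximal core-free subgroups of $X$. I start from the complete list of such pairs $(M_A,M_B)$ given in \cite{LPS90}. For $X$ almost simple of Lie type, this list consists of a small number of infinite families (mainly with $X_0$ classical, and a handful for exceptional $X_0$) plus a finite list of low-rank exceptions.

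The second step is an order estimate. From $X=AB$ we have $|A|\,|B|=|X|\,|A\cap B|$, and since $A\cap B$ is cyclic or dihedral with trivial core, its order is bounded by roughly twice the largest element order in $X$. This forces $|A|\,|B|$ to be only modestly larger than $|X|$, which is severely restrictive whenever $M_A$ and $M_B$ both have small index in $X$, and in particular whenever $M_A\cap M_B$ has a substantial non-cyclic, non-dihedral subgroup.

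The third and main step is the case-by-case analysis for each maximal pair $(M_A,M_B)$. For the family entries recorded in Table \ref{families}, the pattern is typically as follows: $M_A$ is essentially the stabiliser of a subspace or a parabolic-type subgroup acting on a projective or polar geometry, while $B$ acts transitively on that geometry with a cyclic or dihedral point stabiliser. The classification of transitive subgroups with such stabilisers is largely available in the literature and feeds into the family table. For the exceptional maximal pairs (triality factorisations of $\POm_8^+(q)$, certain factorisations of $\Sp_{2m}(q)$ and $\POm_{2m}^+(q)$, small-dimensional classical groups, and the finitely many exceptional-type factorisations), the index of $M_A$ or $M_B$ is fixed and small, so the order bound above forces $A$ and $B$ into a very short list of candidates, each of which is resolved by direct subgroup analysis and, where needed, computation in \textsc{GAP} or \textsc{Magma}. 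This yields Tables \ref{excep-psl}--\ref{excep-sp}. Finally, for each line in the tables I verify the converse by exhibiting the factorisation explicitly, uniformly in the family cases and by direct check in the exceptional ones.

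The hardest part will be the analysis for classical groups in which both $M_A$ and $M_B$ have small index, so that $M_A\cap M_B$ is relatively large and there is considerable freedom in choosing $A\le M_A$ and $B\le M_B$. Here the cyclic/dihedral condition on $A\cap B$ must be played off against the transitivity requirement that $X=AB$; identifying exactly which subgroups $A,B$ can occur, without inadvertently enlarging $A\cap B$, requires detailed knowledge of the subgroup lattice of the classical geometry and of the orbit structure of $M_B$ on the $M_A$-cosets. I expect the $\Sp_{2m}(q)$/$\POm_{2m}^\pm(q)$ cases and the triality-induced exceptions for $\POm_8^+(q)$ to demand the most care, while the exceptional-type groups and small-rank exceptions should be comparatively routine.
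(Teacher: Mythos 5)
Your strategy is genuinely different from the one used in the paper. The paper does \emph{not} descend from the maximal factorisations of \cite{LPS90}; instead it leans on the recently completed classification of \emph{all} factorisations of classical groups by Li, Xia and Wang (\cite{LX19} for a factor with two or more non-abelian composition factors, \cite{LWX} for both factors having exactly one, and \cite{LX22} for a soluble factor), whose tables already record, for every factorisation $X=AB$, an explicit subgroup of $A\cap B$. The paper's task then reduces to scanning those tables for cyclic/dihedral intersections and supplying supplementary arguments (module structure of unipotent radicals under Singer tori, divisibility conditions on $|A|$ from \cite{BL}, existence constructions) in the handful of cases where the recorded subgroup of $A\cap B$ does not immediately settle the matter; the exceptional Lie type groups are dispatched in one line via \cite{HLS}. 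Your route — refine to a maximal factorisation $X=M_AM_B$ and prune using $|M_A:A|\,|M_B:B|=|M_A\cap M_B|/|A\cap B|$ together with minimal-index bounds — is the pre-\cite{LWX} approach, and it does work well in the cases where $M_A\cap M_B$ is itself small (e.g.\ $\Sp_{2m}(q)=\O_{2m}^-(q)\,\Sp_2(q^m).m$, where the index relation immediately forces $A=M_A$ for $m\ge 3$ and pins $B$ down to a Borel-type subgroup).

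The genuine gap is in the descent step for the maximal pairs with \emph{large} intersection $M_A\cap M_B$ — precisely the cases you flag as "the hardest part" but then describe as resolvable by "direct subgroup analysis" and an appeal to "the classification of transitive subgroups with such stabilisers \ldots largely available in the literature". That classification is not available independently of the problem: determining which subgroups $A\le M_A$, $B\le M_B$ still satisfy $X=AB$ is exactly the content of the Li--Xia--Wang programme, carried out over several long papers, and it is not a consequence of \cite{LPS90} plus order estimates. For instance, when $M_A$ is a parabolic $P_1$ of $\PSL_n(q)$ or $P_m$ of $\Sp_{2m}(q)$ and $M_B$ is a field-extension or orthogonal subgroup, the intersection $M_A\cap M_B$ contains a sizeable unipotent group, so the relation $|M_A:A|\,|M_B:B|=|M_A\cap M_B|/|A\cap B|$ leaves enormous freedom for $A$; one must then argue (as the paper does in Propositions \ref{caseI3}--\ref{caseI9}) via the $\GL_1(q^m)$-module structure of the unipotent radical that $|A\cap Q|$ can only take a few values, and via lower bounds on $|A|$ from \cite{BL} that the small options are impossible. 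Similarly, in the cases where one factor has two or more non-abelian composition factors, the subgroups $K\le A\cap B$ that kill almost all candidates are computed in \cite{LX19} for arbitrary (not just maximal) factorisations, and do not follow from knowing $M_A\cap M_B$. So while your outline is sound and would eventually succeed, as written it assumes the hardest input rather than supplying it; to close the argument you would need either to import the \cite{LX19, LX22, LWX} classifications (at which point you have essentially reconstructed the paper's proof) or to develop the module-theoretic and divisibility arguments for each parabolic and field-extension configuration yourself.
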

	
	\begin{remark} \label{classrem} 
		{\rm There are some small differences in the way we record the factorisations $X=AB$ in the tables: in Table \ref{families} we just give the possibilities for $(X_0,A\cap X_0,B\cap X_0)$; whereas in Tables \ref{excep-psl}-\ref{excep-sp}, we give
			the possibilities $(X,A,B)$ where $X$ is an almost simple group which is minimal subject to having a factorisation $X=AB$ with cyclic or dihedral intersection. In all cases the tables list possibilities up to interchanging $A$ and $B$, and up to applying automorphisms of $X$. }
	\end{remark}

	\subsection{Factorisations of sporadic groups}
	
	Here we classify the cyclic/dihedral factorisations of the sporadic groups.
	
	\begin{thm}\label{sporadic}
		Let $X$ be almost simple with socle a sporadic group $X_0$, and suppose that $X = AB$ with $A,B$ core-free and $A \cap B$ cyclic or dihedral. Then $X$, $A$, $B$, and $A \cap B$ are as in Table 
		$\ref{spor}$, and conversely, for each line in the table there exists such a factorisation.
	\end{thm}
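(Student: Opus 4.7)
The strategy is to reduce the problem to the known classification of maximal factorisations of almost simple groups with sporadic socle, and then cut that list down using the cyclic/dihedral intersection condition. First I would make the standard reduction: if $X=AB$ satisfies the hypothesis and $A\leq M$, $B\leq N$ for maximal subgroups $M,N$ of $X$, then $X=AB\subseteq MB\subseteq X$ forces $X=MB$, and similarly $X=AN$ and $X=MN$; in particular $(X,M,N)$ is one of the maximal factorisations of sporadic almost simple groups classified by Giudici. This reduces matters to a short finite list of candidate triples $(X,M,N)$.

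For each such triple, the task is to enumerate the pairs $(A,B)$ with $A\leq M$, $B\leq N$, $X=AB$, and $A\cap B$ cyclic or dihedral. The two basic constraints are the order identity $|A|\cdot|B|=|X|\cdot|A\cap B|$, together with the severe cap on $|A\cap B|$ imposed by the cyclic/dihedral hypothesis: a cyclic group of order $n$ or dihedral group of order $2n$ is small, so both $|A|$ and $|B|$ must be close to $\sqrt{|X|}$ up to the intersection size. Combined with the ATLAS subgroup information for $M$ and $N$, and with conjugacy-class and element-order data, these bounds eliminate most candidates quickly. A practical intermediate step is to first list the cyclic or dihedral subgroups $D\leq M\cap N$, up to $X$-conjugacy, whose orders are consistent with the factorisation, and for each such $D$ check whether subgroups $A\leq M, B\leq N$ exist with $A\cap B=D$ and $X=AB$.

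For the few cases that orders alone do not resolve, explicit computation in GAP or Magma with the standard faithful permutation representations of the relevant almost simple groups (from the online ATLAS of Finite Group Representations) decides whether the required $A$ and $B$ exist and produces the entries of Table~\ref{spor}. The converse direction, that each line of Table~\ref{spor} really arises, is verified by exhibiting the factorisation and the cyclic or dihedral intersection directly inside the corresponding almost simple group.

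I expect the main obstacle to be computational rather than conceptual: for the larger sporadic socles, the maximal subgroups $M$ and $N$ can have many subgroup classes, and keeping the enumeration tractable will require exploiting the cyclic/dihedral restriction as aggressively as possible at the outset, for instance by parametrising the search by the small intersection $A\cap B$ rather than by subgroups of $M$ and $N$.
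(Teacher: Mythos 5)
Your proposal matches the paper's proof in essence: reduce to the known factorisations of almost simple groups with sporadic socle, and then determine by computation which of them have cyclic or dihedral intersection (with the converse checked by exhibiting the factorisations explicitly). The only refinement worth noting is that Giudici's paper classifies \emph{all} factorisations of these groups, not merely the maximal ones, so the paper can dispose of the two largest cases ($Fi_{22}$ and $Co_1$) directly from the Atlas and restrict the Magma computation to exactly the subgroup pairs appearing in Giudici's list, rather than having to search within maximal subgroups as you propose.
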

	
	\begin{remark} \label{sporrem} 
		{\rm In Table \ref{spor}, if the socle factorises as $X_0 = A_0B_0$, we do not include factorisations of $X = X_0.2$ of the form $X=AB$, where $A\cap X_0 = A_0, B\cap X_0 = B_0$.}
	\end{remark}
	
	\subsection{Consequences for map embeddings}\label{s:1.4}
	
	We shall use our factorisation theorems to classify embeddings of graphs $\Gamma$ that have an arc-transitive {\it almost simple} group of automorphisms $X$, with simple socle $X_0$. 
	Let $\M$ be an embedding of  $\Gamma$, and assume that $G = \Aut(\M)\cap X$ is also arc-transitive and does not contain the socle $X_0$. This situation holds if and only if conditions (i)-(iii), given above, pertain for an arc $(\a,\b)$ of $\G$, which we repeat here in an equivalent but slightly altered form, noting that the involution in (iii) must interchange $\a$ and $\b$:
	
	\begin{hyp}\label{hypo} $X$ is an almost simple group with socle $X_0$, acting faithfully and transitively on a set $\O$, $G$ is a core-free subgroup of $X$, and $g \in G$ is an involution such that the following hold:
		\begin{itemize}
			\item[{\rm (i)}] for $\a \in \O$ we have $X = GX_\a$, and $G_\a$ is cyclic or dihedral;
			\item[{\rm (ii)}] $X_\a = G_\a\,X_{\a\b}$, and $|G_{\a\b}| \le 2$, where $\b = \a^g$;
			\item[{\rm (iii)}] $\la X_\a,g \ra = X$.
		\end{itemize}
	\end{hyp}
	
	In the results below, we classify all pairs $(X,G)$ satisfying Hypothesis \ref{hypo}, and as a consequence all the arc-transitive graphs with embeddings as above; the latter are just the orbital graphs on $\O$ with edge-set $\{\a,\b\}^X$, where $\b = \a^g$. It turns out that there are many such embeddings when $X_0$ is an alternating group, but only a handful when $X_0$ is classical or sporadic.
	
	We begin with our result for alternating groups. In this result $K_n$ denotes the complete graph on $n$ vertices, while $K_{n,n}\setminus n.K_2$ is the complete bipartite graph $K_{n,n}$ minus a complete matching; $J(n,2)$ is the \emph{Johnson graph}, for which the vertices are the unordered pairs from an $n$-set, two pairs being adjacent if they contain a common point.
	
	\begin{thm}\label{t:maps-an} 
		Suppose that Hypothesis $\ref{hypo}$ holds for $X, G$ and involution $g\in G$ with $\soc(X)=A_n$ for some $n\geq5$, and let $\G$ be the orbital graph on $\O$ with edge-set $\{\a,\b\}^X$, where $\b = \a^g$. Then either 
		\begin{itemize}
			\item[{\rm (i)}]  one of the lines of Table~$\ref{t:anmaps}$ or  $\ref{6711tbl}$ holds; or
			\item[{\rm (ii)}] ${\rm soc}(G) = A_{n-1}$, and there are examples for infinitely many values of $n$.
			%including the graphs in line $3$ of Table~$\ref{t:anmaps}$, and the examples discussed in \S~$\ref{bk1n8}$.    
		\end{itemize}
	\end{thm}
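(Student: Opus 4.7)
The strategy is to reduce Hypothesis~\ref{hypo} to a cyclic/dihedral factorisation of $X$ and invoke Theorem~\ref{t:Anfactns}. Set $A=G$ and $B=X_\a$: condition~(i) of Hypothesis~\ref{hypo} gives $X=AB$, both are core-free (the former by assumption, the latter because $X$ acts faithfully on $\O$), and $A\cap B=G_\a$ is cyclic or dihedral. So Theorem~\ref{t:Anfactns} classifies the possibilities, and I work case by case, additionally verifying conditions~(ii) and~(iii) of Hypothesis~\ref{hypo} at each stage.

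In case (a.1) of Theorem~\ref{t:Anfactns}, up to interchange one of $A=G$ or $B=X_\a$ is a natural point-stabiliser $X_x$ in the $n$-point action of $X$. If $A=G$ plays this role, then $\Soc(G)=A_{n-1}$ and we are in conclusion~(ii) of the present theorem. Otherwise $B=X_\a$ has socle $A_{n-1}$, so the $X$-action on $\O$ is the natural one on $\{1,\dots,n\}$; then $G$ is a transitive subgroup of $X$ with $G_\a=G\cap X_\a$ cyclic or dihedral. The classification of transitive subgroups of $S_n$ with cyclic or dihedral point-stabiliser is well-understood (they are highly transitive or sharply transitive groups of very restricted shape), and combined with the further requirements $X_\a=G_\a X_{\a\b}$, $|G_{\a\b}|\le 2$, and $\la X_\a,g\ra=X$ for some involution $g\in G$, this yields only the entries appearing in Table~\ref{t:anmaps}.

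In case (a.2) the parameter $k$ lies in $\{2,3,4,5\}$ and the factorisations $(X,A,B)$ are listed in Table~\ref{tab:Anfactns}. For each such triple (including the interchange $A\leftrightarrow B$) I test whether there is an involution $g\in N_G(X_{\a\b})$ with $\la X_\a,g\ra=X$ and $|G_{\a\b}|\le 2$; the surviving cases give the remaining lines of Table~\ref{t:anmaps}. The exceptional case~(b), with $n\le 10$, is handled by direct inspection of the three listed triples, together with the small cases of (a.1) and (a.2) that require separate treatment, filling Table~\ref{6711tbl}; computer verification in \textsc{GAP} handles the generation condition~(iii) in these small-$n$ instances.

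For conclusion~(ii) I must also exhibit infinitely many examples. The model construction takes $X\in\{A_n,S_n\}$, $G$ the stabiliser in $X$ of a point in $\{1,\dots,n\}$ (so $\Soc(G)=A_{n-1}$), and $X_\a$ a transitive subgroup of $X$ of appropriate index for which $G\cap X_\a$ is cyclic or dihedral; an involution $g\in G$ satisfying Hypothesis~\ref{hypo}(ii),(iii) is then produced by standard generation results for $A_n$ and $S_n$. The main obstacle throughout is the generation condition~(iii): while Theorem~\ref{t:Anfactns} controls the factorisation data tightly, checking or ruling out $\la X_\a,g\ra=X$ for a candidate involution $g\in N_G(X_{\a\b})$ is what ultimately separates the finite list in Tables~\ref{t:anmaps} and~\ref{6711tbl} from the infinite family in~(ii). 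I expect to push the latter through by choosing $X_\a$ from a family of $2$-transitive groups of prime-power degree (affine or projective), which provides an infinite sequence of $n$ for which all three conditions can be simultaneously verified.
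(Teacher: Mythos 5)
Your reduction to Theorem~\ref{t:Anfactns} and the case division (point-stabiliser factor on the $X_\a$ side versus the $G$ side, plus the $k\ge 2$ and small-$n$ cases) matches the paper's strategy, and the finite-list part of the argument is workable in outline, modulo the fact that ``testing each triple'' in Table~\ref{tab:Anfactns} must actually be an argument for the infinite families there (the paper does this via the divisibility constraint that the valency $|X_\a:X_{\a\b}|=|G_\a:G_{\a\b}|$ divides $|G_\a|$, compared against minimal degrees of faithful transitive actions of $A_{n-k}$).

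The genuine gap is your construction of infinitely many examples for conclusion~(ii). You propose $G=X_x$ with $\soc(G)=A_{n-1}$ and $X_\a$ drawn from a family of $2$-transitive groups of prime-power degree $n$, affine or projective. Neither option can work. If $X_\a$ is affine $2$-transitive with $(X_\a)_x$ cyclic or dihedral, then (by the analogue of Lemma~\ref{l:aff2t}) $X_\a=\AGL_1(q)$ and $G_\a=\GL_1(q)$; since $|X_\a:X_{\a\b}|$ divides $|G_\a|=q-1$, the arc-stabiliser $X_{\a\b}$ must contain the full translation subgroup $T$, which is the unique (hence characteristic) Sylow $p$-subgroup of $X_{\a\b}$; any involution $g$ normalising $X_{\a\b}$ then normalises $T$, so $\la X_\a,g\ra\le N_X(T)\ne X$ and condition~(iii) fails. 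If $X_\a$ is projective ($\soc(X_\a)=\PSL_d(q)$ on points), its point stabiliser is a full parabolic, which is cyclic or dihedral only in degenerate cases; and more generally, $2$-transitive groups of degree $n$ with cyclic or dihedral point stabiliser of order at least $n-1$ reduce, via Ito's theorem, to the affine case plus finitely many exceptions, so they cannot furnish an infinite family. The paper's actual construction (Construction~\ref{c:k1eg}, Theorem~\ref{k1eg}) is of a completely different nature: $n=(p-1)!/2$ for primes $p$ in a carefully chosen infinite set $\Pi$, with $X_\a\cong A_p$ acting \emph{imprimitively} on the cosets of $C_p$, $G_\a=C_p$ and $X_{\a\b}=A_{p-1}$ regular; verifying $\la X_\a,g\ra=X$ there requires a primitivity argument, the O'Nan--Scott theorem, and the classification of primitive groups containing a regular $A_{p-1}$ -- nothing resembling ``standard generation results.'' As it stands, your plan proves at most the finite classification and leaves the existence statement in~(ii) unestablished.
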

	
	\begin{remark}\label{altrem}
		% (a) In line 1 of Table \ref{t:anmaps}, $n=p^d$ is a prime power and $G$ is an affine 2-transitive group (with socle $[n]=C_p^d$) with $|G_{\a\b}|\le 2$. In line 2 of the table we have $(X,X_\a) = (S_n,A_{n-1})$ and $G$ is also an affine $2$-transitive subgroup of $S_n$. See \S \ref{ak1}.
		
		{\rm We construct an infinite family of examples satisfying Theorem~\ref{t:maps-an}(ii) in Section \ref{bk1prop}. In these examples we have $n = (p-1)!/2$ for certain primes $p$, and $(X,G) = (A_n,A_{n-1})$ with $X_\a \cong A_p$, $G_\a \cong C_p$. The graphs have $n!/p!$ vertices and valency $p$.}
	\end{remark}
	
	% Suppose that $X$ is  an almost simple group with socle $X_0=A_n$ for some $n\geq5$, and that $G<X$; also that both $X$ and $G$ act arc-transitively on a  graph $\G$, and that Hypothesis~\ref{h:map} holds for $X, G, \G$ and an involution $g\in G$. 
	% % Let $X = A_n$ or $S_n$, and let $\G = (\O,E)$ be an $X$-arc transitive graph. Suppose that $\G$ has an arc-transitive embedding as a map $\M$ and that $G=\Aut \M$ does not contain $A_n$. 
	% Then one of the following holds:
	% \begin{itemize}
		%     \item[{\rm (i)}] $n$ is a prime-power and $\G=K_n$: such embeddings exist for all prime-powers $n$ (see Section $\ref{ak1}$);
		%     \item[{\rm (ii)}] $n$ is a prime-power and $\G = K_{n,n}\setminus n.K_2$: such embeddings exist for $n=r^2$ with $r \equiv 3 \hbox{ mod }4$ (see Section $\ref{ak1}$);
		%     \item[{\rm (iii)}] ${\rm soc}(G) = A_{n-1}$: there exist such embeddings for infinitely many values of $n$ (see Proposition $\ref{k1eg}$);
		%     \item[{\rm (iv)}] $n=q+1$ with $q$ a prime-power, and $\G=J(n,2)$: there exist embeddings for all such $n$ with ${\rm soc}(G) = \PSL_2(q)$ (see Section $\ref{1st}$);
		%     \item[{\rm (v)}] $n=6,8$ or $11$, and $\G$ is as in Table $\ref{6711tbl}$.
		% \end{itemize}
	
	\begin{table}[h!]
		\caption{Infinite families of examples from $A_n$; here $p$ is a prime}\label{t:anmaps}
		\[
		\begin{array}{c|c|c|c|c|c|c|l|l}
			\hline
			n&X & X_\a & X_{\a\b} & G & G_\a & G_{\a\b} & \G & \hbox{ref} \\
			\hline
			p^f& S_n & S_{n-1} & S_{n-2} & \AGL_1(n) & \GL_1(n) & 1 & K_n &\S~\ref{ak1} \\
			2^f& A_n & A_{n-1} & A_{n-2} & \AGL_1(n) & \GL_1(n) & 1 & K_n &\S~\ref{ak1} \\
			p^f+1&S_n, A_n & (S_{n-2}\times S_2)\cap X& S_{n-3}\cap X & \PGL_2(n-1) & D_{2(n-2)}&1&J(n,2)& \S~\ref{1st}\\
			\hline
		\end{array}
		\]
	\end{table}

	% \begin{table}[h!]
		% \caption{Infinite families of examples from $A_n$}\label{t:anmaps-old}
		% \[
		% \begin{array}{cccclc}
			% \hline
			%  \hbox{Line}&n& \G & \soc(G) & \hbox{Existence} &\hbox{Reference}\\
			% \hline
			% 1&\hbox{prime power} & K_n & [n] &\hbox{all prime powers $n$}&\S~\ref{ak1} \\
			% 2&\hbox{prime power} & K_{n,n}\setminus n.K_2 & [n] &\hbox{$n=r^2, r\equiv 3\pmod{4}$ }&\S~\ref{ak1}\\
			% %3&(p-1)!/2 & \hbox{Rem~\ref{r:k1eg}} & A_{n-1} &\hbox{$p$ prime as in Defn~\ref{d:pi}}&\S~\ref{bk1prop} \\
			% 3&q+1 & J(n,2) &\PSL_2(q)& \hbox{all prime powers $q$} &\S~\ref{1st} \\
			% \hline
			% \end{array}
		% \]
		% \end{table}
	
	\begin{table}[h!]
		\caption{Exceptional examples from $A_n$}\label{6711tbl}
		\[
		\begin{array}{c|c|c|c|c|c|l|l}
			\hline
			X & X_\a & X_{\a\b} & G & G_\a & G_{\a\b} & \G & \hbox{ref} \\
			\hline
			S_6, A_6 & S_5, A_5 & S_4, A_4 & \PSL_2(5) & D_{10} & 2 & K_6 &\S~\ref{ak1} \\
			%S_9, A_9 & S_8, A_8 & S_7, A_7 & 3^2.D_8 & D_{8} & 1 & K_9 &\S~\ref{ak1} \\
			S_6 & A_5 & A_4 & \PGL_2(5) & D_{10} & 2 & K_{6,6}\setminus 6.K_2&\S~\ref{ak1}\\
			%S_9 & A_8 & A_7 & \AGL_1(9).2 & D_{8} & 1 & K_{9,9}\setminus 9.K_2&\S~\ref{ak1}\\
			S_6.2 & S_5 & S_4 & (S_3\wr S_2).2 & D_{12} & 2 & K_{6,6}&\S~\ref{cn6}\\
			S_7 & \PSL_3(2) & 7.3 & S_5 \times 2 & D_8 & 1 & \hbox{ antiflag graph of $\PG_3(2)$} & \S~\ref{xas} \\
			S_8 & \AGL_3(2) & \GL_3(2) & S_5 \times 2 & D_8 & 1 & \hbox{ antiflag graph of $\PG_3(2)$} & \S~\ref{bk3} \\
			S_{11} & \PSL_2(11) & 11.5 & S_9\times 2 & D_{12} & 1 & |V\G|=12\cdot 7!, \hbox{valency $12$}& \S~\ref{xas} \\
			\hline
		\end{array}
		\]
	\end{table}
	
	In contrast to Theorem \ref{t:maps-an}, for groups of Lie type there are just five embeddings:
	
	\begin{thm}\label{t:classical-maps}
		Suppose that Hypothesis $\ref{hypo}$ holds for $X, G$ and involution $g\in G$, and that $\soc(X)$ is a simple group of Lie type that is not isomorphic to an alternating group. Let $\G$ be the orbital graph on $\O$ with edge-set $\{\a,\b\}^X$, where $\b = \a^g$. Then $(X,G,X_\a)$ and $\G$ are as in Table $\ref{class-maps}$. Conversely, each entry in the table gives an arc-transitive embedding.
	\end{thm}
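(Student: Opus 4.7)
The plan is to use Theorem~\ref{classgen} as the main reduction and then impose the extra conditions (ii) and (iii) of Hypothesis~\ref{hypo} case by case. Under Hypothesis~\ref{hypo}(i) we have $X = G\cdot X_\a$ with $G_\a = G\cap X_\a$ cyclic or dihedral, and since $X$ is almost simple acting faithfully on $\O$, both $X_\a$ and $G$ are core-free in $X$. Setting $A=G$ and $B=X_\a$, this is a cyclic/dihedral factorisation to which Theorem~\ref{classgen} applies, restricting $(X_0,A\cap X_0,B\cap X_0)$ to a line of Tables~\ref{families} and \ref{excep-psl}-\ref{excep-sp}, up to interchanging $A,B$ and applying automorphisms of $X$. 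The hypothesis that $\soc(X)$ is of Lie type (and not isomorphic to an alternating group) excludes the $A_n$-lines.

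Next I would impose condition (ii), which asserts a sub-factorisation $X_\a = G_\a\cdot X_{\a\b}$ with $|G_\a\cap X_{\a\b}|\le 2$. Since
\[
|X_\a:X_{\a\b}| \;=\; \frac{|G_\a|}{|G_\a\cap X_{\a\b}|}
\]
is the valency of $\G$, the valency is at most $|G_\a|$; in particular it is tiny compared with $|X_\a|$. For each candidate line I would enumerate the conjugacy classes of subgroups of $X_\a$ of index $|G_\a|$ or $|G_\a|/2$, and retain only those $X_{\a\b}$ meeting $G_\a$ trivially or in a single involution. This alone eliminates most of the infinite families in Table~\ref{families}, because for generic parameters no such $X_{\a\b}$ exists in $X_\a$.

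For each surviving triple $(X, X_\a, X_{\a\b}, G, G_\a)$, I would then search for an involution $g$ witnessing condition (iii). Such a $g$ must lie in $N_G(X_{\a\b})\setminus X_\a$ (so that $g$ swaps $\a$ and $\b=\a^g$ and normalises the arc-stabiliser $X_{\a\b}=X_\a\cap X_\b$), and one checks $\la X_\a,g\ra = X$ against the known lists of maximal subgroups of $X$ containing $X_\a$ (Aschbacher together with Bray--Holt--Roney--Dougal in the classical cases, and the relevant exceptional-group references otherwise). The graph $\G$ is then recovered as the orbital graph with edge-set $\{\a,\a^g\}^X$ and identified by its parameters (order, valency, intersection numbers) as one of the five graphs listed in Table~\ref{class-maps}. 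The converse is a direct construction: for each line of Table~\ref{class-maps} the triple $(X_\a,X_{\a\b},g)$ is exhibited explicitly, the coset-graph $\G$ and associated map $\M$ are built, and Hypothesis~\ref{hypo} is verified by inspection.

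The main obstacle will be the case-by-case combination of Steps (ii) and (iii) across the numerous lines of Tables~\ref{families} and \ref{excep-psl}-\ref{excep-sp}. Condition (ii) is arithmetic and usually easy to rule in or out, but condition (iii) is delicate: even when a candidate involution $g$ exists in $N_G(X_{\a\b})$, one must carefully rule out $\la X_\a,g\ra$ being contained in some proper overgroup of $X_\a$, and for small-rank classical groups $X_\a$ can lie in several maximal subgroups, each of which must be eliminated. It is precisely this generation check that prunes the list down to the five graphs in Table~\ref{class-maps}.
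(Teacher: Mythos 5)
Your overall strategy coincides with the paper's: reduce to the cyclic/dihedral factorisations of Theorem \ref{classgen}, exploit the divisibility $|X_\a:X_{\a\b}|=|G_\a:G_{\a\b}|\mid |G_\a|$ coming from the sub-factorisation $X_\a=G_\a X_{\a\b}$ to constrain the valency, and then test for an arc-reversing involution $g$ with $\la X_\a,g\ra=X$. This is exactly how Section \ref{s:liemaps} proceeds, working line by line through Tables \ref{families} and \ref{excep-psl}--\ref{excep-sp} and delegating the exceptional tables and a list of small groups to Magma.

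The step that fails as stated is the claim that condition (ii) ``alone eliminates most of the infinite families \dots because for generic parameters no such $X_{\a\b}$ exists in $X_\a$.'' Generically such subgroups do exist in abundance: since $|G_\a|$ is tiny (of order roughly $nf$, or $2(q^m-\e)$, etc.) and the quasisimple layer of $X_\a$ has no proper subgroup of so small an index, every $X_{\a\b}\le X_\a$ of index dividing $|G_\a|$ must \emph{contain} the large normal subgroup of $X_\a$ (the unipotent radical and/or the simple normal subgroup), and subgroups of that kind exist for all parameter values. What kills them is the additional condition recorded in the last sentence of Proposition \ref{cri} and packaged as Hypothesis \ref{h:map}(c): if $1\ne N\unlhd X_\a$ with $N\le X_{\a\b}$, then the arc-reversing involution must move $N$, which typically cannot happen (e.g.\ $N=O_p(X_\a)$ is the unique Sylow subgroup of its order in $X_\a$, or $N_X(N)$ is a parabolic containing both $X_\a$ and $g$, so that $\la X_\a,g\ra\ne X$). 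Only after excluding $N\le X_{\a\b}$ do the minimal-index bounds for proper subgroups (e.g.\ \cite[Table 5.2.A]{KL}) contradict $|X_\a:X_{\a\b}|\mid |G_\a|$. Since your plan applies condition (iii) only after forming the candidate list, and for the infinite families that list is not finite until the normal-subgroup obstruction is invoked, you need this consequence of (iii) up front to obtain a uniform argument in $q$ and the rank; with that reordering the proof closes along the paper's lines.
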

	
	\begin{table}[h!]
		\caption{Arc-transitive embeddings from groups of Lie type}\label{class-maps}
		\[
		\begin{array}{c|c|c|c|c|c|l}
			\hline
			X & X_\a & X_{\a\b} & G & G_\a & G_{\a\b} & \G \\
			\hline
			\PSL_3(2).2 & P_1 & D_8 & 7.6 & 3 & 1 & \hbox{incidence graph of ${\rm PG}_2(2)$} \\
			\PSL_3(8).6 & P_1 & \hbox{Borel} & 73.18 & 9 & 1 & \hbox{incidence graph of ${\rm PG}_2(8)$} \\
			\PSL_2(11).2 & A_5 & A_4 & 11.10 & 5 & 1 & \hbox{incidence graph of 11 point biplane} \\
			\SO_5(3) & 3^3.S_4 & 3^{1+2} & 2^4.S_5 & D_{24} & 1 & \hbox{see Subsection \ref{ex:u42}} \\
			& 3^3.A_4 & 3^{1+2} & 2^4.S_5 & D_{12} & 1 & \hbox{see Subsection \ref{ex:u42}} \\
			\hline
		\end{array}
		\]
	\end{table}
	
	Finally, for sporadic groups there are four embeddings:
	
	\begin{thm}\label{spormaps}  Suppose that Hypothesis $\ref{hypo}$ holds for $X, G$ and involution $g\in G$, and that $\soc(X)$ is a sporadic group. Let $\G$ be the orbital graph on $\O$ with edge-set $\{\a,\b\}^X$, where $\b = \a^g$. Then $(X,G,X_\a)$ and $\G$ are as in Table $\ref{class-maps}$. Conversely, each entry in the table gives an arc-transitive embedding; in the table we include the number of vertices $n$, and the valency $k$ of the graph.
	\end{thm}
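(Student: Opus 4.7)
The plan is to combine Theorem \ref{sporadic} with Hypothesis \ref{hypo}: Theorem \ref{sporadic} enumerates, in Table \ref{spor}, all factorisations $X=AB$ of an almost simple group $X$ with sporadic socle such that $A,B$ are core-free and $A\cap B$ is cyclic or dihedral. Each such factorisation is a candidate for the pair $(G,X_\a)=(A,B)$ (or $(B,A)$), so the classification reduces to testing each row of Table \ref{spor} against the remaining conditions of Hypothesis \ref{hypo} — namely, the existence of a vertex $\b$ and an involution $g\in G$ with $\a^g=\b$ satisfying $X_\a = G_\a X_{\a\b}$, $|G_{\a\b}|\leq 2$, and $\la X_\a, g\ra = X$. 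Condition (i) of Hypothesis \ref{hypo} is automatic from the factorisation, while the remaining two constrain both the candidate pair-stabiliser $X_{\a\b}$ and the candidate involution $g$ very tightly.

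For each row I would proceed as follows. Using the Atlas, enumerate the conjugacy classes of subgroups of $X_\a$ that arise as $X_\a\cap X_\a^x$ for some $x\in X\setminus X_\a$; these give the possible $X_{\a\b}$. Keep only those admitting a factorisation $X_\a = G_\a X_{\a\b}$, a strong restriction since $G_\a$ is cyclic or dihedral and hence typically small compared to $X_\a$. For each surviving $X_{\a\b}$, determine the involutions $g\in N_G(X_{\a\b})\setminus G_\a$ (the condition $\a^g=\b$ forces $g$ to normalise $X_{\a\b}$ and lie outside $G_\a$), and for each such $g$ test whether $\la X_\a, g\ra = X$, using the list of maximal overgroups of $X_\a$ in the Atlas. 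If it does, also check $|G\cap X_{\a\b}|\leq 2$. The four embeddings listed in the theorem should be precisely the survivors.

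The main obstacle is the case-by-case computational verification, because for several of the larger sporadic groups there are multiple classes of involutions in $G$ and multiple candidate $X_{\a\b}$, and verifying $\la X_\a, g\ra = X$ by hand is unreliable. The efficient route is explicit computation in standard permutation or matrix representations via GAP (with the Atlas of Finite Group Representations) or Magma: compute the double coset structure of $X$ on $X_\a$, identify representatives for the candidate $X_{\a\b}$, locate involutions in the normaliser inside $G$, and test generation. I expect most rows of Table \ref{spor} to be eliminated for one of two structural reasons: either $G_\a$ is too small to admit a factorisation $X_\a = G_\a X_{\a\b}$ with $|G_{\a\b}|\leq 2$, or every candidate involution $g$ together with $X_\a$ lies in a proper almost simple overgroup of $X_\a$ visible in the Atlas subgroup structure of $X$.

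For the converse, each surviving triple $(X,G,X_\a)$ carries, via the explicit involution $g$ produced by the analysis, a self-paired orbital $\{\a,\b\}^X$ with $\b=\a^g$, and hence an arc-transitive graph $\G$. Proposition \ref{cri} then promotes this datum to the arc-transitive map embedding $\M$ required. The number of vertices $n=[X:X_\a]$ and valency $k=[X_\a:X_{\a\b}]$ recorded in the table are read off directly from the group orders, completing the proof.
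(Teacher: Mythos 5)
Your proposal matches the paper's proof in substance: the paper likewise starts from the cyclic/dihedral factorisations of Theorem \ref{sporadic} (which rest on Giudici's classification of all factorisations of sporadic groups), and for each candidate pair $(X_\a,G)$ computes double coset representatives in Magma to test the factorisation condition $X_\a=G_\a X_{\a\b}$ with $|G_{\a\b}|\le 2$ and then searches for an involution $g$ satisfying Hypothesis \ref{h:map}(b), with the converse supplied by Proposition \ref{cri}. This is essentially the same approach, executed computationally in the same way.
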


	\begin{table}[h!]\label{spormaptab}
		\caption{Arc-transitive embeddings from sporadic groups}
		\[
		\begin{array}{l|l|l|l|l|l|l|l}
			\hline
			X & X_\a & X_{\a\b} & G & G_\a & G_{\a\b} & n & k\\
			\hline
			M_{24} & \PSL_2(7) & S_4 & M_{23} & 7 & 1 & 14572980 & 7 \\
			J_2.2 & (A_5\times D_{10}).2 & 5^2.4 & \PSU_3(3).2 & 12 & 1 & 1008 & 12 \\
			HS.2 & (5\times A_5).4 & 5^2.4 & M_{22}.2 & 12 & 1 & 73920 & 12 \\
			He.2 & 7^2.\SL_2(7).2 & 7^{1+2}.6 & \Sp_4(4).4 & 16 & 1 & 244800 & 16 \\
			\hline
		\end{array}
		\]
	\end{table}
	
	Note that $X_\a$ is maximal in $X$ in lines 1, 2 and 4 of Table~\ref{spormaptab}, but in line 3,  $X_\a$ has index $2$ in a maximal subgroup $5.4 \times S_5$ of $X=HS.2$. More generally, although we make no such assumption, it is very often the case that the almost simple group $X$ is vertex-primitive on $\G$, or $\G$ is bipartite and $X$ is vertex-biprimitive (that is, the index $2$ subgroup fixing the two parts of the bipartition is primitive on each part).
	
	\begin{coroll}\label{cor:vpr}
		Suppose that Hypothesis $\ref{hypo}$ holds for $X, G, g$, and that $\G$ is the orbital graph on $\O$ with edge-set $\{\a,\b\}^X$, where $\b = \a^g$. Then one of the following holds.
		\begin{itemize}
			\item[{\rm (i)}] $\G$ is a complete graph $K_n$ with $n$ a prime power, a Johnson graph $J(n,2)$ with $n-1$ a prime power, or one of the $14$ graphs in Tables $\ref{6711tbl}$, $\ref{class-maps}$, $\ref{spormaptab}$;
			\item[(ii)] $\soc(X)=A_n$ and $\soc(G)=A_{n-1}$, as in Theorem~$\ref{t:maps-an}(ii)$.
		\end{itemize}
		Moreover, in case (i), either $X$ is vertex-primitive on $\G$, or $\G$ is bipartite and $X$ is vertex-biprimitive, or $X=\SO_5(3)$  as in the last Lines of Table~$\ref{class-maps}$.
	\end{coroll}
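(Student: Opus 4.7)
The plan is to deduce the corollary directly from the three classification theorems already stated, with a brief case check at the end for vertex-(bi)primitivity. For the first conclusion, I would split according to the isomorphism type of $X_0=\soc(X)$. If $X_0=A_n$, Theorem~\ref{t:maps-an} supplies the dichotomy: either conclusion (ii) of the corollary holds with $\soc(G)=A_{n-1}$, or $(X,G,X_\a,X_{\a\b})$ appears in Table~\ref{t:anmaps} (giving $\G\cong K_n$ for $n$ a prime power, or $\G\cong J(n,2)$ for $n-1$ a prime power) or in Table~\ref{6711tbl}. If $X_0$ is of Lie type and not alternating, Theorem~\ref{t:classical-maps} places $(X,G,X_\a)$ in one of the five rows of Table~\ref{class-maps}; and if $X_0$ is sporadic, Theorem~\ref{spormaps} places it in Table~\ref{spormaptab}. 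The ``14 further graphs'' of conclusion (i) are then the union of Tables~\ref{6711tbl}, \ref{class-maps} and \ref{spormaptab}, after merging the antiflag graph of $\PG_3(2)$ that appears in both the $S_7$ and $S_8$ rows of Table~\ref{6711tbl}.

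For the primitivity assertion in case (i), the plan is to inspect, entry by entry, the overgroup lattice of $X_\a$ in $X$, using the fact that $X$ is vertex-primitive on $\G$ iff $X_\a$ is maximal in $X$, and otherwise $X$ is vertex-biprimitive on the bipartite graph $\G$ iff $X_\a$ lies in some index-$2$ subgroup $H<X$ that preserves the bipartition, with $X_\a$ maximal in $H$. For the infinite families of Table~\ref{t:anmaps}, the stabilizers $S_{n-1}$, $A_{n-1}$ and $(S_{n-2}\times S_2)\cap X$ are all maximal in $X$, so vertex-primitivity is immediate. For the remaining rows I would read the overgroup structure from standard maximal-subgroup information: the three incidence graph rows of Table~\ref{class-maps} and the $K_{6,6}$, $K_{6,6}\setminus 6.K_2$ rows of Table~\ref{6711tbl} are bipartite with the socle of $X$ acting as an index-$2$ subgroup that preserves the bipartition, and $X_\a$ is maximal in this socle in each case; the sporadic rows of Table~\ref{spormaptab} are vertex-primitive except for the $HS.2$ entry already flagged in the paragraph following Theorem~\ref{spormaps}, where $X_\a$ has index $2$ in a maximal subgroup of $X$ and the resulting action is vertex-biprimitive on a bipartite graph.

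The one obstacle, and the source of the third alternative in the statement of the corollary, is the pair of $\SO_5(3)$ rows at the bottom of Table~\ref{class-maps}: here $X_\a$ is $3^3.S_4$ or $3^3.A_4$, both of which sit inside the singular-point parabolic subgroup of $\SO_5(3)$, whose index in $X$ is far greater than $2$. Hence $X_\a$ has a proper overgroup of index larger than $2$, so $X$ is imprimitive on $\O$ but not vertex-biprimitive, exactly as recorded. This case is the only one where the primitive/biprimitive dichotomy genuinely fails, and all other verifications above reduce to straightforward lookups in the \emph{Atlas} and in the known lists of maximal subgroups of the groups appearing in Tables~\ref{6711tbl}, \ref{class-maps} and \ref{spormaptab}.
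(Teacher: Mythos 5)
Your proposal is correct and follows essentially the same route as the paper: the paper gives no separate proof of this corollary, treating it as a direct consequence of Theorems \ref{t:maps-an}, \ref{t:classical-maps} and \ref{spormaps} together with the maximality observations in the paragraph preceding its statement, and this is exactly your case-by-case inspection (including the merging of the two antiflag-graph rows of Table \ref{6711tbl} to reach the count of $14$). The only minor imprecision is in the $\SO_5(3)$ cases, where ``an overgroup of index greater than $2$'' establishes imprimitivity but not by itself the failure of biprimitivity (for that one also needs that the $80$-vertex graph is non-bipartite, and that $3^3.A_4$ is non-maximal in $\Omega_5(3)$ for the $160$-vertex graph); since the corollary merely exempts these lines rather than asserting that failure, nothing is lost.
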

	
	Theorem~\ref{t:maps-an}(ii) is the one case where our classification of map embeddings is not completely explicit. In this case Construction~\ref{c:k1eg} demonstrates that there are infinitely many examples $\G(p)$, and by Theorem~\ref{k1eg},  each of these graphs $\G(p)$  is non-bipartite and $X=A_n$ (where $n=(p-1)!/2$) is imprimitive on vertices. Thus  Corollary~\ref{cor:vpr}(iii) provides infinitely many non-bipartite examples where vertex-primitivity fails. 
	Moreover, we know that these are not the only examples for Theorem~\ref{t:maps-an}(ii)  and in Subsection~\ref{bk1} we discuss several others  arising from $X$-arc-transitive graphs with $X=S_8$ or $S_9$.   It would be very interesting to understand this case better, and a start would be to have more examples.
	
	\begin{prob}\label{prob1}
		Find further infinite families of examples of arc-transitive embeddings $\M$ of graphs admitting an arc-transitive subgroup of automorphisms $A_n$ or $S_n$ such that $\Aut(\M)$ is $A_{n-1}$ or $S_{n-1}$.   
	\end{prob}
	
	Another observation we draw from our results is that, although we only ask for arc-transitive embeddings, some of our examples are in fact flag-regular. We note that a $G$-arc-transitive embedding is flag-regular if and only if $G_{\a\b}=C_2$, and we can read off this property from Tables~\ref{t:anmaps}--\ref{spormaptab}. 
	
	% if $\soc(X)$ is not an alternating group, and also for the examples with $\soc(X)=A_n$ occurring in Table~\ref{6711tbl}.  
	
	% We make a few comments about the examples with $\soc(X)=A_n$ in Table~\ref{t:anmaps}. In the case of Line 3 of Table~\ref{t:anmaps}, we show in Subsection~\ref{1st} that $G=\PGL_2(q), G_\a=D_{2(q-1)}$ and $G_{\a\b}=1$; so this example is  not flag-regular, that is to say, it is arc-regular.   In the first two lines of Table~\ref{t:anmaps} the group $G$ is required (in both lines) to be an affine $2$-transitive group $G$ of degree $n$ such that $|G_{\a\b}|\leq 2$. There is always an example with $G_\a$ cyclic (a Singer cycle) giving an arc-regular embedding. {\color{red} Martin could you check please: I don't think we can have $G_{\a\b}=C_2$ can we? We would need need $G_\a\leq \GL$ to be dihedral of order $2(n-1)$, but the index 2 cyclic subgroup would be a Singer cycle and the extra 2 acts as a field aut and never so gives us a dihedral group unless $n\leq 4$??? If you agree  that it's true that all these cases are arc-regular then we have the following corollary - and we can rewrite Table~\ref{t:anmaps}.} 

	\begin{coroll}\label{cor:fr}
		Suppose that Hypothesis $\ref{hypo}$ holds for $X, G, g$ and that $\G$ is the orbital graph on $\O$ with edge-set $\{\a,\b\}^X$ (where $\b = \a^g$). Suppose also that $X, G$ do not satisfy  Theorem~$\ref{t:maps-an}(ii)$.  Then either the $G$-arc-transitive embedding is arc-regular, or  one of Lines $1,2,3$ of Table~$\ref{6711tbl}$ holds and the embedding is flag-regular.
	\end{coroll}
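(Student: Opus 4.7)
The plan is to combine the three classification theorems (\ref{t:maps-an}, \ref{t:classical-maps}, \ref{spormaps}) with a direct inspection of the $G_{\a\b}$ column of the resulting tables. The key observation, which is really the content of the corollary, is that Hypothesis \ref{hypo}(ii) forces $|G_{\a\b}|\le 2$, so the stabiliser of the arc $(\a,\b)$ in $G$ is either trivial or $C_2$. In the former case $G$ is regular on arcs of $\G$ (since $G$ is arc-transitive by Hypothesis \ref{hypo}); in the latter case, using the fact recalled in the introduction that the only automorphism of $\M$ fixing a flag is the identity, $G$ is regular on flags of $\M$. Thus the dichotomy ``arc-regular versus flag-regular'' is governed entirely by the order of $G_{\a\b}$, and the corollary reduces to deciding in which of the listed cases $|G_{\a\b}|=2$ occurs.

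First I would invoke the classification theorems: since $\soc(X)$ is alternating, of Lie type, or sporadic, and we have excluded the case $\soc(G)=A_{n-1}$ of Theorem~\ref{t:maps-an}(ii), the triple $(X,G,X_\a)$ must occur in one of Tables~\ref{t:anmaps}, \ref{6711tbl}, \ref{class-maps}, or \ref{spormaptab}. Next I would simply read off the $G_{\a\b}$ entry in each row: all rows of Tables~\ref{t:anmaps}, \ref{class-maps}, and \ref{spormaptab} list $G_{\a\b}=1$, so they give arc-regular embeddings. In Table~\ref{6711tbl}, rows~1, 2, and 3 (the three examples involving $K_6$ or $K_{6,6}\setminus 6K_2$ or $K_{6,6}$) record $G_{\a\b}=2$, while rows~4, 5, and 6 record $G_{\a\b}=1$. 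This gives exactly the trichotomy asserted in the corollary.

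There is essentially no obstacle here, since all of the work is done by the preceding classification theorems; the corollary is a bookkeeping consequence, as signalled in the paragraph preceding it. The only mild care required is to confirm, in the flag-regular cases (rows 1--3 of Table~\ref{6711tbl}), that the $C_2$ factor $G_{\a\b}$ is indeed realised by an element of $G$ that interchanges $\a$ and $\b$ in the embedding (so as to produce a full flag-stabiliser rather than a spurious edge-stabiliser in the graph only). This is immediate from the explicit descriptions in Section~\ref{ak1} and Section~\ref{cn6} referenced in Table~\ref{6711tbl}, and from the general identification of flag-regularity with $G_{\a\b}\cong C_2$ recorded just before the corollary.
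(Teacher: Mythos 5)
Your proposal is correct and matches the paper's treatment: the corollary is proved by combining Theorems \ref{t:maps-an}, \ref{t:classical-maps} and \ref{spormaps} with the identification of flag-regularity with $G_{\a\b}\cong C_2$ stated just before the corollary, and then reading off the $G_{\a\b}$ column of Tables \ref{t:anmaps}--\ref{spormaptab}, which is nontrivial only in Lines 1--3 of Table \ref{6711tbl}. Nothing is missing.
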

	
	We note that each of the three exceptional flag-regular embeddings in Corollary~\ref{cor:fr} is related to the group $A_6$.
	
	\vspace{4mm}
	\subsection{Layout} We offer a brief outline of the layout of the paper. Section \ref{prelim} contains some preliminary material on graphs and maps, together with a discussion of our strategy for proving our map classification theorems \ref{t:maps-an} and \ref{t:classical-maps}. It also has a short subsection \ref{maggie} on our use of Magma computations in the proofs. In Section \ref{arc-tr} we describe some of the most interesting graphs that occur in our map classification theorems, and Section \ref{bk1prop} comprises our construction of an infinite family of graphs that admit an arc-transitive group of automorphisms $A_n$, and have an embedding as a map with arc-transitive group $A_{n-1}$. Sections \ref{ansnfactn} and \ref{classfactn} contains the proofs of our cyclic/dihedral factorisation theorems \ref{t:Anfactns} and \ref{classgen} for alternating groups and for groups of Lie type, respectively. In Sections \ref{s:anmaps} and \ref{s:liemaps} we prove the map classification theorems \ref{t:maps-an} and \ref{t:classical-maps}, and Section \ref{s:sporadic} contains our treatment of the sporadic groups. Finally, Section \ref{tables} gives the tables of cyclic/dihedral factorisations of alternating groups and groups of Lie type that occur in our factorisation theorems.
	
	\subsection{Acknowledgements}
	As mentioned, we make considerable use of Magma computations in our proofs, and we are very grateful to Eamonn O'Brien for his generous assistance with this. We also wish to thank Cai Heng Li and Binzhou Xia for making available pre-publication versions of their paper \cite{LWX}, which allowed us to greatly reduce the length of our exposition in the proof of Theorem~\ref{classgen}. 
	
	\section{Preliminaries on graphs and arc-transitive embeddings}\label{prelim}
	
	We study arc-transitive $2$-cell embeddings of connected simple graphs in closed surfaces, which we introduce as follows. For more details about graph embeddings see for example \cite{GW}, \cite{J15}, or \cite{STW}. 
	
	\medskip\noindent
	\emph{Graphs.}\quad A \emph{(simple) graph} $\Gamma=(\Omega,E)$ consists of a set $\Omega$ of vertices, and a set $E$ of unordered pairs of vertices, which we call edges, and $\Gamma$ is \emph{connected} if, for any vertices $\a, \b$, there exists a finite vertex sequence $\a_1,\dots,\a_r$ such that $\a_1=\a$, $\a_r=\b$, and each $\{\a_i,\a_{i+1}\}$ is an edge. The \emph{automorphism group} $\Aut(\G)$ of a graph $\G=(\O,E)$ consists of all permutations of the vertex set $\O$ which leave the edge set $E$ invariant. An \emph{arc} of $\G$ is an ordered pair $(\a,\b)$ of vertices such that $\{\a,\b\}\in E$, and for $G\leq \Aut(\G)$, $\G$ is said to be \emph{$G$-arc-transitive} if $G$ is transitive on the set of arcs. Note that if $\G$ is $G$-arc-transitive, then $G$ is also vertex-transitive and edge-transitive on $\G$; in particular, $\G$ is \emph{regular} in the sense that the set $\G(\a)$ of vertices adjacent to  $\a$ has the same cardinality for each vertex $\a$, called the \emph{valency} of $\G$. 
	
	\medskip\noindent
	\emph{Maps.}\quad 
	A \emph{map} $\M = (\O,E,F)$ is a $2$-cell embedding of a simple graph $\G=(\Omega,E)$ in a closed surface with face set $F$; and the automorphism group $\Aut(\M)$ of $\M$ is the subgroup of $\Aut(\G)$ consisting of those graph automorphisms which extend to self-homeomorphisms of the supporting surface (see \cite[Section 6]{J15}), sometimes also defined as the subgroup of automorphisms of $\G$ that preserve the set of \emph{flags} (mutually incident vertex-edge-face triples) \cite[Section 2]{S01}. We assume that $\G$ is connected, that $\G$ is not a simple cycle (equivalently $\G$ has valency at least $3$), and that  each edge is incident with exactly two faces.  
	The only map automorphism that fixes a flag is the identity, that is to say, $\Aut(\M)$ is semiregular on the set of flags, and we call $\M$ \emph{flag-regular} if $\Aut(\M)$ is transitive (hence regular) on flags.  Also we call $\M$ \emph{arc-transitive} if $\Aut(\M)$ is transitive on the arcs of $\G$.  Since  the stabiliser in $\Aut(\M)$ of an arc $(\a,\b)$ must preserve the two faces of $\M$ incident with the edge $\{\a,\b\}$, the arc-stabiliser $\Aut(\M)_{\a,\b}$ has order at most two, and it has order $2$ if and only if $\M$ is flag-regular. In fact, for a flag-regular map $\M$, the stabiliser $\Aut(\M)_\a$ of a vertex $\a$ cyclically permutes the $k:=|\G(\a)|$ edges incident with $\a$ and hence induces a circular ordering on $\G(\a)$; the stabiliser of an edge $\{\a,\b\}$ in this action is the arc-stabiliser $\Aut(\M)_{\a,\b}$, and this interchanges the two faces incident with $\{\a,\b\}$, and hence interchanges the vertices before and after $\b$  in the circular ordering induced on $\G(\a)$. Thus for a flag-regular map,  $\Aut(\M)_\a$ acts faithfully on $\G(\a)$ as the dihedral group $D_{2k}$ of order $2k$, and the edge-stabiliser $\Aut(\M)_{\{\a,\b\}}\cong C_2\times C_2$.  An arc-transitive map $\M$ which is not flag-regular acts regularly on the arc set, and the stabiliser $\Aut(\M)_\a$ is regular on $\G(\a)$; it is either a cyclic group $C_k$, or $k$ is even and it is a dihedral group $D_k$ (see \cite{GW}, \cite[Table I]{STW} or the summary in \cite[Table before Theorem 1.3]{LiPS24}). If we wish to emphasise the arc-transitive subgroup $G$ of $\Aut(\M)$, then we refer to the embedding as a $G$-arc-transitive embedding.
	
	We use the following necessary and sufficient conditions for connected graphs other than cycles to have an arc-transitive embedding. Proof of sufficiency relies on constructions in \cite{LiPS21, LiPS24} which are effective for not necessarily simple arc-transitive graphs. An analogue of this result for flag-regular embeddings was proved in \cite{GNSS}.
	
	\begin{prop}\label{cri}
		Let $\G = (\O,E)$ be a finite connected $X$-arc-transitive graph of valency at least $3$, where $X\leq \Aut(\G)$, and let $(\a,\b)$ be an arc of $\G$ and $G\leq X$. Then $\G$ has a $G$-arc-transitive embedding $\M$ if and only if  the following three conditions hold:
		\begin{itemize}
			\item[{\rm (i)}] $X = GX_\a$ and $X_\a = G_\a X_{\a\b}$;
			\item[{\rm (ii)}] $G_\a$ is cyclic or dihedral and  $G_{\a\b} = 1$ or $C_2$;
			\item[{\rm (iii)}] there is an involution $g \in N_G(X_{\a\b})$ such that $\la X_\a,g \ra = X$.
		\end{itemize}
		Furthermore,  if these conditions hold and $X_{\a\b}$ contains a nontrivial normal subgroup $N$ of $X_\a$, then $N^g\ne N$ and $N^g\unlhd X_{\a\b}$. % $X$ is almost simple, then $X_{\a\b}$ is core-free in $X_\a$.
	\end{prop}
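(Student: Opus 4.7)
The plan is to prove the two directions of the equivalence and then derive the final assertion from sufficiency.

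For necessity, assume $\G$ has a $G$-arc-transitive embedding $\M$. Arc-transitivity of $G$ on $\G$ yields vertex-transitivity on $\O$, so for $x\in X$ we can write $\a^x=\a^h$ with $h\in G$, giving $x\in GX_\a$ and hence $X=GX_\a$; similarly, neighbour-transitivity of both $X_\a$ and $G_\a$ on $\G(\a)$ gives $X_\a=G_\a X_{\a\b}$, establishing (i). Condition (ii) is inherited from the structural properties of $\Aut(\M)$ summarised in the preliminaries: vertex-stabilisers of arc-transitive maps are cyclic or dihedral, and arc-stabilisers have order at most $2$, and these pass to the arc-transitive subgroup $G\leq \Aut(\M)$. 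For (iii), take the involution $g$ in the edge-stabiliser $G_{\{\a,\b\}}$ swapping $\a$ and $\b$ (such a $g$ exists in both the arc-regular and flag-regular cases, since the arc-reversing element in the edge-stabiliser of an arc-transitive map can always be chosen to be an involution, given the form of $G_\a$); it conjugates $X_{\a\b}=X_{\b\a}$ to itself, and the orbit of $\a$ under $\la X_\a,g\ra$ contains $\G(\a)$, hence equals $\O$ by connectedness of $\G$, forcing $\la X_\a,g\ra=X$.

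For sufficiency, assume (i)--(iii). Identify $\O$ with $X/X_\a$; condition (i) then makes $\O$ simultaneously $G$-isomorphic to $G/G_\a$ and forces $G_\a$ to be transitive on $\G(\a)$ with point-stabiliser $G_{\a\b}$. The cyclic-or-dihedral structure of $G_\a$ given by (ii) endows $\G(\a)$ with a canonical rotational (or reflectional) scheme, and (iii) supplies the edge-reversing involution that assembles these local schemes into a globally consistent pattern on $\G$. These are precisely the ingredients required by the constructions in \cite{LiPS21, LiPS24} to produce a $2$-cell embedding $\M$ of $\G$ in a closed surface on which $G$ acts arc-transitively as a subgroup of $\Aut(\M)$. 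Those constructions are set up for the more general class of $X$-arc-transitive graphs allowed to be non-simple, so the main obstacle is verifying that they specialise cleanly to the simple-graph setting used here: this is where I expect the bulk of the technical work to lie.

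For the final clause, assume (i)--(iii) and let $1\neq N\unlhd X_\a$ with $N\leq X_{\a\b}$. Since $g$ sends the arc $(\a,\b)$ to $(\b,\a)$, conjugation by $g$ maps $X_\a\mapsto X_\b$ and $X_{\a\b}\mapsto X_{\b\a}=X_{\a\b}$; thus $N^g\leq X_{\a\b}$ with $N^g\unlhd X_\b$, and therefore $N^g\unlhd X_{\a\b}$ since $X_{\a\b}\leq X_\b$. If $N^g=N$, then $N\unlhd \la X_\a,g\ra=X$ by (iii); but a nontrivial normal subgroup of $X$ contained in a point-stabiliser $X_\a$ of a faithful transitive action would lie in the kernel of that action on $\O$, which is impossible. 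Hence $N^g\neq N$, as required.
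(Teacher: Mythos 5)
Your necessity and final-clause arguments are sound and essentially match the paper's. One small imprecision in necessity: the arc-reversing element in $G_{\{\a,\b\}}$ is \emph{automatically} an involution because $G_{\{\a,\b\}}\leq\Aut(\M)_{\{\a,\b\}}\cong C_2\times C_2$ is elementary abelian — there is nothing to ``choose'', and the phrase ``given the form of $G_\a$'' is not the right justification.

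The sufficiency direction, however, contains a genuine gap, which you yourself flag. It is not enough to assert that (i)--(iii) supply ``precisely the ingredients'' for the constructions in \cite{LiPS21,LiPS24}. Those constructions each take a different group-theoretic datum as input, and one must split into cases on the structure of $G_\a$ to extract it: if $G_\a=C_k$ (so $G_{\a\b}=1$), one forms a rotary pair $(a,g)$ with $G_\a=\langle a\rangle$; if $G_\a=D_{2k}$ (so $G_{\a\b}=C_2=\langle x\rangle$ and $G_{\{\a,\b\}}=\langle x,g\rangle\cong C_2^2$), one writes $G_\a=\langle x,y\rangle$ and forms a flag-regular triple $(x,y,g)$; if $G_\a=D_k$ with $k$ even (so $G_{\a\b}=1$), one forms an arc-transitive triple of involutions. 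Each case invokes a different construction and a different proposition from \cite{LiPS21,LiPS24}, and one also needs $\langle G_\a,g\rangle=G$, which requires a short argument from $g\in N_G(X_{\a\b})$ and connectedness. Moreover, you misidentify the remaining technical obstacle: the issue is not whether the constructions ``specialise cleanly to the simple-graph setting'', but whether the underlying graph $\Sigma$ of the map they output actually \emph{is} $\G$ rather than some other $G$-arc-transitive graph on the same vertex set. That identification is what makes sufficiency work: the constructions output a $G$-arc-transitive $\Sigma$ containing $(\a,\b)$ as an arc, and since (i) makes $G$ arc-transitive on $\G$, the edge sets $\{\a,\b\}^G=\{\a,\b\}^X$ agree, so $\Sigma=\G$. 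Without the case split and this identification, sufficiency is not established.
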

	
	\begin{proof} 
		Suppose that $\M$ is a $G$-arc-transitive embedding of $\G$, so $G\leq \Aut(\M)$. Then both $G$ and $X$ are arc-transitive on $\G$, and this yields part (i). Part (ii) follows from the discussion before the proposition.  For part (iii), let $g\in G$ be an element that maps the arc $(\a,\b)$ to the reverse arc $(\b,\a)$ (which exists since $G$ is arc-transitive). Then $g\in G_{\{\a,\b\}}$, and hence $g^2=1$ since  
		$G_{\{\a,\b\}}\leq C_2\times C_2$, again by the discussion above. In particular $g$  normalises $X_{\a\b}$, and since $\G$ is connected, $\langle X_{\a}, g\rangle=X$ (see, for example, \cite[Theorems 2.2(a) and 2.11]{LiPS21}). 
		
		Conversely, suppose that conditions (i)--(iii) all hold for an arc-transitive subgroup $X\leq \Aut(\G)$ and a subgroup $G\leq X$. 
		Then condition (i) implies that $G$ is arc-transitive on $\G$. The equality $\langle X_\a,g\rangle=X$ in condition (iii) implies that $g\not\in X_\a$ so $\b=\a^g\ne\a$, and the inclusion $g\in N_G(X_{\a\b})$  in condition (iii) implies that $g\in N_G(G_{\a\b})$, and hence $\langle G_\a, g\rangle=G$ as $\G$ is connected.   Let $k:=|\G(\a)|$, the valency of $\G$, so $k\geq3$. Then condition (ii) implies that, for the arc-transitive group $G$, the vertex-stabiliser $G_\a$ is one of $D_{2k}$ or $C_k$, or $k$ is even and $G_\a=D_k$. Suppose first that $G_\a=D_{2k}$. Then by conditions (ii) and (iii), $G_{\a\b}=\langle x\rangle\cong C_2$, $G_{\{\a,\b\}}$ (of order $4$) is equal to $\langle x,g\rangle\cong C_2\times C_2$, and there is a third involution $y$ such that $G_\a=\langle x,y\rangle$. Then the involution triple $(x,y,g)$ is a flag-regular triple as in \cite[Definition 4.8]{LiPS21}, and by \cite[Proposition 4.11]{LiPS21}, the construction \cite[Construction 4.9]{LiPS21} produces a flag-regular map with automorphism group $G$ (or see \cite[Section 3]{GNSS}), embedding a graph $\Sigma$ with vertex set $V\G$. Since $G$ acts arc-transitively on $\Sigma$ and $(\a,\b)$ is an arc of $\Sigma$ in this construction, it follows that $\Sigma=\G$, and we have a flag-regular embedding of $\G$.  Next suppose that $G_\a=\langle a\rangle = C_k$. Then the pair $(a,g)$ is a rotary pair as in \cite[Definition 1.6]{LiPS21}, and by \cite[Propositions 4.2 and 4.5]{LiPS21}, the constructions \cite[Constructions 4.2 and 4.4]{LiPS21} produce arc-transitive maps with vertex set $V\G$ and  automorphism group containing $G$ as an arc-transitive subgroup. The same argument as in the flag-regular case shows that the embedded graph is $\G$. Moreover  by \cite[Theorem 1.9]{LiPS21}, these constructions provide the only arc-transitive embeddings of $\G$ up to map isomorphism. 
		Now let $G_\a=D_k$ with $k$ even. Then $G_\a=\langle x,y\rangle$ for some involutions $x,y$, and by condition (iii), $G=\langle x,y,g \rangle$, 
		so $(x,y,z)$ is an arc-transitive triple as in \cite[Definition 1.1]{LiPS24}. Hence by   \cite[Propositions 3.4 and 3.7]{LiPS24}, the 
		constructions \cite[Constructions 3.2 and 3.5]{LiPS24} produce  maps with vertex set $V\G$ and with automorphism groups containing the arc-regular subgroup $G$. Arguing as before, the embedded graph in each case is $\G$ and so we have $G$-arc-transitive embeddings of $\G$. By \cite[Theorem 1.3]{LiPS24}, these constructions are the only  arc-transitive embeddings of $\G$ up to map isomorphism. 
		
		Finally suppose that conditions (i)--(iii) hold and that    $X_{\a\b}$ contains a nontrivial normal subgroup $N$ of $X_\a$. Then by part (iii), $N^g \unlhd (X_{\a\b})^g=X_{\a\b}$. If $N^g= N$  then $N$ is normalised by $\langle X_\a,g\rangle$, which equals $X$ by (iii). Thus $N\unlhd X$ and so $X$ acts unfaithfully on $V\G$, which is a contradiction. Thus $N^g\ne N$.
	\end{proof}

	\subsection{The map theorem: strategy and set-up}\label{s:mapsetup} 
	
	Our main application of the  factorisation results for almost simple groups in Theorems~\ref{t:Anfactns}, \ref{classgen} and~\ref{sporadic}, relates to arc-transitive embeddings of graphs. We are   concerned with a graph $\G$ admitting an arc-transitive, almost simple subgroup $X$ of automorphisms, such that $\G$ has a $G$-arc-transitive embedding for some subgroup $G$ of $X$.  We analyse the case where  $G$ does not contain $\soc(X)$, and obtain an almost complete classification.  
	%This classification is achieved via the results in Subsection~\ref{s:1.4}.
	
	Given the graph $\G$ and groups $X$ and $G$, existence of a $G$-arc-transitive embedding is equivalent to conditions (i)--(iii) of Proposition~\ref{cri}. Moreover, 
	as explained in the proof of Proposition~\ref{cri}, for permutation groups $G<X$ such that conditions (i)--(iii) of Proposition~\ref{cri} hold, these three conditions may be used to construct both the $X$-arc-transitive graph and the embedding. This observation underpins our strategy. 
	
	Conditions (i)--(iii) of Proposition~\ref{cri} together with the condition that $G$ does not contain $\soc(X)$ are captured in Hypothesis~\ref{hypo} (on taking $\b=\a^g$ in Hypothesis~\ref{hypo}). Using this hypothesis we examine each of the cyclic/dihedral factorisations identified in 
	Theorems~\ref{t:Anfactns}--\ref{sporadic}; we determine whether or not there exists an involution $g\in G$ with the properties required by Hypothesis~\ref{hypo}, and if found then we construct the graph and note the embedding. We obtain explicit identifications of the graphs with the exception of the cases of Theorem~\ref{t:maps-an}(ii).
	
	To help with our analysis, we note that the last sentence of Proposition~\ref{cri} provides additional information about the subgroup structure which follows from conditions (i)--(iii), and this extra condition proves very useful. Thus we make the following hypothesis which (by Proposition \ref{cri}) is equivalent to Hypothesis~\ref{hypo}.

	\begin{hyp}\label{h:map}
		$X$ is an almost simple group with socle $X_0$, acting faithfully and transitively on a set $\O$, $G$ is a core-free subgroup of $X$, and $g \in G$ is an involution such that the following hold for $\a\in\O$ and $\b=\a^g$:
		\begin{enumerate}
			\item[(a)] $X=G\, X_\a$,\  $X_\a = G_\a\, X_{\a\b}$, $G_\a$ is cyclic or dihedral, and $|G_{\a\b}|\leq 2$;
			\item[(b)] $g \in N_G(X_{\a\b})\setminus N_G(X_{\a})$, and $\la X_\a,g \ra = X$;
			\item[(c)] if $1\ne N\unlhd X_\a$ and $N\leq X_{\a\b}$, then $N^g\ne N$ and $N^g\unlhd X_{\a\b}$.
		\end{enumerate}
		Note that these conditions imply that $\G=(\O,\{\a,\b\}^X)$ is a connected simple graph of valency at least $3$ with arc-transitive automorphism subgroups $X, G$, and that $g$ reverses the arc $(\a,\b)$.
	\end{hyp}
	
	We shall analyse the cyclic/dihedral factorisation for the alternating groups, Lie type groups, and sporadic groups in Sections \ref{s:anmaps}, \ref{s:liemaps} and \ref{s:sporadic}, respectively, to determine which of them satisfy Hypothesis \ref{h:map} and hence correspond to arc-transitive embeddings, thus proving Theorems \ref{t:maps-an}, \ref{t:classical-maps} and \ref{spormaps}.
	
	\subsection{Our use of computation} \label{maggie}
	
	In our proofs of the factorisation theorems \ref{t:Anfactns}, \ref{classgen}, \ref{sporadic}, and also of the map classification theorems \ref{t:maps-an}, \ref{t:classical-maps}, \ref{spormaps}, we make considerable use of computations in Magma \cite{magma} for various almost simple groups. Here is a brief description of these computations.
	
	First, there is a list of small almost simple groups $X$ -- namely, those with socle $A_n$ for $n\le 10$, and those in the list (\ref{listsm}) -- for which conjugacy class representatives of all subgroups can be computed. For these, we can verify the conclusions of Theorems \ref{t:Anfactns}, \ref{classgen}, \ref{t:maps-an} and \ref{t:classical-maps} in a very naive fashion, as follows. To classify the cyclic/dihedral factorisations, compute all pairs $(A,B)$ of subgroups such that $|X| = \frac{|A|\,|B|}{|A\cap B|}$ and $A\cap B$ is cyclic or dihedral. For such a pair $(A,B)$, let $\O = \{ Ax\mid x\in X\}$, and write $X_\a = A$, $G = B$ to conform with the notation of Hypothesis \ref{h:map}. Compute all double coset representatives $h$ for $X_\a / X\backslash X_\a$, and for each $h$, let $\b = \a^h$ (so that $X_{\a\b} = X_\a \cap X_\a^h$). Now check whether the conditions in (a) of Hypothesis \ref{h:map} hold, and if they do, search for an involution $g$ satisfying (b) of Hypothesis \ref{h:map}.
	
	We also need to do computations with some larger almost simple groups, for which it is not practical to compute all subgroups -- namely, the classical groups $X$ appearing in Tables \ref{excep-psl} - \ref{excep-sp}, and the sporadic groups in Table \ref{spor}. For these classical groups, we have proved the factorisation theorem \ref{classgen} without using computation, and only use Magma for the map theorem \ref{t:classical-maps}: the latter proceeds as described in the previous paragraph -- namely, for each pair $(A,B)$ in the tables, we compute double coset representatives, check whether the factorisation conditions in Hypothesis \ref{h:map}(a) hold, and if they do, search for an involution $g$ satisfying Hypothesis \ref{h:map}(b). Finally, for the sporadic groups, we know all the factorisations by \cite{Giudici}, so again we know exactly which subgroups to compute with, which we do in the same way.

	\section{Some arc-transitive graphs admitting arc-transitive embeddings}\label{arc-tr}
	
	In this section we introduce some of the graphs which occur in our classifications in Theorems \ref{t:maps-an} and  \ref{t:classical-maps}.
	It is sometimes convenient to denote  the vertex set of a graph $\G$ by $V\G$.
	
	\subsection{Some infinite families of graphs associated with symmetric groups}
	The following well-known graphs arise in our work:
	\begin{itemize}
		\item[(i)]  For each $n\geq2$ the \emph{complete graph} $K_n$ is the graph on $n$ vertices with all pairs of distinct vertices forming an edge; $\Aut(K_n)=S_n$.
		\item[(ii)]  For each $n\geq2$ the \emph{complete bipartite graph} $K_{n,n}$ is the graph on $2n$ vertices with vertex set $V^+\cup V^-$, where $V=\{1,\dots,n\}$, and edges all pairs $\{v^+, u^-\}$ with $v,u\in V$; $\Aut(K_{n,n})=S_n\wr S_2$.
		\item[(iii)]  For each $n\geq2$ the \emph{complete bipartite graph minus a matching} $\G=K_{n,n}\setminus  n.K_2$ is the graph on $2n$ vertices with vertex set $V\G=V^+\cup V^-$, where $V=\{1,\dots,n\}$, and edges all pairs $\{v^+, u^-\}$ with $v,u\in V$ and $u\ne v$; 
		$\Aut(\G)=S_n\times S_2$.
		\item[(iv)]  For each $n\geq 3$ the \emph{Johnson graph} $J(n,2)$ is the graph on $\binom{n}{2}$ vertices with vertices being the unordered pairs from an $n$-set, and with distinct vertices $\a,\b$ forming an edge if and only if $\a\cap\b\ne\emptyset$. If $n\ne 4$ then $\Aut(J(n,2))=S_n$, while $\Aut(J(4,2))=S_4\times S_2$.
		
	\end{itemize}
	All of these graphs occur in Theorem~\ref{t:maps-an}.
	
	\subsection{Arc-transitive graphs from point-block incidence structures}
	
	A point-block \emph{incidence structure} $\DD=(\PPP,\BB)$ consists of a set $\PPP$ of points and a set $\BB$ of blocks together with an incidence relation $I\subseteq \PPP\times\BB$. Often the elements of $\BB$ are subsets of $\PPP$ and the relation $I$ is inclusion. 
	Some of our examples arise as the \emph{incidence graph} $\Inc(\DD)$ of a point-block structure $\DD$; this is the graph with vertex set $\PPP\cup\BB$ and edges the pairs $\{\a,\b\}$ such that $\a\in\PPP,\b\in\BB$ and $\a,\b$ incident. Such a pair is called a \emph{flag} of $\DD$. 
	
	The incidence structures arising in our examples are $2$-designs, that is, each unordered pair of points is incident the same number $\lambda$ of blocks. 
	Several examples of this nature come from \emph{projective planes} $\PG_2(q)$ where $\PPP$ and $\BB$ are the sets of $1$-spaces and $2$-spaces, respectively, of a vector space $\mathbb{F}_q^3$; these are $2$-designs with $\lambda=1$. Another example comes from the $11$-point \emph{biplane} with  blocks of size $5$ (which is a $2$-design with $\lambda=2$). Lastly there is an example we call the \emph{antiflag graph} of the projective geometry $\PG_3(2)$ where $\PPP$ and $\BB$ are the sets of $1$-spaces and $3$-spaces, respectively, of the vector space $\mathbb{F}_2^4$; and the edges of the antiflag graph are the point-block pairs which are non-incident.

	\subsection{Other general constructions}\label{s:other}
	
	Several of our examples of arc-transitive graphs arise as orbital graphs of a transitive permutation group. Given a transitive permutation group $G\leq \Sym(\O)$ we use a \emph{nontrivial self-paired orbital}, that is to say a $G$-orbit $\Delta\subseteq \O\times \O$ consisting of pairs of distinct points such that $(\a,\b)\in\D$ if and only if  $(\b,\a)\in\D$. The associated \emph{orbital graph} has vertex set $\O$ and edge set $\{ \{\a,\b\}\mid (\a,\b)\in\D\}$; the arc set is $\D$ and by construction $G$ is admitted as an arc-transitive subgroup of automorphisms.
	
	Graphs described as orbital graphs include the infinite family constructed from finite alternating groups discussed in Section~\ref{bk1prop}.
	
	There are several graphs arising from a small orthogonal geometry which we describe in Subsection~\ref{s:ortho}. That discussion uses the following concepts:
	\begin{itemize}
		\item[(i)]  The \emph{lexicographic product} $\Sigma[2.K_1]$ of a graph $\Sigma$ and the graph $2.K_1$ (consisting of two isolated vertices) has vertex set $V\Sigma\times \mathbb{Z}_2$ with $(\sigma,i)$ adjacent to $(\sigma',j)$ if and only if $\{\sigma,\sigma'\}$ is an edge of $\Sigma$,  see \cite[p.17]{GR}.  The automorphism group $\Aut(\Sigma[2.K_1])$ contains $C_2\wr \Aut(\Sigma)$ (sometimes as a proper subgroup), and for a subgroup $A\leq \Aut(\Sigma)$, the group $C_2\wr A$ is arc-transitive on $\Sigma[2.K_1]$ if and only if $A$ is arc-transitive on $\Sigma$.
		
		\item[(ii)] Let  $G\leq \Sym(\O)$ be a transitive permutation group with an orbit $\D\subseteq \O\times\O$ consisting of pairs of distinct points which is not self-paired, so that $\D':=\{(\b,\a)\mid (\a,\b)\in\D\}$ is a $G$-orbit different from $\D$. We say that $\D$ and $\D'$ are \emph{paired orbitals}. We define  a (simple, undirected, bipartite) graph $\widehat{\D}$  which is a type of `doubling' of the orbital graph construction given above.  
		The vertex set $V\widehat{\D}$ is a disjoint union of two copies of $\O$ which we write as $V\widehat{\D} = \{v^+\mid v\in \O\}\cup \{v^-\mid v\in \O\}$. The edges of $\widehat{\D}$ are the pairs $\{u^+, w^-\}$ for which $(u,w)\in\Delta$.  The graph $\widehat{\D}'$ is defined similarly with $\Delta'$ replacing $\Delta$, and the map
		\begin{equation}\label{e:rho}
			\rho: u^+\ \longleftrightarrow\ u^-\quad \text{for}\ u\in \O
		\end{equation}
		induces a graph isomorphism interchanging $\widehat{\D}$ and $\widehat{\D}'$. The group $G$ is admitted as an edge-transitive group of automorphisms of both $\widehat{\D}$ and $\widehat{\D}'$, and $G$ has two vertex-orbits. Also $\langle G,\rho\rangle = G.2$ is transitive on $V\widehat{\D}=V\widehat{\D}'$.
	\end{itemize}
	
	\subsection{Arc-transitive graphs from orthogonal geometry}\label{s:ortho}\label{ex:u42}

	% \begin{table}[h!]\label{t:map-class}
		% \caption{Small classical arc-transitive embeddings}
		% \[
		% \begin{array}{c|c|c|l}
			% \hline
			% X & \Aut \M &X_\a& \hbox{Comment} \\
			% \hline
			% \SO_5(3) &  2^4.S_5& 3^3.S_4 &  \hbox{ Orthogonality graph $\G$ in Example~\ref{ex:u42}(b)} \\
			%          &          & 3^3.A_4 &  \hbox{ A `doubling' $\widehat{\G}$ in $\G$ of Example~\ref{ex:u42}(c)} \\
			% \hline
			% \end{array}
		% \]
		% \end{table}

	Let $X=\SO_5(3)=\POm_5(3).2$ with natural module $V=\F_3^5$ equipped with a non-degenerate quadratic form, and let $X_0=\Omega_5(3)$ so $X=X_0.2$, We introduce several arc-transitive graphs admitting $X$, all related to the vertex-primitive, rank $3$, strongly regular graph $\Sigma$ we define in part (a) below. These graphs arise in the analysis for Lie type groups, in Table~\ref{class-maps}.
	\begin{enumerate}
		\item[(a)] \emph{Orthogonality graph $\Sigma$ on singular $1$-spaces:}\quad Let $\Sigma$ be the graph with  $V\Sigma$ the set of $40$ singular $1$-spaces in $V$, such that two vertices $\sigma,\sigma'$ are adjacent if and only if they are orthogonal. Then  $\Sigma$ is strongly regular with valency $12$, and  $\Aut(\Sigma)=X$ is vertex primitive with rank $3$.
		
		\item[(b)] \emph{Orthogonality graph $\G$ on singular vectors:}\quad Geometrically this is a natural `doubling' of the graph $\Sigma$. The vertex set of $\G$ is the set $\Omega$ of $80$ non-zero singular vectors in $V$, with edges the pairs $\{v,u\}$ from $\O$ such that $v, u$ are orthogonal and $v\ne \pm u$. Then $\G$ has valency $24$ and we will prove that $\G\cong \Sigma[2.K_1]$. Moreover:
		\begin{enumerate}
			\item[(i)]  $\Aut(\G)$ contains $X$ as an arc-transitive subgroup  and, for $v\in\O$, $X_v=3^3.S_4$, $X_v$ has orbits $\{v\}$, $\{-v\}$, $\G(v)=\{w\mid (v,x)=0\}$, $\{w\mid (v,x)=1\}$ and $\{w\mid (v,x)=-1\}$, of sizes $1,1,24, 27,27$.
			\item[(ii)] The socle $X_0$ of $X$ is vertex-transitive on $\G$. The (unique) index $2$ subgroup $X_0\cap X_v=3^3.A_4$ of $X_v$ is transitive on the two $X_v$-orbits (of size $27$), and has two orbits $\Delta(v), \Delta'(v)$ of length $12$ in $\G(v)$. There is an involution $g\in X_v\setminus (X_0\cap X_v)$ such that $g$ interchanges  $\Delta(v)$ and $\Delta'(v)$. The $X_0$-orbitals $\Delta$ and $\Delta'$ are paired with each other.
		\end{enumerate}    
		%, and in particular contains the map"
		
		\item[(c)]  \emph{The `doubled' graph $\widehat{\G}$:}\quad  We define $\widehat{\G}$ as the doubling $\widehat{\D}$ of the $X_0$-orbital $\D$ in (b)(ii) - see  Subsection~\ref{s:other}(ii). 
		% union of two copies of $V\G=\O$ and we write $V\widehat{\G} = \{v^+\mid v\in \O\}\cup \{v^-\mid v\in \O\}$. The edges of $\widehat{\G}$ are the pairs $\{u^+, w^-\}$ for which $(u,w)\in\Delta$ with $\Delta$ as in (b)(ii). 
		Thus $\widehat{\G}$ has $160$ vertices and valency $12$. The graph $\widehat{\G}'$ defined similarly with the paired orbital $\Delta'$ is isomorphic to $\widehat{\G}'$ under the map $\rho$ in \eqref{e:rho}.
	\end{enumerate}

	\begin{lemma}\label{l:u42}
		Let $X, X_0$, $\G$, $\Sigma$, and $\widehat{\G}$ be as above. Then the following hold.
		\begin{enumerate}
			\item[(i)] The claims of  (b) are valid, and moreover the map $\tau: u\to -u$ $(u\in\O)$ lies in $C_{\Aut(\G)}(X)\setminus X$, the graph $\G\cong \Sigma[2.K_1]$, and $\Aut(\G)$ contains $C_2\wr X=2^{40}\cdot X$.
			
			\item[(ii)] The claims of  (c) are valid and moreover, with $\rho$ as in \eqref{e:rho}, and letting each $x\in X$ and $\tau$ act on $V\widehat{\G}$ by $x:u^\epsilon\to (u^x)^\epsilon$ and $\tau: u^\epsilon\to (-u)^\epsilon$ $(u\in\O, \epsilon=\pm)$, we have 
			\[
			\Aut(\widehat{\G})\cap \left(\langle \rho\rangle\times \langle \tau\rangle\times X\right)=\langle X_0, \rho g\rangle\times \langle \tau\rangle \cong X\times C_2
			\]
			and $\langle X_0, \rho g\rangle\cong X$  is arc-transitive on $\widehat{\G}$. 
		\end{enumerate}
		
	\end{lemma}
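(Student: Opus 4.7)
My plan is to establish part~(i) by direct geometric analysis of $X=\SO_5(3)$ acting on the $80$ non-zero singular vectors of $V=\F_3^5$, and to deduce part~(ii) by a ``twisted doubling'' argument that produces an arc-transitive copy of $X$ on $\widehat{\G}$ from the involution $g$ of (b)(ii) together with the bipartition-swap $\rho$.

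\textbf{Part (i).} I would first verify the orbit structure of $X_v$ on $\O$ claimed in (b)(i). Standard counting gives $|X|=51840$ and $|\O|=80$, hence $|X_v|=648=|3^3.S_4|$; the structure is recognised by identifying the full stabiliser of the singular $1$-space $\langle v\rangle$ as the maximal parabolic $3^3.(S_4\times C_2)$ and restricting to the index-$2$ kernel of the action on $\langle v\rangle$. By Witt's theorem, the $X_v$-orbit of $w\in\O$ is determined by whether $w=v$, $w=-v$, or by the value $B(v,w)\in\{0,\pm1\}$, giving the five orbits of sizes $1,1,24,27,27$. The element $\tau=-I$ centralises $X$ since it is central in $\GL(V)$; it satisfies $\det(\tau)=(-1)^5=-1$ and so lies outside $X$; and it preserves $\G$ because $B(u,w)=0$ and $w\ne\pm u$ are both invariant under $u\mapsto-u$. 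Adjacency in $\G$ depends only on the pair of distinct $1$-spaces $\langle u\rangle\ne\langle w\rangle$, so the fibration $u\mapsto\langle u\rangle$ exhibits $\G$ as $\Sigma[2.K_1]$; the $40$ fibrewise transpositions generate $C_2^{40}$ commuting with $X$, yielding $C_2\wr X\le\Aut(\G)$.

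\textbf{The $X_0$-orbit split and part (ii).} Vertex-transitivity of $X_0$ on $\O$ follows from $X=X_0 X_v$ (since any reflection in a non-singular hyperplane lies in $X_v\setminus X_0$), and then $X_0\cap X_v=3^3.A_4$ by index. For the two length-$27$ orbits of $X_v$ the stabiliser $X_{v,w}$ has order~$24$, so $(X_0\cap X_v)_w$ has order~$24$ or~$12$, of which only~$12$ gives the integer orbit size $324/12=27$; hence there is a single $(X_0\cap X_v)$-orbit of size~$27$. For the length-$24$ orbit $\G(v)$, however, $|X_{v,w}|=27$ is odd, so $X_{v,w}\le X_0\cap X_v$, forcing $(X_0\cap X_v)_w=X_{v,w}$ and hence two orbits $\Delta(v),\Delta'(v)$ of size $324/27=12$. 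Any involution $g\in X_v\setminus(X_0\cap X_v)$ realises the nontrivial coset and so swaps $\Delta(v)$ and $\Delta'(v)$; the corresponding $X_0$-orbitals $\Delta,\Delta'$ are paired (not self-paired) since any arc-reversal of $(v,w)$ with $w\in\Delta(v)$ lies in $X_v\setminus(X_0\cap X_v)$, hence swaps the two $(X_0\cap X_v)$-orbits on $\G(v)$. For (c), the vertex count~$160$ and valency~$12$ are immediate from $|\O|=80$ and $|\Delta(v)|=12$, and $\widehat\G\cong\widehat{\G}'$ via $\rho$ is built into the definition in Subsection~\ref{s:other}(ii). To produce the claimed arc-transitive action one observes that $\rho$ commutes with both $X$ and $\tau$ (from their actions on $V\widehat\G$), and that $\rho g$ is an involution in $\Aut(\widehat\G)$: an edge $\{u^+,w^-\}$ with $(u,w)\in\Delta$ is mapped to $\{(u^g)^-,(w^g)^+\}$, which is an edge iff $(w^g,u^g)\in\Delta$; and since $g$ swaps $\Delta$ with $\Delta'$ and $\Delta,\Delta'$ are paired, this holds. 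The map $\varphi\colon X\to\langle X_0,\rho g\rangle$ given by $\varphi|_{X_0}=\mathrm{id}$ and $\varphi(g)=\rho g$ extends to a homomorphism using $g^2=(\rho g)^2=[\rho,X_0]=1$; it is surjective by construction and injective because $X_0$ preserves the bipartition while $\rho g$ reverses it. Arc-transitivity of $\langle X_0,\rho g\rangle$ on $\widehat\G$ then follows since $X_0$ is transitive on each of the two parts of $V\widehat\G$ with stabiliser $X_0\cap X_v$ transitive on the neighbourhood, and $\rho g$ swaps the parts.

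\textbf{Automorphism group identification and main obstacle.} To justify
\[
\Aut(\widehat\G)\cap(\langle\rho\rangle\times\langle\tau\rangle\times X)=\langle X_0,\rho g\rangle\times\langle\tau\rangle,
\]
I would first check that $\tau$ lies in $\Aut(\widehat\G)$: since $\tau=-I$ centralises $X$, it permutes the $X_0$-orbitals, and one verifies that it preserves $\Delta$ (rather than swapping it with $\Delta'$). The subgroup $\langle X_0,\rho g\rangle\times\langle\tau\rangle$ has order $4|X_0|=2|X|$, while the ambient group $\langle\rho\rangle\times\langle\tau\rangle\times X$ has order $4|X|$; the only cosets to rule out are those containing $\rho$ alone or $g$ alone (without the other), and these send $\widehat\G$ to $\widehat{\G}'\ne\widehat\G$ (because each individually swaps $\Delta$ with $\Delta'$, while $\widehat\G$ encodes $\Delta$). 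The main obstacle in the whole argument is the pair of subtle points about which elements lie inside $X_0=\Omega_5(3)$: the non-existence of an arc-reversal in $X_0$ (needed for the pairing of $\Delta,\Delta'$) and the membership of the permutation $(v,w)\mapsto(-v,-w)$ in the orbital stabiliser of $\Delta$ (needed for $\tau\in\Aut(\widehat\G)$). Both are spinor-norm computations and, while amenable in principle to a careful geometric analysis using reflections, are most cleanly and reliably verified by an explicit Magma computation with $\SO_5(3)$ acting on singular vectors of the sort described in Subsection~\ref{maggie}.
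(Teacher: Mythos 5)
Your proposal is correct and follows essentially the same route as the paper: part (b) is established partly by direct argument and partly by computation (your odd-order-stabiliser argument for the $12+12$ splitting of $\G(v)$ is a nice explicit substitute for the paper's Magma check), (i) via the lexicographic-product structure, and (ii) by showing that $\rho$ and $g$ each interchange $\widehat{\G}$ and $\widehat{\G}'$ so that $\rho g\in\Aut(\widehat{\G})$, followed by the index-$2$ argument inside $\langle\rho\rangle\times\langle\tau\rangle\times X$. One small correction: a reflection has determinant $-1$ and hence lies outside $X=\SO_5(3)$ altogether, so to exhibit an element of $X_v\setminus X_0$ (giving $X=X_0X_v$) you should instead take a product $r_wr_{w'}$ of two reflections in non-singular vectors $w,w'\perp v$ with $Q(w),Q(w')$ in different square classes.
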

	
	\begin{proof}
		(i) The claims in part (b) can be checked by Magma computations. The map $\tau=-I$ lies in $\Aut(\G)$ by inspection, $\tau$ centralises $X=\SO_5(3)$, and $\tau\not\in X$. It follows that $\G(v)^\tau=\G(-v)$ is equal to the unique $X_v$-orbit $\G(v)$ of size $24$ in $\O$. Thus for the partition $\Pi$ with parts $\{u,-u\}$ (for $u\in\O$), two parts $\sigma, \sigma'$ are adjacent if and only if each vertex of $\sigma$ is adjacent to each vertex of $\sigma'$. Hence the quotient of $\G$ modulo $\Pi$ is $\Sigma$ and $\G\cong \Sigma[2.K_1]$, so $\Aut(\G)$ contains $C_2\wr X=2^{40}\cdot X$.  
		
		(ii) As noted in Subsection~\ref{s:other}, the map $\rho$ induces a graph isomorphism interchanging $\widehat{\G}$ and $\widehat{\G}'$. 
		% Each edge $\{u^+,w^-\}$ of $\widehat{\G}$ has $(u,w)\in\Delta$, and the map $\rho$ sends $\{u^+,w^-\}$ to a pair   $\{w^+,u^-\}$ with $(w,u)\in\Delta'$. It follows that $\rho$ interchanges the edge sets of  $\widehat{\G}$ and of $\widehat{\G}'$ and hence induces an isomorphism between these two graphs. 
		Thus the claims of  part (c) are valid. It follows from the definition of $\widehat{\G}$ that this graph is bipartite and that $X_0\leq \Aut(\widehat{\G})$ with two orbits $\O^+:= \{v^+\mid v\in \O\}$ and $\O^-:=\{v^-\mid v\in \O\}$; and the $X_0$-actions on $\O,\O^+,\O^-$ are all equivalent. In particular, $(X_0)_v$ is transitive on $\widehat{\G}(v^+) = \G(v)^-$. 
		
		The map $\rho$ in \eqref{e:rho} centralises $X$ and $\tau$. Consider the group $Y:=\langle \rho\rangle\times \langle \tau\rangle\times X$. By the previous paragraph $\Aut(\widehat{\G})\cap Y$ contains $X_0$, and it is straightforward to see that $\tau\in \Aut(\widehat{\G})\cap Y$. By part (b)(ii),  there is an involution $g\in X_v\setminus (X_0\cap X_v)$ such that $g$ interchanges  $\Delta(v)$ and $\Delta'(v)$ in its action on $\O$. 
		Thus, acting on $\O^+\cup\O^-$, $g$ fixes each of $\O^+$ and $\O^-$ setwise and sends each edge  
		$\{u^+,w^-\}$ of $\widehat{\G}$ (that is, $(u,w)\in\Delta$) to an edge  $\{w^+,u^-\}$  of 
		$\widehat{\G}'$ (since $(w,u)\in\Delta'$). 
		That is to say, both $g$ and $\rho$ interchange the graphs  $\widehat{\G}$ and $\widehat{\G}'$, and so their product $\rho g$ lies in $\Aut(\widehat{\G})\cap Y$. 
		Therefore $\Aut(\widehat{\G})\cap Y$ contains $\langle\tau, X_0, \rho g\rangle\cong \langle\tau\rangle\times\langle X_0, \rho g\rangle \cong C_2\times X$, and since this subgroup has index $2$ in $Y$, it follows that equality holds. Since $\rho g$ interchanges $\O^+$ and $\O^-$, the group $\langle X_0, \rho g\rangle$ is vertex-transitive, and the stabiliser of $v^+$ in $\langle X_0, \rho g\rangle$ is  $(X_0)_v$, which is transitive on $\widehat{\G}(v^+)$. Thus $\langle X_0, \rho g\rangle$ is arc-transitive on $\widehat{\G}$.
	\end{proof} 
	
	%{\color{red} In Table~\ref{t:maps-families} lines 4 and 5, column `$\G$', change `Example 2.4" (b) or (c) to \S %\ref{s:ortho}(b) or (c), respectively.}
	
	\section{A new infinite family of arc-transitive maps from alternating groups}\label{bk1prop}
	
	In Construction~\ref{c:k1eg} we define an infinite family of graphs which have arc-transitive embeddings: each of these graphs has prime valency $p$ and admits an arc-transitive action of an alternating group $A_n$ and also an arc-transitive action of one of its subgroups $A_{n-1}$, for certain primes $p$, where $n=(p-1)!/2$. We use the following set of primes. 
	
	\begin{definition}\label{d:pi}
		{\rm   Define $\Pi$ to be the set of all primes $p$ such that 
			\begin{enumerate}
				\item [(a)] $p\equiv 3\pmod{4}$ and $p>3$; and
				\item[(b)] the only  insoluble transitive permutation groups of degree $p$ are $A_p$ and $S_p$.
			\end{enumerate}
		}
	\end{definition}
	
	\begin{remark}\label{r:infprime}
		{\rm It is well-known (see for example \cite[Theorem 1]{G83}) that $\Pi$ contains all primes satisfying part (a) except $11, 23$, and those of the form $\frac{q^d-1}{q-1}$, where $q$ is a prime-power. We claim that for a large positive integer $x$, 
			\begin{equation}\label{pino}
				|\{ p \in \Pi : p \le x\}| \sim \frac{x}{2\log x},
			\end{equation}
			(so in particular, $\Pi$ is infinite). To see this, write $X = \mathbb{N} \cap [1,x]$, and note that the number of integers in $X$ of the form $\frac{q^d-1}{q-1}$ for some $d\ge 3$ is at most $x^{1/2}+x^{1/3}+\cdots + x^{1/(d-1)}$. Also the number of primes in $X$ of the form $q+1$, with $q$ a prime power, is at most $\log_2x$. It follows that 
			\[
			|\Pi \cap X| = \pi_1(x) + O(x^{1/2}),
			\]
			where $\pi_1(x)$ is the number of primes in $X$ congruent to 3 modulo 4. By Dirichlet's theorem $\pi_1(x) \sim \frac{x}{2\log x}$, and (\ref{pino}) follows.
		}
	\end{remark}
	
	\begin{construction}\label{c:k1eg}
		{\rm Let $p\in \Pi$ and $n:=(p-1)!/2$. 
			\begin{itemize}
				\item[(i)]  Let $H=A_p$ with subgroups $C=C_p$ and $K=A_{p-1}$ so that $H=CK$, and $H$ has a transitive coset action on $I:=[H:C]=\{ Cx\mid x\in K\}$, with $|I|=n$.
				\item[(ii)] Let $X:=\Alt(I)$ and $\nu:=C\in I$, so $G:=X_\nu=A_{n-1}$. Then $X$ has a transitive coset action on $\O:=[X:H]=\{ Hx\mid x\in X\}$ and, for $\a:=H\in\O$, $X_\a=H$.
				\item[(iii)] Let $g_0\in\Aut(K)$ correspond to a transposition in $\Aut(K)\cong S_{p-1}$, and  define an associated map on $I = [H:C]$ as follows:
				\[
				g : Cx \mapsto Cx^{g_0} \;\hbox{ for }x \in K.
				\]
				We show in Claim 1 below that $g\in G$, that $|g|=2$, and that $\b:=\a^g\in\O\setminus\{\a\}$. 
				\item[(iv)] Define $\G(p)$ to be the orbital graph on $\O$ with edge set $\{\a,\b\}^X$. So we have 
				\[
				X=A_n, \,X_\a=A_p,\, X_{\a\b}=A_{p-1},\, G=A_{n-1},\, G_\a=C_p,\, G_{\a\b}=1.
				\]
			\end{itemize}
		}
	\end{construction}
	
	\begin{thm}\label{k1eg}
		Let $p \in \Pi$ (as in Definition~$\ref{d:pi}$), and let $n = (p-1)!/2$ and $\G=\G(p), X=A_n, G=A_{n-1},$ etc. be as in Construction~$\ref{c:k1eg}$. Then the following hold.
		\begin{itemize}
			\item[(i)] $\G$ is a connected, non-bipartite, $X$-arc-transitive graph of valency $p$ with $n!/p!$ vertices, and $X$ is imprimitive on vertices;
			\item[(ii)] the vertex-stabiliser $X_\a \cong A_p$, and arc-stabiliser  $X_{\a\b} \cong A_{p-1}$ (with $X_{\a\b}$ acting regularly in the natural action on $n$ points);
			\item[(iii)] $\G$ admits a $G$-arc-transitive map embedding with $G$ acting regularly on arcs ($G_\a \cong C_p$ and $G_{\a\b} = 1$).
		\end{itemize}
		%map $\M$, where $A_n \le \Aut \,\G$ and $A_{n-1} \le \Aut \,\M$, with both groups acting arc-transitively.
	\end{thm}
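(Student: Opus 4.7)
The plan is to apply Proposition~\ref{cri} with $X=A_n$, $G=A_{n-1}$ and the involution $g$ of Construction~\ref{c:k1eg}: verifying the three conditions of Proposition~\ref{cri} produces $\G=\G(p)$ as the $X$-arc-transitive orbital graph specified in the construction and simultaneously delivers the $G$-arc-transitive embedding asserted in part~(iii), after which the remaining items of parts~(i) and~(ii) follow routinely. The proof therefore divides into four pieces: (A) verify the unproved Claim~1 inside the construction (that $g\in G$ is an involution with $\b=\a^g\ne\a$); (B) identify $X_{\a\b}$ as $K\cong A_{p-1}$, from which conditions~(i) and~(ii) of Proposition~\ref{cri} fall out; (C) prove the generation statement $\langle X_\a,g\rangle=X$, which will be the main obstacle; and (D) extract the remaining structural assertions.

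For (A), $\gcd(|C|,|K|)=1$ makes the $K$-representative of each coset in $I=H/C$ unique, so $g:Cx\mapsto Cx^{g_0}$ is a well-defined permutation of $I$; it fixes $\nu=Ce$, squares to the identity, and is non-trivial because $g_0$ is a non-trivial involutory automorphism of $K$. To show $g\in A_n$, identify $I$ with $K$ via $Ck\leftrightarrow k$, under which $g$ acts on $K$ as conjugation by $g_0$; the fixed set is $C_{A_{p-1}}(g_0)$, of order $(p-3)!$ (computed inside $C_{S_{p-1}}(g_0)=\langle g_0\rangle\times S_{p-3}$), so the number of transpositions of $g$ is $\tfrac12\bigl(\tfrac{(p-1)!}{2}-(p-3)!\bigr)=\tfrac{(p-3)!\,p(p-3)}{4}$; the hypotheses $p\equiv3\pmod4$ and $p\ge 7$ give both $(p-3)/4\in\Z$ and $4\mid(p-3)!$, making the count even, so $g\in A_n=X$. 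Finally $\b\ne\a$: otherwise $g\in X_\a=H$ would lie in $H\cap G=H_\nu=C\cong C_p$, which has no involution.

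For (B), since $g_0\in\Aut(K)$ the element $g$ normalises (the right-regular image of) $K$ in $\Sym(I)$, so $K\le X_{\a\b}=H\cap H^g$. Because $K=A_{p-1}$ is maximal in $H=A_p$, either $X_{\a\b}=K$ or $X_{\a\b}=H$; in the latter case $g\in N_X(H)$, and since $C_X(H)\cong N_H(C)/C\cong C_{(p-1)/2}$ has odd order, $g$ must induce a non-trivial outer automorphism $\phi$ of $H\cong A_p$ fixing both $C$ and $K$ setwise with $\phi|_K=g_0$. Because $C_{S_{p-1}}(K)=1$, any such $\phi$ is conjugation by the element $t_0=g_0\in S_{p-1}\subset S_p$; but with a natural choice such as $C=\langle(1\,2\,\cdots\,p)\rangle$ and $g_0=(1\,2)$, a direct check shows $(1\,2)\notin N_{S_p}(C)$, so no such $\phi$ exists and $X_{\a\b}=K$. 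Then $G_\a=H\cap G=H_\nu=C\cong C_p$ is cyclic, $G_{\a\b}=K_\nu=1$ because $K$ acts regularly on $I$, and $H=CK=G_\a\cdot X_{\a\b}$; together with $X=G\cdot H$ (equivalent to transitivity of $H$ on $I$), this gives conditions~(i) and~(ii) of Proposition~\ref{cri}. Note also that $g$ normalises $X_{\a\b}=K$ by construction and $g\in G$, so $g\in N_G(X_{\a\b})$.

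For (C), let $M:=\langle X_\a,g\rangle=\langle H,g\rangle$; one must show $M=X=A_n$. This is the technical heart of the proof and where the arithmetic condition $p\in\Pi$ enters essentially. The subgroup $M\le A_n$ is transitive on $I$ and contains both $H\cong A_p$ and $\langle K,g\rangle\cong S_{p-1}$. Supposing $M\lneq A_n$, the plan is to extract from $M$ a natural insoluble transitive permutation action of degree $p$ -- for instance, the action of $M$ on a suitable $M$-orbit of Sylow $p$-subgroups containing $C$, or on an $M$-invariant refinement of the $H$-block system of block-size $(p-1)/2$ on $I$ -- whose image contains the natural $A_p$, and so by property~(b) of $\Pi$ is isomorphic to $A_p$ or $S_p$; a comparison with the explicit interaction of $g$ with the regular subgroup $K$ then produces a contradiction, forcing $M=X$. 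Condition~(iii) of Proposition~\ref{cri} is thereby established with the given involution $g$, and Proposition~\ref{cri} delivers the $G$-arc-transitive embedding. The remaining claims are immediate: the valency $|H:X_{\a\b}|=|A_p:A_{p-1}|=p$, the vertex-count $|X:X_\a|=n!/p!$, non-bipartiteness because the simple group $X=A_n$ has no index-$2$ subgroup to supply a bipartition, and imprimitivity of $X$ on $\O$ because $H=A_p$ is not maximal in $A_n$ -- the $H$-block system on $I$ of block size $(p-1)/2$ is preserved by a proper intermediate subgroup $(\Sym(B)\wr\Sym(\BB))\cap A_n$ strictly between $H$ and $X$.
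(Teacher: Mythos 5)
Your parts (A), (B) and (D) are essentially sound. The parity count in (A) is a correct variant of the paper's Claim~1, and your route to $X_{\a\b}=K$ in (B) — maximality of $K=A_{p-1}$ in $H=A_p$, then ruling out $g\in N_X(H)$ by showing the induced automorphism of $H$ would have to be conjugation by a transposition normalising $C$, impossible since the involutions of $N_{S_p}(C)\le\AGL_1(p)$ move $p-1$ points — is genuinely different from the paper's, which instead counts fixed points of $g$ in the hypothetical group $\langle H,g\rangle\cong S_p$ acting on $I$ and derives an impossible centraliser order $2(p-3)!$. Your version is arguably cleaner, though the assertion that the induced automorphism must be \emph{outer} needs one more line: if it were conjugation by $h\in H$, then $h$ would normalise $K$, hence lie in $K=N_{A_p}(A_{p-1})$, and would induce an inner automorphism of $K$, whereas $g_0$ is outer.

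The genuine gap is in (C). You correctly identify $\langle X_\a,g\rangle=X$ as the technical heart, but you do not prove it: you only announce "the plan is to extract \dots\ a natural insoluble transitive permutation action of degree $p$ \dots\ a comparison \dots\ then produces a contradiction." No such degree-$p$ action is exhibited, and the sketch as written cannot work as stated: a proper subgroup $M$ with $H\le M<A_n$ need not act on anything of degree $p$, and if the image of such an action already "contains the natural $A_p$" then property (b) of $\Pi$ is not needed at all, so your outline never actually engages the hypothesis $p\in\Pi$ where it matters. The paper's argument is two-staged and substantive: first it proves that $Y=\langle H,g\rangle$ is \emph{primitive} on $I$ — this is where property (b) enters, forcing a block stabiliser $H_B$ (a proper transitive degree-$p$ subgroup of $A_p$ containing $C$) into $\AGL_1(p)$, after which the action of $g$ on $K_B=\langle z\rangle$ together with $p\equiv 3\pmod 4$ yields a contradiction; second, it excludes proper primitive overgroups $M\ge Y$ via the O'Nan--Scott theorem (using that $n=(p-1)!/2$ is neither prime nor a proper power and that $A_{p-1}$ is the unique simple group of order $n$) combined with the classification in \cite{LPS10} of primitive groups of degree $n$ containing a regular subgroup $A_{p-1}$. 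Neither ingredient appears in your proposal, so the connectedness of $\G$ — and with it condition (iii) of Proposition~\ref{cri} and the existence of the embedding — is not established.
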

	
	We note that the map $\M$ constructed in Theorem~\ref{k1eg} (as in the proof of Proposition~\ref{cri}) has automorphism group  $\Aut(\M) \cong A_{n-1}$ or $S_{n-1}$.

	\begin{proof}
		Adopt the notation of Construction~\ref{c:k1eg}, and note that $G_\a = H\cap G = C$. 
		First we make some obervations about the $X$-action on $\O$. The simple group $X=A_n$ has no index $2$ subgroup, so the $X$-action on $\O$ does not preserve a block system consisting of just two blocks (so the graph $\G$ will be non-bipartite). 
		Also the transitive action of $H = A_p$ on the $n$-set $I$ is its action on the cosets of $C=C_p$, and as $C<N_H(C)=C.\frac{p-1}{2} <H$, this action is imprimitive, and $H$ is contained in an imprimitive wreath product subgroup $A_k \wr A_{n/k}$ of $X$ (with $k=\frac{p-1}{2}$). Hence $H=X_\a$ is not maximal in $X$, so the action of $X$ on $\O$ is imprimitive.

		Next we establish the claims made in Construction~\ref{c:k1eg}(iii) about the element $g$.
		
		\medskip\noindent
		\emph{Claim 1: $g \in \Alt(I)$, $g\in G$, $|g|=2$, and $|{\rm fix}_I(g)| = |C_K(g_0)|=(p-3)!$.}
		
		\smallskip\noindent\emph{Proof of Claim 1.}\quad 
		By definition $g \in \Sym(I)$ (since $K$ is regular on $I$), $|g|=2$ (since $|g_0|=2$), and $|{\rm fix}_I(g)| = |C_K(g_0)|$. The latter implies that $g$ is a product of $\frac{1}{2}(|K|-|C_K(g)|)$ transpositions. Since $p>7$ (as $p \in \Pi$), all involutions in $A_{p-1}$ have centralisers of order divisible by $4$, and so $g \in \Alt(I)=X$. Moreover, by the definition of $g$ we see that $g$ fixes the point $\nu=C\in I$ and hence $g\in X_\nu=G$. Finally, as $g_0$ corresponds to a transposition in $S_{p-1}$, 
		$C_K(g_0) \cong (S_2\times S_{p-3})\cap A_{p-1}\cong S_{p-3}$, proving the  claim.
		
		\medskip
		Our strategy for proving the theorem is to verify that all  conditions (i)-(iii) of Proposition \ref{cri} hold. Firstly since $H$ is transitive on $I$, we have $X=GH=GX_\a$\, and also $G_\a=G\cap H=C$ is cyclic. It is not immediately obvious that the points $\a,\b$ are distinct, so we address this next.
		
		% , and let $H = A_p$. Let $C\cong C_p$ be a Sylow $p$-subgroup of $H$, and let $I=[H:C]=\{Ch\mid h\in H\}$ of size $n := (p-1)!/2$ with $H$ acting transitively by right multiplication. Let $X = \Alt(I) \cong A_n$, and let $G =X_\nu = A_{n-1}$ be the stabilizer in $X$ of the point $\nu=C\in I$.  
		% Then $H_\nu=H \cap G = C$, and we have a factorisation $X = HG$.
		
		% Define $\O = [X:H]=\{Hx\mid x\in X\}$, so that $X_\a = H$ for the point $\a = H \in \O$, and $G_\a = H\cap G = C$. We will prove that there exists a point $\b \in \O$ and an involution $g \in N_G(X_{\a\b})$ such that
		% %
		% Let $K$ be a subgroup $A_{p-1}$ of $H$, and note that $K$ acts regularly on $I$ since $K\cap C=1$, and hence $I=\{Cx\mid x\in K\}$. Let $g_0$ be an involution in $\Aut(K) \setminus \Inn(K)$ that corresponds to a transposition in $S_{p-1}\cong \Aut(K)$, and 
		% define an associated map on $I = [H:C]$ as follows:
		% \[
		% g : Cx \mapsto Cx^{g_0} \;\hbox{ for }x \in K.
		% \]

		\medskip\noindent
		\emph{Claim 2: $g \in N_X(K)\setminus N_X(H)$; in particular $\b\ne\a$ and $g$ interchanges $\a$ and $\b$.}
		
		\smallskip\noindent\emph{Proof of Claim 2.}\quad 
		By the definition of $g$ we have $g \in N_X(K)$.  Suppose for a contradiction that $H^g=H$. Then $Y:=\langle H,g\rangle$ contains $H=A_p$ as a subgroup of index $2$ and $g$ induces a non-identity automorphism of $H$ (as $g$ does not centralise $K$). Hence $Y\cong S_p$. Also $Y\cap G$ contains $H\cap G=C$, and as $g\in (Y\cap G)\setminus H$ it follows that  $D:=Y\cap G =(H\cap G).2= C.2$. Now $C$ is self-centralising in $Y\cong S_p$, so $D=Y\cap G\cong D_{2p}$. For the transitive group $Y$ on $I$ with point stabiliser $D$, counting pairs $(x,\mu)$ such that $x\in g^Y, \mu\in I$ and $\mu^x=\mu$, we have $|g^Y|\cdot |{\rm fix}_I(g)|=|I|\cdot |g^Y\cap D|$. By Claim 1, $|{\rm fix}_I(g)| = (p-3)!$, and we have 
		$|g^Y\cap D|=p$ and $|I|=n=\frac{1}{2}(p-1)!$. It follows that $|g^Y|=\frac{p!}{2\cdot (p-3)!}$, and hence  $|C_Y(g)|=2.\cdot (p-3)!$. However no involution in $S_p$ has a centraliser of this order. Thus  $g \not\in N_X(H)$ and the first assertion is proved. In the coset action of $X$ on $\O$, $g$ maps the point $\a=H$ to $\b=Hg$, and as $g\not\in N_X(H)$ it follows that $\b\ne\a$, and as $|g|=2$ by Claim 1, the involution $g$ interchanges $\a$ and $\b$.
		
		\medskip
		By Claims 1 and 2, $\G$ is an orbital graph for a nontrivial self-paired $X$-orbital, so $\G$ is $X$-arc-transitive (see Subsection~\ref{s:other}). 
		It also follows from Claim 2 that $X_{\a\b} = X_\a \cap X_\a^g =H\cap H^g= K$ since $g$ normalises the maximal subgroup $K$ of $H$. Thus $g\in N_G(X_{\a\b})$ by Claim 2, and also  $\G$ has valency $p=|X_\a:X_{\a\b}|$ and $|X:X_\a|=n!/p!$ vertices. Also $G_{\a\b}=G\cap X_{\a\b}=G\cap K=C\cap K=1$ and it follows that $G$ acts arc-transitively (in fact arc-regularly) on $\G$ and $X_\a = G_\a X_{\a\b}$.  Thus to  show that  conditions (i)-(iii) of Proposition \ref{cri} hold, it remains to prove that  $Y:=\la X_\a, g \ra = X$ (this property is equivalent to proving that $\G$ is connected). We now assume to the contrary that $Y:=\la X_\a, g \ra \ne X$ and prove in several steps that this leads to a contradiction. 
		
		\medskip\noindent
		\emph{Claim 3: $Y:=\la X_\a, g \ra$ is primitive on the set $I$.}
		
		\smallskip\noindent\emph{Proof of Claim 3.}\quad 
		Suppose to the contrary that $\BB$ is a nontrivial block system for $Y^I$. Since $g$ fixes the point $\nu=C\in I$, $g$ fixes setwise the block $B \in \BB$ containing $\nu$. As $H$ is transitive on $I$ with point-stabilizer $C$, we have $C < H_B < H$. Since $p \in \Pi$, the only insoluble transitive groups of degree $p$ are $A_p$ and $S_p$, and hence $C \triangleleft H_B$, and so $H_B = C\cdot \la z\ra \le \AGL_1(p)$, where $z$ is an element of order $r$ dividing $p-1$. We may take $z$ to lie in $K = H_\nu\cong A_{p-1}$, and so $K_B = \la z\ra$. Since $g$ fixes $B$ setwise and normalises $K$ (by Claim 2), it follows that $g$ normalises $K_B = \la z\ra$. However by definition $g$ induces the automorphism $g_0$ of $K$ which is conjugation by a transposition in $S_{p-1}$ (see Construction~\ref{c:k1eg}(iii)), and so $\langle K, g\rangle \cong S_{p-1}$. Now  $\langle z \rangle  \le \AGL_1(p)$ acts semiregularly on the set $\{1,\dots,p-1\}$ and is therefore normalised by $g$ if and only if $r=|z|=2$.  However this means that the permutation $z$ induces on $\{1,\dots,p-1\}$ is a product of $(p-1)/2$ cycles of length $2$, and as $p\in\Pi$, $(p-1)/2$ is odd, so $z\not\in A_{p-1}$, which is a contradiction, proving Claim 3. 
		
		\medskip
		Since $Y = \la H, x \ra < X=A_n$, we have $Y\leq M<X$ for some maximal subgroup $M$ of $X$, and by Claim 3, $M$ is primitive 
		on $I$.  We consider what O'Nan-Scott type of primitive group $M$ can be. Recall that $|I| = n = (p-1)!/2$. Using Bertrand's Postulate, we see that $n$ is not a prime, or a proper power of an integer. Hence $M$ cannot be of affine type, of product type, of twisted wreath type, or of simple diagonal type of degree $|T|^k$ with $k\ge 2$, where $T$ is a simple group. It follows that either 
		\begin{itemize}
			\item[(i)] $M$ is of simple diagonal type of degree $|T|$, for a non-abelian simple group $T$, or 
			\item[(ii)] $M$ is almost simple.
		\end{itemize}
		In case (i), since $A_{p-1}$ is the unique simple group of order $n$ (see \cite{KLST}), we have $T = A_{p-1}$. As $M$ is of simple diagonal type,  $M \le T^2.(\Out\, T \times S_2)$. However $M$ contains $H = A_p$, which is a contradiction.
		Hence (ii) holds, and $M$ is almost simple. Now $M< X=A_n$, and $M$ is primitive of degree $n$ and contains a regular subgroup $K = A_{p-1}$. It follows by \cite[Thm. 1.4 and Table 2]{LPS10} that there is a prime $r$ such that $p-1 = r^2-2$, but this implies that $p=r^2-1$, which is impossible. Thus we conclude that $Y = \la H, \tau \ra = X$, as required. This completes the proof of Theorem~\ref{k1eg}.
	\end{proof}

	\section{Factorisations of $A_n,\,S_n$: Proof of Theorem \ref{t:Anfactns}}\label{ansnfactn}
	
	We now give the proof of Theorem~\ref{t:Anfactns}. It relies on Lemma~\ref{lem:k2-B2hom} which is proved below.
	
	\vspace{4mm}
	\no {\bf Proof of Theorem \ref{t:Anfactns}}
	
	Let $X=A_n$ or $S_n$  ($n\geq5$), and suppose that $X=AB$ with $A, B$ proper subgroups of $X$ not containing $\soc(X)$, and with  $A\cap B$ cyclic or dihedral. Let $\Omega=\{1,\ldots,n\}$, and let $x,y,z,w,v$ be pairwise distinct points of $\Omega$. As noted in the preamble to Theorem \ref{t:Anfactns}, apart from some exceptional cases with $n\le 10$, we can assume the following holds for some $k$:
	\begin{equation}\label{e:gencaserep}
		A_{n-k}\unlhd A\leq S_{n-k}\times S_k, \ \mbox{with $B$ $k$-homogeneous on $\Omega$, for some $k\in\{1,\dots,5\}$.}
	\end{equation}
	We shall now assume this, and leave the exceptional cases to the end of the proof.
	
	\vspace{2mm}
	(a) Suppose first that $k=1$. If $(X,A)=(S_n,S_{n-1})$ or $(A_n,A_{n-1})$, then $A = X_x$ for some $x\in\Omega$, so $X=AB$ if and only if $B^\Omega$ is transitive, and we have $A\cap B$ cyclic or dihedral as in case a.1(i) of Theorem \ref{t:Anfactns}. The only other possibility is $(X,A)=(S_n,A_{n-1})$. Here $A$ is the stabiliser in $A_n$ of some point $x\in\Omega$ and $|X_{x}:A|=2$. Also $X=AB$ implies that $B^\Omega$ contains an odd permutation, and that $X=X_xB$, whence $B^\Omega$ is transitive. Thus $B_x = X_x\cap B\geq A\cap B$, and we have  $|B_x:A\cap B|=|B:A\cap B|/|B:B_x|=|AB:A|/|B:A\cap B|= 2n/n=2$. This implies that $A_n=A(B\cap A_n)$ so $B\cap A_n$ is transitive on $\Omega$.  Thus conclusion a.1(ii) of Theorem \ref{t:Anfactns} holds since $(B\cap A_n)_x=A\cap B$.
	
	(b) Now assume that $k=2$.  If $A=(S_{n-2}\times S_2)\cap X$ or $(S_{n-2}\times 1)\cap X$, then $X=AB$ implies that $B^\Omega$ is $2$-homogeneous or $2$-transitive, and hence that $A\cap B=B_{\{x,y\}}$ or $B_{x,y}$, respectively. Thus $B$ is given by Lemma~\ref{lem:k2-B2hom} and Table~\ref{tab:B2hom}, and Line 1 or 2 of Table~\ref{tab:Anfactns} holds. These are the only possibilities if $X=A_n$, so we may assume that $X=S_n$, and that $A$ is one of (i) $A=(S_{n-2}\times S_2)\cap A_n$, or  (ii) $A=A_{n-2}\times 1$, or (iii) $A=A_{n-2}\times S_2$. Consider first cases (i) and (ii). Here $A<A_n$, and $A$ is the stabiliser in $A_n$ of on unordered pair $\{x,y\}$ or an ordered pair $(x,y)$ from $\Omega$, respectively. The condition $X=AB$ implies that $A_n=A(B\cap A_n)$, and hence $B\cap A_n$ is $2$-homogeneous or $2$-transitive on $\Omega$, and so $B\cap A_n$  is given by Lemma~\ref{lem:k2-B2hom} and Table~\ref{tab:B2hom}, and Line 3 or 4 of Table~\ref{tab:Anfactns} holds. This leaves case (iii). Here $A\cap A_n = A_{n-2}\times 1$ and $A\not\leq A_n$. If $B\leq A_n$ then $X=AB$ implies that $A_n=(A\cap A_n)B=(A_{n-2}\times 1)B$, and we have just shown that this yields that $(B\cap A_n)^\Omega=B^\Omega$ is $2$-transitive with $A\cap B= (B\cap A_n)_{xy}$ as in Table~\ref{tab:B2hom}. On the other hand, if $B\not\leq A_n$, then we first note that $(A\cap A_n)\cap (B\cap A_n)=(A\cap B)\cap A_n$ has index at most $2$ in $A\cap B$. Hence
	\[
	\frac{|A\cap A_n|\cdot |B\cap A_n|}{|(A\cap A_n)\cap (B\cap A_n)|} = \frac{(|A|/2)\cdot(|B|/2)}{|A\cap B\cap A_n|}
	\geq \frac{|A|\cdot |B|}{2|A\cap B|}=\frac{|AB|}{2} = \frac{|X|}{2} = |A_n|
	\]
	and we deduce that $A_n=(A\cap A_n)(B\cap A_n)$ so again $(B\cap A_n)^\Omega$ is $2$-transitive with $A\cap B\cap A_n= (B\cap A_n)_{xy}$ as in Table~\ref{tab:B2hom}. Thus Line 5  of Table~\ref{tab:Anfactns} holds.
	
	\vspace{2mm}
	(c) Next assume that $k\ge 3$. Now $X=AB$ implies that $X= ((S_{n-k}\times S_k)\cap X)B$, so $B^\Omega$ is $k$-homogeneous. By \cite[Theorem 9.4(B)]{DM}, $k\leq 5$ and either $B^\Omega$ is $k$-transitive or $B^\Omega$ satisfies  \cite[Theorem 9.4(B)(ii)--(iv)]{DM}. We treat the various values of $k$ separately.
	
	Suppose first that $k=3$, so by \cite[Theorem 9.4(B)]{DM} either $B^\Omega$ is $3$-transitive, or $n=q+1$, where $q=p^f$ with $p$ prime, and $\PSL_2(q)\unlhd B\leq \widehat{B}:=\PGaL_2(q)$. In the latter case $\widehat{B}_{xyz}=C_f$, and we have examples for all groups $B$, as in Line 1 of $k=3$ in Table~\ref{tab:Anfactns}. Note that there are some extra restrictions on $A,B$ in this case -- details are given in Remark~\ref{r:Anfactns}(a). 
	For the remaining cases, $B$ is 3-transitive with socle not $\PSL_2(q)$, and $B$ together with the stabiliser $B_{xyz}$ satisfies one of the columns of Table~\ref{tab:k3}. Since $A\cap B\geq B_{xyz}$ and $A\cap B$ is cyclic or dihedral, it follows that $B$ is one of $M_{11}$ of degree $12$, $\AGL_3(2)$, $\AGL_1(8), \AGaL_1(8)$, or $\AGaL_1(32)$.  Each of these yields examples in Table~\ref{tab:Anfactns}.
	\begin{table}
		\caption{$B$ and $B_{xyz}$ with $k=3$ and $\Soc(B)\ne \PSL_2(q)$ }\label{tab:k3}
		\begin{tabular}{l|ccccccc}
			\hline
			$B$         & $M_{11}$ & $M_{11}$  & $M_n$& $2^4.A_7$&$\AGL_d(2)$&$\AGL_1(8)$ or &$\AGaL_1(32)$ \\
			& $n=11$& $n=12$& $n=12,22,23,24$& &&$\AGaL_1(8)$& \\
			\hline
			$B_{xyz}$ & $Q_8$ & $S_3$ & $\geq Q_8$ or  $\geq 2^4$& $A_4$ & $\geq 2^{d-1}$ & $1$ or $C_3$ & $1$\\ 
			\hline
		\end{tabular}
	\end{table}
	
	Suppose next that $k=4$, so by  \cite[Theorem 9.4(B)]{DM} either $B^\Omega$ is $4$-transitive, or $B$ is $\PGL_2(8), \PGaL_2(8)$ or $\PGaL_2(32)$. Thus $B$ and its stabiliser subgroups $B_{xyzw}$ and $B_{\{x,y,z,w\}}$ satisfy  one of the columns of Table~\ref{tab:k4}. Since $A\cap B\geq B_{xyzw}$ and $A\cap B$ is cyclic or dihedral, it follows that $B$ is one of $M_{11}$, $\PGL_2(8)$, $\PGaL_2(8)$, or $\PGaL_2(32)$.  Each of these yields examples as in Table~\ref{tab:Anfactns}. More details for the case $\PGaL_2(8)$ are given in Remark~\ref{r:Anfactns}(b). 
	\begin{table}
		\caption{$B$, $B_{xyzw}$  and $B_{\{x,y,z,w\}}$ with $k=4$ }\label{tab:k4}
		\begin{tabular}{l|ccccc}
			\hline
			$B$         & $M_{11}$  & $M_n$&$\PGL_2(8)$&$\PGaL_2(8)$ &$\PGaL_2(32)$ \\
			& $n=11$& $n=12,23,24$& && \\
			\hline
			$B_{xyzw}$ & $1$ & not cyclic/dihedral & $1$ & $1$ & $1$ \\ 
			$B_{\{x,y,z,w\}}$ & $S_4$ & -- & $V_4$ & $A_4$ & $V_4$ \\ 
			\hline
		\end{tabular}
	\end{table}
	
	Finally we consider $k=5$ where, by  \cite[Theorem 9.4(B)]{DM}, $B=M_{12}$ or $M_{24}$. In the latter case $B_{xyzwv}=[48]$ is not cyclic or dihedral so yields no examples. Thus $B=M_{12}$ and $B_{xyzwv}=1$ with $B_{\{x,y,z,w,v\}}=S_5$, and we obtain the examples in Table~\ref{tab:Anfactns}.
	
	\vspace{2mm}
	(d) It remains to handle the case where (\ref{e:gencaserep}) does not hold. These are exceptional factorisations with $n\le 10$, and we compute with Magma that conclusion (b) of Theorem \ref{t:Anfactns} holds.
	
	\vspace{4mm}
	To complete the proof of Theorem~\ref{t:Anfactns} we need the following lemma.
	
	\begin{lemma}\label{lem:k2-B2hom}
		Suppose that $B\leq \Sym(\Omega)$ with $n=|\Omega|$, that $B$ is $2$-homogeneous and $B\not\geq A_n$, and for distinct points $x,y\in\Omega$, that $B_{\{x,y\}}$ or $B_{xy}$ is cyclic or dihedral. Then one of the Lines of Table~\ref{tab:B2hom} holds; in the table, apart from Line $6$, there is a $2$-homogeneous subgroup $B^\circ \triangleleft B$ as in column $2$ of the table. Moreover column $5$ of Table~\ref{tab:B2hom} contains a $\checkmark$ if and only if some $B$ (with the given $B^\circ$)  contains an odd permutation. 
		
	\end{lemma}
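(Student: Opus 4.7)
The plan is to invoke the classification (via CFSG) of finite $2$-transitive, and of $2$-homogeneous but not $2$-transitive, permutation groups, and then to filter the resulting list by the cyclic/dihedral condition on $B_{xy}$ or $B_{\{x,y\}}$.

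First I would dispose of the case where $B$ is $2$-homogeneous but not $2$-transitive. By Kantor's theorem this forces $n=q$ to be an odd prime power with $q\equiv 3\pmod 4$, and $B$ to lie inside $\AGaL_1(q)$ (with the familiar index-two condition distinguishing $2$-homogeneous from $2$-transitive actions); in particular $B_{xy}$ lies in a cyclic group, so the cyclic/dihedral hypothesis is automatic. This yields the line of Table~\ref{tab:B2hom} which, in the notation of the lemma, is the exceptional line not supplied with a canonical normal subgroup $B^\circ$.

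Next, assuming $B$ is $2$-transitive, I split into the affine case and the almost simple case. In the affine case, $B\leq \AGL_d(p)$ with $B_0 := B_{\vec 0}\leq \GL_d(p)$ transitive on nonzero vectors, and $B_{xy}$ is conjugate in $B$ to the $B_0$-stabiliser of a nonzero vector. Using the Hering--Liebeck classification of transitive subgroups of $\GL_d(p)$ and requiring the vector-stabiliser to be cyclic or dihedral forces either $d=1$ (giving $B\leq \AGaL_1(q)$ and hence the generic affine lines of the table), or one of a short list of low-dimensional sporadic configurations that are checked individually. In the almost simple case I would run down Cameron's list of $2$-transitive almost simple groups. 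For $\PSL_d(q)$ with $d\geq 3$ the stabiliser $B_{xy}$ contains $\SL_{d-1}(q)$; for $\PSU_3(q)$ it contains the unipotent radical $q^{1+2}$; for ${}^2G_2(q)$ it contains an elementary abelian group of order $q^3$; and for the symplectic $\PSp_{2d}(2)$-actions and for the large Mathieu groups $M_{22}, M_{23}, M_{24}$ the $2$-point stabiliser is likewise non-metacyclic. All of these are incompatible with the cyclic/dihedral hypothesis and are ruled out. The survivors are $\PSL_2(q)$ (where $B_{xy}$ is cyclic of order $(q-1)/\gcd(2,q-1)$), $\Sz(q)$ (where $B_{xy}$ is cyclic of order $q-1$), and a handful of small sporadic cases such as $M_{11}$ of degree $11$ and $\PSL_2(11)$ of degree $11$; these are checked directly and populate the remaining lines of Table~\ref{tab:B2hom}.

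Finally, the $\checkmark$ column recording whether some $B$ with the named $B^\circ$ contains an odd permutation: for each $B^\circ$ I identify the largest $2$-homogeneous overgroup of $B^\circ$ in $\Sym(\Omega)$ and decide by a direct parity computation whether that overgroup meets $\Sym(\Omega)\setminus \Alt(\Omega)$. The main obstacle will be organising the affine $2$-transitive subcase uniformly: the Hering--Liebeck list is long, and for each entry one must rule out a cyclic or dihedral vector-stabiliser outside the named low-dimensional exceptions, while simultaneously tracking $\GaL$- versus $\GL$-type field extensions to pin down exactly which canonical normal subgroup is recorded as $B^\circ$ in column~$2$.
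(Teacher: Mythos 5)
Your overall strategy -- run through the classification of $2$-homogeneous (hence, apart from the $\AGaL_1(q)$ subcase, $2$-transitive) groups and filter by the cyclic/dihedral condition on the two-point stabiliser -- is exactly the paper's strategy, and the treatment of the non-$2$-transitive case, the affine $d=1$ case, and the parity column is sound. However, there are concrete errors in your filtering of the almost simple case that would produce the wrong table. For a rank-one group acting $2$-transitively on the cosets of a Borel subgroup, the \emph{two}-point stabiliser is not the full point stabiliser minus nothing: it is a complement to the unipotent radical, i.e.\ a (cyclic) maximal torus. Thus for $\PSU_3(q)$ of degree $q^3+1$ one has $B_{xy}\cong C_{(q^2-1)/(3,q+1)}$, not a group containing $q^{1+2}$, and for ${}^2G_2(q)$ one has $B_{xy}\cong C_{q-1}$. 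Both satisfy the cyclic/dihedral hypothesis and give two of the infinite families in Table~\ref{tab:B2hom} (Lines 2 and 3), which your argument wrongly discards. Similarly, for $\PSL_d(q)$ with $d\ge 3$ the two-point stabiliser contains $\SL_{d-2}(q)$ (not $\SL_{d-1}(q)$) together with a unipotent group of order $q^{2(d-2)}\cdot q^{\binom{d-2}{2}}\cdots$; the blanket exclusion fails precisely for $\PSL_3(2)$ of degree $7$, where $B_{xy}=2^2$ and $B_{\{x,y\}}=D_8$, giving Line 7 of the table.

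Two smaller points. In the affine $2$-transitive case with $d=2$, the transitive linear group $\SL_2(p)\le\GL_2(p)$ has \emph{cyclic} vector stabiliser $C_p$ for every prime $p$, so the survivors are not just ``$d=1$ or a short list of sporadic configurations'': you get the infinite family $\ASL_2(p)$ of Line 5 (plus the finitely many exceptional two-dimensional cases of degrees $5^2,\ldots,59^2$, which the paper disposes of by machine). And $M_{11}$ of degree $11$ is not in fact a survivor ($B_{xy}\cong 3^2{:}Q_8$), so it should not be listed as populating a line of the table; the only sporadic-type survivor of the almost simple analysis is $\PSL_2(11)$ of degree $11$. With these corrections -- in particular restoring the $\PSU_3(q)$, ${}^2G_2(q)$, $\PSL_3(2)$ and $\ASL_2(p)$ lines -- your argument matches the paper's.
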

	
	\begin{proof}
		Suppose first that $B$ is an almost simple $2$-homogeneous group on $\Omega$ with $\soc(B)\ne A_n$. By \cite[Theorem 9.4B]{DM}, $B$ is $2$-transitive, and in all cases $B\cap A_n$ is $2$-transitive. First we show that the examples of $2$-transitive groups with $B_{\{x,y\}}$ or $B_{xy}$  cyclic or dihedral are precisely those in Table~\ref{tab:B2hom} with $B^\circ$ simple. If $B$ is a Lie type group of rank 1, with socle $B^\circ=\,^2\!B_2(q)$, $^2\!G_2(q)$, $\PSU_3(q)$ or $\PSL_2(q)$, then we obtain the examples in Table~\ref{tab:B2hom} Lines 1--4.
		If $B$ has socle $\PSL_d(q)$ with $n=(q^d-1)/(q-1)$ and $d\geq3$, then the only example with the cyclic/dihedral property is $B=\PSL_3(2)\cong \PSL_2(7)$ with $B_{\{x,y\}}=D_{8}$ and $B_{xy}=2^2$ as in  Table~\ref{tab:B2hom} Line 7. 
		If $B= \Sp_{2d}(2)$ acting on the set of quadratic forms on $\mathbb{F}_2^{2d}$ of type $\varepsilon=\pm$, then $B_{xy}$ involves $\Sp_{2d-2}(2)$ so  the cyclic/dihedral property implies that $d=2$, and as $B\not\geq A_n$ we have $n=10$ and $\soc(B)\cong \PSL_2(9)$ as in Table~\ref{tab:B2hom} Line 4. 
		The remaining almost simple $2$-transitive groups do not lie in an infinite family and are: $M_n$ with $n\in\{11,12,22,23,24\}$, $M_{11}$ with $n=12$, $A_7$ with $n=15$, $\PSL_2(11)$ with $n=11$, $HS$ with $n=176$, and $Co_3$ with $n=276$.  It is straightforward to check each of these actions and the only example is $B=\PSL_2(11)$ with $B_{\{x,y\}}=D_{12}$ and $B_{xy}=S_3$ as in Table~\ref{tab:B2hom} Line 8.  
		
		We now determine the entries $\checkmark$ or $\times$ in column 5 of Table~\ref{tab:B2hom} for these groups.
		Observe that, in all cases  $B_{\{x,y\}}=D_{2r}$ for some $r$. 
		We make the following observation about existence of odd permutations in $B$ (still under the assumption that $B$ is almost simple).
		
		\medskip
		\noindent
		\emph{Claim:} The following are equivalent:
		\begin{itemize}
			\item [(i)] $B$ contains an odd permutation;
			\item[(ii)] $B_{xy}$ contains an odd permutation;
			\item[(iii)] $B_{\{x,y\}}=D_{2r}$ with $r$ even, and $B_{\{x,y\}}$ contains an odd permutation of order $2$.
		\end{itemize}
		
		\smallskip\noindent\emph{Proof of Claim.} The equivalence of (i) and (ii) holds since $B\cap A_n$ is $2$-transitive on $\Omega$. 
		Obviously (iii) implies (i), so to complete the proof we show that (ii) implies (iii). 
		So assume that $B_{xy}$ contains an odd permutation. We have $B_{\{x,y\}}= \langle g\rangle.\langle h\rangle=C_r.2=D_{2r}$ for some $r$. If $r$ were odd then $B_{xy}$ would be the unique index $2$ subgroup $B_{xy}=\langle g\rangle$ of odd order $r$, which implies that  $B_{xy}<A_n$, a contradiction. Thus  $r$ is even. If $B_{xy}\cong D_r$  then without loss of generality  $B_{xy}=\langle g^2,h\rangle$, and since $g^2\in A_n$ any odd permutation in $B_{xy}$ is an involution. Thus we may assume that  $B_{xy}=\langle g\rangle$, so $B_{xy}\cap A_n=\langle g^2\rangle$ and without loss of generality $B_{\{x,y\}}\cap A_n= \langle g^2,h\rangle\cong D_{r}$ (since  $\langle g\rangle$ contains odd permutations). In this final case the involution $gh\in B_{\{x,y\}}\setminus A_n$, proving the Claim.
		
		\smallskip
		We now verify the entries in column 5 of Table~\ref{tab:B2hom} for these groups. For $B^\circ=\, ^2\!B_2(q)$ or $^2\!G_2(q)$, $|B:B^\circ|$ is odd and hence $B\leq A_n$. If $B^\circ= \PSU_3(q)$, 
		then $\PGU_3(q)\leq A_n$ and an involutory odd permutation $\sigma$ in $B_{\{x,y\}}$ would be a field automorphism;  however $\sigma$ fixes $q+1$ points 
		of $\Omega$ and so has $(q^3-q)/2=q(q^2-1)/2$ cycles of length $2$, which is even. Hence $B\leq A_n$.  Next consider $B^\circ= \PSL_2(q)$. If $q$ is odd, then $\PGL_2(q)_{xy}$ contains a $(q-1)$-cycle which is odd, hence the $\checkmark$ in Table~\ref{tab:B2hom}. Now assume $q$ is even, and suppose $t \in B$ is an odd involution. Since $\PGL_2(q)\leq A_{q+1}$, $t$ is a field automorphism, so $t$ fixes $q_0+1$ points and has $(q-q_0)/2=q_0(q_0-1)/2$ cycles of length $2$. But this implies that $t$ is even (noting that $q_0\ne2$ since $B\not\geq A_n$), a contradiction.
		Finally, in Lines 7 and 8 of Table \ref{tab:B2hom}, we have $B=B^\circ\leq A_n$. Thus the lemma is proved in the case where $B^\circ$ is almost simple. 
		
		\vspace{2mm}
		Assume now that $B$ is an affine $2$-homogeneous group. By \cite[Theorem 9.4B]{DM} either $B$ is $2$-transitive or $\ASL_1(q)\leq B\leq \AGaL_1(q)$. In both cases we have  $n=q^d\geq 5$ and $B_x \leq\Gamma L_d(q)$. 
		
		If $d=1$ then $B\leq \AGaL_1(q)$ and we have the examples in Line 6 of Table~\ref{tab:B2hom}. If $q$ is odd, a Singer cycle in $\GL_1(q)$ acts as a $(q-1)$-cycle, hence is an odd permutation. And if $q = 2^m$, then $\AGaL_1(q) \le \AGL_m(2)$, which is contained in $\Alt(2^m)$ provided $m>2$ (which is the case, as $n=q>4$). This justifies the entry in column 5 of Table \ref{tab:B2hom} for this case.

		Suppose next that $d \ge 3$ (and $B_x \leq\Gamma L_d(q)$). Then by \cite[Appendix]{lieb}, one of the following holds:
		\begin{itemize}
			\item[(i)] $B_x\unrhd \SL_d(q),\, \Sp_d(q)$ or $G_2(q)\,(d=6)$; 
			\item[(ii)] $n=3^4$ and $R:=2^{1+4}\unlhd B_x$;
			\item[(iii)] $n=3^6$ and $\SL_2(13) \unlhd B_x$;
			\item[(iv)] $n=2^4$ and $B_x = A_6$ or $A_7$.
		\end{itemize}
		Since $B_{x,y}$ is cyclic or dihedral, the only possibilities to give examples are (ii) and (iii), and these are recorded in Table~\ref{tab:B2hom}.
		
		Assume finally that $d=2$. In this case the only infinite family has $B^\circ=\ASL_2(p)$ with $B^\circ_{x,y}=Z_p$, $B^\circ_{\{x,y\}}=D_{2p}$. Further, if we take $B$ to be generated by $B^\circ$ together with the matrix $t:=\hbox{diag}(1,-1)$, then $B_{xy} \cong D_{2p}$, $B_{\{x,y\}} \cong D_{4p}$, and $t$ induces an odd permutation of $\F_p^2$ if and only if $p \equiv 3 \hbox{ mod }4$, as claimed in Table~\ref{tab:B2hom}.
		In the remaining cases, $p^d$ is $5^2,\ldots ,59^2$, and we compute with Magma a minimal example $B^\circ$ as in 
		Table~\ref{tab:B2hom}, together with a computation deciding whether there is a larger example $B$ containing an odd permutation.
	\end{proof}

	\begin{remark}\label{r:Anfactns}
		{\rm
			(a) 
			%{\bf Transferred from the Intro, used to be Remark 1.2}  
			There are additional restrictions in some cases on the groups $A, B$ in Theorem~\ref{t:Anfactns}. The first of these is in case (a.2) with $k=3$ and $\Soc(B)=\PSL_2(q)$ (with $n=q+1$ where $q=p^f$ with $p$ prime and $f\geq1$). Here $B\leq \widehat{B}:=\PGaL_2(q)$, and $\widehat{B}_{xyz}=C_f$, so we have examples with $A=(S_{n-3}\times 1)\cap X$ for all $B$ satisfying $\PSL_2(q)\unlhd B\leq \widehat{B}$, with $A\cap B$ cyclic of order dividing $f$. Since $\widehat{B}_{\{x,y,z\}}=S_3\times C_f$, we also obtain dihedral intersections when $A=(S_{n-3}\times S_3)\cap X$, namely $A\cap B=D_6$ if $B=\PGL_2(q)$, and  we can also get $A\cap B=D_{12}$ if $f$ is even. The largest cyclic groups $A\cap B$ arising are $C_{3f}$ if $\gcd(f,3)=1$ and   $C_{2f}$  if $\gcd(f,2)=1$. Note that $B\ne \Soc(B)$ if  $q\equiv 1\pmod{4}$ since $B^\Omega$ must be $3$-homogeneous.
			
			(b) In case (a.2) with $k=4$ and $B=\PGaL_2(8)$, we have $B\leq A_9$ and $B_{\{x,y,z,w\}}=A_4$. Thus $(S_5\times S_4)\cap B=A_4$, and to obtain examples with a cyclic or dihedral intersection, we take $A=(S_5\times S)\cap X$ for certain proper subgroups $S<S_4$ such that $S_4=SA_4$. The possibilities are $(S, A\cap B)=(S_3,C_3), (D_8,2^2), (C_4,C_2), (\langle (x,y)(z,w)\rangle, C_2)$ and $(\langle (x,y)\rangle, 1)$.  
			
			(c) In case (a.1)(i) we can demonstrate the existence of examples as follows: if $X=S_n$ we may take $B=\langle(1,2,\dots,n)\rangle$; and if $X=A_n$, then we may take $B=Z_n$ if $n$ is odd; while if $n=2^am$ with $a\geq2$ and $m$ odd, then we may take $\Omega = \F_2^a\times Z_m$ and $B=B_0\times Z_m$ with $B_0$ the group of translations of $\F_2^a$. If $n=2m$ with $m\geq 3$ odd, then we take $\Omega=\{1,1',\dots,m,m'\}$ and $B=\langle g,h\rangle$ with $g=(1,2,\dots,m)(1',2',\dots,m')$, and if $m\equiv 3\pmod{4}$ then $h=(m,m')\prod_{i=1}^{(m-1)/2}(2i-1, (2i-1)',2i,(2i)')$, while if $m\equiv 1\pmod{4}$ then $h=(m,m')(1,1')(2,2') \prod_{i=2}^{(m-1)/2}(2i-1, (2i-1)',2i,(2i)')$; note that $B\leq X=A_n$, $B$ is transitive, and $B\cap A_{n-1}\cong C_2$. 
			
			(d) In case (a.1)(ii) with $n\not \equiv 1 \pmod{4}$, there exist examples, as follows. Here $X=S_n$ and $A=A_{n-1}$, and 
			we take $B=\langle g,h\rangle=D_{2n}$, where $g=(1,2,\dots,n)$ and $h=\prod_{i=1}^{n/2} (i,n-i+1)$ if $n$ is even, or $h=\prod_{i=1}^{(n-1)/2} (i,n-i)$ if $n\equiv 3\pmod{4}$. Then $B\cap A_n=\langle g^2,h\rangle$, $\langle g^2,gh\rangle$  or $\langle g\rangle$, $B\cap A_n$ is transitive and $(B\cap A_n)_x=1$. 
			
			However, in case (a.1)(ii) there are values of $n \equiv 1 \pmod{4}$, for which examples do not exist. To see this, take $n= p\equiv 1 \pmod{4}$, chosen such that the only insoluble transitive permutation groups of degree $p$ are $A_p$ and $S_p$. If $B$ is a transitive subgroup of $S_p$ and $B \ne A_p,S_p$, then $B \le \AGL_1(p)$, and since $\AGL_1(p) \le A_p$ when $p\equiv 1 \pmod{4}$, it follows that $B$ contains no odd permutation -- hence cannot satisfy (a.2)(ii) of Theorem \ref{t:Anfactns}.
		}
	\end{remark}
	
	\section{Factorisations of groups of Lie type: Proof of Theorem \ref{classgen}}\label{classfactn}
	
	In this section we prove Theorem \ref{classgen}. Let $X$ be an almost simple group with socle $X_0$, a simple group of Lie type, and suppose that $X = AB$ with $A,B$ core-free and $A \cap B$ cyclic or dihedral. We aim to show that $X$, $A$, $B$, and $A \cap B$ are as in Tables $\ref{families}$ and $\ref{excep-psl}-\ref{excep-sp}$.
	
	We begin with the case where $X_0$ is an exceptional group of Lie type. In this case the factorisations of $X$ are completely determined in \cite{HLS}, where the intersections $A\cap B$ can also be found, and the only cases where this intersection is cyclic or dihedral are those occuring for $X_0 = G_2(q)$ on the last two lines of Table \ref{families}. 
	
	From now on, we assume that $X_0$ is a classical group that is not isomorphic to an alternating group. We make extensive use of the results on factorisations of classical groups in \cite{LX19, LX22, LWX}. These papers divide the consideration of factorisations $X=AB$ into various sub-cases: first, where one factor has two or more non-abelian composition factors; second, where both factors have a unique non-abelian composition factor; and third, where one (or both) factor is soluble. We shall divide our proof into these sub-cases as well.
	
	\subsection{Case where $A$ or $B$ has two or more non-abelian composition factors}\label{twocomp}
	
	As above, assume that $X=AB$ with $A\cap B$ cyclic or dihedral, and $\soc(X) = X_0$ a classical group. In the case where one of the factors $A,B$ has two or more non-abelian composition factors, the factorisations $X = AB$ are given by \cite{LX19}. We record the result in the next lemma, also listing a subgroup $K$ of $A\cap B$.
	
	\begin{lem} Assume that $X = AB$ and that $A$ has two or more non-abelian composition factors. Then $X_0,A,B$ are as in Table $\ref{2ormoresp}$, along with a subgroup $K$ of $A\cap B$.
	\end{lem}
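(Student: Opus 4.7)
The plan is to start from the classification in \cite{LX19} of all maximal factorisations $X=AB$ of almost simple classical groups $X$ in which one factor has at least two non-abelian composition factors, and then filter this list by imposing the restriction that $A\cap B$ be cyclic or dihedral. Since every factorisation $X=AB$ with $A,B$ core-free refines to a maximal factorisation $X=\overline{A}\,\overline{B}$ with $A\le \overline{A}$, $B\le \overline{B}$, and since any subgroup of a cyclic or dihedral group is again cyclic or dihedral, it suffices to identify the maximal factorisations for which there exists a pair $(A,B)$ with the prescribed intersection property, and then exhibit a suitable subgroup $K\le A\cap B$ in each surviving case.

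The first step is therefore to tabulate the maximal factorisations from \cite[Theorem A and Tables]{LX19} -- these are finite in number up to the interchange of $A$ and $B$, and in each line the types of $A$ and $B$ are described up to conjugacy in $\Aut(X_0)$. For each such line, I would compute, or extract from \cite{LX19}, the generic structure of $A\cap B$ (for instance via the standard factoriser recipe: if $A$ normalises a parabolic and $B$ stabilises a subspace decomposition, then $A\cap B$ typically contains a Levi-like subgroup of recognisable shape). The candidate subgroup $K\le A\cap B$ to be listed in Table~\ref{2ormoresp} will in most cases be a natural small subgroup -- a cyclic torus factor, a Singer-type cycle, or the dihedral normaliser of such a cycle -- which is easy to exhibit and whose order gives an immediate lower bound on $|A\cap B|$.

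The filtering step is then straightforward in principle: a line of \cite{LX19} survives if and only if $A\cap B$ is cyclic or dihedral for some choice of $A,B$ refining the given maximal pair. Most maximal factorisations will be eliminated immediately, because $A\cap B$ contains a non-soluble section (for example $\PSL$ or $\Sp$ of small rank), or a characteristic elementary abelian subgroup of rank $\ge 2$, or an extraspecial group of order bigger than $8$. The surviving candidates come from a small number of infinite families, notably the ones with $X_0$ symplectic or orthogonal in characteristic $2$ where $A$ contains $\Sp_{2a}(2^b)\wr S_t$ or a similar wreath-type structure, and $B$ is of $\mathcal{C}_3$-type (field extension), in which case $A\cap B$ has a core-free cyclic or dihedral shape arising from the stabiliser of the extension structure.

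The main obstacle will be the bookkeeping in the wreath-product cases: for each wreath-type factor $A$ of shape $G_0\wr S_t$ one must determine, from the concrete embeddings described in \cite{LX19}, whether the intersection with the second factor $B$ forces a large permutational component $S_t$ or a large base component $G_0^{t-1}$, and then decide when both components collapse sufficiently for $A\cap B$ to be cyclic or dihedral. I expect that this will leave only the handful of lines listed in Table~\ref{2ormoresp}, and that the accompanying subgroup $K$ can be read off uniformly as a maximal torus (or its $2$-extension) in the smaller factor. Finally, for any borderline small-rank cases that resist the generic analysis, I would confirm existence and non-existence by direct Magma computation as described in Subsection~\ref{maggie}.
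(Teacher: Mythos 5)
Your starting point is the right one: this lemma has no independent proof in the paper at all --- it is a verbatim transcription of the classification in \cite{LX19} of factorisations in which one factor has at least two non-abelian composition factors, together with a subgroup $K\le A\cap B$ that can be read off from the tables there. However, your proposal then proves the wrong statement. The lemma does \emph{not} filter by the cyclic/dihedral condition: Table~\ref{2ormoresp} records all sixteen factorisation types from \cite{LX19}, and in almost every line the listed subgroup $K$ is large and highly non-cyclic/non-dihedral (e.g.\ $\O_{2a}^-(q^b)\times\O_{2a}^+(q^b)$, $\SU_3(q^l)\times\SL_3(q^l)$, $\Sp_{2l-2}(q^b)$). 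Your expectation that $K$ is ``a cyclic torus factor, a Singer-type cycle, or the dihedral normaliser of such a cycle'' is therefore incorrect; $K$ is deliberately taken as large as possible precisely so that most lines can be eliminated. The filtering you describe is the content of the \emph{subsequent} Proposition~\ref{2cf}, which inspects the column $K$ of Table~\ref{2ormoresp} and finds that only four cases survive. Had you carried out your plan as written, you would have produced a four-line table, not Table~\ref{2ormoresp}, so your argument does not establish the stated conclusion.

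There is also a logical soft spot in your reduction to maximal factorisations. Passing to overgroups only gives $A\cap B\le \overline{A}\cap\overline{B}$, so non-cyclic/non-dihedral structure in $\overline{A}\cap\overline{B}$ does not by itself eliminate a refinement $X=AB$; and an ``order lower bound on $|A\cap B|$'' is likewise insufficient. What is needed --- and what \cite{LX19} actually supplies, and what the column $K$ of the table encodes --- is a specific subgroup guaranteed to be contained in $A\cap B$ for \emph{every} factorisation of the given type, so that its structure (not merely its order) obstructs cyclicity or dihedrality. Re-deriving such subgroups from the maximal factorisations of \cite{LPS90}, including the wreath-product bookkeeping you mention, amounts to redoing a substantial part of \cite{LX19}; the paper avoids this entirely by quoting that reference. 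Note finally that \cite{LX19} classifies all such factorisations, not only the maximal ones, so the reduction step you propose is not needed.
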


	\begin{table}[h!] \label{2ormoresp}
		\caption{Factorisations $X=AB$ with $X$ classical and $A$ having two nonabelian cf's}
		\[
		\begin{array}{l|l|l|l|l}
			\hline
			\hbox{no.} & X_0 & A\; \triangleright & B\; \triangleright & K \\
			\hline
			1& \Sp_{4l}(q),\,q=2^f & (\Sp_{2a}(q^b))^2,ab=l & \O_{4l}^-(q) & \O_{2a}^-(q^b)\times  \O_{2a}^+(q^b) \\
			2& \Sp_{12l}(q),\,q=2^f & (G_2(q^l))^2 & \O_{12l}^-(q) & \SU_3(q^l) \times \SL_3(q^l) \\
			3& \Sp_{4l}(4),\,l\ge 2 & \Sp_2(4) \times \Sp_{4l-2}(4) & \Sp_{2l}(16) & \Sp_2(4) \times \Sp_{2l-2}(16) \\
			4&                     & \Sp_2(4) \otimes \Sp_{2l}(4) & \O_{4l}^-(4) & \Sp_{2l-2}(4) \\
			5& \Sp_{8l}(2),l\ge 2 & \Sp_2(4) \otimes \Sp_{2l}(4) & \O_{8l}^-(2) & \Sp_{2l-2}(4) \\
			6& \Sp_4(q),\,q=2^f & \O_4^+(q) & ^2\!B_2(q) & q-1 \\
			7& \Sp_6(q),\,q=2^f \ge 4 & U\times \Sp_4(q)\,(U\le \Sp_2(q)) & G_2(q) & \SL_2(q) \\
			8& \Sp_{12}(4) & \Sp_2(4)\times \Sp_{10}(4) & G_2(16) & \SL_2(16) \\
			9&             & \Sp_2(4) \times G_2(4) & \O_{12}^-(4) & \SL_2(4) \\
			10& \Sp_{24}(2) & \Sp_2(4) \times G_2(4) & \O_{24}^-(2) & \SL_2(4) \\
			\hline
			11 & P \O_{4l}^+(q),\,l\ge 2,q\ge 4 & U \times \PSp_{2l}(q)\,(U\le \PSp_2(q)) & \O_{4l-1}(q) & \Sp_{2l-2}(q) \\
			12 & \O_{8l}^+(2) & \Sp_2(4) \times \Sp_{2l}(4) & \Sp_{8l-2}(2) & \Sp_{2l-2}(4) \\
			13 &  \O_{8l}^+(4) & \Sp_2(4^c) \times \Sp_{2l}(16)\,(c\le 2) & \Sp_{8l-2}(4) & \Sp_{2l-2}(16) \\
			14 & \O_{12}(q), q=2^f\ge 4 & U \times G_2(q) ,(U\le \Sp_2(q)) & \Sp_{10}(q)  & \SL_2(q) \\
			15 & \O_{24}^+(2) & \Sp_2(4) \times G_2(4) & \Sp_{22}(2) & \SL_2(4) \\
			16 & \O_{24}^+(4) & \Sp_2(4^c)\times G_2(16) \,(c\le 2) & \Sp_{22}(4) & \SL_2(16) \\
			\hline
		\end{array}
		\]
	\end{table}
	
	\begin{prop}\label{2cf} Theorem $\ref{classgen}$ holds if $A$ has two or more non-abelian composition factors.
	\end{prop}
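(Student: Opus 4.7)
The approach is to inspect Table \ref{2ormoresp} (taken from \cite{LX19}), which lists every factorisation $X = AB$ of the stated type together with an explicit subgroup $K \leq A \cap B$. Since every subgroup of a cyclic or dihedral group is itself cyclic or dihedral, the hypothesis forces $K$ to be cyclic or dihedral; the plan is to rule out each row on this basis, with the few rows where $K$ is small or genuinely cyclic handled by a direct computation of $A \cap B$.

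The mechanical bulk of the argument runs as follows. In Line 1, $K = \O_{2a}^-(q^b) \times \O_{2a}^+(q^b)$ is a product of two non-trivial dihedral groups even in the minimal case $a = b = 1$, hence is neither cyclic nor dihedral. In Lines 2, 3, 7--11 and 14--16, the listed $K$ contains a copy of $\SL_2(q)$, $\Sp_{2a}(q)$, $\SU_3(q^l)$ or $G_2(q^l)$ with $q \geq 4$, and therefore contains the non-abelian simple group $A_5 \cong \SL_2(4)$; similarly in Lines 4, 5, 12 and 13 whenever $l \geq 2$. Each such row is immediately excluded.

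The main obstacle is Line 6, the Suzuki-type factorisation $\Sp_4(q) = \O_4^+(q) \cdot {}^2\!B_2(q)$ with $q = 2^f$ and $f$ odd, where $K = C_{q-1}$ is genuinely cyclic and so gives no contradiction. Here $A \cap B$ must be determined directly: using the standard embedding of ${}^2\!B_2(q)$ in $\Sp_4(q)$ arising from the Suzuki ovoid, together with the structure $\O_4^+(q) \cong (\SL_2(q) \times \SL_2(q)).2$, the goal is to identify $A \cap B$ as a subgroup of order $q - 1$ or $2(q-1)$, verify it is cyclic or dihedral, and then check that the resulting triple appears as an entry in Table \ref{families}.

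Finally, the boundary rows of Lines 12 and 13 with $l = 1$, where the listed $K$ is trivial, involve $\O_8^+(2)$ or $\O_8^+(4)$ with one factor containing $\SL_2(4) \times \SL_2(4)$. The order of $A \cap B$ is pinned down by $|A||B|/|X|$, and its isomorphism type can be verified by a short Magma computation of the kind described in Subsection \ref{maggie}. Combining the eliminations in the generic rows with the explicit identifications in the few exceptional ones, the only surviving triples $(X, A, B)$ are precisely those recorded in Tables \ref{families} and \ref{excep-psl}--\ref{excep-sp}, which establishes the proposition.
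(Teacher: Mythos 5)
There is a genuine error in your treatment of Line 1 of Table \ref{2ormoresp}. You dismiss it on the grounds that $K = \O_{2a}^-(q^b)\times \O_{2a}^+(q^b)$ is ``a product of two non-trivial dihedral groups even in the minimal case $a=b=1$''. But for $a=1$ the factors are $\O_2^-(q^b)\cong C_{q^b+1}$ and $\O_2^+(q^b)\cong C_{q^b-1}$ (it is the full groups $\Or_2^\pm$ that are dihedral, not $\O_2^\pm$), and since $q$ is even these orders are coprime, so $K$ is cyclic of order $q^{2b}-1$. This case therefore survives the $K$-test and in fact produces a genuine infinite family: the paper shows that one must have $B\cap X_0 = \O_{4l}^-(q)$ (rather than $\Or_{4l}^-(q)$) and that $l$ must be odd (using $\Or_2^+(q^l)\le \O_{2l}^+(q)$ for $l$ even to get a non-cyclic, non-dihedral intersection), arriving at $A = \Sp_2(q^l)\wr S_2$, $B=\O_{4l}^-(q)$, $A\cap B \cong D_{2(q^{2l}-1)}$ — one of the lines of Table \ref{families}. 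By excluding this row outright you would both miss a family that the theorem asserts exists and fail to carry out the extra analysis (parity of $l$, which version of the orthogonal group occurs) needed to pin it down. Everything else in your elimination of the generic rows is sound.

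Two smaller points. For Line 6 your plan to compute $A\cap B$ from the Suzuki ovoid is stated as a goal rather than carried out; the paper simply cites \cite[p.96--97]{LPS90} for $\Sp_4(q)=\Or_4^+(q)\cdot{}^2\!B_2(q)$ with intersection $D_{2(q-1)}$, and additionally observes that this $D_{2(q-1)}$ lies inside $\O_4^+(q)$, so there is \emph{no} factorisation with the smaller factor $\O_4^+(q)$ — a point your sketch omits. For Lines 12 and 13 with $l=1$ your deferral to Magma would work, but you do not state the outcome; the paper gives a one-line structural argument ($A\cap B$ contains $N_1(A)=\O_3(4)\cong A_5$), showing these cases do not occur.
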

	
	\begin{proof}
		From Table \ref{2ormoresp}, we see that the only cases where $K$ can be cyclic or dihedral are 
		\begin{itemize}
			\item[(i)] Case 1 with $a=1$,
			\item[(ii)] Case 6,
			\item[(iii)] Case 12 with $l=1$,
			\item[(iv)] Case 13 with $l=1$.
		\end{itemize}
		
		Consider (i): $X_0 = \Sp_{4l}(q)$, $A \triangleright \Sp_2(q^l) \times \Sp_2(q^l)$, $B \triangleright \O_{4l}^-(q)$. If $B$ contains $\Or_{4l}^-(q)$, then $A\cap B \ge \Or_2^+(q^l) \times \Or_2^-(q^l)$, which is not cyclic or dihedral. Hence $B \cap X_0 = \O_{4l}^-(q)$. When $l$ is even, we have $\Or_2^+(q^l) \le \O_{2l}^+(q)$ by \cite[4.3.14]{KL}, so that 
		$A\cap B \ge \Or_2^+(q^l) \times \O_2^-(q^l)$, which is not cyclic or dihedral. Therefore $l$ is odd. At this point we have the example in Table \ref{families}: $l$ odd, $A = \Sp_2(q^l) \wr S_2$, $B = \O_{4l}^-(q)$ with 
		$A\cap B = (\O_2^+(q^l) \times \O_2^-(q^l)).2 \cong D_{2(q^{2l}-1)}$.
		
		Now consider (ii). By \cite[p.96-97]{LPS90}, there is a factorisation $\Sp_4(q) = AB$ with  $A = \Or_4^+(q)$, 
		$B=\,^2\!B_2(q)$ and $A\cap B = D_{2(q-1)}$. This is an example in Table \ref{families}. Finally, 
		this $D_{2(q-1)}$ is contained in $\O_4^+(q)$, so there is no such factorisation $\O_4^+(q)\,^2\!B_2(q)$.
		
		In case (iii), we have $A \triangleright \O_4^+(4)$ and $B = N_1$, and $A\cap B$ contains $N_1(A) = \O_3(4)$, which is not cyclic or dihedral. Similarly, case (iv) does not occur. 
		%({\bf Will do Magma checks to confirm these, but I'm sure the argt is right.})
	\end{proof}
	
	\subsection{Case where both $A$ and $B$ have exactly one non-abelian composition factor}\label{onecomp}
	
	In this section we consider factorisations $X = AB$ of classical groups satisfying the hypothesis:
	\begin{equation}\label{bothone}
		\hbox{Both $A$ and $B$ have exactly one non-abelian composition factor.}
	\end{equation}
	Such factorisations are classified in \cite{LWX}. We use this to prove Theorem \ref{classgen} in this case, family by family.
	
	\begin{prop}\label{bothonelinear}
		Theorem $\ref{classgen}$ holds for $X_0 = \PSL_n(q)$ under hypothesis $(\ref{bothone})$.
	\end{prop}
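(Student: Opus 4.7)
The plan is to invoke the classification of factorisations $X = AB$ of almost simple groups with socle $X_0 = \PSL_n(q)$ satisfying hypothesis $(\ref{bothone})$, which is provided in \cite{LWX}. That classification yields a finite list of families of triples $(X_0, A, B)$ (up to interchanging $A,B$ and applying automorphisms). For each such family I would compute an explicit subgroup $K \le A \cap B$ (typically a Levi factor, the intersection of natural geometric subgroups, or a sub-classical group obtained by restricting $A$'s action to $B$'s invariant subspace) and test whether $K$ can be cyclic or dihedral. Whenever $K$ contains a simple section of rank at least $2$, or a direct product of two non-cyclic groups, the pair is eliminated.

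First I would dispose of the generic families. The typical source of examples for $\PSL_n(q)$ is the $P_1$/$\Sp_n(q)$-style factorisation (or related subspace-stabiliser patterns), where $B$ lies in a parabolic and $A$ is a classical subgroup preserving a form. In these cases $A \cap B$ contains either a Levi subgroup of a proper parabolic of the smaller classical group, or its unipotent radical, and is cyclic/dihedral only when the rank collapses, forcing $n \le 3$ (or $n = 4$ in a few degenerate situations). These surviving low-rank cases are exactly the entries of Table \ref{families} and the linear portion of Table \ref{excep-psl}.

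Second I would handle the low-dimensional cases separately. For $n = 2$, Dickson's theorem on subgroups of $\PSL_2(q)$ gives a complete list of core-free subgroups, and one can enumerate directly all pairs $(A,B)$ with $X = AB$; the cyclic/dihedral condition on $A \cap B$ then leaves only the well-known sporadic families (e.g.\ $\PSL_2(q) = B_0 \cdot D$ where $B_0$ is a Borel and $D$ is dihedral), all of which appear in Table \ref{excep-psl}. For $n = 3$ a similar finite enumeration based on the Aschbacher class structure of $\PSL_3(q)$ suffices, and accounts for the remaining entries. The converse assertions (that each tabulated triple truly factorises $X$ with cyclic or dihedral intersection) are verified by exhibiting the relevant subgroups explicitly.

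The main obstacle is twofold. On the structural side, the classification in \cite{LWX} produces many sub-families distinguished by congruence conditions on $n$ and $q$, and pinning down the precise isomorphism type of $A \cap B$ (as opposed to a subgroup) often requires a careful analysis of how the Levi decomposition of one factor meets the geometric decomposition of the other; this is where most of the case work lies. On the computational side, the small-parameter exceptional cases listed in Table \ref{excep-psl} must be checked individually, and I would do this with the Magma routine described in Subsection \ref{maggie}: for each candidate pair $(A,B)$ compute $A \cap B$ up to conjugacy and confirm the cyclic/dihedral property, thereby simultaneously verifying both directions of the proposition for those entries.
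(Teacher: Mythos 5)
Your opening move is the right one and matches the paper: under hypothesis (\ref{bothone}) the factorisations $X=AB$ with $X_0=\PSL_n(q)$ are completely listed in \cite[Table 3.1]{LWX}, and that table already exhibits a subgroup of $A\cap B$ for each entry, so the proof reduces to reading off which entries pass the cyclic/dihedral test (five do, namely cases 5, 6, 11, 12 and 14, all landing in Table \ref{excep-psl}; one also excludes $\SL_4(2)\cong A_8$ at the outset since it is an alternating group).

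However, from your second paragraph onward you misidentify the scope of the proposition. Hypothesis (\ref{bothone}) requires that \emph{both} $A$ and $B$ have exactly one non-abelian composition factor; in particular neither factor is soluble. Consequently the parabolic-type factorisations you treat as the ``generic families'' — $A\triangleright q^{n-1}.\SL_{n-1}(q)$ against a cyclic/metacyclic $B\le \frac{q^n-1}{(q-1)d}.n$, and for $n=2$ the Borel-times-dihedral factorisations of $\PSL_2(q)$ — do not fall under this proposition at all: they have a soluble factor and are handled in Section \ref{solcase} (cases (I) and (III) there), and they populate Table \ref{families}, not Table \ref{excep-psl}. Your claim that the survivors here are ``exactly the entries of Table \ref{families} and the linear portion of Table \ref{excep-psl}'' is therefore wrong for this proposition: under (\ref{bothone}) the linear groups contribute nothing to Table \ref{families}. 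Likewise the proposed from-scratch enumerations via Dickson's theorem for $n=2$ and Aschbacher classes for $n=3$ are both unnecessary (the relevant list is already in \cite{LWX}) and aimed at the wrong set of factorisations. The argument would still be salvageable because the extra cases you drag in are genuine factorisations treated elsewhere, but as written the case division is incorrect and the stated conclusion about where the surviving triples appear does not match what needs to be proved here.
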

	
	\begin{proof}
		By our initial assumption we exclude $X_0 = \SL_4(2) \cong A_8$ from consideration. 
		
		For $X_0 = \PSL_n(q)$, the factorisations under hypothesis (\ref{bothone}) are given in \cite[Table 3.1]{LWX}, where a subgroup of $A\cap B$ is also exhibited. The only cases where this subgroup is cyclic or dihedral are listed in Table \ref{1cflin}, where the first column lists the case number in  \cite[Table 3.1]{LWX}. In the table we list the minimal examples $(X,A,B)$, as given in \cite{LWX}. All of these are in Table \ref{excep-psl} in the conclusion of Theorem \ref{classgen}.
	\end{proof}

	\begin{table}[h!]
		\caption{$X_0 = \PSL_n(q)$: Possible $A,B$, both with one non-abelian cf} \label{1cflin}
		\[
		\begin{array}{c|l|l|l|l}
			\hline
			\hbox{no. in \cite[Tb.3.1]{LWX}} & X & A & B  & A\cap B \\
			\hline
			5 & \PSL_4(4).2 & \PSL_2(16).4 & \SL_3(4).2 & 1 \\
			6 & \PSL_6(2) & G_2(2) & \SL_5(2) & D_6 \\
			11 & \PSL_4(3) & S_5,\,4.A_5  &  3^3.\SL_3(3) & 3,\,D_6 \\
			12 & \PSL_6(3) & \PSL_2(13) & 3^5.\SL_5(3) & 3 \\
			14 & \PSL_3(4).2 & M_{10} & \SL_3(2).2 & D_6 \\
			\hline
		\end{array}
		\]
	\end{table}
	
	\begin{prop}\label{bothoneunitary}
		Theorem $\ref{classgen}$ holds for $X_0 = \PSU_n(q)\,(n \ge 3)$ under hypothesis $(\ref{bothone})$.
	\end{prop}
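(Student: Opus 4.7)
The plan mirrors the proof of Proposition~\ref{bothonelinear}. I would invoke the classification in \cite{LWX} of all factorisations $X=AB$ of almost simple groups with socle $X_0=\PSU_n(q)$ in which both $A$ and $B$ have exactly one non-abelian composition factor; this is contained in the table of \cite{LWX} analogous to \cite[Table~3.1]{LWX} used in the linear case. For each line of that table, \cite{LWX} supplies a subgroup $K\le A\cap B$, together with sufficient structural data to identify $A\cap B$ itself up to conjugacy.

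The first step is to scan the table and eliminate every line in which the exhibited subgroup $K$ is neither cyclic nor dihedral: since $A\cap B\ge K$, such lines cannot yield a cyclic/dihedral intersection and are discarded immediately. This will remove the vast majority of entries, because in most factorisations of $\PSU_n(q)$ the distinguished subgroup $K$ contains a quasisimple or a large extraspecial factor.

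The second step is to examine the (short) list of lines surviving the first screening. For each, I would (i) compute $|A\cap B|$ from $|X|=|A||B|/|A\cap B|$, (ii) use the embedding $K\le A\cap B\le A$ together with the known subgroup structure of $A$ (via \cite{KL}) to identify the isomorphism type of $A\cap B$, and (iii) decide whether $A\cap B$ is actually cyclic or dihedral, or only $K$ was. The surviving triples $(X,A,B,A\cap B)$ should then be checked against the unitary entries in Table~\ref{families} and in Table~\ref{excep-psl}--\ref{excep-sp} (namely the table devoted to $\PSU_n(q)$); each surviving factorisation should appear there, and conversely, for each entry in the tables a representative factorisation exists by \cite{LWX} or by direct inspection of the geometry.

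The main obstacle will be in the second step: for a handful of cases the canonical subgroup $K$ recorded in \cite{LWX} is cyclic or dihedral, but $A\cap B$ is strictly larger and may fail to be cyclic/dihedral. Typical examples are subgroup factorisations involving a field-extension subgroup $\GU_{n/2}(q^2)$ meeting a $\mathcal{C}_2$-type stabiliser, where the intersection often contains a non-cyclic abelian or a non-dihedral metacyclic normaliser. A careful argument, case by case, using the structure of maximal tori and field-extension subgroups in $\PSU_n(q)$, will be needed to pin down $A\cap B$ exactly, and to verify minimality of the representative almost simple group $X$ in the spirit of Remark~\ref{classrem}. Apart from this, the argument is bookkeeping, entirely parallel to the proof of Proposition~\ref{bothonelinear}.
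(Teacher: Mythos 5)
Your overall strategy — consult the relevant tables of \cite{LWX} (Tables 4.1, 4.2) for factorisations under hypothesis (\ref{bothone}), discard lines where the exhibited subgroup $K\le A\cap B$ is clearly not cyclic or dihedral, and then scrutinise the survivors — is exactly the paper's approach. However, you misidentify where the real work is, and two of the nontrivial arguments in the actual proof are not anticipated by your plan.

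First, you suggest the problematic survivors will involve field-extension subgroups $\GU_{n/2}(q^2)$ and can be resolved by analysing maximal tori. In fact the one surviving line that genuinely needs an argument is a parabolic one: $X_0 = \PSU_{4b}(q)$, $A = q^c.\SL_2(q^{2b})$ a $P_m$-type subgroup, $B = \SU_{4b-1}(q)$, with $A\cap B = [q^{c-2b+1}]$. To rule this out, the paper uses the bound $c\ge 4b$ from \cite[Lemma~4.6]{LWX}, so that $A\cap B$ contains a $p$-group $C$ of order $q^{2b+1}$; since $C$ sits inside $q^c.\SL_2(q^{2b})$ with the normal $q^c$ elementary abelian, $C$ has exponent at most $p^2$, which is incompatible with cyclic or dihedral unless $b=1,q=2$, a case which makes $B = \SU_3(2)$ soluble and so outside hypothesis (\ref{bothone}). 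Your tori-based argument would not produce this. Second, you do not address the quotient subtlety: case 6 of \cite[Table~4.1]{LWX} gives $\SU_6(2) = G_2(2)\cdot\SU_5(2)$ with intersection $\SL_2(2)\cong D_6$, which looks like an example, but on passing to $\PSU_6(2)$ the intersection grows to order $18$ and ceases to be dihedral, so it must be excluded. Without this check your filtered list would retain a spurious entry. So while your scaffolding is right, the specific arguments your plan would deploy do not close the two cases that actually require care.
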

	
	\begin{proof}
		As in the previous proof, we refer to \cite[Tables 4.1,4.2]{LWX}, where the factorisations under (\ref{bothone}) are listed, together with the intersections of the factors. The only cases where $A\cap B$ could possibly be cyclic or dihedral are listed in Table \ref{1cfuni}, where the first column refers to the case numbers in  \cite[Tables 4.1,4.2]{LWX}. (Note that case 6 of \cite[Table 4.1]{LWX} gives the factorisation $\SU_6(2) = G_2(2)\,SU_5(2)$ with intersection $\SL_2(2) \cong D_6$, but this descends to a factorisation of $\PSU_6(2)$ with an intersection of order 18 that is not dihedral, hence is not included in Table \ref{1cfuni}.)
		
		All the possibilities in Table \ref{1cfuni}, except that in the first row, appear in Table \ref{excep-psu} in the conclusion of Theorem \ref{classgen}. So it remains to rule out the first row of  Table \ref{1cfuni}. In this case we see from \cite[Lemma 4.6]{LWX} that $c \ge 4b$, and hence $A\cap B$ contains a group $C$ of order $q^{2b+1}$. This subgroup $C$ lies in $A = q^c.\SL_2(q^{2b})$, where the normal subgroup $q^c$ is elementary abelian, and hence $C$ has exponent at most $p^2$. So $C$ cannot be cyclic or dihedral unless $b=1$ and $q=2$. But then $B = \SU_3(2)$ is soluble, a contradiction.
	\end{proof}
	
	\begin{table}[h!] 
		\caption{$X_0 = \PSU_n(q)$: Possible $A,B$, both with one non-abelian cf} \label{1cfuni}
		\[
		\begin{array}{l|l|l|l|l}
			\hline
			\hbox{no. in \cite[Tb.4.1]{LWX}} & X & A & B  & A\cap B \\
			\hline
			1\,(a=2) & \PSU_{4b}(q) & q^c.\SL_2(q^{2b}) & \SU_{4b-1}(q) & [q^{c-2b+1}] \\
			5 & \PSU_4(4).4 & \PSL_2(16).4 & \SU_3(4).4 & 1 \\
			%6 & \PSU_6(2) & G_2(2)  & \SU_5(2) & D_6 \\
			4\,(\hbox{Tb.4.2}) & \PSU_4(3).[4] & 3^4.A_6.[4] & \PSL_2(7).[4] & D_6 \\
			\hline
		\end{array}
		\]
	\end{table}
	
	\begin{prop}\label{bothoneorthog1}
		Theorem $\ref{classgen}$ holds under hypothesis $(\ref{bothone})$ for $X_0 = \O_{2m+1}(q)\,(m \ge 3,\,q \hbox{ odd})$ or $P\Omega_{2m}^-(q)\,(m\ge 4)$.
	\end{prop}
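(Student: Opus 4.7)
The plan is to follow the same strategy as Propositions~\ref{bothonelinear} and~\ref{bothoneunitary}: invoke the relevant tables from \cite{LWX} which list all factorisations $X = AB$ of these almost simple classical groups under hypothesis~\eqref{bothone}, together with a recorded subgroup $K \le A \cap B$ in each case. We then run through the list and retain only those entries for which $K$ (and indeed the whole intersection $A\cap B$) can be cyclic or dihedral, discarding all others. The surviving rows should match precisely the entries for $X_0 = \Omega_{2m+1}(q)$ and $P\Omega_{2m}^-(q)$ in Tables \ref{families} and \ref{excep-o}--\ref{excep-sp} (or whichever of the orthogonal tables the paper uses).

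First I would work through the odd-dimensional case $X_0 = \Omega_{2m+1}(q)$, $m\ge 3$, $q$ odd. From the classification in \cite{LWX}, the factors with a unique nonabelian composition factor are very restricted: typically one factor is a stabiliser of a nondegenerate or singular subspace (of type $N_1$, $N_2^\pm$, $P_1$, etc.), and the other lies in one of a handful of known factorisations involving $\PSL_2(q^r)$, $G_2(q)$ (when $m=3$), $\PSp_{2m}(q)$, or $\PSU_{m}(q^{1/2})$-type subgroups. For each pairing one reads off $K$ from \cite{LWX}; in almost every case $K$ contains either a nonabelian classical subgroup of rank $\ge 1$, or a product of two nontrivial groups of coprime orders, or a $p$-group of exponent $>2$ of rank $\ge 2$, forcing $A\cap B$ not to be cyclic or dihedral. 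The remaining entries are exactly those in our conclusion tables.

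Next I would handle $X_0 = P\Omega_{2m}^-(q)$ with $m \ge 4$ in precisely the same way, using the corresponding table of \cite{LWX}. The pattern is similar: the factor opposite to a subspace stabiliser is typically $\Omega_{2m-1}(q)$ or a similar group whose intersection with $A$ contains a classical subgroup in the smaller dimension. The bookkeeping here is cleaner than the unitary case because the minus-type form forces $m\ge 4$, which kills most of the sporadic coincidences.

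The main obstacle, as in the unitary case (cf.\ the first line of Table~\ref{1cfuni} in the proof of Proposition~\ref{bothoneunitary}), will be the ``$P_1$--type versus $\PSL_2(q^r)$-type'' entries, where the recorded $K$ from \cite{LWX} is a $p$-group whose size depends on a field-size parameter $c$ and where a careful exponent and rank analysis is needed inside the unipotent radical to force $A\cap B$ to contain a non-cyclic, non-dihedral $p$-subgroup. I expect this will reduce to showing $c \ge 2b+1$ (or the analogous orthogonal inequality) via \cite[Lemma~4.6]{LWX}-type counting, and then using the fact that the unipotent radical has exponent at most $p^2$ while its rank grows with the dimension, to rule out all but possibly $q=p$ small, which is then excluded by a direct check. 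Apart from this, the argument is entirely a table lookup and elementary subgroup bookkeeping.
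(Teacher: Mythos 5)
Your proposal follows essentially the same approach as the paper: look up the factorisations in \cite[Tables 5.1, 6.1]{LWX}, read off the recorded subgroup of $A\cap B$, and retain only the entries where it could be cyclic or dihedral, finding that $P\Omega_{2m}^-(q)$ yields no cases and $\Omega_{2m+1}(q)$ yields the three families plus one sporadic factorisation recorded in Table~\ref{1cforth1}. One small remark: the ``$P_1$-type versus $\PSL_2(q^r)$-type'' unipotent-radical exponent analysis you anticipate as the main obstacle does not actually arise for these two families (it is needed in the unitary and plus-type orthogonal cases of Propositions~\ref{bothoneunitary} and~\ref{bothoneorthog2}), so for this proposition the argument reduces to a pure table lookup.
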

	
	\begin{proof}
		Here we refer to \cite[Tables 5.1, 6.1]{LWX}. The only cases where $A\cap B$ can be cyclic or dihedral occur with $X_0 = \O_{2m+1}(q)$, and are listed in Table \ref{1cforth1}. The first three rows are in Table \ref{families}, and the last row is in Table \ref{excep-orthog}.
	\end{proof}
	
	\begin{table}[h!] 
		\caption{$X_0 = \O_{2m+1}(q)$: Possible $A,B$, both with one non-abelian cf} \label{1cforth1}
		\[
		\begin{array}{c|l|l|l|l}
			\hline
			\hbox{no. in \cite[Tb.5.1]{LWX}} & X & A & B  & A\cap B \\
			\hline
			2 & \O_7(q)\,(q=3^f) & \SL_3(q) & \O_6^-(q) & q^2-1 \\
			3 & \O_7(q)\,(q=3^f) & \SU_3(q) & \O_6^+(q) & q^2-1 \\
			3 & \O_7(q)\,(q=3^{2f+1}) & ^2\!G_2(q) & \O_6^+(q) & q-1 \\
			8 & \O_7(3) & 3^3.\SL_3(3) & A_9 & D_6 \\
			\hline
		\end{array}
		\]
	\end{table}
	
	\begin{prop}\label{bothoneorthog2}
		Theorem $\ref{classgen}$ holds under hypothesis $(\ref{bothone})$ for $X_0 = P\O_{2m}^+(q)\,(m \ge 4)$.
	\end{prop}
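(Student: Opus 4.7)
The plan is to mirror the strategy used in Propositions \ref{bothonelinear}--\ref{bothoneorthog1}. That is, I would consult the analogue of \cite[Tables 3.1, 4.1, 4.2, 5.1, 6.1]{LWX} which classifies all factorisations $X=AB$ of almost simple groups with socle $P\Omega_{2m}^+(q)$ under hypothesis (\ref{bothone}), and which in each case exhibits an explicit subgroup $K\le A\cap B$. The proof then reduces to sifting through that table, selecting precisely those rows for which $K$ can be cyclic or dihedral, and checking that each such row already appears in Table \ref{excep-orthog} (or in the generic families of Table \ref{families}).

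The first and easiest step is to discard the bulk of the table. Most rows exhibit a $K$ of the form $\Sp_{2k}(q)$, $G_2(q)$, $\O_n^{\pm}(q)$ of rank $\ge 2$, $\SL_k(q)$ with $k\ge 3$, or a direct product of two non-abelian groups; none of these can be cyclic or dihedral. This immediately eliminates the generic infinite families of $P\Omega^+$-factorisations coming from the stabilisers $N_1$, $P_1$, $P_m$, $P_{m-1}$ paired with classical subgroups of product type.

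The main obstacle will be the factorisations arising from triality in $D_4$, i.e.\ the case $m=4$, where $P\Omega_8^+(q)$ admits extra factorisations coming from the graph automorphism of order $3$. For each such candidate one must compute (or extract from \cite{LWX}) the precise intersection $A\cap B$ -- not merely the exhibited subgroup $K$ -- and then determine whether it is cyclic or dihedral, taking care with the small-field cases ($q=2,3,4$) where the intersection can be larger than in generic $q$. I expect the answer to be that no new genuinely $D_4$-triality example survives, apart from those already recorded for $X_0=P\Omega_8^+(q)$ in Table \ref{excep-orthog}, but this must be checked case by case.

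Finally, for each surviving row, I would verify that the corresponding triple $(X_0,A\cap X_0,B\cap X_0)$ (up to interchanging $A$ and $B$ and up to automorphisms of $X$, as per Remark \ref{classrem}) appears in Table \ref{families} or \ref{excep-orthog}, and conversely that every entry of those tables with $X_0=P\Omega_{2m}^+(q)$ (and both factors having a non-abelian composition factor) is realised by one of these rows, giving the required converse statement of Theorem \ref{classgen}. The verification of converse existence is routine once the candidates are in hand: it amounts to exhibiting the known factorisations from \cite{LWX} and quoting the explicit cyclic or dihedral intersection.
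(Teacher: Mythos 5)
Your overall strategy -- consult the classification of factorisations of $P\O_{2m}^+(q)$ under hypothesis (\ref{bothone}) in \cite{LWX} and sift the rows for cyclic or dihedral intersections -- is exactly the approach the paper takes (it uses \cite[Tables 7.1, 7.2]{LWX}). The quick elimination of rows whose exhibited subgroup $K$ visibly contains a classical group of rank at least $2$ or a product of non-abelian groups is also as in the paper. However, your proposal defers precisely the part of the argument that carries the mathematical content, and it misidentifies where the difficulty sits.

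The genuinely hard rows of \cite[Table 7.1]{LWX} are not the triality cases but rows 1--3, where the exhibited intersection has the shape $[q^{c-b+1}].\SL_{a-1}(q^b)$ or $[q^{c-b+1}].\Sp_{a-2}(q^b)$ with unspecified parameters $a,b,c$; here one cannot decide cyclicity/dihedrality by inspection. The paper resolves these by invoking \cite[Props.~7.26, 7.27]{LWX}, which force either $c\ge a^2b$ or $b\le 2$; in the first case $A\cap B$ contains an elementary-type subgroup $[q^{3b+1}]$ (resp.\ $[4^{3b+2}]$), which is never cyclic or dihedral, and in the second the parameters collapse to $X_0=\O_8^+(2)$ (handled separately by Magma and feeding the $\O_8^+(2)$ entries of Table~\ref{excep-orthog}) or to the $\O_8^+(4).2$ example. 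Your plan contains no mechanism for bounding $c$, and without that the claim that these rows contribute nothing new cannot be justified. Relatedly, your expectation that "no new genuinely $D_4$-triality example survives" is not quite right: surviving examples do exist (for $\O_8^+(4).2$ and $\O_{12}^+(2)$), and the upfront Magma treatment of $\O_8^+(2)$ is not an optional refinement but a necessary step, since that group produces many exceptional cyclic/dihedral factorisations. So the gap is concrete: the analysis of the parabolic-type rows via the parameter bounds of \cite[Props.~7.26, 7.27]{LWX}, and the separate computational disposal of $\O_8^+(2)$.
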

	
	\begin{proof}
		In this case, for convenience we handle the case $X_0 = \O_8^+(2)$ using Magma, and exclude it from now on.
		Now we use \cite[Tables 7.1,7.2]{LWX}, and argue from these that the minimal examples $(X,A,B)$ with $A\cap B$ cyclic or dihedral are those given in Table \ref{1cforth2}. These are all in Table \ref{excep-orthog}, so this will complete the proof of the proposition.
		
		First consider \cite[Table 7.1]{LWX}. The fact that $A\cap B$ is cyclic or dihedral, together with the exclusion of $X_0 = \O_8^+(2)$ implies that only rows 1,2,3,5,7 and 23 of Table 7.1 can occur. We handle these one by one. 
		
		In row 1 of \cite[Table 7.1]{LWX}, we have $X_0 = P\O_{2ab}^+(q)$ and $A\cap B = [q^{c-b+1}].\SL_{a-1}(q^b)$ or 
		$[q^{c-b+1}].\Sp_{a-2}(q^b)$, where $a\ge 2$ and $c$ is given by \cite[Prop. 7.26]{LWX}. The latter proposition also gives that either $c\ge a^2b$ or $b=1$. In the former case $c\ge 4b$ and $A\cap B$ contains $[q^{3b+1}]$, which cannot be cyclic or dihedral. And if $b=1$, then $a\ge 4$ and the cyclic or dihedral condition implies that $a=4$, $q=2$, hence 
		$X_0 = \O_8^+(2)$ , which we have excluded.
		
		Row 2 of \cite[Table 7.1]{LWX} is handled in entirely similar fashion: again, the only possibility is that $X_0 = \O_8^+(2)$ , which we have excluded.
		
		Now consider row 3 of \cite[Table 7.1]{LWX}. Here $X = \O_{2ab}^+(4).2$ and $A\cap B = [4^{c-b+2}].\SL_{a-1}(4^b)$ or 
		$[4^{c-b+2}].\Sp_{a-2}(4^b)$, where $a\ge 2$ and $b$ is even.  Moreover, \cite[Prop. 7.27]{LWX} implies that either $c \ge a^2b$ or $b=2$. In the former case, $A\cap B$ contains $[4^{3b+2}]$, which cannot be cyclic or dihedral. Hence $b=2$, and we must have $a=2$ and $c\le 1$. This gives the possibility in row 1 of Table \ref{1cforth2}.
		
		Rows 5,7 and 23 of \cite[Table 7.1]{LWX} lead to rows 4,2 and 3 of Table \ref{1cforth2}, respectively.
		
		Finally, the only possibility in  \cite[Table 7.2]{LWX} with  $A\cap B$ cyclic or dihedral is in row 9 of that table, which is included in row 1 of Table \ref{1cforth2}. 
	\end{proof}
	
	\begin{table}[h!] 
		\caption{$X_0 = P\O_{2m}^+(q)$: Possible $A,B$, both with one non-abelian cf} \label{1cforth2}
		\[
		\begin{array}{l|l|l|l}
			\hline
			X & A & B  & A\cap B \\
			\hline
			\O_8^+(4).2 & 2^{2c}.\SL_2(16).4\,(c\le 1) & \Sp_6(4).2 & 2^{2c} \\
			& \SL_2(16).8 & \Sp_6(4.2 & 2 \\     
			& \O_8^-(2).2 & \O_6^-(4).4 & D_{12} \\
			\O_{12}^+(2) & G_2(2) & \Sp_{10}(2) & D_6 \\
			\hline
		\end{array}
		\]
	\end{table}
	
	\begin{prop}\label{bothonesp}
		Theorem $\ref{classgen}$ holds under hypothesis $(\ref{bothone})$ for $X_0 = \PSp_{2m}(q)\,(m \ge 2)$.
	\end{prop}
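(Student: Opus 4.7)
The plan is to follow exactly the strategy used in Propositions \ref{bothonelinear}--\ref{bothoneorthog2}. Namely, invoke the classification in \cite{LWX} of factorisations $X=AB$ of almost simple groups with socle $\PSp_{2m}(q)$ satisfying hypothesis $(\ref{bothone})$ — this is contained in the symplectic tables of \cite{LWX} (analogous to Tables 3.1, 4.1, 5.1, 7.1 for the other classical families). Each row there exhibits an explicit subgroup $K \leq A\cap B$, and my first step would be to scan every row and discard those in which $K$ is manifestly not cyclic or dihedral (for example whenever $K$ involves $\SL_r(q^b)$ or $\Sp_r(q^b)$ with $r\geq 2$, or a direct product of two non-cyclic/dihedral pieces).

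Second, for the surviving rows I would determine the precise structure of $A\cap B$, using the orders $|A|,|B|,|X|$ together with the known embedding of $K$. A number of generic families need individual attention, in particular those of the form $\Sp_{2ab}(q) = (\Sp_{2a}(q^b).b) \cdot N$, where $N$ is an orthogonal subgroup $\O_{2ab}^{\pm}(q)$ (here $q$ is even, since this factorisation exists only then), and the $N_1$-type factorisations. In each such case the subgroup $K$ is typically a product of the form $[q^c]\cdot S$ or $\O_{2a}^{\epsilon}(q^b)$-type, and the cyclic/dihedral hypothesis should force $a=1$ (so that the intersection reduces to a torus or a dihedral normaliser), after which the remaining parameters $(b,q,\epsilon)$ pin down the family line already recorded in Tables \ref{families} and \ref{excep-sp}. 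I would also use the observation exploited in the unitary case (Proposition \ref{bothoneunitary}) that if a supposed intersection contains an elementary abelian $p$-section of rank $\geq 2$, then it cannot be cyclic or dihedral unless $p=2$ and the rank is $\leq 2$, giving a quick way to eliminate large parabolic-type intersections.

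Third, there are several low-rank or small-field exceptions, including $\Sp_4(q)$ (which overlaps with $\O_5(q)$ already treated in Proposition \ref{bothoneorthog1}), $\Sp_4(2)\cong S_6$, $\Sp_4(3)\cong \SU_4(2)$, $\Sp_6(2)$, $\Sp_6(4)$ and $\Sp_8(2)$, as well as the cases of triality-like or field-automorphism extensions flagged separately in \cite{LWX}. For these I would either cite the alternating/unitary/orthogonal case already proved or, in line with Subsection \ref{maggie}, verify the answer by a direct Magma computation that enumerates the subgroup pairs $(A,B)$ with $|X|=|A|\cdot|B|/|A\cap B|$ and $A\cap B$ cyclic or dihedral.

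The main obstacle I anticipate is the bookkeeping for the generic family $\Sp_{2ab}(q) = \Sp_{2a}(q^b).b \cdot \O_{2ab}^{\epsilon}(q)$ in characteristic $2$: the precise structure of $A\cap B$ depends on an extension-field torus calculation (as in Case 1 of Proposition \ref{2cf}), and one has to separate the $a=1$ subcase (which does produce the dihedral intersection $D_{2(q^b-1)}$ or $D_{2(q^b+1)}$ recorded in Table \ref{families}) from the $a\geq 2$ subcases, which must be ruled out. A secondary difficulty is ensuring that each minimal $(X,A,B)$ extracted from \cite{LWX} actually matches (up to $\Aut(X_0)$-conjugacy and swapping $A,B$) an entry of Table \ref{excep-sp}; this is routine but requires care with outer automorphisms, and a short Magma check suffices in the handful of ambiguous small cases.
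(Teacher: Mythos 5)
Your proposal follows essentially the same route as the paper: cite the symplectic tables \cite[Tables 8.1, 8.2]{LWX}, eliminate most rows by observing that the exhibited subgroup of $A\cap B$ contains an elementary abelian (or small-exponent) $p$-group of rank too large to be cyclic or dihedral, reduce the generic family $\Sp_{2ab}(q)=\Sp_{2a}(q^b)\cdot\O_{2ab}^{\pm}(q)$ to $a=1$, and dispose of the small cases ($\Sp_4(2)$, $\Sp_6(2)$, $\Sp_8(2)$, etc.) by Magma. The surviving possibilities then match Tables \ref{families} and \ref{excep-sp} exactly as in the paper's argument.
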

	
	\begin{proof}
		For convenience we handle the cases $X_0 = \Sp_4(2),\,\Sp_6(2),\,\Sp_8(2)$ using Magma, and exclude these from now on.
		Now we use \cite[Tables 8.1,8.2]{LWX}, and argue from these that the minimal examples $(X,A,B)$ with $A\cap B$ cyclic or dihedral are those given in Table \ref{1cfsp}.
		
		First consider \cite[Table 8.1]{LWX}. The fact that $A\cap B$ is cyclic or dihedral, together with the exclusions of $X_0 = \Sp_{2m}(2)$ for $m=2,3,4$, implies that only rows 1-3, 7-14, 17 and 20 of Table 8.1 can occur. We handle these one by one. 
		
		In row 1 of \cite[Table 8.1]{LWX}, we have $X_0 = \PSp_{2ab}(q)$, $A = \PSp_{2a}(q^b)$ and $A\cap B = [q^d].\Sp_{2a-2}(q^b)$, where $d = (2a-1)b$ (see \cite[8.14]{LWX}). The subgroup $[q^d]$ of $A\cap B$ has exponent $p$, so we must have $q=2$ and $d=2$. But this forces $a=1,b=2$ and $X_0 = \Sp_4(2)$, which is not the case.
		
		Row 2 of \cite[Table 8.1]{LWX} has $X_0 = \Sp_{2ab}(q)\,(q=2^f)$, $A = \Sp_{2a}(q^b)$, $B = \O_{2ab}^+(q).(2,b)$ and $A\cap B = \O_{2a}^+(q^b).(2,b)$. The cyclic/dihedral condition forces $a=1$, and this case is included in Table \ref{1cfsp}. 
		
		Row 3 of \cite[Table 8.1]{LWX} coincides with row 4 of our Table \ref{1cfsp}.
		
		Now consider rows 7 and 8 of \cite[Table 8.1]{LWX}. Here $X_0 = \Sp_{4b}(q)$ with $q$ even, $A = q^c.\Sp_2(q^b).r$ ($r = 1$ or $b_2$), and $A\cap B$ contains a subgroup $[q^{c-b}]$, where $c \ge  3b$ (see \cite[8.47,8.48]{LWX}). Moreover, this subgroup $[q^{c-b}]$ has exponent at most 4. The cyclic/dihedral condition then forces $b=1$, $q=2$, so $X_0 = \Sp_4(2)$, a contradiction.
		
		In rows 9 and 10 of  \cite[Table 8.1]{LWX}, we have $X_0 = \Sp_{4b}(2)$ and $A\cap B$ contains a subgroup $[2^{c-b+1}]$ of exponent at most 4, where $c \ge 2b$ (see \cite[8.49,8.50]{LWX}). Hence the cyclic/dihedral condition forces $b\le 2$, which is not the case. Row 11 is similar: here $X_0 = \Sp_{4b}(4)$ with $b$ even, and $A\cap B$ contains a subgroup $[4^{c-b+1}]$ of exponent at most 4, where $c \ge 2b$. Hence $A\cap B$ cannot be cyclic or dihedral.
		
		In row 12 of \cite[Table 8.1]{LWX}, we have $X_0 = \Sp_{2ab}(q)$ ($q,b$ even), $A = \Sp_{2a}(q^b).b_2$, $B = \O_{2ab}^-(q)$ and $A\cap B = \Or_{2a}^-(q^b).(b_2/2)$. Hence $a=1$ and $b_2=2$, which is row 3 of Table \ref{1cfsp}. Similarly, row 14 of \cite[Table 8.1]{LWX} leads to row 1 or 2 of Table \ref{1cfsp}. 
		
		Row 13 of \cite[Table 8.1]{LWX} is row 5 of Table \ref{1cfsp}.
		
		Finally, consider row 17 of \cite[Table 8.1]{LWX}. Here $X_0 = \Gamma \Sp_{2m}(4) = \Sp_{2m}(4).2$, and the cyclic/dihedral condition on $A\cap B$ forces $m=2$ and one of the possibilities in rows 7,8 of Table \ref{1cfsp} to occur. Row 20 of 
		\cite[Table 8.1]{LWX} leads to the same examples in Table \ref{1cfsp}, where a graph automorphism of $X_0$ is applied to both factors $A,B$.
		
		This completes the analysis of the entries of \cite[Table 8.1]{LWX}. 
		
		Now we consider \cite[Table 8.2]{LWX}. In this table, the possibilities for $X, A, B$ and $A\cap B$ are listed explicitly, and we can read off that the only cases with $A\cap B$ cyclic or dihedral are those in Table \ref{1cfsp} with $X_0 = \PSp_4(p)$, $\Sp_4(4)$, 
		$\PSp_6(3)$, $\Sp_6(4)$ or $\Sp_{12}(2)$.
		
		Note finally that the first five rows of Table \ref{1cfsp} are included in Table \ref{families}, and the rest are in Table \ref{excep-sp}.
	\end{proof}

	\begin{table}[h!] 
		\caption{$X_0 = \PSp_{2m}(q)$: Possible $A,B$, both with one non-abelian cf} \label{1cfsp}
		\[
		\begin{array}{l|l|l|l}
			\hline
			X & A & B  & A\cap B \\
			\hline
			\Sp_{2b}(q),\,q=2^f & \Sp_2(q^b) & \Or_{2b}^\e(q) & D_{2(q^b-\e)} \\
			&       & \O_{2b}^\e(q)\,(b \hbox{ odd}) & q^b-\e \\
			& \Sp_2(q^b).2\,(b_2=2) & \O_{2b}^-(q) & D_{2(q^b+1)} \\
			\Sp_{4b}(q),\,q=2^f,\,bf\hbox{ odd} & ^2\!B_2(q^b) & \Or_{4b}^+(q) & D_{2(q^b-1)} \\
			\Sp_{8b}(q),\,q=2^f,\,bf\hbox{ odd} & \Sp_4(q^b).2 & \O_{8b}^-(q) & D_{2(q^b-1)} \\
			\PSp_4(p),\,p=11,29,59 & \PSp_2(p^2) & p^{1+2}.2A_5 & p \\
			\Sp_4(4).2 & A_5.a\,(a=2,4) & \Or_4^-(4).2 & a/2 \\
			& A_6.a\,(a=2,4) & \Or_4^-(4).2 & D_{3a} \\
			\PSp_6(3) & \PSL_2(13) & P_1 & 3 \\
			& 3^{1+4}.2^{1+4}.A_5 & \PSL_2(27).3 & 3 \\
			\Sp_6(4).2 & \PSU_3(3).2 & \Or_6^-(4).2 & D_6 \\
			& \SL_2(16).4 & G_2(4).2 & 1 \\
			\Sp_{12}(2) & \PSU_3(3).2 & \Or_{12}^-(2) & D_6 \\
			\hline
		\end{array}
		\]
	\end{table}

	\subsection{Case where $A$ or $B$ is soluble}\label{solcase}
	
	In this section we consider factorisations $X = AB$ of classical groups, where one or both of the factors $A,B$ is soluble. Our starting point is \cite[Thm. 1.1]{LX22}, which states that for such a factorisation, one of the following holds:
	\begin{itemize}
		\item[(I)] we have $A\cap X_0 \le H$ and $B \cap X_0 \triangleright K$, where $(X_0,H,K)$ are listed in \cite[Table 1.1]{LX22}, and reproduced in Table \ref{9inf} below;
		\item[(II)] $(X,A,B)$ is one of 28 exceptional cases listed in \cite[Table 1.2]{LX22};
		\item[(III)] both $A$ and $B$ are soluble, and $(X,A,B)$ is given by \cite[Prop. 4.1]{LX22} (one infinite family with $X_0 = \PSL_2(q)$ and 11 exceptional cases).
	\end{itemize}
	
	In Table \ref{9inf} we use the standard notation $P_i$ for the parabolic subgroup obtained by deleting node $i$ of the Dynkin diagram.
	
	We shall first deal with cases (II) and (III), and then handle each of the nine families under (I).
	
	\begin{table}[h!] 
		\caption{Factorisations $X=AB$ with $A$ soluble} \label{9inf}
		\[
		\begin{array}{l|l|l|l|l}
			\hline
			\hbox{Case} &  X_0 & H & K  & \hbox{Conditions} \\
			\hline
			1 & \PSL_n(q) & \frac{q^n-1}{(q-1)d}.n & q^{n-1}.\SL_{n-1}(q) & d = (n,q-1) \\
			2 & \PSL_4(q) & q^3.\frac{q^3-1}{d}.3 < P_1 \hbox{ or }P_3 & \PSp_4(q) & d=(4,q-1) \\
			3 & \PSp_{2m}(q) & q^{\frac{1}{2}m(m+1)}.(q^m-1).m < P_m & \O_{2m}^-(q) & m\ge 2, q \hbox{ even} \\
			4 & \PSp_4(q) & q^3.(q^2-1).2 < P_1 & \Sp_2(q^2) & q \hbox{ even} \\
			5 & \PSp_4(q) & q^{1+2}.\frac{q^2-1}{2}.2 < P_1 & \PSp_2(q^2) & q \hbox{ odd} \\
			6 & \PSU_{2m}(q) & q^{m^2}.\frac{q^{2m}-1}{(q+1)d}.m < P_m & \SU_{2m-1}(q) & m\ge 2, d = (2m,q+1) \\
			7 & \O_{2m+1}(q) & [q^{\frac{1}{2}m(m+1)}].\frac{q^m-1}{2}.m < P_m & \O_{2m}^-(q) & m\ge 3,q \hbox{ odd} \\
			8 & P\O_{2m}^+(q) & q^{\frac{1}{2}m(m-1)}.\frac{q^m-1}{d}.m < P_{m-1},P_m & \O_{2m-1}(q) & m\ge 5,
			d=(4,q^m-1) \\
			9 & P\O_8^+(q) & q^6.\frac{q^4-1}{d}.4< P_1,P_3,P_4 & \O_7(q) & d=(4,q^4-1) \\
			\hline
		\end{array}
		\]
	\end{table}
	
	\begin{prop}\label{IImag} Theorem $\ref{classgen}$ holds for $(X,A,B)$ as in $(II)$.
	\end{prop}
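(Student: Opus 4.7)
The plan is to work through the 28 exceptional factorisations listed in \cite[Table 1.2]{LX22} case by case, and for each determine whether the intersection $A\cap B$ is cyclic or dihedral, discarding those for which it is not and matching the survivors against the relevant lines of Tables~\ref{excep-psl}--\ref{excep-sp}. Since case (II) is by definition a finite exceptional list consisting of almost simple classical groups $X$ of bounded (and typically small) dimension over small fields, the groups $X$ all lie within the range that Magma can handle directly in the manner described in Subsection~\ref{maggie}: we can compute conjugacy class representatives of the relevant subgroups $A, B$, form $A\cap B$, and test whether $A\cap B$ is cyclic or dihedral.

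The first step is to enumerate the 28 triples $(X, A, B)$ from \cite[Table 1.2]{LX22}, taking into account that in some entries $A$ or $B$ is specified only up to its intersection with the socle, so we also need to allow for extensions by diagonal, field or graph automorphisms; this is handled by computing, for each minimal socle-containing candidate $X$, representatives in $X$ of the $\Aut(X_0)$-orbits of subgroups of the prescribed shape. The second step is, for each such triple, to form the intersection $A\cap B$ in $X$ (or equivalently count it via $|A\cap B| = |A|\cdot |B|/|X|$ when $X=AB$ to first verify the factorisation, then compute the actual group). The third step is to check whether the resulting group is cyclic or dihedral; if so, we record the triple, identify a minimal $X$ realising it (as in Remark~\ref{classrem}), and check that it appears in the appropriate classical table among Tables~\ref{excep-psl}, \ref{excep-psu}, \ref{excep-orthog}, \ref{excep-sp}.

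Because the bulk of the 28 cases will yield intersections that are visibly not cyclic or dihedral (for instance intersections containing a non-cyclic characteristic-$p$ unipotent subgroup, or intersections involving $\SL_2(q)$ or $A_5$), many of them can be ruled out by a quick structural argument without actually running the computation: a short remark for each such family will suffice. The remaining small number of surviving triples will be listed and checked individually against the tables.

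The main obstacle I anticipate is bookkeeping rather than mathematical depth: in \cite[Table 1.2]{LX22} the factors are often described only up to isomorphism type and only within the socle, so extracting the full list of possibilities $(X, A, B)$ with $X$ minimal subject to the factorisation (as demanded by Remark~\ref{classrem}) requires care to avoid either missing a case or double-counting under the action of $\Out(X_0)$. A secondary subtlety is that even when $A\cap B\cap X_0$ is cyclic or dihedral, an extension to $X$ might acquire extra outer elements that destroy this property, and conversely a non-cyclic intersection inside $X_0$ might not rule out a cyclic/dihedral intersection for a proper overgroup of $X_0$ in $X$; both directions must be checked. Once this enumeration is set up correctly, the Magma verification for each surviving triple is straightforward.
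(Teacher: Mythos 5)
Your plan coincides with the paper's proof: the paper also enumerates all 28 rows of \cite[Table 1.2]{LX22}, computes via Magma (as outlined in Subsection~\ref{maggie}) the triples $(X,A,B)$ with $A\cap B$ cyclic or dihedral, and verifies that the survivors appear in Tables~\ref{excep-psl}--\ref{excep-sp}. Your added remarks about structural shortcuts and care with outer automorphisms are sensible refinements of the same strategy, not a different argument.
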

	
	\begin{proof} 
		In this case, $(X,A,B)$ is as in  \cite[Table 1.2]{LX22}: the factor $A\cap X_0$ is contained in the soluble subgroup listed in the third column of the table, and the insoluble factor $B \cap X_0$ is given in the fourth column. For all the 28 rows of the table, we compute using Magma the possibilities for $(X,A,B)$ with $A\cap B$ cyclic or dihedral, and these can all be found in Tables \ref{excep-psl}-\ref{excep-sp} of Theorem \ref{classgen}. 
	\end{proof}
	
	\begin{prop}\label{IIIcase} Theorem $\ref{classgen}$ holds for $(X,A,B)$ as in $(III)$.
	\end{prop}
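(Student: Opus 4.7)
The plan is to invoke \cite[Prop. 4.1]{LX22} and work through each of the two types of factorisation separately. That proposition gives an infinite family with socle $\PSL_2(q)$ and a finite list of $11$ exceptional triples $(X,A,B)$. For the exceptional cases, each $X$ is an almost simple group of small order, so the strategy described in Subsection~\ref{maggie} applies directly: for every pair $(A,B)$ listed in \cite[Prop. 4.1]{LX22}, use Magma to compute $|A\cap B|$ and check whether it is cyclic or dihedral, recording all successful triples and verifying that they are already present in Tables~\ref{excep-psl}--\ref{excep-sp}.

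For the infinite family, I would work directly. Here $X_0=\PSL_2(q)$ and both factors are soluble, so up to interchanging $A$ and $B$ and applying field/diagonal automorphisms, $A\cap X_0$ is contained in a Borel subgroup $B_0=[q].(q-1)/d$ (with $d=(2,q-1)$) and $B\cap X_0$ is a dihedral subgroup of order $2(q+1)/d$. The key observation is that a Borel subgroup of $\PSL_2(q)$ is a Frobenius group, every nontrivial element fixes at most one point on the projective line, and any dihedral subgroup $D_{2(q+1)/d}$ acts semiregularly on the ordered pairs from $\PG_1(q)$. Consequently $A\cap B$ is very restricted: its intersection with the unipotent radical $[q]$ of $A$ is trivial, and its intersection with the Cartan part must act compatibly with the dihedral structure on $B$. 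A short case-analysis, split according to the parity of $q$ and the precise extensions present in $X\setminus X_0$ (field, diagonal, or graph-field automorphisms), then yields the list of $(X,A,B,A\cap B)$ that appears in the $\PSL_2$ entries of Table~\ref{families} and in Table~\ref{excep-psl}.

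The main obstacle is bookkeeping in the $\PSL_2(q)$ family: one must track how field and diagonal automorphisms extend the Borel and dihedral subgroups and ensure that every resulting factorisation of an almost simple overgroup of $\PSL_2(q)$ with cyclic or dihedral intersection is recorded (up to the conventions of Remark~\ref{classrem}, which tell us only to list the minimal $X$ admitting the factorisation). Once the Borel-dihedral case is fully analysed, the rest of the argument reduces to the finite check in the previous paragraph, and comparison with the tables completes the proof.
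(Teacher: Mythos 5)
Your overall skeleton matches the paper's: both proofs rest on \cite[Prop. 4.1]{LX22}, dispatch the $11$ exceptional triples by direct (computational) inspection against the tables, and then treat the infinite $\PSL_2(q)$ family. Where you diverge is in that family. The paper's argument there is a one-line identification: the configuration $X_0=\PSL_2(q)$, $A\cap X_0\le D_{2(q+1)/d}$, $[q]\triangleleft B\cap X_0\le [q].((q-1)/d)$ is \emph{exactly} the $n=2$ instance of line 1 of Table~\ref{families} (and a trivial order count, $|A\cap B\cap X_0|\le \frac{|A\cap X_0||B\cap X_0|}{|X_0|}=2/d$, already forces the intersection to be cyclic of order dividing $2=n$, as the table records). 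You instead propose to re-derive the structure of $A\cap B$ from the geometry of $\PG_1(q)$ and a case split over automorphisms; that would work but is unnecessary labour, and two of your supporting claims are not quite right as stated: the dihedral group $D_{2(q+1)/d}$ does not act semiregularly on ordered pairs from $\PG_1(q)$ when $q\equiv 1\pmod 4$ (its involutions fix two points), and for $q$ even its involutions are unipotent, so the intersection with the unipotent radical $[q]$ of the Borel need not be trivial — indeed $A\cap B\cap X_0$ can be a $C_2$ generated by such an involution. Neither slip is fatal to your conclusion, but the order-counting/table-matching route avoids them entirely.
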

	
	\begin{proof} 
		In this case both $A$ and $B$ are soluble, and $(X,A,B)$ is given by \cite[Prop. 4.1]{LX22}: either $(X,A,B)$ is as in \cite[Table 4.1]{LX22} (which is a subset of our Tables \ref{excep-psl}-\ref{excep-sp}), or the following holds:
		\begin{equation}\label{psl2sol}
			X_0 = \PSL_2(q),\,A\cap X_0 \le D_{2(q+1)/d} \hbox{ and } [q] \triangleleft B\cap X_0 \le [q].((q-1)/d),
		\end{equation}
		where $d = (2,q-1)$. But here $(X,A,B)$ is as in line 1 of Table \ref{families} in Theorem \ref{classgen}.
	\end{proof}
	
	It remains to deal with the possibilities under case (I), where $A\cap X_0 \le H$ and $B \cap X_0 \triangleright K$, where 
	$(X_0,H,K)$ are as in Table \ref{9inf}.
	
	\begin{prop}\label{caseI1} Theorem $\ref{classgen}$ holds for $(X_0,H,K)$ as in line $1$ of Table $\ref{9inf}$.
	\end{prop}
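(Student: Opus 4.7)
The plan is to exploit the geometric meaning of the factorisation. Since $K = q^{n-1}.\SL_{n-1}(q)$ is the stabiliser in $X_0$ of a $1$-space $\ell$ in the natural module $V=\mathbb{F}_q^n$, and $B\cap X_0 \trianglerighteq K$, the factorisation $X=AB$ is equivalent to $A$ being transitive on the set of $(q^n-1)/(q-1)$ $1$-spaces of $V$. On the other hand, $A\cap X_0\le H = \langle C\rangle.n$, the normaliser of a Singer cycle $C$; since $C$ acts regularly on $1$-spaces, the stabiliser of $\ell$ in $H$ is a cyclic complement $H_\ell\cong C_n$, generated by a field automorphism of $\mathbb{F}_{q^n}/\mathbb{F}_q$. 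Consequently $A\cap B\cap X_0 \le H\cap K \le H_\ell$ is cyclic of order dividing $n$; this already produces the generic entry of Table~\ref{families}, with $(A_0,B_0,A_0\cap B_0) = (H, K, C_n)$.

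The next step is to determine $A\cap B$ inside the almost simple group $X$, i.e.\ to bring in the outer automorphisms. Writing $\mathrm{Out}(X_0)=\langle \delta,\phi,\gamma\rangle$ (diagonal, field, graph), I would note that both $\delta$ and $\phi$ (suitably realised) normalise $K$ and lie in $N_X(P_\ell)$, whereas $\gamma$ swaps the $P_1$- and $P_{n-1}$-parabolic classes, so $\gamma$ can appear in $B$ only when paired with a diagonal/field element that restores the class, or when $B$ is the stabiliser of an unordered flag $\{\ell,\ell'\}$. On the $H$ side, $N_X(H)/H$ is also cyclic up to a possible $\gamma$-contribution coming from the inverse-transpose action on the Singer cycle. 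Combining these, one computes that the contribution of $X/X_0$ to $A\cap B$ beyond $H_\ell\cong C_n$ is itself cyclic, except possibly when a graph involution appears, in which case the intersection becomes dihedral. Thus for the generic extensions one gets cyclic intersections, contributing to the line for $\PSL_n(q)$ in Table~\ref{families}; graph-automorphism extensions produce at most dihedral intersections, and only for small $n$ or small $q$ do these give new examples, which must then match the entries in Table~\ref{excep-psl}.

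The main obstacle is the bookkeeping in this last step: one needs to describe precisely which subgroups $A$ with $A\cap X_0 \le H$ and which subgroups $B$ with $B\cap X_0 \trianglerighteq K$ are compatible with a factorisation of some almost simple overgroup, and then to compute $A\cap B$ in each case. To avoid a lengthy case analysis I would draw on the explicit list of such factorisations already provided in \cite{LX22} (Case~(I), line~1), restricting attention to those whose intersection, as tabulated or as computable from the structure above, is cyclic or dihedral. Verifying for the small residual exceptional parameters $(n,q)$ that the resulting $(X,A,B,A\cap B)$ coincides with an entry of Table~\ref{excep-psl} can be done by direct Magma computation as outlined in Section~\ref{maggie}, which will also handle the remaining lines of Table~\ref{9inf} in analogous subsequent propositions.
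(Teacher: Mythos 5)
Your proposal is correct and follows essentially the same route as the paper: the decisive observation in both is that $A\cap B\cap X_0$ lies in $H\cap P_1$, a cyclic group of order dividing $n$, so the factorisation is exactly line 1 of Table \ref{families}. Your second paragraph on outer automorphisms is unnecessary, since by Remark \ref{classrem} Table \ref{families} records only the intersections inside the socle, and the paper settles the existence direction not by Magma but by exhibiting the explicit example $X=\PGL_n(q)$, $A=P_1$, $B=\frac{q^n-1}{q-1}.r$ with $A\cap B\cong C_r$.
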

	
	\begin{proof} 
		In this case, $X_0 = \PSL_n(q)$, $A\cap X_0 \le \frac{q^n-1}{(q-1)d}.n$ and $B\cap X_0 \triangleright q^{n-1}.\SL_{n-1}(q)$. Moreover we may assume we are not in case (III), so $B$ is insoluble, and so in particular $n\ge 3$ and $(n,q) \ne (3,2)$, 
		$(3,3)$. Thus $(X,A,B)$ is as in line 1 of Table \ref{families} in Theorem \ref{classgen}.
		
		Note that, as claimed in Theorem \ref{classgen}, there is an example satisfying line 1 of Table \ref{families}, as follows: 
		\[
		X = \PGL_n(q),\; A = P_1,\; B= \frac{q^n-1}{q-1}.r,
		\]
		where $r|n$. In this case $X = AB$ and $A\cap B$ is a cyclic group of order $r$.
	\end{proof}
	
	\begin{prop}\label{caseI2} Theorem $\ref{classgen}$ holds for $(X_0,H,K)$ as in line $2$ of Table $\ref{9inf}$.
	\end{prop}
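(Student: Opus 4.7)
The plan is to analyse the intersection $A \cap B$ structurally, using that $A \cap X_0 \le H = q^3.\frac{q^3-1}{d}.3$, a specific subgroup of the parabolic $P_1$ (or $P_3$) of $X_0 = \PSL_4(q)$, and that $B \cap X_0 \triangleright K = \PSp_4(q)$. First I would identify the contribution from the unipotent radical $O_p(P_1)$: this is elementary abelian of order $q^3$, and a direct calculation using the symplectic form shows that its intersection with $\Sp_4(q)$ is the long-root subgroup of order $q$. Hence $A \cap B$ contains (essentially) an elementary abelian $p$-subgroup of order $q$, which is cyclic or dihedral only when $q$ is prime.

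Next I would analyse the Levi part of $H$: the factor $\tfrac{q^3-1}{d}.3$ is a Singer cycle of the $\GL_3$-Levi together with the order-three Frobenius of $\mathbb{F}_{q^3}/\mathbb{F}_q$, and acts irreducibly on the Levi's natural $3$-dimensional module. Meanwhile, the Levi of $P_1 \cap \Sp_4(q)$ has shape $(q-1) \times \Sp_2(q)$, which decomposes this $3$-space non-trivially. This incompatibility of irreducibility with the symplectic splitting forces the Levi contribution to $A \cap B$ to be small. Combining the unipotent and semisimple estimates with the exact equality $|X||A \cap B| = |A||B|$ then determines $|A \cap B|$ as a function of the outer-automorphism indices $|A : A \cap X_0|$ and $|B : B \cap X_0|$, and the cyclic/dihedral hypothesis bounds $q$.

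For the resulting small values of $q$, I would complete the analysis by direct Magma computation in the spirit of Proposition \ref{IImag}, and verify that every admissible triple $(X, A, B)$ matches an entry in Tables \ref{families} and \ref{excep-psl} for socle $\PSL_4(q)$, up to the equivalences in Remark \ref{classrem}. In particular, the generic family should emerge as the line of Table \ref{families} corresponding to $\PSL_4(q) = P_1 \cdot \PSp_4(q)$ with $A \cap B$ of the predicted order, while small-$q$ anomalies correspond to exceptional table rows.

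The main obstacle will be the interaction with outer automorphisms of $\PSL_4(q)$, particularly the graph automorphism: it swaps $P_1$ and $P_3$ while conjugating $\PSp_4(q)$ to a distinct subgroup, so an element of $A \setminus X_0$ inducing a graph automorphism must be handled carefully, both when counting its contribution to $A \cap B$ and when identifying the resulting factorisation with a table entry. The diagonal and field automorphisms enter more routinely, but still require attention to avoid double-counting factorisations that are equivalent under $\Aut(X_0)$.
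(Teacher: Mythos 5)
Your overall strategy --- extracting an elementary abelian subgroup of order $q$ from $O_p(P_1)\cap\Sp_4(q)$ and using the cyclic/dihedral condition to restrict $q$ --- is exactly the paper's opening move, but your conclusion from it is wrong. An elementary abelian group of order $q$ can also be \emph{dihedral}, namely when $q=4$: the Klein four-group counts as dihedral here (intersections $2^2$ appear throughout the paper's tables). The correct deduction is ``$q=p$ or $q=4$'', and $q=4$ is not a dead end: it yields the genuine factorisation $\SL_4(4)=\Sp_4(4)\cdot(2^6.63)$ with $A\cap B=2^2$, recorded in Table~\ref{excep-psl}. As written, your argument would discard this example. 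A second gap: your ``hence'' presupposes that $A$ contains the whole unipotent radical $Q=[q^3]$, whereas a priori $A\cap X_0$ is merely \emph{contained in} $q^3.\frac{q^3-1}{d}.3$; the paper imports $Q\le A$ (together with the divisibility of $|A|$ by $q^3(q^3-1)/(2,q-1)$) from \cite[Prop.~2.2]{BL}. Without some such input, $A\cap Q\cap\Sp_4(q)$ need not have order $q$.

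For the surviving case $q=p$ your plan is also incomplete where it matters: what remains is not a Magma check of small $q$ (the generic case is an infinite family of primes), but the congruence analysis producing the condition ``$X\ge X_0.2$ if $p\equiv 1\pmod 4$'' in Table~\ref{families}. The paper shows that for $p\equiv 1\pmod 4$ one has $d=4$, $B\cap X_0=\PSp_4(p)$ is self-normalising in $X_0$ (\cite[4.8.3]{KL}), and $|A\cap X_0|\,|B\cap X_0|<|X_0|$, so $X_0$ itself does not factorise and one must pass to $X_0.2$; existence of the factorisations (with $A\cap B=p$ or $p\times 3$) is then quoted from \cite[Prop.~2.2(ii)]{BL}. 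Your Levi/irreducibility argument could in principle replace part of this order count, but as stated it neither isolates the $p\bmod 4$ dichotomy nor addresses the converse (existence) half of Theorem~\ref{classgen}.
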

	
	\begin{proof} 
		Here $X_0 = \PSL_4(q)$, $A\cap X_0 \le q^3.\frac{q^3-1}{d}.3<P_i$ ($i=1$ or 3), and $B \triangleright \PSp_4(q)$. Also, by \cite[Prop. 2.2]{BL}, $A$ has order divisible by $q^3(q^3-1)/(2,q-1)$, and contains the unipotent radical $Q = [q^3]$ of $P_i$. Hence $A\cap B$ contains a subgroup of $Q$ of order $q$, and so the cyclic/dihedral condition implies that either $q=p$ or $q=4$. The case $q=4$ gives an example in Table \ref{excep-psl}.
		
		Now assume that $q=p$. If $p \equiv 1 \hbox{ mod }4$, then $B\cap X_0 = PSp_4(p)$ (see \cite[4.8.3]{KL}) and $|A\cap X_0|$  divides $3q^3(q^3-1)/4$, so $X_0$ does not factorise as $(A\cap X_0)(B\cap X_0)$. On the other hand, \cite[Prop.2.2(ii)]{BL} gives a factorisation $X=AB$, where
		\[
		X = \left\{ \begin{array}{l} X_0.2, \hbox{ if }p \equiv 1 \hbox{ mod }4 \\ X_0, \hbox{otherwise} \end{array} \right. \hbox{ and }
		A = \PSp_4(p).2,\,B = p^3.(p^3-1)/2,
		\]
		and $|A\cap B|=p$. Replacing $B$ by $B.3$ gives another factorisation with $A\cap B = p\times 3$.
		These possibilities are in Table \ref{families}.
	\end{proof}
	
	\begin{prop}\label{caseI3} Theorem $\ref{classgen}$ holds for $(X_0,H,K)$ as in line $3$ or $4$ of Table $\ref{9inf}$.
	\end{prop}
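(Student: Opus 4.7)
Both lines share $X_0 = \PSp_{2m}(q) = \Sp_{2m}(q)$ with $q=2^f$ even, and $H \le P_m$ the stabiliser of a maximal totally isotropic $m$-subspace $W \le V$; line 4 is the case $m = 2$ with $K = \Sp_2(q^2)$, while line 3 has $K = \O_{2m}^-(q)$ for $m \ge 2$. The unipotent radical $Q$ of $P_m$ is the abelian (in fact, for $q$ even, elementary abelian) group of transvections based on $W$, of order $q^{m(m+1)/2}$, identifiable as an $\F_q$-vector space with the symmetric $m \times m$ matrices over $\F_q$; and $H = Q.L$ where $L \cong C_{q^m-1}.C_m$ is a Singer-normaliser in $\GL_m(q)$.

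The key observation is that $A \cap B \cap Q$, being a subgroup of the elementary abelian $Q$, is itself elementary abelian, hence of order at most $4$ whenever $A \cap B$ is cyclic or dihedral. I will derive the lower bound $|(A\cap B)\cap Q|\ge |A\cap Q|\cdot|B\cap Q|/|H|$ from $A(B\cap Q)\le H$, and combine it with $|A|\cdot|B|=|X|\cdot|A\cap B|$ together with $|B|\ge|K|$ to constrain $|A|$ and $|A\cap Q|$. For line 3, standard computation gives $|H \cap K| = m \cdot q^{m(m-1)/2}$ (assuming $X_0=HK$, which holds by Table \ref{9inf}), so the $2$-part of $A\cap B$ inherits at least $q^{m(m-1)/2}$ whenever $A$ contains $Q$. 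Imposing elementary abelian rank at most two then forces $fm(m-1)/2\le 2$, which rules out $m\ge 3$ entirely and leaves only $m=2$ with $q\in\{2,4\}$. For line 4 the same bookkeeping, using $\Sp_2(q^2)\cong \SL_2(q^2)$ as the index-$2$ subgroup of $\O_4^-(q)\cong \PSL_2(q^2).2$, limits $q$ similarly.

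It remains to treat the configurations where $A$ is a proper subgroup of $H$ with $A \cap Q \subsetneq Q$: such $A$ have smaller $2$-part but must still factorise $X$ with $B$, and I would rule them out by showing that $L$ cannot act transitively enough on the $Q/(A\cap Q)$ quotient to compensate, using the explicit description of $L$-orbits on $Q\cong\mathrm{Sym}_m(\F_q)$ coming from the Singer action on symmetric matrices. Each residual case with $m=2$ and $q\le 4$ is then compared against Tables \ref{families} and \ref{excep-sp}; Magma computations for $q\in\{2,4\}$ will confirm that the only $(A,B,A\cap B)$ satisfying the hypotheses are precisely those listed, and will also handle the almost-simple extensions $X>X_0$ (graph or field automorphism extensions) where $A\cap B$ may pick up or lose involutions. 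The main obstacle I anticipate is the $L$-orbit analysis on $Q$: the Singer-type action has orbits of sizes dictated by the rank stratification of symmetric matrices in characteristic two, and carefully enumerating which $L$-invariant complements can be removed from $A$ while preserving the factorisation $X=AB$ is the delicate step of the argument.
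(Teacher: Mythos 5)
There is a genuine gap, and it is fatal to the overall strategy rather than a fixable detail. Your plan is to (1) kill the case $A\supseteq Q$ by observing that $A\cap B$ then contains an elementary abelian subgroup of order $q^{m(m-1)/2}$ (correct, and in the spirit of the paper's treatment of the subcase $|A\cap Q|\ge q^{3m/2}$), and then (2) \emph{rule out every} configuration with $A\cap Q\subsetneq Q$ via an $L$-orbit analysis, so that only $m=2$, $q\le 4$ survives. But step (2) would eliminate the actual answer: line 3 of Table \ref{9inf} genuinely produces the infinite families in Table \ref{families}, namely $X_0=AB$ with $B\supseteq\O_{2m}^-(q)$ and $A=q^m.(q^m-1).m'$, $A\cap B=C_{m'}$ or $C_{2m'}$, valid for \emph{all} $m\ge 2$ and all even $q$. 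In these examples $|A\cap Q|=q^m$, which is a proper subgroup of $Q$ (of order $q^{m(m+1)/2}$) as soon as $m\ge 2$, so they fall squarely into the case you propose to discard. The correct conclusion of the proposition is not ``$m\ge 3$ is impossible'' but ``the factorisation forces $A\cap X_0\le q^m.(q^m-1).m$, landing in Table \ref{families}''. The mechanism the paper uses, and which your sketch lacks, is the module structure of $Q\cong S^2V_m$ under the Singer normaliser: computing eigenvalues $\lambda^{2q^i}$, $\lambda^{q^i+q^j}$ over $\overline{\F}_q$ shows the $\F_q$-composition factors of $Q$ restricted to $\GL_1(q^m)$ all have dimension $m$ (plus one of dimension $m/2$ when $m$ is even), whence $|A\cap Q|$ is $q^{m/2}$, $q^m$, or at least $q^{3m/2}$; the first is impossible, the third is the one your elementary abelian bound kills (reducing to $(m,q)=(2,4)$ or $(4,2)$, checked by Magma), and the second is the surviving family. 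Your proposed replacement for this --- an enumeration of $L$-orbits on $\mathrm{Sym}_m(\F_q)$ aimed at showing no proper $A\cap Q$ can ``compensate'' --- is aimed at proving a false statement.

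Two smaller points. First, the inequality $|(A\cap B)\cap Q|\ge |A\cap Q|\,|B\cap Q|/|H|$ is not what you want (the product $(A\cap Q)(B\cap Q)$ lies in $Q$, so the denominator should be $|Q|$, and in any case you never establish a lower bound for $|B\cap Q|$ beyond the $A\supseteq Q$ case). Second, the proposition also requires the \emph{converse} direction of Theorem \ref{classgen}: one must exhibit the factorisations with intersections $C_{2m'}$ and $C_{m'}$, which the paper does by starting from $\Sp_{2m}(q)=\Or_{2m}^-(q)\,(\Sp_2(q^m).m)$ and refining the second factor to $q^m.(q^m-1).m$; your proposal does not address existence at all. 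The observation that line 4 is line 3 with $m=2$ twisted by a graph automorphism is essentially right and matches the paper.
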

	
	\begin{proof} 
		Observe that line 4 of Table \ref{9inf} is equivalent to line 3 with $m=2$ and a graph automorphism of $X_0 = \Sp_4(q)$ applied to $H$ and $K$, so we need only deal with line 3.
		
		In this case, $X_0 = \PSp_{2m}(q)$ with $m\ge 2$ and $q$ even, $A\cap X_0 \le q^{\frac{1}{2}m(m+1)}.(q^m-1).m < P_m$, and $B \triangleleft \O_{2m}^-(q)$. 
		
		We next justify the existence of the factorisations claimed in line 3 of Table \ref{families}. We start with the factorisation $X_0 = HK$, where
		\begin{equation}\label{beg}
			H = \Or_{2m}^-(q),\;K = \Sp_2(q^m).m,\; H\cap K = \Or_2^-(q^m).m \cong (q^m+1).2m
		\end{equation}
		(see Table \ref{1cfsp} above). Pick a subgroup $K_1 =q^m.(q^m -1).m < K$, and observe that $K = (H\cap K)\,K_1$ with intersection $H \cap K_1 = 2m$, a cyclic group. Thus we have a factorisation 
		\begin{equation}\label{beg1}
			X_0 = AB,\;A =  \Or_{2m}^-(q),\; B = K_1 =q^m.(q^m -1).m,\; A\cap B = C_{2m}.
		\end{equation}
		We can replace $m$ by any of its divisors $m'$ and get an example with intersection $C_{2m'}$, as in line 3 of Table \ref{families}.
		
		Now replace $H$ by $\O_{2m}^-(q)$, and observe that we still have a factorisation $X_0 = HK$ with $K = \Sp_2(q^m).m$, where now $H\cap K = \O_2^-(q^m).m \cong (q^m+1).m$ (this follows from the proof of \cite[4.3.16]{KL}, which shows that 
		$\Or_2^-(q^m).m$ is not contained in $\O_{2m}^-(q)$). Now if we choose $K_1$ as above, we have $H\cap K_1 = m$, and so we have a factorisation 
		\begin{equation}\label{beg2}
			X_0 = AB,\;A =  \O_{2m}^-(q),\; B = K_1 =q^m.(q^m -1).m,\; A\cap B = C_{m}.
		\end{equation}
		Again we can replace $m$ by any of its divisors $m'$, provided $(m')_2 = m_2$, giving an example with intersection $C_{m'}$, as in line 3 of Table \ref{families}. (We suspect that there is no such example in this case with $(m')_2 < m_2$, but have not proved this.)
		
		It remains to prove that all factorisations corresponding to line 3 of Table \ref{9inf} satisfy the conditions of Table \ref{families}. 
		We have $A\cap X_0 \le Q.(q^m-1).m < P_m$, where $Q$ is the unipotent radical of $P_m$ and $P_m = QL$ with $L \cong \GL_m(q)$. Here $Q$ is elementary abelian of order $q^{\frac{1}{2}m(m+1)}$, and $L$ acts on $Q$ as $\GL_m(q)$ acts on $S^2(V)$, where $V = \F_q^m$ is the natural module. Let $M$ be a cyclic subgroup $\GL_1(q^m)$ of $L$, and consider the action of $M$ on $Q$. Write $M = \langle x \rangle$, and let $\overline{V}_m=V_m\otimes \overline{\mathbb{F}}_q$, where $\overline{\mathbb{F}_q}$ is an algebraic closure of $\mathbb{F}_q$. Then $x$ acts on $\overline{V}_m$ as $\hbox{diag}(\lambda, \lambda^q,\dots,\lambda^{q^{m-1}})$, where $\l \in  \overline{\mathbb{F}}_q$ and $|\lambda|=q^m-1$. Hence $x$ acts on $S^2 \overline{V}_m$ as a diagonal matrix with eigenvalues $\l^{2q^i}$ and $\l^{q^i+q^j}$ for $1\le i<j\le m$. It follows that if $m$ is odd then the $\F_q$-composition factors of $Q\downarrow M$ all have dimension $m$, while if $m$ is even then there is one composition factor of dimension $m/2$ (with eigenvalues $\l^{q^i+q^{i+\frac{m}{2}}}$ for $1\le i\le m/2$), and the rest have dimension $m$.
		
		We know that $|A|$ is divisible by $q^m(q^m-1)$. Assume that the primitive prime divisors $q_m$ and also $q_{m/2}$ (if $m$ is even) exist. These divide $|A|$, which therefore has $\F_q$-composition factors on $Q$ of dimensions $m$, $m/2$ as above.
		Hence $A\cap Q$ either has order $q^{m/2}$ or $q^m$, or it has order $\ge q^{3m/2}$. 
		If $|A\cap Q| = q^m$ then $A \cap X_0 \le q^m.(q^m-1).m$, and $(X,A,B)$ are as in Table \ref{families}, as required.
		If $|A\cap Q| = q^{m/2}$, then we must have $q^{m/2}$ dividing $|X/X_0|$, hence dividing $\log_2q$, which is impossible. 
		And if  $|A\cap Q| \ge  q^{3m/2}$, then we have $|A\cap B\cap Q| \ge q^{m/2}$, and so the cyclic/dihedral condition 
		forces $(m,q) = (2,4)$ or $(4,2)$; at this point we compute using Magma that this gives an example for $X_0 = \Sp_4(4)$ (line 3 of Table \ref{excep-sp}), but no example for $X_0 = \Sp_8(2)$.
		
		Finally, if either $q_m$ or $q_{m/2}$ does not exist, then $(m,q) = (6,2)$ or $(12,2)$. But in these cases $A$ contains a full Singer cycle $2^m-1$ of $L = \GL_m(2)$, so has composition factors on $Q$ of dimensions $m$ and $m/2$, and the above argument goes through. 
	\end{proof}
	
	\begin{prop}\label{caseI5} Theorem $\ref{classgen}$ holds for $(X_0,H,K)$ as in line $5$ of Table $\ref{9inf}$.
	\end{prop}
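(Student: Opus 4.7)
The plan is to leverage the factorisation order identity to force $q$ prime, and then identify the surviving $(A,B)$ by analysing the Levi action on the unipotent radical. Write $A_0 = A\cap X_0$ and $B_0 = B\cap X_0$, so $X_0 = A_0 B_0$ with $A_0 \le H = q^{1+2}.\tfrac{q^2-1}{2}.2 < P_1$ and $B_0 \unrhd K = \PSp_2(q^2)$. A direct index computation yields $|H|\,|K|/|X_0| = q$, so in the extremal case $A_0 = H$, $B_0 = K$ the intersection $H\cap K$ has order $q$ and lies inside the unipotent radical $Q \cong q^{1+2}$ of $P_1$.

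The next step is to argue that the cyclic/dihedral hypothesis on $A_0\cap B_0$, combined with $|A_0\cap B_0| = |A_0|\,|B_0|/|X_0|$ and the fact that $|B_0|$ differs from $|K|$ by at most a factor of $2$ (from the field automorphism of $\F_{q^2}/\F_q$), forces $|A_0\cap B_0 \cap Q| \ge q$ whenever $A_0$ contains a full copy of $Q$. Since $Q$ has exponent $p$, any cyclic subgroup of $Q$ has order at most $p$, so this compels $q = p$ prime. A parallel argument will reduce the remaining configurations, in which $A_0$ does not contain $Q$, to the same conclusion via the fact that $H/Q$ acts faithfully and semisimply on $Q/Z(Q)$.

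For $q = p$, I will enumerate the admissible configurations. The natural factorisations with $A_0 \ge Q$ and $B_0 = K$ or $K.2$ give cyclic intersections of order dividing $2p$ and fit the corresponding line of Table~\ref{families}. The only remaining possibility, in which $A_0$ is a proper subgroup of $H$ of the form $p^{1+2}.2A_5$ paired with $B_0 = \PSp_2(p^2)$, requires $2A_5$ to embed in the Levi section $\tfrac{p^2-1}{2}.2$ and act appropriately on $Q/Z(Q)$; I will verify that this occurs exactly when $p\in\{11,29,59\}$, producing the sporadic entries recorded in Table~\ref{1cfsp} and hence in Table~\ref{excep-sp}.

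The main obstacle will be the Levi-side bookkeeping: determining, for each overgroup $B_0$ of $K$ (up to the outer involution coming from $\PGaL_2(p^2)\cap \PSp_4(p)$) and each subgroup $A_0 \le H$ compatible with the factorisation, whether the non-unipotent contribution to $A_0\cap B_0$ remains cyclic or dihedral. Once this is carried out, a converse check (direct construction in the generic case, Magma verification for the exceptional primes) confirms that each surviving $(X,A,B)$ gives a genuine factorisation with the claimed intersection.
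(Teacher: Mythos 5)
Your opening step for forcing $q=p$ is sound and matches the paper's (the paper quotes \cite[Prop.~2.6]{BL} to get $q^3(q^2-1)\mid |A|$, whence $A\cap B$ contains a subgroup of order $q$ lying in the exponent-$p$ group $Q$). But the core of your argument contains a circularity that causes you to miss the decisive point of this case. The identity $|H|\,|K|/|X_0|=q$ tells you that $|H\cap K|=q$ \emph{only if} $X_0=HK$ actually is a factorisation, and whether it is one is precisely what has to be decided here. The paper settles this by an explicit matrix computation in $Y=\Sp_4(p)$: writing an element of $H\cap K$ as $q_1e=q_2dc$ with respect to a standard basis and comparing actions on $\la e_2,f_2\ra$, one finds that the order of the $\Sp_2(p)$-component $d$ divides $\gcd(p-1,\,2(p+1))$, which is $2$ if $p\equiv 3\pmod 4$ but $4$ if $p\equiv 1\pmod 4$. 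Consequently $|H\cap K|$ (in $Y$) is $2p$ in the first case, giving a genuine factorisation $X_0=HK$ with intersection $C_p$ (or $D_{2p}$ on replacing $K$ by $\PSp_2(q^2).2$), but is $4p$ in the second, so that $X_0\ne HK$ and one only obtains a factorisation at the level of $X_0.2=\PGSp_4(p)$. Your proposal asserts the intersection is $q$ in the ``extremal case'' unconditionally, so it would both fail to establish the condition ``$X\ge X_0.2$ if $p\equiv 1\pmod 4$'' recorded in Table \ref{families} and break down at the promised ``direct construction in the generic case'': for $p\equiv 1\pmod 4$ there is no such factorisation of $X_0$ itself.

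A secondary confusion: the factorisations $\PSp_4(p)=\PSp_2(p^2)\,(p^{1+2}.2A_5)$ for $p=11,29,59$ do not arise under line 5 of Table \ref{9inf} at all, since $p^{1+2}.2A_5$ is insoluble; they are classified under hypothesis (\ref{bothone}) via \cite[Table 8.2]{LWX} in Proposition \ref{bothonesp}. Moreover $2A_5\cong\SL_2(5)$ certainly does not embed in the metacyclic Levi section $\tfrac{p^2-1}{2}.2$ of $H$; it embeds in $\Sp_2(p)$ inside the full parabolic $P_1$. Including these here is at best redundant, and the embedding claim as stated is false.
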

	
	\begin{proof} 
		Here $X_0 = \PSp_4(q)$ with $q$ odd, $A\cap X_0 \le H:=Q.\frac{q^2-1}{2}.2 < P_1$ with $Q = q^{1+2}$, and $B \triangleright K:=\PSp_2(q^2)$. Also by \cite[Prop. 2.6]{BL}, $|A|$ is divisible by $q^3(q^2-1)$, so $A\cap B$ contains a subgroup of order $q$ which must be elementary abelian, and so the cyclic/dihedral condition forces $q=p$.
		
		For convenience, we shall work in $Y = \Sp_4(p)$, where $Y/\la -I \ra = X_0$, and replace $H$ and $K$ by their preimages in $Y$. We have $P_1 = QL$ with $L = \Sp_2(p) \times (p-1)$, and $H = QD$ with $D = D_0 \times C$, where $D_0$ is a subgroup of order $2(p+1)$ in $\Sp_2(p)$ and $C = p-1$. Taking a standard basis $e_1,e_2,f_2,f_1$, with $P_1$ the stabilizer of $\la e_1\ra$, elements of $C$ act as $\hbox{diag}(\a,1,1,\a^{-1})$, while elements of $D_0$ fix $e_1$ and $f_1$. Also $K\cap P_1 = Q_1E$, where $|Q_1|=p^2$ and elements of $E$ act as $\hbox{diag}(\b,\b,\b^{-1},\b^{-1})$.
		
		Let $x \in H\cap K$, and write $x = q_1e = q_2dc$, where $q_1 \in Q_1$, $e \in E$, $q_2 \in Q$, $d \in D_0$ and $c \in C$.
		Considering actions on $W = \la e_2,f_2\ra$, and noting that $c$ acts trivially on $W$, we have $d^W = (ue)^W$, where $u^W$ is lower unitriangular and $e^W = \hbox{diag}(\b,\b^{-1})$. Hence the order of $d$ divides $p-1$, and so it divides $\hbox{gcd}(p-1,\,2(p+1))$. 
		
		If $p \equiv 3 \hbox{ mod }4$, it follows that $d$ has order 1 or 2, hence is $\pm I_2 \in \Sp_2(p)$. Therefore $e = \pm I$ and so $|H \cap K| = 2p$ and we have a factorisation $Y = HK$. Taking images in $X_0$, this gives a factorisation $X_0 = AB$ with $A\cap B = p$; and if we replace $B$ with $\PSp_2(q^2).2$, we obtain a factorisation with $A\cap B = D_{2p}$.
		
		Now assume that $p \equiv 1 \hbox{ mod }4$. In this case $d$ can have order 4, and we get $|H \cap K| = 4p$, so $Y \ne  HK$. There is no factorisation of $X_0$ here, but there is a factorisation of $X_0.2 = \PGSp_4(p)$, as shown in \cite[Prop. 2.6]{BL}.
		
		Thus $X_0$, $A$, $B$ and $A \cap B$ are as in Table \ref{families}, and all assertions in Theorem \ref{classgen} for this case are now proved.
	\end{proof}
	
	\begin{prop}\label{caseI6} Theorem $\ref{classgen}$ holds for $(X_0,H,K)$ as in line $6$ of Table $\ref{9inf}$.
	\end{prop}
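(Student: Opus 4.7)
The plan is to mimic the structure of Proposition \ref{caseI3}, but accounting for the fact that in the unitary case the unipotent radical $Q$ of $P_m$ is non-abelian. Here $X_0=\PSU_{2m}(q)$, $A\cap X_0\leq H=Q.\frac{q^{2m}-1}{(q+1)d}.m$ with $|Q|=q^{m^2}$, and $B\cap X_0\triangleright K=\SU_{2m-1}(q)$. The Levi complement of $P_m$ is $\GL_m(q^2)/(q+1)$, and the cyclic factor of $H$ comes from a Singer-type subgroup of this Levi. By standard order divisibility (as in \cite[Prop.~2.4]{BL} or the general setup of \cite{LPS90}), $|A|$ must be divisible by $q^{m^2}(q^{2m}-1)/((q+1)d)$, which forces $A\cap X_0$ to contain the full unipotent radical $Q$. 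Hence $A\cap B\supseteq Q\cap B$.

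First I would establish existence of the examples claimed in Table~\ref{families}. Starting from the factorisation $X_0=P_mK$, one restricts $A$ to the subgroup $Q.C_r.m'$ where $C_r$ is a subgroup of the Singer torus $\frac{q^{2m}-1}{(q+1)d}$ of the Levi and $m'\mid m$ is an order of a field/graph automorphism contribution, and one checks that the intersection with $B$ (any extension of $\SU_{2m-1}(q)$ admissible in the relevant almost simple overgroup $X$) is indeed $X_0=AB$ and that $A\cap B$ is cyclic of the predicted order dividing $2m$. This is a direct calculation with the Levi action on $Q$ and the Bruhat decomposition of $\SU_{2m-1}(q)$ relative to the stabilised non-degenerate vector.

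For uniqueness, having located $Q\leq A$, I analyse $Q\cap K$. Here $K=\SU_{2m-1}(q)$ is the stabiliser of a non-isotropic vector $v$. If $W$ is the maximal totally singular subspace stabilised by $P_m$, then $Q\cap K$ is the subgroup of $Q$ fixing $v$, and it is a unipotent subgroup of $K$ of order at least $q^{m(m-1)}$ (or similar), sitting inside the unipotent radical of a parabolic of $K$. Computing its exponent (at most $p$ modulo the centre $Z(Q)=[Q,Q]$ of order $q^m$, and exponent $p$ on $Z(Q)$ itself since the long-root elements have order $p$), I then invoke the cyclic/dihedral hypothesis. Because $|Q\cap K|\geq q$ with exponent $p$, the intersection can be cyclic/dihedral only when $q=p$ (so that $Q\cap K$ is elementary abelian of bounded order), or when $m,q$ are both very small. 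The small cases are then dispatched by Magma, matching them to entries in Table~\ref{excep-psu}; the generic $q=p$ case (and a handful of twists by field/graph automorphisms) yields precisely the rows of Table~\ref{families}.

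The main obstacle is handling the two-step nilpotent structure of $Q$. Unlike the symplectic parabolic of Proposition \ref{caseI3}, where $Q$ is elementary abelian and the Levi action on $Q$ can be read off as $S^2V$, here $Q/Z(Q)$ is a module for $\GL_m(q^2)$ isomorphic to $V\otimes \bar V^{(q)}$ and $Z(Q)$ is the Hermitian-form space, so I must track two layers of the intersection $A\cap B\cap Q$ separately: both its image in $Q/Z(Q)$ and its trace on $Z(Q)$. The cyclic/dihedral bound is most restrictive on the centre (where the long-root contribution kicks in), and confirming that this forces $m$ to be small or $q$ to be prime is the delicate heart of the argument.
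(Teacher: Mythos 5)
There is a genuine error at the foundation of your argument. You claim that the order divisibility forced by the factorisation makes $|A|$ divisible by $q^{m^2}(q^{2m}-1)/((q+1)d)$ and hence that $A\cap X_0$ contains the full unipotent radical $Q$ of order $q^{m^2}$. This is false: the index $|X_0:B\cap X_0|$ with $B\triangleright \SU_{2m-1}(q)$ is only about $q^{2m-1}(q^{2m}-1)/(q+1)$, so the correct divisibility (the one the paper takes from \cite[Prop.\ 2.7]{BL}) is by $q^{2m}(q^{2m}-1)/(q+1)$, and for $m\ge 3$ this is far smaller than $|Q|=q^{m^2}$. Indeed the examples in Table~\ref{families} have the soluble factor equal to $q^{2m}.\frac{q^{2m}-1}{(q+1)d}.m$, i.e.\ meeting $Q$ in a subgroup of order exactly $q^{2m}$, not all of $Q$. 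Your premise $Q\le A$ would force $A\cap B\supseteq Q\cap K$, a unipotent group of order at least $q^{m(m-2)}$, and your subsequent analysis would then wrongly eliminate all $m\ge 3$, contradicting the existence of the infinite family. The step you are missing is the module-theoretic pinning-down of $|A\cap Q|$: since $A$ contains (a large part of) the Singer torus $\GL_1(q^{2m})$ of the Levi, $A\cap Q$ is normalised by it, and the $\F_q$-composition factors of $Q$ under this torus have dimension $2m$ (plus one of dimension $m$ when $m$ is odd); this forces $|A\cap Q|\le q^m$, $=q^{2m}$, or $\ge q^{3m}$, and only the middle option survives. That, not containment of $Q$, is the delicate heart of the argument.

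Two further inaccuracies: the intersection $A\cap B$ in this line of Table~\ref{families} is $[q]$ (a group of order $q$), not a cyclic group of order dividing $2m$ --- you appear to have imported the conclusion of the symplectic case (Proposition~\ref{caseI3}) where the intersection is $C_{2m'}$. And your endgame ``$q=p$ or $m,q$ very small, dispatched by Magma into Table~\ref{excep-psu}'' misses that $q=4$ is admissible for \emph{all} $m\ge 2$ and belongs to the infinite family: once $A\cap B$ is shown to contain a subgroup of order $q$ of exponent at most $p^2$, the cyclic/dihedral condition gives $q=p$ or $q=4$, both as generic outcomes. Your instinct to track the two-step nilpotent structure of $Q$ (with $Q/Z(Q)\cong V\otimes \bar V^{(q)}$) is reasonable in itself, but it is not where the paper's argument lives, and as set up it does not close the proof.
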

	
	\begin{proof} 
		In this case, $X_0 =  \PSU_{2m}(q)$, $A \cap X_0 \le Q.\frac{q^{2m}-1}{(q+1)d}.m < P_m$  with $Q = q^{m^2}$, and $B \triangleright \SU_{2m-1}(q)$. Moreover, $|A|$ is divisible by $q^{2m}(q^{2m}-1)/(q+1)$, by \cite[Prop. 2.7]{BL}.
		
		As in the proof of Proposition \ref{caseI3}, we see that as a $\GL_1(q^{2m})$-module over $\F_q$, the composition factors of $Q$ have dimension $2m$, together with one of dimension $m$ if $m$ is odd. Hence either $|A\cap Q| \le q^m$, or 
		$|A\cap Q| = q^{2m}$, or $|A\cap Q| \ge q^{3m}$. In the first case $q^m$ must divide $|X/X_0|$, which is impossible; and in the last case, $A \cap B$ contains a subgroup of order $q^{m+1}/|X:X_0|_p$, which cannot be cyclic or dihedral.
		
		Thus $|A\cap Q| = q^{2m}$. Moreover, $A\cap B$ has a subgroup of order $q$, so the cyclic/dihedral condition implies that either $q=p$ or $q=4$. Finally, \cite[Prop. 2.7]{BL} demonstrates the existence of a factorisation in this case, namely the factorisation $\PGU_{2m}(q) = AB$ with $A = q^{2m}..\frac{q^{2m}-1}{q+1}$ and $B = N_1$. 
	\end{proof}
	
	\begin{prop}\label{caseI7} Theorem $\ref{classgen}$ holds for $(X_0,H,K)$ as in line $7$ of Table $\ref{9inf}$.
	\end{prop}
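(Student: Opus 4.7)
The plan is to follow the template of Propositions \ref{caseI3} and \ref{caseI6}, analysing the action of a Singer-like torus of the Levi on the unipotent radical of the parabolic $P_m$. Here $X_0 = \O_{2m+1}(q)$ with $m\ge 3$ and $q$ odd, $A\cap X_0 \le H := Q.\frac{q^m-1}{2}.m < P_m$ where $Q$ is the unipotent radical, and $B\cap X_0 \triangleright K := \O_{2m}^-(q)$.

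First I would fix notation. Write $P_m = QL$ with $L \cong \GL_m(q)$, let $W = \langle e_1,\dots,e_m\rangle$ be the maximal totally singular subspace stabilised by $P_m$, and note $|Q| = q^{m(m+1)/2}$. Since $q$ is odd, $Q$ is two-step nilpotent of exponent $p$, with centre $Z := [Q,Q] \cong \wedge^2 W$ of order $q^{m(m-1)/2}$ and quotient $Q/Z \cong W$ of order $q^m$, both as $L$-modules. By an index calculation analogous to \cite[Prop.~2.7]{BL}, $|A|$ is divisible by $q^{m(m+1)/2}(q^m-1)/2$, so $A \ge Q$ and $A$ contains a cyclic torus $M \cong \GL_1(q^m) \le L$.

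Next I would run a primitive prime divisor analysis along the lines of Proposition \ref{caseI3}: $M$ acts irreducibly of $\F_q$-dimension $m$ on $Q/Z \cong W$, and on $Z \cong \wedge^2 W$ with composition factors all of dimension $m$ when $m$ is odd, or one of dimension $m/2$ together with the rest of dimension $m$ when $m$ is even. Since $q$ is odd and $m\ge 3$, the primitive prime divisors $q_m$ (always) and $q_{m/2}$ (when $m$ is even) exist and divide $|M|$, forcing $A\cap Q$ to be a union of full $\F_q[M]$-composition factors. Hence $|A\cap Q|$ lies in a short list: either $q^m$ (the lift of the $W$-factor), or $q^m$ times a product of $\wedge^2 W$-factors, or $|Q|$ itself. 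The cyclic/dihedral hypothesis on $A\cap B$ then rules out all values with $|A\cap Q| \ge q^{3m/2}$, since $A\cap B\cap Q$ would be a nontrivial subgroup of $Q$ of exponent $p$ and $\F_q$-rank at least $2$ as an $\F_q[M]$-section, contradicting cyclicity/dihedrality. Values with $|A\cap Q|<q^m$ are excluded by the divisibility information on $|A|$.

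This leaves $|A\cap Q| = q^m$, and hence $A\cap X_0 \le q^m.\frac{q^m-1}{2}.m$, matching an entry in Table \ref{families}. To finish, I would exhibit existence of such factorisations: starting from a known factorisation of $X_0$ provided by the embedding $\O_2^-(q^m).m \le \O_{2m}^-(q)$, one pares the second factor down inside $P_m$ and computes that $A\cap B$ is cyclic of order dividing $m$, in parallel with the construction \eqref{beg1}. I anticipate the main obstacle to be the two-step nilpotent bookkeeping for $Q$ (unlike the elementary abelian symplectic case of Proposition \ref{caseI3}): one must verify that the lift of the $W$-factor to an actual subgroup of $Q$ is well-defined up to conjugation, that it interacts correctly with $K = \O_{2m}^-(q)$, and that the resulting intersection $A\cap B$ has the precise cyclic order recorded in Table \ref{families}.
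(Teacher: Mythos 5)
Your proposal reaches the wrong conclusion: line 7 of Table \ref{9inf} contributes no factorisations at all, and there is no corresponding entry in Table \ref{families}. The paper's proof is two lines long: by the claim in the proof of \cite[Prop.~2.8]{BL}, the factorisation $X=AB$ forces the \emph{entire} unipotent radical $Q$, of order $q^{m(m+1)/2}$, to lie in $A$; then $A\cap B$ contains a $p$-subgroup of order at least $q^{m(m-1)/2}\ge q^3$, which (as $q$ is odd and $Q$ has exponent at most $p^2$) can be neither cyclic nor dihedral. Hence there are no examples.

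Your write-up is internally inconsistent on exactly this point. You begin by asserting that divisibility of $|A|$ by $q^{m(m+1)/2}(q^m-1)/2$ gives $A\ge Q$ --- which, if granted, forces $|A\cap Q|=q^{m(m+1)/2}\ge q^{3m/2}$ and lands you squarely in the case you later discard --- yet a few sentences later you conclude $|A\cap Q|=q^m$. The composition-factor template of Propositions \ref{caseI3}, \ref{caseI6} and \ref{caseI8} is the wrong one here precisely because in those cases only $q^m$ (resp.\ $q^{2m}$) is known a priori to divide $|A|$, leaving genuinely several options for $|A\cap Q|$; in the present case the stronger necessary condition $Q\le A$ holds and there is nothing left to analyse. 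Consequently your final paragraph sets out to construct factorisations $X_0=\bigl(q^m.\frac{q^m-1}{2}.m\bigr)\cdot B$ with $B\triangleright \O_{2m}^-(q)$ that do not exist, and to match them against a line of Table \ref{families} that is not there: the only $\O_7(q)$ entries in that table arise from Proposition \ref{bothoneorthog1} (both factors with one non-abelian composition factor), not from the soluble-factor analysis.
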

	
	\begin{proof} 
		Here $X_0 = \O_{2m+1}(q)$, $A\cap X_0 \le  Q.\frac{q^m-1}{2}.m < P_m$, where $Q = q^{m(m-1)/2}.q^m$ is the unipotent radical of $P_m$, and $B \triangleright \O_{2m}^-(q)$; also $m\ge 3$ and $q$ is odd. By the claim in the proof of \cite[Prop. 2.8]{BL}, for $X$ to factorise as $AB$, it is necessary that $Q \le A$. But then $A\cap B$ has a subgroup of order 
		$q^{m(m-1)/2}$ which cannot be cyclic or dihedral.
	\end{proof}
	
	\begin{prop}\label{caseI8} Theorem $\ref{classgen}$ holds for $(X_0,H,K)$ as in line $8$ of Table $\ref{9inf}$.
	\end{prop}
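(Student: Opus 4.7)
In this case $X_0 = P\O_{2m}^+(q)$ with $m \ge 5$, $A \cap X_0 \le H := Q.\frac{q^m-1}{d}.m < P_m$ (or $P_{m-1}$; these are interchanged by a graph automorphism, and triality does not intrude since $m \ge 5$), where $Q = [q^{m(m-1)/2}]$ is the abelian unipotent radical of $P_m$, $d = (4, q^m-1)$, and $B \cap X_0 \triangleright K = \Omega_{2m-1}(q)$. The plan is to mirror the strategy of Proposition~\ref{caseI3}, replacing the symmetric-square Levi module used there by the exterior-square module $\wedge^2 V_m$, where $V_m = \F_q^m$.

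First I would exhibit the examples claimed in Table~\ref{families}. Starting from the well-known factorisation $X_0 = N_1 \cdot P_m$, with $N_1 \cong \Omega_{2m-1}(q)$, I restrict the parabolic factor to a subgroup $K_1$ of shape $q^{m(m-1)/2}.\frac{q^m-1}{d}.m'$ for a divisor $m' \mid m$, and verify by counting orders that $X_0 = N_1 \cdot K_1$ with intersection cyclic of order $m'$, following the pattern of \eqref{beg1}--\eqref{beg2}. Outer-automorphism and $2$-part subtleties should be handled exactly as in Proposition~\ref{caseI3}. For the uniqueness direction, the factorisation constraints from \cite{BL} imply that $|A|$ is divisible by $q^{m(m-1)/2}(q^m-1)/d$. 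A cyclic Singer subgroup $M = \GL_1(q^m) \le \GL_m(q)$ of the Levi acts on $\wedge^2 \overline{V}_m$ (with $\overline{V}_m = V_m \otimes \overline{\F}_q$) with eigenvalues $\lambda^{q^i + q^j}$ for $0 \le i < j \le m-1$ and $|\lambda| = q^m - 1$; the Galois orbits of these exponents under the cyclic shift $(i,j)\mapsto(i+1,j+1) \bmod m$ have size $m$, with the single exception, when $m$ is even, of the orbit of $\lambda^{q^i + q^{i+m/2}}$, which has size $m/2$.

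Assuming the primitive prime divisors $q_m$ and (for even $m$) $q_{m/2}$ exist, both divide $|A|$, so $A \cap Q$ is $M$-invariant and hence a sum of $\F_q$-$M$-irreducibles; it follows that $|A \cap Q| \le q^{m/2}$, or $|A \cap Q| = q^m$, or $|A \cap Q| \ge q^{3m/2}$. The first alternative forces $q^{m/2}$ to divide $|X/X_0|$, which is impossible for $m \ge 5$; the third gives $|A \cap B \cap Q| \ge q^m$, an elementary abelian $p$-subgroup contradicting the cyclic/dihedral hypothesis. Therefore $|A \cap Q| = q^m$ and $(X, A, B)$ lies in Table~\ref{families}. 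The few $(m, q)$ for which $q_m$ or $q_{m/2}$ fail to exist (classified by Zsigmondy's theorem) can be handled with Magma, noting that $A$ still contains a full Singer cycle of $\GL_m(q)$ so the eigenvalue analysis applies verbatim. The main obstacle will be the existence direction: pinning down precisely which divisors $m'$ of $m$ produce genuine factorisations with an intersection of the claimed cyclic or dihedral shape, and handling any $2$-part constraint inherited from the analogue of the analysis at the end of Proposition~\ref{caseI3}.
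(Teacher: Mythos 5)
Your overall strategy -- transplanting the argument of Proposition~\ref{caseI3} to the exterior-square module $\wedge^2 V_m$, obtaining the trichotomy $|A\cap Q|\le q^{m/2}$, $=q^m$ or $\ge q^{3m/2}$ from the Galois orbits of the Singer eigenvalues $\lambda^{q^i+q^j}$, and disposing of the Zsigmondy exceptions via a full Singer cycle -- is exactly the paper's. But your input from \cite{BL} is wrong in a way that matters: \cite[Prop.~2.9]{BL} gives that $|A|$ is divisible by $q^{m}(q^m-1)/(2,q-1)$, not by $q^{m(m-1)/2}(q^m-1)/d$. Since $A\cap X_0$ lies in $Q.\frac{q^m-1}{d}.m$ and $(q^m-1)/d$ is prime to $p$, your stated bound would force $|A\cap Q|\ge q^{m(m-1)/2}/(m_p\,|X/X_0|_p)$, which for $m\ge 5$ lands you unavoidably in the third alternative and ``proves'' that no factorisation exists -- contradicting the entry in Table~\ref{families}. (Contrast line~7 of Table~\ref{9inf}, where \cite{BL} really does force $Q\le A$ and the case dies; line~8 survives precisely because the divisibility is only by $q^m$.) The correct bound $q^m$ is what rules out the first alternative, and in the third alternative it yields only $|A\cap B\cap Q|\ge q^{\frac{1}{2}m+1}/|X{:}X_0|_p$ (not $q^m$), which is still large enough to violate the cyclic/dihedral condition.

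The same confusion undermines your existence argument: a subgroup $K_1$ of shape $q^{m(m-1)/2}.\frac{q^m-1}{d}.m'$ containing the full unipotent radical cannot satisfy $X_0=N_1K_1$ with small intersection, since $N_1\cap Q$ already has order $q^{(m-1)(m-2)/2}$ and the order count fails. The genuine example, cited from \cite[Prop.~2.9]{BL}, is $\PSO_{2m}^+(q)=AB$ with $A=q^{m}.\frac{q^m-1}{(2,q-1)}$ and $B=N_1$: the unipotent part of the soluble factor is a single $m$-dimensional $M$-composition factor of $Q$, not all of $Q$. Finally, you omit the step that produces the constraint ``$q=p$ or $q=4$'' recorded in Table~\ref{families}: since $|A|_p\ge q^m$ and $|X{:}B|_p\le q^{m-1}$, the intersection $A\cap B$ contains an elementary abelian subgroup of order $q$, and a cyclic or dihedral group can contain such a subgroup only if $q=p$ or $q=4$.
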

	
	\begin{proof} 
		In this case, $X_0 = P\O_{2m}^+(q)$, $A\cap X_0 \le Q.\frac{q^m-1}{d}.m < P_{i}$ with $i \in \{m-1,m\}$ and $Q = 
		q^{\frac{1}{2}m(m-1)}$ the unipotent radical of $P_i$, $B \triangleright  \O_{2m-1}(q)$, and $m\ge 5$. Moreover, $|A|$ is divisible by $q^{m}(q^{m}-1)/(2,q-1)$, by \cite[Prop. 2.9]{BL}.
		
		As a $\GL_1(q^{m})$-module over $\F_q$, the composition factors of $Q$ have dimension $m$, together with one of dimension $m/2$ if $m$ is even (see the proof of Proposition \ref{caseI3}). Hence either $|A\cap Q| \le q^{m/2}$, or 
		$|A\cap Q| = q^{m}$, or $|A\cap Q| \ge q^{3m/2}$. In the first case $q^{m/2}$ must divide $|X/X_0|$, which is impossible; and in the last case, $A \cap B$ contains a subgroup of order $q^{\frac{1}{2}m+1}/|X:X_0|_p$, which cannot be cyclic or dihedral.
		
		Thus $|A\cap Q| = q^{m}$. Moreover, $A\cap B$ has a subgroup of order $q$, so the cyclic/dihedral condition implies that either $q=p$ or $q=4$. Finally, \cite[Prop. 2.7]{BL} demonstrates the existence of a factorisation in this case, namely the factorisation $\PSO^+_{2m}(q) = AB$ with $A = q^{m}.\frac{q^{m}-1}{(2,q-1)}$ and $B = N_1$. 
	\end{proof}
	
	\begin{prop}\label{caseI9} Theorem $\ref{classgen}$ holds for $(X_0,H,K)$ as in line $9$ of Table $\ref{9inf}$.
	\end{prop}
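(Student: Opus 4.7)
The plan is to follow the template of Propositions~\ref{caseI7} and~\ref{caseI8}, adapted to the triality setting of $D_4$. Since the three parabolics $P_1, P_3, P_4$ appearing in line 9 of Table~\ref{9inf} are permuted by triality, I may assume $i=1$. Write $Q$ for the (abelian) unipotent radical of $P_1$, so $|Q|=q^6$; the derived group of a Levi complement is $\Omega_6^+(q)\cong \SL_4(q)/\{\pm I\}$, acting on $Q$ as the exterior square $\Lambda^2 V_4$ of its natural $4$-dimensional module $V_4$ via the standard $A_3 \to D_3$ isomorphism. Let $T$ be a cyclic subgroup of order $(q^4-1)/d$ inside this Levi coming from a Singer cycle of $\GL_4(q)$. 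By an order count analogous to \cite[Prop.~2.9]{BL}, the order $|A|$ is divisible by $q^3(q^4-1)/d$, and so $T \le A$ and $|A\cap Q| \ge q^3$.

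The first key step will be to determine the $T$-submodule lattice of $Q$. Letting $t$ generate $T$ and act on $\overline{V}_4 = V_4 \otimes \overline{\F}_q$ as $\mathrm{diag}(\lambda,\lambda^q,\lambda^{q^2},\lambda^{q^3})$ with $|\lambda|=q^4-1$, the element $t$ acts on $\Lambda^2 \overline{V}_4$ with the six eigenvalues $\lambda^{q^i+q^j}$ for $0 \le i < j \le 3$. Under the Frobenius shift $\{i,j\} \mapsto \{i+1,j+1\} \pmod 4$, these pairs partition into one orbit of length $4$ and one orbit of length $2$ (namely $\{\{0,2\},\{1,3\}\}$), so $Q$ has exactly two $T$-composition factors over $\F_q$, of dimensions $4$ and $2$. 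Combined with the lower bound $|A\cap Q| \ge q^3$, this forces $|A\cap Q|\in\{q^4,q^6\}$.

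In the case $|A\cap Q|=q^4$, comparing the $p$-parts of both sides of $|A|\cdot|B|=|X|\cdot|A\cap B|$ shows that $A\cap B$ must contain a cyclic $p$-subgroup of order comparable to $q/|X:X_0|_p$; the cyclic/dihedral condition then restricts to $q=p$ or $q=4$, and the resulting small list of possibilities is checked directly. In the remaining case $Q \le A$, the intersection $A\cap B$ contains $Q\cap B$; since $B \triangleright \Omega_7(q)$ stabilises a non-singular $1$-space, $Q\cap B$ is a $p$-group of exponent $p$, and the cyclic/dihedral condition forces $|Q\cap B|\le p$, again giving $q=p$ or $q=4$.

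Existence of the factorisation $X=AB$ with $A \supseteq Q$ and $B = \Omega_7(q)$ for these values of $q$ will be deduced from a triality-adjusted version of the construction in \cite[Prop.~2.9]{BL}, and a direct computation will confirm that $A\cap B$ is cyclic or dihedral of the form recorded in Tables~\ref{excep-psl}--\ref{excep-sp}. The principal obstacle will be the triality bookkeeping: one must verify that the three a priori distinct factorisations coming from $P_1$, $P_3$, $P_4$ genuinely coincide up to an outer automorphism of $X_0$, and one must track the effect of $|X:X_0|$ finely enough to distinguish when the intersection is cyclic versus dihedral.
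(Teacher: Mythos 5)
Your overall strategy --- reduce by triality to a single parabolic, compute the $T$-composition factors of $Q$ (dimensions $4$ and $2$), pin down $|A\cap Q|$, and quote \cite{BL} for existence --- is exactly the route the paper takes: it treats line 9 as the $m=4$ instance of line 8 (Proposition \ref{caseI8}) after applying triality, and your eigenvalue computation on $\Lambda^2\overline{V}_4$ is correct. But there are two genuine gaps. First, the case $Q\le A$ must be \emph{eliminated}, not retained. If $|A\cap Q|=q^6$, then comparing $p$-parts in $|A\cap B|=|A|/|X:B|$ and using $|X_0:B\cap X_0|_p\le q^3$ shows that $A\cap B$ contains a $p$-subgroup of order at least $q^{3}/|X:X_0|_p$ sitting inside $Q.[4_p]$, hence of exponent at most $4p$; for $q>2$ this is neither cyclic nor dihedral. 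You instead conclude only that $|Q\cap B|\le p$ and then assert the existence of factorisations with $A\supseteq Q$ --- no such factorisations exist, and admitting them would produce entries (a soluble factor containing all of $q^6$) that are not in Table \ref{families}. Relatedly, your bound $|A\cap Q|\ge q^3$ does not follow from divisibility of $|A|$ by $q^3(q^4-1)/d$ alone, since the $p$-part of $A/(A\cap Q)$ can be as large as $4_p\,|X:X_0|_p$; you need the stronger divisibility $q^4(q^4-1)/(2,q-1)$ from \cite[Prop. 2.9]{BL}, as used in Proposition \ref{caseI8}.

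Second, $q=2$ cannot be absorbed into the generic argument. For $q=2$ the counting that rules out $|A\cap Q|\le q^2$ and $|A\cap Q|=q^6$ degenerates (the factor $4_2\cdot|X:X_0|_2$ can swallow $q^3$), and indeed $\O_8^+(2)$ admits the factorisation $\Sp_6(2)\cdot[2^4.15.4]$ with intersection of order $8$, which lies outside the generic pattern $A\cap B=[q]$ or $[2q]$ and is recorded separately in Table \ref{excep-orthog}. The paper disposes of $q=2$ by a Magma computation; your ``checked directly'' would have to amount to the same, and as written your argument would miss these exceptional intersections.
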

	
	\begin{proof} 
		This case is the $m=4$ version of line 8 of Table \ref{9inf}, but allowing a triality automorphism of $X_0$ to be applied to the subgroups $H,K$. Provided $q>2$, the proof of Proposition \ref{caseI8} goes through, and for $q=2$ we use Magma to obtain the conclusion.    \end{proof}
	
	%%%%%%%%%
	
	\section{The map theorem for the alternating groups}\label{s:anmaps}
	
	In this section we prove Theorem \ref{t:maps-an}. The procedure is to examine the cyclic/dihedral 
	factorisations of alternating and symmetric groups given by Theorem \ref{t:Anfactns}, and for those satisfying the conditions in Hypothesis~\ref{h:map} to determine all corresponding arc-transitive embeddings.  
	%In this result $K_n$ denotes the \emph{complete graph} on $n$ vertices, while $K_{n,n}\setminus n.K_2$ is the complete bipartite graph $K_{n,n}$ minus a complete matching; $J(n,2)$ is the \emph{Johnson graph} with vertices the set of unordered pairs from en $n$-set and two pairs adjacent in they contain a common point.
	For convenience, we restate Theorem \ref{t:maps-an} here:
	
	\begin{thm}\label{p:maps-an} 
		Suppose that Hypothesis $\ref{h:map}$ holds for $X, G, \G$ and involution $g\in G$ with $\soc(X)=A_n$ for some $n\geq5$. Then either 
		\begin{itemize}
			\item[{\rm (i)}]  one of the lines of Table~$\ref{t:anmaps}$ or  $\ref{6711tbl}$ holds; or
			\item[{\rm (ii)}] ${\rm soc}(G) = A_{n-1}$, and there are examples for infinitely many values of $n$.    
		\end{itemize}
	\end{thm}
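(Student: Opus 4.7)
The overall plan is to apply Theorem~\ref{t:Anfactns} to the factorisation $X = G\,X_\alpha$ arising from Hypothesis~\ref{h:map} (setting $A = G$, $B = X_\alpha$, or vice versa), and then for each candidate pair $(G, X_\alpha)$ with $G \cap X_\alpha$ cyclic or dihedral, decide whether Hypothesis~\ref{h:map}(b),(c) can be realised by a suitable involution $g \in G$; if so, identify the resulting orbital graph $\Gamma = (\Omega, \{\alpha, \alpha^g\}^X)$. The exceptional factorisations in Theorem~\ref{t:Anfactns}(b) (with $n \le 10$) can be dispatched by the Magma procedure described in \S\ref{maggie}, producing the small-$n$ rows of Table~\ref{6711tbl}.

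For case (a.1)(i) of Theorem~\ref{t:Anfactns}, one factor is the natural point stabiliser $X_x$ and the other is a transitive subgroup $C$ with $C_x$ cyclic or dihedral; the analysis splits according to which factor is chosen as $X_\alpha$. If $X_\alpha = X_x$ the $X$-action on vertices is the natural action on $\{1,\dots,n\}$, and since $X_\alpha$ is maximal in $X$ for $n \ge 5$, the condition $\langle X_\alpha, g\rangle = X$ reduces to $g \notin X_\alpha$; running through the transitive groups $C \le X$ with cyclic or dihedral point stabiliser and checking which admit a suitable involution, I expect to recover the affine infinite family $G = \AGL_1(n)$ with $n$ a prime power, giving $\Gamma = K_n$ (lines 1--2 of Table~\ref{t:anmaps}), together with the small $K_6$/$K_{6,6} \setminus 6.K_2$ exceptions of Table~\ref{6711tbl}. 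If instead $G = X_x$ then $\soc(G) = A_{n-1}$ and we are automatically in conclusion~(ii) of the theorem. Case~(a.1)(ii), where $A = A_{n-1}$ sits inside $X = S_n$, is handled identically: either $G = A_{n-1}$ (so $\soc(G) = A_{n-1}$) or $X_\alpha = A_{n-1}$ and the vertex action is the imprimitive $2n$-point action, yielding the $K_{6,6} \setminus 6.K_2$ and $K_{6,6}$ entries of Table~\ref{6711tbl}.

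For the $k \ge 2$ rows of Theorem~\ref{t:Anfactns}(a.2), I would proceed through Table~\ref{tab:Anfactns} line by line. The condition $X_\alpha = G_\alpha X_{\alpha\beta}$ from Hypothesis~\ref{h:map}(a) is a strong restriction, and together with $|G_{\alpha\beta}| \le 2$ and the normal-subgroup condition~(c) it eliminates most possibilities. When $k = 2$ one invokes Lemma~\ref{lem:k2-B2hom} to pin down the 2-homogeneous candidates for $G$; the only surviving infinite family has $X_\alpha$ a pair-stabiliser, $G = \PGL_2(n-1)$ with $n-1$ a prime power, giving $\Gamma = J(n,2)$ (line 3 of Table~\ref{t:anmaps}), and the small exceptions produce the remaining rows of Table~\ref{6711tbl} (in particular the antiflag graph of $\PG_3(2)$ arising from $(X, G) = (S_7, S_5 \times 2)$ and $(S_8, S_5 \times 2)$). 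For $k \in \{3,4,5\}$, inspection of the $k$-homogeneous groups $G$ in Table~\ref{tab:Anfactns} gives at most a handful of admissible involutions, producing only the $S_{11}$ row of Table~\ref{6711tbl} among larger cases; all of these can be verified with Magma.

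The main obstacle, I expect, is the sub-case of (a.1)(i) with $X_\alpha = X_x$: the class of transitive groups of degree $n$ with cyclic or dihedral point stabiliser is wide, and showing that only the affine groups $\AGL_1(p^f)$ and the sporadic $A_6$-related examples survive the involution constraints requires a careful traversal of this class, repeatedly exploiting Hypothesis~\ref{h:map}(c) to exclude candidates. Finally, conclusion~(ii) of Theorem~\ref{t:maps-an} is justified by Construction~\ref{c:k1eg} and Theorem~\ref{k1eg}, which for every prime $p \in \Pi$ -- an infinite set by Remark~\ref{r:infprime} -- produces an arc-transitive embedding of a graph $\Gamma(p)$ with $X = A_n$, $G = A_{n-1}$, $n = (p-1)!/2$, thereby supplying the required infinite family of non-tabulated examples.
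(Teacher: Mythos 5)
Your proposal follows essentially the same route as the paper: reduce via Theorem~\ref{t:Anfactns} to the factorisation $X=X_\a G$, split according to which factor plays the role of $X_\a$ and the value of $k$, use Lemma~\ref{lem:k2-B2hom} for $k=2$ and Magma for $n\le 10$, and invoke Construction~\ref{c:k1eg} with Theorem~\ref{k1eg} for conclusion~(ii). The one obstacle you flag — pinning down the transitive groups $G$ in the $\G=K_n$ case — is resolved in the paper by observing that arc-transitivity plus $|G_{\a\b}|\le 2$ makes $G$ sharply $2$-transitive or subject to Ito's theorem on $2$-transitive groups of order $2(n-1)n$, after which Lemma~\ref{l:aff2t} forces $G=\AGL_1(p^f)$.
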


	We first prove two lemmas about $2$-transitive groups which are relevant for the proof of Theorem~\ref{p:maps-an}.
	
	\begin{lemma}\label{l:aff2t}
		Let $\O$ be a set of size $q=p^f$ for a prime $p$ and integer $f\geq1$ with $q\geq5$,  let $G<\Sym(\O)$ be $2$-transitive of affine type such that $G_\a$ is cyclic or dihedral, for  $\a\in\O$. Then
		
		\begin{enumerate}
			\item[(i)]  $G_\a=C_{q-1}$ is a Singer cycle, and $G=\AGL_1(q)$;
			\item[(ii)] if $q$ is even then $G\leq \Alt(\O)$;
			\item[(iii)] if $q$ is odd then $G\not\leq \Alt(\O)$ and $G\cap \Alt(\O)$ is not $2$-transitive.
		\end{enumerate}
		%    So $G\leq A_q$ if and only if $G=\AGL_1(q)$ with $q$ even, or $n=9$ and $G=3^2.D_8$ (one of the two cases in part (ii)).
	\end{lemma}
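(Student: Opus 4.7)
The plan is to start from Hering's classification of finite $2$-transitive affine groups and then carry out an orbit analysis inside $\GaL_1(q)$. First I would invoke Hering's theorem (as in \cite[Appendix]{lieb}): the point stabiliser $G_\a$ of a $2$-transitive affine group of degree $p^f$ either lies in $\GaL_1(q)$, or contains one of $\SL_a(q_0)$, $\Sp_{2a}(q_0)$, $G_2(q_0)'$, or is one of a short list of sporadic groups ($2^{1+4}$, $A_6$, $A_7$, and extensions of $\SL_2(3)$ and $\SL_2(5)$). Under the cyclic or dihedral hypothesis on $G_\a$, every non-$\GaL_1$ possibility is ruled out, except for $\SL_2(2)\cong D_6$ with $q=4$, which the hypothesis $q\ge 5$ excludes.

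Next, I would show that inside $\GaL_1(q)=C_{q-1}\rtimes C_f$, transitivity of $G_\a$ on the $q-1$ nonzero vectors forces $G_\a=\GL_1(q)$. Write $C=G_\a\cap \F_q^*$ of order $m\mid q-1$, and let $e=|G_\a:C|$, so $|G_\a|=me$ and $e\mid f$. In the cyclic case, $G_\a$ abelian forces the Galois part to centralise $C$, giving $m\mid p^{f/e}-1$, so $m\le p^{f/e}-1$. Using $q-1=(p^{f/e}-1)(1+p^{f/e}+\cdots+p^{(e-1)f/e})$, one sees $q-1>e(p^{f/e}-1)\ge me$ whenever $e>1$, contradicting $me\ge q-1$; hence $e=1$ and $G_\a=C_{q-1}$ is Singer. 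In the dihedral case, write $G_\a=\la c\ra\rtimes\la s\ra$ with $|c|=m$ and $s$ an involution inverting $c$. A direct calculation in $\GaL_1(q)$ shows that if $s=(a,\sigma^k)$ then $scs^{-1}=c^{p^k}$, so $p^k\equiv-1\pmod{m}$, while transitivity gives $m\in\{q-1,(q-1)/2\}$. The case $m=q-1$ forces $p^k=q-2$ with $0<k<f$, which has no solution for $q\ge 5$. The case $m=(q-1)/2$ forces $q$ odd, $f$ even and $k=f/2$ (from $s^2=1$); an arithmetic bound $(q-1)/2\mid p^{f/2}+1$ then forces $q\le 9$, and direct inspection at $q=9$ shows the resulting $D_8$ has $1$-orbit equal to $C$ of size $4$, hence is not transitive on $\F_9^*$.

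Finally, with $G=\AGL_1(q)$ established, parts (ii) and (iii) reduce to a parity computation. A nontrivial translation is a product of $q/p$ cycles of length $p$, and is even since either $p$ is odd or $q=2^f$ with $f\ge 3$ (making $q/p$ even). A Singer generator fixes $0$ and is a $(q-1)$-cycle on $\F_q^*$, which is even iff $q$ is even. So if $q$ is even then $G\le\Alt(\O)$, proving (ii); if $q$ is odd then the Singer cycle is odd, so $G\not\le\Alt(\O)$, and $G\cap\Alt(\O)=V\rtimes C_{(q-1)/2}$ acts on $V\setminus\{0\}$ with two orbits (squares and non-squares in $\F_q^*$), hence is not $2$-transitive, giving (iii). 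The main obstacle in the plan is the dihedral subcase, where cleanly eliminating the borderline $q=9$ requires either careful arithmetic bookkeeping or a short direct verification inside $\GaL_1(9)$.
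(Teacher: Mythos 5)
Your argument is correct and follows the same overall strategy as the paper: reduce to $G_\a\leq\GaL_1(q)$, run the arithmetic on the congruence $p^k\equiv -1$ modulo the order of the rotation subgroup to cut the dihedral case down to $q=9$, dispose of $q=9$ by direct inspection of the unique $D_8$ in $\GaL_1(9)$, and finish (ii),(iii) with the parity computation for translations and a Singer generator. The one substantive difference is the reduction step: you invoke Hering's classification of $2$-transitive affine groups to place $G_\a$ inside $\GaL_1(q)$, whereas the paper avoids that classification and instead argues directly that $G_\a$ (in the cyclic case), or its index-$2$ cyclic subgroup $L$ (in the dihedral case, with irreducibility of $L$ established via primitive prime divisors), is an irreducible cyclic subgroup of $\GL_f(p)$ and hence lies in a Singer cycle by Huppert's S\"atze II.7.3 and II.3.10. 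Your route is shorter but classification-dependent; the paper's is self-contained modulo classical facts about Singer cycles. One small point you should make explicit in the dihedral case: the identification of the rotation subgroup $\la c\ra$ with $C=G_\a\cap\F_q^*$ needs a word of justification — it follows because $G_\a/C$ embeds in $C_f$, and the only nontrivial cyclic quotient of a dihedral group of order at least $6$ is $C_2$, so $C$ is forced to be the unique cyclic index-$2$ subgroup.
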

	
	\begin{proof}
		We identify $\O$ with the vector space $\mathbb{F}_p^f$ and choose $\a=0$ so that $H:=G_\a\leq \GL_{f}(p)$ is transitive on $\O\setminus\{\a\}$, and $H$ is cyclic or dihedral.  In particular $p^{f}-1 \mid |H|$, and $H$ is irreducible on $\O$. If $H$ is cyclic then by \cite[Satz II.7.3]{H67}, $H$ is a Singer cycle, and $G=\AGL_1(q)$, as in part (i). 
		
		Now assume that $H$ is dihedral, so $f\geq 2$ and $H=L.2$ with $L$ cyclic. Note that $|L|$ is divisible by $(q-1)/(2,q-1)\geq 2$. 
		We claim that $L$ is irreducible on $\O$. If $(p,f)=(2,6)$ then $Z_{21}\leq L$ is irreducible; while if $(f,p+1)=(2,2^a)$, then $L$ contains $Z_{2(p-1)}$ which is irreducible. If neither of these cases holds, then  $p^{f}-1$ has a primitive prime divisor, $s$ say,  $L$ contains a subgroup of order $s$ which is irreducible on $\O$ (see \cite[Satz II.7.3]{H67}), so the claim is proved. 
		
		By \cite[Satz II.3.10]{H67}, the $L$-action is equivalent to the action of a subgroup of $\GL_1(p^{f})$, that is, $L$ is a subgroup of a Singer cycle $\langle g\rangle$. Thus $L=\langle g^a\rangle$ where $a\mid q-1$, and $a\leq 2$ since $|L|$ is divisible by $(q-1)/(2,q-1)$. The group $H$ is contained in $N_{\GL(f,p)}(L)$, which by \cite[Satz II.7.3(a)]{H67} is the normaliser $\GaL_1(q)$ of the Singer cycle  $\langle g\rangle$. Let $x \in H\setminus L$, so $|x|=2$ as $H$ is a dihedral group, and $x = yz$, where $y \in \langle g\rangle$ (so $y$ centralises  $L$) and $z$ is an automorphism of $\mathbb{F}_{p^f}$.  
		Since $H$ is not cyclic we have $z\ne 1$ and so $f>1$. As $H$ is dihedral, $z$ must be a field automorphism of order $2$, so $f=2b$ say. Hence $(g^a)^x = (g^a)^z = g^{a p^b}$, and since $(g^a)^x=g^{-a}$ (as $H$ is dihedral) it follows that $|g|$ divides $a(p^b+1)$. On the other hand $|g|=p^{2b}-1$ and $a\leq 2$, and hence $p^b\leq 3$. Thus $b=1$ and $p\leq 3$, and as $q=p^f=p^2\geq 5$ it follows that $q=9$ and $a=2$, so $L=\langle g^2\rangle$ and $x=g^jz$ for some $j$. Since $|x|=2$ we have $1=g^jzg^jz = g^{4j}$ and hence $j$ is even (since $|g|=8$). This implies that $H=\langle g^2, z\rangle$, but this group is not transitive on $\O\setminus\{\a\}$, so we have a contradiction. Thus part (i) is proved.
		
		Finally the Singer cycle $H$ contains an odd permutation if and only if $q$ is odd, so part (ii) holds, and if $q$ is odd then $G\cap\Alt(\O)$ has order $q(q-1)/2$, so cannot be $2$-transitive. This completes the proof.
		%
		% we must have $H=\langle g^2, gz\rangle$.  Each element$h\in H\setminus \langle g^2, gz\rangle$ has the form $h=g^{2j+1}z$ for some $j$, and $h^2 =g^{2j+1}(g^{2j+1})^z =(g^{2j+1})^{1+3}=g^4$ (since $|g|=8$). It follows that $H\cong Q_8$, so there are no possibilities for $H$ dihedral. 
		% %Note that $z$ has cycle type $2^3$ on $\O$ so is an odd permutation, and also $g$ is an odd permutation. Hence $gz \in A_8$, so $H\leq A_8$, as in part (a)(ii). 
		% Thus part (a) is proved.
		%
		% Now assume that $G\not\leq \Alt(\O)$ and $G\cap \Alt(\O)$ is $2$-transitive. Then $G_\a\not\leq A_{q-1}$ and $H:=G_\a\cap A_{q=1}$ is transitive on $\O\setminus\{\a\}$. Then applying part (a) to $H$ and using the above notation, either (i) $H=\langle g\rangle$ is a Singer cycle and $q$ is even, or (ii), $n=9$ and $H=\langle g^2,gz\rangle \cong D_8$. Since $G_\a\leq N_{\GL(f,p)}(H)$ and in both cases $H$ is irreducible, it follows that $G_\a\leq \langle g\rangle.f =\GaL_1(q)$. In case (ii) this means that $G_\a=\langle g,z\rangle =\GL_1(9).2$ as in part (b). Suppose that case (i) holds. Then as $|G_\a|=2|H|$, it follows that $f=2b$ and $G_\a = \langle g, z\rangle$ with $z$ a field automorphism of order $2$. Here $q=2^f=2^{2b}>4$ and $z$ acts on $\O$ with cycle type $2^{(2^f-2^b)/2}$; and  since $(2^f-2^b)/2$ is a multiple of $2^{b-1}\geq 2$ it follows that $z$ is an even permutation. Thus $G_\a=\langle g,z\rangle\leq A_{q-1}$, which is a contradiction. 
	\end{proof}
	
	\begin{lemma}\label{l:pgl2}
		Let $G=\PGL_2(q)$, for a prime power $q\geq4$, and let $\O$ denote the set of $\binom{q+1}{2}$ unordered pairs from the projective line $\PG_1(q)$, with $G$ acting naturally on $\O$. Then $G\leq \Alt(\O)$. 
	\end{lemma}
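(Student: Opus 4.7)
The plan is to split on the parity of $q$ and, in each case, reduce the claim to a statement about a single well-chosen element.

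For $q$ even with $q\geq 4$, the group $G=\PGL_2(q)=\PSL_2(q)$ is non-abelian simple, so it has no subgroup of index $2$. Since $G\cap\Alt(\O)$ has index at most $2$ in $G$, this case is immediate.

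For $q$ odd (so $q\geq 5$), I would first observe that $\PSL_2(q)$ is the unique subgroup of $G$ of index $2$: it is normal of index $2$ and equal to the derived subgroup $[G,G]$, the latter because $\PSL_2(q)$ is simple and $G/\PSL_2(q)\cong C_2$. Hence $\PSL_2(q)\leq G\cap\Alt(\O)$, and to conclude $G\leq\Alt(\O)$ it suffices to exhibit a single element of $G\setminus\PSL_2(q)$ acting as an even permutation on $\O$.

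For this I will take the image $g\in G$ of the matrix $\operatorname{diag}(\lambda,1)$, where $\lambda$ generates $\F_q^\times$. Since $\lambda$ has even order $q-1$ it is a non-square, so $g\notin\PSL_2(q)$. Identifying $\PG_1(q)$ with $\F_q\cup\{\infty\}$, the element $g$ fixes $0$ and $\infty$ and acts as a single $(q-1)$-cycle on $\F_q^\times$, so has cycle type $(1,1,q-1)$ on $\PG_1(q)$. The induced cycle structure on $\O$ will split according to how many points of each pair lie in $\{0,\infty\}$; since $q-1$ is even, the within-cycle orbits decompose as $(q-3)/2$ orbits of length $q-1$ together with one orbit of length $(q-1)/2$. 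Assembling the contributions, the sign of $g$ on $\O$ should come out to $(-1)^{(q+1)/2}\cdot(-1)^{(q-3)/2}=(-1)^{q-1}=+1$, as desired.

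The main potential obstacle is the bookkeeping for the cycle structure induced on $2$-subsets, in particular the short orbit of length $(q-1)/2$ that appears when the even-length cycle on $\F_q^\times$ is broken up; once that decomposition is recorded carefully, the sign computation is a short telescoping check.
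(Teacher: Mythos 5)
Your proof is correct and follows essentially the same route as the paper's: both reduce to checking a single element of $\PGL_2(q)\setminus\PSL_2(q)$ for $q$ odd, choose multiplication by a primitive element (cycle type $1^2\cdot(q-1)$ on $\PG_1(q)$), and compute the induced cycle structure on pairs, including the distinguished short orbit of length $(q-1)/2$. The only cosmetic difference is that your sign computation telescopes to $(-1)^{q-1}=+1$ directly, whereas the paper counts even-length cycles and splits into the cases $q\equiv 1$ and $q\equiv 3 \pmod 4$.
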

	%{\color{red} Is there a nice reference for this?}
	\begin{proof}
		If $q$ is even then $G$ is a nonabelian simple group so $G\leq \Alt(\O)$. Suppose then that $q$ is odd. Then $G\cap \Alt(\O)$ has index at most $2$ in $G$ and hence $\PSL_2(q)\leq\Alt(\O)$. Thus $G\not\leq \Alt(\O)$ if and only if some, and hence all elements in $G\setminus\PSL_2(q)$ induce odd permutations of $\O$.  We write $\PG_1(q)=\{\infty\}\cup \mathbb{F}_q$. Let $\omega\in\mathbb{F}_q^*$ be a primitive element, and consider the element $h\in G$ which acts on $\PG_1(q)$ by fixing $\infty$ and mapping $x\to x\omega$ for $x\in\mathbb{F}_q$. Then $h\in G\setminus\PSL_2(q)$, and $h$ acts on $\PG_1(q)$ with cycle type $1^2\cdot (q-1)^1$. In its action on $\O$, $h$ has one fixed point $\{\infty,0\}$ and all other cycles have length $q-1$ except for a unique cycle of length $(q-1)/2$ consisting of the  pairs $\{\omega^{i},\omega^{i+(q-1)/2}\}$ with $0\leq i<(q-1)/2$.  Thus $h$ on $\O$ has cycle type $1^1\cdot (\frac{q-1}{2})^1\cdot (q-1)^{(q+1)/2}$. If $q\equiv 1\pmod{4}$ then $h$ has $(q+3)/2$ cycles of even length and $(q+3)/2$ is even, so $h\in\Alt(\O)$; if $q\equiv 3\pmod{4}$ then $h$ has $(q+1)/2$ cycles of even length (since here $(q-1)/2$ is odd) and $(q+1)/2$ is even, so again $h\in\Alt(\O)$. 
		It follows that in all cases $G=\PGL_2(q)\leq \Alt(\O)$.
	\end{proof}
	
	\no {\bf Proof of Theorem \ref{p:maps-an}}
	
	\vspace{2mm}
	Let $X, G, g,$ $\G = (\O,E)$ be as in Hypothesis~\ref{h:map}  where $X_0 = A_n$ with $n\geq5$.  Then $G$ does not contain $A_n$, and 
	$X=AB$ where $\{A,B\}=\{X_\a,G\}$. By Theorem~\ref{t:Anfactns}, either $n\in\{6,8,10\}$, or interchanging $A$ and $B$ if necessary we have
	\begin{equation}\label{e:gencaserep1}
		A_{n-k}\unlhd A\leq S_{n-k}\times S_k, \ \mbox{with $B$ $k$-homogeneous on $\Omega$, for some $k\in\{1,\dots,5\}$.}
	\end{equation}
	Since $X = X_\a G$ with $G_\a$ cyclic or dihedral, the possibilities for $(X,G,G_\a)$ were determined in Theorem~\ref{t:Anfactns} and we treat them according to the following subdivision:
	\begin{itemize}
		\item[(a)] \eqref{e:gencaserep1} holds for $X_\a$, and $G$ is as in Thm. \ref{t:Anfactns}(i)-(v);
		\item[(b)] \eqref{e:gencaserep1} holds for $G$, and $X_\a$ is as in Thm. \ref{t:Anfactns}(i)-(v);
		\item[(c)] $n=6,8$ or 10.
	\end{itemize}
	We shall deal with these possibilities case-by-case. For $n\le 10$, we compute using Magma all the possible 
	tuples $(X, X_\a, X_{\a\b}, G, G_\a, g)$ satisfying Hypothesis~\ref{h:map}, and confirm that the result holds for these small cases. We comment on these cases in the proof.   
	
	\subsection{Case (a) with $k=1$ }\label{ak1} Here either $X_\a = S_{n-1}\cap X$, or $(X, X_\a)=(S_n, A_{n-1})$. 
	
	If $X_\a = S_{n-1}\cap X$ then $X$ acts 2-transitively on $\O$ of degree $|\O|=n$, so $\G=K_n$, the complete graph. 
	Being arc-transitive, $G$ is also 2-transitive, and by Hypothesis~\ref{h:map} we have $|G_{\a\b}|\le 2$. If 
	$G_{\a\b} = 1$, then $G$ is sharply 2-transitive on $\O$, and it follows from \cite[Theorem 3.4B]{DM} that $G$ is of affine type so $n$ is a prime-power. And if $G_{\a\b} = C_2$, then a classical result of Ito \cite[Theorem]{Ito} shows that either $(G,\O)$ is affine (in which case $n$ is a prime-power), or $(G,n) = (\PSL_2(5),6)$ or $(\PSL_2(8).3, 28)$. If $G$ is of affine type then, by Lemma~\ref{l:aff2t}, $G=\AGL_1(p^f)$, where $n=p^f$ is a prime power, with $G<A_n$ if and only if $p=2$. 
	In the second case $G<A_6$ so $X=S_6$ or $A_6$, and the third case cannot occur as $G_\a$ is cyclic or dihedral. The infinite family appears in lines 1 and 2 of Table \ref{t:anmaps}, and the case with $n=6$ is in Table~\ref{6711tbl}. 
	
	(Note that for the complete graph $K_n$, the edge-transitive embeddings have been classified completely in \cite[Theorem 1.8]{J21} building on constructions and classifications in \cite{B71, Ja83, JJ85}. In particular, as we have shown, $n$ must be either 6 or a prime-power, and in both cases arc-transitive embeddings exist. If $n=6$ then there is an example with map group $G=\PSL_2(5)\cong A_5$, see \cite[Figure 2]{J21}.)
	
	Now suppose that $(X, X_\a)=(S_n, A_{n-1})$.  Here $|\O| = 2n$, and $X$ has a set $\{\D_1,\D_2\}$ of two  blocks of imprimitivity of size $n$ in $\O$, with $\a\in\D_1$, such that  the subgroup $X^+$ of index $2$ in $X$ fixing both blocks $\D_i$ setwise is $A_n$. Moreover, $X_\a$ is transitive on $\D_1\setminus\{\a\}$, $X_\a$ is the stabiliser of a point $\a'\in\D_2$ and is transitive on $\D_2\setminus\{\a'\}$. The only connected graph $\G = (\O,E)$ on which $X$ acts arc-transitively is $K_{n,n}\setminus nK_2$. As $G$ is vertex transitive it follows that $G\not\leq A_n$ and  the subgroup of $G$ fixing both blocks setwise is $G^+ := G \cap A_n$, of index $2$ in $G$. Arc-transitivity of $G$ implies that $G^+$ induces equivalent 2-transitive actions on the $\D_i$. Thus by Hypothesis~\ref{h:map}, $(G^+,\D_1)$ is a 2-transitive group with the property that, for $\a,\gamma\in\D_1$, $G^+_\a$ is cyclic or dihedral, and $G^+_{\a\gamma}= 1$ or $C_2$.
	
	Since $|G^+_{\a\gamma}| \le 2$, it follows as in the first paragraph of this subsection that either $(G^+,\D_1)$ is affine (in which case $n$ is a prime-power), or $(G^+,n) = (\PSL_2(5),6)$ or $(\PSL_2(8).3, 28)$. 
	The first case is not possible by Lemma~\ref{l:aff2t}, and the last case is not possible, as $\PSL_2(8).3$ cannot be extended by a cyclic group of order 2 contained in $X=S_{28}$. Hence $n=6$ and $(G^+,n) = (\PSL_2(5),6)$. This leads to an example in Table~\ref{6711tbl}; we can check  by a computation in Magma that $G$ contains an  involution $g$ as in Hypothesis~\ref{h:map}(b) such that all of the conditions (i)-(iii) of Proposition \ref{cri} hold, 
	
	% : in this case $G$ must be $S_5$ and the involution $g$ in Hypothesis~\ref{h:map}(b) normalises $G_{\a\b} = \la z \ra \cong C_2$ and lies in $\PGL_2(5)\setminus \PSL_2(5)$ (and hence must be a transposition in the natural action of $S_5\cong \PGL_2(5)$). It is straightforward to see that $\langle G_\a,g\rangle=G$ and this implies that  $\langle X_\a,g\rangle=X$. 
	
	% It remains to prove the existence of embeddings of this type (namely, $(X, X_\a)=(S_n, A_{n-1})$), for the values $n=r^2$, $r \equiv 3 \hbox{ mod }4$, where $r$ is a prime-power. Suppose then that $n=r^2$ for such $r$. Let $G = \AGL_1(r^2)\la g\ra$ acting on $\O = \F_{r^2}$, where $g$ acts as the field automorphism $\l \mapsto \l^r$ of order 2. Then $g$ acts on $\O$ with $r$ fixed points and $\frac{1}{2}(r^2-r)$ cycles of length 2, hence is an odd permutation in $X = S_{r^2}$. At this point we see
	
	\subsection{Case(a) with $k=2$ } Here $A_{n-2} \le X_\a \le (S_{n-2}\times S_2)\cap X$ with $G$ 2-homogeneous as in Table \ref{tab:B2hom}. 
	
	\subsubsection{}\label{1st} Assume first that $X_\a = (S_{n-2}\times S_2)\cap X$, the stabilizer of a pair in $\{1,\ldots,n\}$. Then $\G$ is either a Johnson graph $J(n,2)$ (where two pairs are joined if and only if their intersection has size 1) or its complement. 
	Suppose first that $\G = J(n,2)$. Then $X_{\a\b} = S_{n-3}\cap X$, and $|X_\a:X_{\a\b}| = 2(n-2)$. Since $X_\a = G_\a X_{\a\b}$, this number must divide $|G_\a|$. On the other hand, from the list of possibilities for $2$-homogeneous subgroups $G$ of $S_n$ in Table \ref{tab:B2hom}, we see that if $G$ is almost simple, then one of the lines of the following table holds, %$|G_\a|$ is as follows (for convenience of notation we write $G^0$ for ${\rm soc}(G)$ and give $|G^0_\a|$):
	
	\[
	\begin{array}{ccl}
		\hline
		n & \soc(G) & |\soc(G)\cap X_\a| \\
		\hline
		q+1 & \PSL_2(q) & (q-1).(2,q) \\
		q^2+1 & ^2\!B_2(q) & 2(q-1) \\
		q^3+1 & \PSU_3(q) & 2(q^2-1)/(3,q+1) \\
		q^3+1 & ^2\!G_2(q) & 2(q-1) \\
		11 & \PSL_2(11) & 12 \\
		7 & \PSL_2(7) & 8 \\
		\hline
	\end{array}
	\]
	
	\no and when $G$ is affine, here are the possibilities for $|G_\a|$:
	
	\[
	\begin{array}{ccc}
		\hline
		n & G & |G_\a| \\
		\hline
		p^2 & \le \AGL_2(p) & \hbox{divides }p(p-1) \\
		q=p^a & \le \AGaL_1(q) & \hbox{divides }2a \\
		p^2 & \le p^2.(p-1).S_4 & \hbox{divides }\frac{48}{p+1} \\
		(p=5,7,11,23) && \\
		q^2 & \le q^2.(q-1).S_5 & \hbox{divides }\frac{240}{q+1} \\
		(q=11,\ldots,59) && \\
		\hline
	\end{array}
	\]
	
	\no From these tables, we see that the only case where $2(n-2)$ divides $|G_\a|$ is 
	\[
	n = q+1,\;G = \PGL_2(q),\;G_\a = D_{2(q-1)} 
	\hbox{ and }G_{\a\b} = 1.
	\]
	We claim that there is an embedding of $\G = J(n,2)$ in this case, as in  Table~\ref{t:anmaps}. Take $X = S_n$ and $G = \PGL_2(q) < X$. Let $g \in G$ be an involution that fixes a point in its action on $I = \{1,\ldots,n\}$; say $g = (1)\,(2\,3)\cdots $ in this action. Let $\a = \{1,2\}$ and $\b= \{1,3\}$, adjacent vertices of $\G$. Then $X_\a = S_{n-2}\times S_2$, $X_{\a\b} = S_{n-3}$, and $g$ normalises $X_{\a\b}$ but does not normalise $X_\a$. Hence $\la X_\a,g\ra = X$. Also $G_\a=D_{2(q-1)}$ and $G_{\a\b}=1$. Hence conditions (i)-(iii) of Proposition \ref{cri} all hold. Hence there is an embedding in this case, as claimed. Also there is no other subgroup $G$ such that $\PSL_2(q)\leq G\leq \PGaL_2(q)$ and $G_\a$ is cyclic or dihedral of order a multiple of $2(q-1)$ (the valency of $\G$). Thus the only possibility for $G$ is $\PGL(2,q)$. Moreover $G\leq\Alt(\O)$ by Lemma~\ref{l:pgl2} so $X$ may be $S_n$ or $A_n$.
	
	Now suppose $\G = J(n,2)^c$, the complement of the Johnson graph. In this case we have $|X_\a:X_{\a\b}| = \binom{n-2}{2}$, and as above this has to divide $|G_\a|$, which is not possible.
	
	\subsubsection{} 
	Next consider the case where $X_\a = S_{n-2}\cap X$, the stabilizer of an ordered pair in $\{1,\ldots,n\}$. In this case the only connected undirected $X$-arc-transitive graph on $\O$ has vertices $ij$ and $kl$ adjacent if and only if $\{i,j\} \cap \{k,l\} = \emptyset$. This graph has valency $(n-2)(n-3)=|X_\a:X_{\a\b}|$, which must divide $|G_\a|$, but we have seen in the previous case that this is impossible. 
	
	\subsubsection{} The remaining possibilities in this case (case (a) with $k=2$) are that $X = S_n$ and $X_\a$ is one of the following:
	\begin{itemize}
		\item [(i)] $A_{n-2}$,
		\item[(ii)] $A_{n-2}\times S_2$, or 
		\item[(iii)] $(S_{n-2}\times S_2)\cap A_{n-2}$. 
	\end{itemize}
	In all cases, there is a block system $\BB$ for $(X,\O)$ such that the action of $X$ on $\BB$ is the action of $S_n$ on pairs from 
	$\{1,\ldots,n\}$. This is a rank 3 action with subdegrees $1$, $2(n-2)$, $(n-2)(n-3)/2$. For each such pair $\{i,j\}$, label the corresponding block $B_{\{ij\}}$, and take $\a \in B_{\{12\}}$. If there is an edge in $\G$ from $\a$ to a vertex in a block $B_{\{ij\}}$ with $\{1,2\}\cap \{i,j\} = \emptyset$, then the valency of $\G$ is at least $(n-2)(n-3)/2$, which must therefore divide $|G_\a|$; we have seen that this is impossible. Hence there can only be edges from $\a$ to vertices in blocks of the form $B_{\{1i\}}$ or $B_{\{2i\}}$ with $i>2$. 
	
	In case (i), when $X_\a = A_{n-2}$, the block system $\BB$ can be refined to a block system $\BB'$ for which the action of $X$ is the action of $S_n$ on ordered pairs of distinct elements of $\{1,\ldots,n\}$; we label the blocks of $\BB'$ as $B_{ij}$. If $\G$ has an edge between vertices of $B_{12}$ and $B_{1i}$ (where $i>2$), then it is disconnected; and if there is an edge between vertices of $B_{12}$ and $B_{i1}$ (where $i>2$) then $\G$ is directed (and not arc-transitive). A similar argument rules out edges between vertices of $B_{12}$ and $B_{i2}$ or $B_{2i}$ (where $i\ne 1, 2$). Hence this case does not occur.
	
	Finally, consider cases (ii) and (iii). In these cases $X_\a$ acts transitively on the set of blocks of $\BB$ of the form $B_{\{1i\}}$ or $B_{\{2i\}}$, so $\G$ has valency at least $2(n-2)$. As we saw in Section \ref{1st}, this forces $n = q+1$, ${\rm soc}(G) = \PSL_2(q)$, and the stabiliser in $\soc(G)$ of $B_{\{12\}}$ (which is the stabiliser of an unordered pair from $\{1,\dots,n\}$) is $D_{t(q-1)}$ with $t=(2,q)$.
	Note that $\soc(G)$ has trivial centraliser in $S_{q+1}$, and hence $G\leq \Aut(\PSL_2(q))$; also $q>4$ (as otherwise $G$ contains $A_n$).  
	Now the largest dihedral subgroup of $\Aut(\PSL_2(q))$ containing $D_{t(q-1)}$ is $D_{2(q-1)}$ (and lies in $\PGL_2(q)$), and the valency of $\G$, which is at least $2(n-2) = 2(q-1)$, must divide $|G_\a|$. It follows that  $G_\a=D_{2(q-1)}$ and that $\G$ has valency $2(n-2)$, and moreover  $|G|=|V\G|\cdot|G_\a|=q(q+1)\cdot 2(q-1)=|\PGL_2(q)|\cdot2$. Since $D_{2(q-1)} <G$ we must have $\PGL_2(q)\leq G$, and hence $q=q_0^2$ and $G=\PGL_2(q).2$. Now the index $2$ subgroup $G_0:=\PGL_2(q)$ contains the stabiliser $G_\a$, and it follows that $\G$ is bipartite with two bipartite blocks $\D_1,\D_2$ fixed setwise by $G_0$. The setwise stabiliser of $\D_1$ and $\D_2$ in $X$ must be the unique index $2$ subgroup $X_0=A_{q+1}$, and hence $X_0$ contains $G_0$. Also $X_\a<X_0$ implies that $X_0$ is not vertex-transitive, and in particular $X_0$ does not contain $G$, so $G\cap X_0=G_0$. However, if $q=q_0^2$ is even, then the involutory field automorphism $\tau\in G$ has $q_0+1$ fixed points and $q_0(q_0-1)/2$ cycles of length $2$ (which is even since $q>4$) in its action on $\{1,\dots,q+1\}$, and hence $\tau\in X_0$. This implies that $G\leq X_0$ which is a contradiction. Hence $q$ is odd. However in this case we have $G_\a<G_0<A_{q+1}$, while the cyclic subgroup $C_{q-1}$ of $G_\a$ is generated by an odd permutation of $\{1,\dots,q+1\}$, which is a contradiction. 
	
	\subsection{Case(a) with $k=3$ }\label{ak3}
	Here $A_{n-3} \le X_\a \le (S_{n-3}\times S_3)\cap X$ with $G$ 3-homogeneous as in Table \ref{tab:Anfactns}. 
	In the light of our computations with Magma, we may assume that $n\geq 10$ here. The possibilities with $G_\a$ cyclic or dihedral are as follows:
	% \[
	% \begin{array}{ccc}
		% \hline
		% G & n & |G_\a| \hbox{ divides} \\
		% \hline
		% {\rm soc}(G)=\PSL_2(q)\ (q=p^f) & q+1 & 6f \\
		% %(q=p^f) && \\
		% M_{11} & 12 & 12 \\
		% \AGL_3(2),\,\AGL_1(8),\,\AGaL_1(8) & 8 & 8,1,3 \\
		% \AGaL_1(32) & 32 & 1 \\
		% \hline
		% \end{array}
	% \]
	\[
	\begin{array}{ccc}
		\hline
		n & X_\a &  |G_\a| \hbox{ divides} \\
		\hline
		q+1 & {\rm soc}(X_\a)=\PSL_2(q)\ (q=p^f) & 6f \\
		12 & M_{11} & 12 \\
		8 & AGL_3(2),\,AGL_1(8),\,A\G L_1(8) & 8,1,3 \\
		32 & A\G L_1(32) & 1 \\
		\hline
	\end{array}
	\]
	
	By Hypothesis~\ref{h:map}(c), $X_{\a\b}$ does not contain $A_{n-3}$, and so the factorisation $X_\a = G_\a X_{\a\b}$ implies that $|G_\a|$ is divisible by $|A_{n-3}: X_{\a\b}\cap A_{n-3}|$, the degree of a nontrivial transitive action of $A_{n-3}$. This rules out all but the first line of the above table. For the first line, we have $n-3 = q-2$, so it follows that $5\leq q-2 \le 6f$ (where $q=p^f$), which implies that $q$ is one of the following:
	\[
	32,\,16,\,9,\,8,\,7.
	\]
	For $q=32$, we must have $|G_\a| = 6f=30$ and $G = \PGaL_2(32)$; but then $G_\a = S_3 \times 5$, which is not dihedral. And for $q=16$ or $9$, $A_{q-2}$ has no transitive action of degree dividing $6f$. 
	In the remaining cases $q=8$ or $q=7$, we have $n = 9$ or 8, cases already checked using Magma. 
	
	%and here $G \in\{ \PSL_2(8), \PSL_2(8).3\}$ or  $G \in\{ \PSL_2(7), \PGL_2(7)\}$, respectively.
	%The stabilizer $Y$ in $\PGaL_2(q)$ of a 3-subset of $\{1,\ldots,q+1\}$ is $S_3\times 3$ or $S_3$ respectively, so $G_\a$ (being cyclic or dihedral) must have order dividing $6$. On the other hand $|G_\a|$ is divisible by the index $|A_{q-2}: X_{\a\b}\cap A_{q-2}|$, and it follows that $|G_\a|=6$, $\G$ has valency 6, and either (i) $G_\a=G\cap Y$, or (ii)  $q=8, G=\PSL_2(8).3, $ and $|Y|=3|G_\a|$. In case (i), $|V\G|=\binom{q+1}{3}$, so $X_\a=(S_{q-2}\times S_3)\cap X$, and $X_{\a\b}=(A\times S_3)\cap X$ with $(A, q)=(S_5,8)$ or $(F_{20},7)$. In either case the normal subgroup $C_3$ of $X_\a$ gives a contradiction to Hypothesis~\ref{h:map}(c). Thus case (ii) holds and $|(S_6\times S_3)\cap X:X_\a|=3$, which implies that $X_\a = (S_6\times S_2)\cap X$ and  $X_{\a\b} = (S_5\times S_2)\cap X$. 
	%If $X=S_{9}$ then the normal subgroup  $C_2$ of $X_\a$ gives a contradiction to Hypothesis~\ref{h:map}(c), so $X=A_9$. Thus, up to conjugacy we have (in the $X$-action on $\{1,\dots,9\}$) $X_\a=X_{\{1,2\},3}\cong S_6$ and $X_{\a\b}=X_{\{1,2\},3,4}\cong S_5$. However, the involution $g\in N_X(X_{\a\b})\setminus X_\a$, and hence $g$ must fix $\{1,2\}$ setwise, and so $\langle X_\a,g\rangle \leq X_{\{1,2\}}<X$, which is a contradiction to Hypothesis~\ref{h:map}.  

	\subsection{Case(a) with $k=4$ or 5}\label{ak45}
	Here $A_{n-k} \le X_\a \le (S_{n-k}\times S_k)\cap X$ with $n\geq 2k$ and $G$ 3-homogeneous as in Table \ref{tab:Anfactns}. The possibilities with $G_\a$ cyclic or dihedral are as follows:
	\[
	\begin{array}{cccc}
		\hline
		G & k & n & G_\a \\
		\hline
		M_{11} & 4 & 11 & \le D_8 \hbox{ or }S_3 \\
		\PSL_2(8), \,\PSL_2(8).3 & 4 & 9 & \le 2^2 \hbox{ or }3 \\
		\PSL_2(32).5 & 4 & 33 & \le 2^2 \\
		M_{12} & 5 & 12 & \le D_6,\,D_8 \hbox{ or }D_{10} \\
		\hline
	\end{array}
	\]
	As in the previous case, $|G_\a|$ must be divisible by $|A_{n-k}: X_{\a\b}\cap A_{n-k}|$, the degree of a nontrivial transitive action of $A_{n-k}$. This is clearly impossible for all the entries in the above table. 
	
	\subsection{Case(b) with $k=1$}\label{bk1}
	Here $G = A_{n-1}$ or $S_{n-1}$, and $X_\a$ is transitive on $\{1,\ldots,n\}$, as in part (iii) of Theorem \ref{p:maps-an}. It is not feasible to classify all the possibilities in this case, but recall that in Construction~\ref{c:k1eg} we have given an infinite family of  examples via a novel group theoretic construction (see Theorem~\ref{k1eg}). To illustrate that there are many other examples, we list below the examples for $n=8,9$, which were computed using Magma.
	
	\begin{itemize}
		\item[{\rm (i)}] For $(X,X_\a,X_{\a\b},G, G_\a, G_{\a\b})=(S_8,[64],[8],S_7,D_8,1)$, there are several arc-transitive graphs $\G$ of valency $8$ with $630$ vertices such that Hypothesis~\ref{h:map} holds, and the groups $([64],[8])$ can have shape  $(D_8.D_8, C_2^3)$ or $(C_2\wr C_2^2, D_8)$.    
		\item[{\rm (ii)}] For $(X,X_\a,X_{\a\b},G, G_\a, G_{\a\b})=(S_8,2\times S_4,D_8,S_7,D_6,1)$, there are several arc-transitive graphs $\G$ of valency $6$ with $840$ vertices such that Hypothesis~\ref{h:map} holds.
		\item[{\rm (iii)}] For $(X,X_\a,X_{\a\b},G, G_\a, G_{\a\b})=(S_9,3^{1+2}.2,3^2,S_8,D_6,1)$, there is an arc-transitive graph $\G$ of valency $6$ with $56\cdot 5!$ vertices such that Hypothesis~\ref{h:map} holds.
	\end{itemize}

	\subsection{Case(b) with $k=2$}
	Here $A_{n-2} \le G \le (S_{n-2}\times S_2)\cap X$ with $X_\a$ 2-homogeneous as in Table \ref{tab:B2hom}. 
	
	\subsubsection{$X_\a$ almost simple } \label{xas}
	
	Assume that $X_\a$ is almost simple, with socle $T$. From Table \ref{tab:B2hom}, the possibilities are:
	\[
	\begin{array}{ccc}
		\hline
		n & T & G \cap T \le \\
		\hline
		q+1 & \PSL_2(q) & D_{t(q-1)}\,(t=(2,q)) \\
		q^2+1 & ^2\!B_2(q) & D_{2(q-1)} \\
		q^3+1 & \PSU_3(q) & D_{2(q^2-1)} \\
		q^3+1 & ^2\!G_2(q) & D_{2(q-1)} \\
		11 & \PSL_2(11) & D_{12} \\
		7 & \PSL_2(7) & D_8 \\
		\hline
	\end{array}
	\]
	
	\no For the entries in the first four rows of the table (assuming $n\ge 10$), \cite{LPS90} shows that there are no factorisations of $X_\a$ with a factor as in the third column, contradicting the fact that $X_\a = G_\a X_{\a\b}$. 
	
	\medskip\noindent
	\emph{Claim 1: For row $5$ of the table above, the graph $\G$ is as in line $5$ of Table~\ref{6711tbl}. }
	
	Here $T=\PSL_2(11)$ is self-normalising in $S_{11}$ and so $X_\a=T<A_{11}$.  Then since $|X_\a:X_{\a\b}|$ divides $|G_\a|$, which divides $12$, it follows that  $|X_\a:X_{\a\b}| = |G_\a|=12$, and  $G_\a=D_{12}$ by Theorem~\ref{t:Anfactns}. Also  $X_{\a\b}=11.5$,  $G_{\a\b}=1$ and $\G$ has valency $12$. Let $x \in X_{\a,\b}$ of order 11. Then $N_X(X_{\a\b})= N_X(\la x\ra)\leq 11.10$ must contain an involution, and hence we must have $X=S_{11}$ and then $N_{X}(X_{\a\b}) = 11.10$ does indeed contain an involution $g\in X\setminus A_{11}$. Further, $|G|=|X:X_\a|\cdot |G_\a| = \frac{11!}{12.11.5}\cdot 12 = 2\cdot 9!$ and hence $G=S_9\times S_2$.
	Note that since the involution $g\in S_{11}$ stabilises a pair in $\{1,\ldots,11\}$, it can be taken to lie in $G$, and since $g$ normalises $X_{\a\b}$ but does not normalise $X_\a$, $g$ interchanges $\a$ and $\b$. Thus $\G$ is the graph with edge-set $\{\a,\b\}^X$;  both $X$ and $G$ act arc-transitively on $\G$, and $\G$ has $|X:X_\a| = 12 \cdot 7!$ vertices. Finally,  $\la X_\a,g\ra = X$ as $S_{11}$ is the only insoluble transitive group of degree 11 containing an odd permutation, and hence $\G$ is connected. It is the graph in  line $4$ of Table~\ref{6711tbl}.
	
	We note that $\G$ is bipartite (since $X_\a<A_{11}<X$), and that $X$ acts $2$-arc-transitively on $\G$ (since $X_\a$ is $2$-transitive on the $12$ neighbours of $\a$).

	\medskip\noindent
	\emph{Claim 2: For row $6$ of the table above,  the graph $\G$ is as in line $4$ of Table~\ref{6711tbl}. }
	
	Here $T=\PSL_2(7)$ is self-normalising in $S_7$ and so $X_\a=T<A_7$. Then since $|X_\a:X_{\a\b}|$ divides $|G_\a|$, which divides $8$, it follows that  $|X_\a:X_{\a\b}| = |G_\a|=8$,  and $G_\a=D_{8}$ by Theorem~\ref{t:Anfactns}.  Also $X_{\a\b}=7.3$, $G_{\a\b}=1$ and $\G$ has valency $8$.  Let $x \in X_{\a,\b}$ of order 7. Then $N_X(X_{\a\b})= N_X(\la x\ra)\leq 7.6$ must contain an involution, and hence we must have $X=S_{7}$ and then $N_{X}(X_{\a\b}) = 7.6$ does contain an involution $g\in X\setminus A_{7}$. Further, $|G|=|X:X_\a|\cdot |G_\a| = \frac{7!}{8.7.3}\cdot 8 = 2\cdot 5!$ and hence $G=S_5\times S_2$. 
	
	Arguing exactly as in Claim 1, we see that there is an arc-transitive embedding in this case also, with $X_{\a\b} = 7.3$ for an edge $\{\a,\b\}$. The graph $\G$ has 30 vertices and valency 8, and in fact it is isomorphic to the antiflag graph of $\PG_3(2)$; this is a bipartite graph whose vertices are the points and hyperplanes of $\PG_3(2)$, joined if they are non-incident. Its automorphism group is $\PSL_4(2).2 = S_8$, so as $G = S_5 \times 2 \le S_5\times S_3 < S_8$, this is also an embedding that occurs under Case (b) with $k=3$ in Section \ref{bk3} below.
	
	\subsubsection{$X_\a$ affine } 
	
	\no Assume that $X_\a$ is affine. From Table \ref{tab:B2hom}, the possibilities are:
	
	\[
	\begin{array}{ccc}
		\hline
		n & X_\a & |G_\a|\ \hbox{divides }  \\
		\hline
		p^2 & \le \AGL_2(p) & p(p-1) \\
		q=p^a & \le \AGaL_1(q) & 2a \\
		p^2 & \le p^2.(p-1).S_4 & \frac{48}{p+1} \\
		(p=5,7,11,23) && \\
		q^2 & \le q^2.(q-1).S_5 & \frac{240}{q+1} \\
		(q=9,11,\ldots,59) && \\
		\hline
	\end{array}
	\]
	
	\no In all cases we see that the factorisation $X_\a = G_\a X_{\a\b}$ implies that $X_{\a\b}$ contains the full translation subgroup $T$ of order $n$. But then any involution $g$ as in Proposition \ref{cri} normalises $T$, and so $\la X_\a, g \ra \ne X$. So by Proposition \ref{cri}, there are no arc-transitive embeddings in this case.
	
	\subsection{Case (b) with $k=3$ } \label{bk3}
	
	Here $A_{n-3} \le G \le (S_{n-3}\times S_3)\cap X$ with $n\geq6$, and $X_\a$ is 3-homogeneous as in Table \ref{tab:Anfactns}. In similar fashion to `case (a) with $k=3$' in Subsection~\ref{ak3}, the possibilities are:
	\[
	\begin{array}{ccc}
		\hline
		n & X_\a &  |G_\a| \hbox{ divides} \\
		\hline
		q+1 & {\rm soc}(X_\a)=\PSL_2(q)\ (q=p^f) & 6f \\
		12 & M_{11} & 12 \\
		8 & \AGL_3(2),\,\AGL_1(8),\,\AGaL_1(8) & 8,1,3 \\
		32 & \AGaL_1(32) & 1 \\
		\hline
	\end{array}
	\]
	In the first row, $X_{\a\b}$ does not contain $\PSL_2(q)$ by Hypothesis~\ref{h:map}(c), and so  $|X_\a:X_{\a\b}|$ is divisible by the index of a proper subgroup of $\PSL_2(q)$. Since $|X_\a:X_{\a\b}|$ divides $|G_\a|$, which divides $6f$,  we must have $q = 5$ or 8. If $q=5$ then $X_\a = \PGL_2(5) < X = S_6$ and we have $n=6$, $\G=K_6$ as in line 1 of Table~\ref{6711tbl} of Theorem \ref{p:maps-an}. If $q=8$ then $G_\a$ must contain a subgroup $3^2$, so is not cyclic or dihedral.
	
	Next observe that $M_{11}$ has no factorisation with a factor of order 12, so $X_\a \ne M_{11}$. Similarly $X_\a$ is not $\AGL_1(8)$, $\AGaL_1(8)$ or $\AGaL_1(32)$.
	
	Finally, consider the case where $X_\a = \AGL_3(2)$. Here there is an example of an arc-transitive embedding. In fact this occurs with the antiflag graph of $\PG_3(2)$ in line $4$ of Table~\ref{6711tbl}, which we saw in Section \ref{xas}: this graph has automorphism group $X = \PSL_4(2).2 = S_8$ and vertex-stabiliser a parabolic subgroup $X_\a=\AGL_3(2)$. The group $G$ for this embedding is $S_5\times 2$. 
	
	\subsection{Case (b) with $k=4$ or $5$ } \label{bk45}
	
	Here $A_{n-k} \le G \le (S_{n-k}\times S_k)\cap X$, and $X_\a$ is 4-homogeneous as in Table \ref{tab:Anfactns}. However in every case the group $X_\a$ has no factorisation with one of the factors being $G_\a$.
	
	\subsection{Case (c) with $n=6, 8$ or $10$ } \label{cn6810}
	
	We performed  computations with Magma to find all the possibilities for $X, G, \G, g$ satisfying Hypothesis~\ref{h:map} with $\soc(X)=A_n$, where $n\in\{6,8,10\}$. Many of the possibilities occur in families already analysed and identified in Tables~\ref{t:anmaps} and~\ref{6711tbl}, and several that we recorded in \S~\ref{bk1} with $G=A_{n-1}$ or $S_{n-1}$ -- the one case where we do not have a complete classification. There was exactly one new example that did not fit any of these cases. and we present it below.
	
	\subsubsection{$n=6$}\label{cn6} 
	Here is the new set of subgroups:
	\[
	(X,X_\a,G, G_\a, X_{\a\b})=(S_6.2,\, S_5,\, (S_3 \wr S_2).2,\, D_{12},\, F_{20}).
	\] 
	Since $S_5<S_6<S_6.2$ it follows that the graph $\G$ is bipartite. Let $\D_1, \D_2$ be the parts of the bipartition, where $\a\in\D_1, \b\in\D_2$, and let $X^+=S_6$ fixing each $\D_i$ setwise. An involution in $X\setminus S_6$ interchanges $\D_1$ and $\D_2$, and acts as a graph automorphism of $X^+$. Thus the actions of $X^+$ on $\D_1$ and $\D_2$ are not equivalent, so $X_\a$ acts transitively on $\D_2$ and $\G$ is the complete bipartite graph $K_{6,6}$. Since $S:=O_5(X_{\a\b})$ is a Sylow $5$-subgroup of $X$, it follows that $N_X(S)S_6/S_6\cong C_2$, 
	and by \cite{magma}, $N_X(S)\setminus S_6$ contains an involution $g$ such that $g\in G$ and $g$ interchanges $\a$\and $\b$. In addition $\langle X_\a,g\rangle = X$ since $\G$ is connected and so all the conditions of Proposition~\ref{cri} hold and we have an arc-transitive embedding, as in Line 3 of Table~\ref{6711tbl}.
	
	\vspace{4mm}
	This completes the proof of Theorem \ref{p:maps-an}.

	\section{The map theorem for groups of Lie type}\label{s:liemaps}
	
	In this section we prove Theorem \ref{t:classical-maps}. The procedure is to examine each of the cyclic/dihedral factorisations in Theorem~\ref{classgen} to determine which of them correspond to simple connected graphs admitting arc-transitive embeddings. For convenience, we restate the theorem here:
	
	\begin{thm}\label{p:maps-families}
		Suppose that Hypothesis~$\ref{h:map}$ holds for $X, G, \G, g$ with $X$ is  an almost simple group of Lie type (such that $\soc(X)$ is not an alternating group). Then one of the lines of Table~$\ref{class-maps}$ holds.
	\end{thm}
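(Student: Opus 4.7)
The plan is to work through the classification of cyclic/dihedral factorisations $X=AB$ in Theorem~\ref{classgen} case by case, identifying the choice $\{A,B\}=\{X_\alpha,G\}$ and then testing whether an involution $g\in G$ can be found satisfying all parts of Hypothesis~\ref{h:map}. Since Hypothesis~\ref{h:map}(a) already requires that $X=AB$ with $A\cap B$ cyclic or dihedral, every candidate triple must appear in Tables~\ref{families} and \ref{excep-psl}--\ref{excep-sp}. For each line we must decide which of $A,B$ plays the role of $X_\alpha$ (keeping in mind that $G$ is core-free while $X_\alpha$ need not be), determine the double coset structure of $X_\alpha\backslash X/X_\alpha$ so as to identify possible arcs $(\alpha,\beta)$ and corresponding $X_{\alpha\beta}$, and then check that $|G_{\alpha\beta}|\le 2$, that $X_\alpha=G_\alpha X_{\alpha\beta}$, and that an involution $g\in N_G(X_{\alpha\beta})\setminus N_G(X_\alpha)$ exists with $\la X_\alpha,g\ra=X$.

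For the infinite families in Table~\ref{families}, the strategy is to exploit the structural information recorded in Section~\ref{classfactn}. In most families the intersection $A\cap B$ contains a large unipotent subgroup forcing $|G_{\alpha\beta}|$ to be much larger than $2$, or the intersection contains a normal subgroup of $X_\alpha$ that is forced by Hypothesis~\ref{h:map}(c) to have a nontrivial $g$-conjugate inside $X_{\alpha\beta}$, giving structural obstructions. I expect that the overwhelming majority of lines of Table~\ref{families} will be excluded by these elementary considerations; the only survivors should be the three rank-one projective plane and biplane incidence situations leading to the first three rows of Table~\ref{class-maps}. Specifically, for $X_0=\PSL_n(q)$ with $A=P_1$ and $B$ the Singer-type subgroup normaliser, the arc-stabiliser $X_{\alpha\beta}$ is a Borel-type subgroup; I expect the existence of a suitable involution $g$ to fail unless $n=3$ and the parameter constraints match those of $\PSL_3(2).2$ and $\PSL_3(8).6$. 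The $\PSL_2(11).2$ row will emerge from the biplane-related factorisation.

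For the orthogonal line with $X_0=\Omega_5(3)\cong \PSp_4(3)$ listed in Table~\ref{families} (or its $\PSp_4(p)$ analogues in Table~\ref{1cfsp}), I will apply the detailed analysis of the doubled graph $\widehat\Gamma$ and the orthogonality graph on singular vectors set up in Subsection~\ref{ex:u42} and Lemma~\ref{l:u42}. There we already constructed the arc-transitive embedding of the graph described in the last two rows of Table~\ref{class-maps}, so existence is clear; the point is that Hypothesis~\ref{h:map}(c) rules out the analogous $\PSp_4(p)$ constructions for $p>3$ because the elementary abelian normal subgroup $p^{1+2}$ inside $X_\alpha$ then produces a normal subgroup of $X_\alpha$ inside $X_{\alpha\beta}$ equal to all its $g$-conjugates.

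For the finitely many exceptional lines in Tables~\ref{excep-psl}--\ref{excep-sp}, the plan is to invoke Magma as described in Subsection~\ref{maggie}: for each $(X,A,B)$ we compute $X_\alpha\backslash X/X_\alpha$, determine $X_{\alpha\beta}$ for each double coset representative, test the factorisation condition $X_\alpha=G_\alpha X_{\alpha\beta}$, and search $N_G(X_{\alpha\beta})\setminus N_G(X_\alpha)$ for a suitable involution $g$. The main obstacle I anticipate is not in the bookkeeping but in handling the rank-one $\PSL_2(q)$ subgroup situations uniformly across the orthogonal/symplectic families, where the arc-stabiliser $X_{\alpha\beta}$ tends to be a maximal parabolic of $A$ whose unipotent radical is normalised by both factors simultaneously; verifying that Hypothesis~\ref{h:map}(c) fails in all but the handful of listed cases requires a careful tracking of how the $g$-conjugate of the unipotent radical intersects $X_{\alpha\beta}$, and will be the most delicate part of the argument. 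Having dealt with every surviving line, we compare with Table~\ref{class-maps} to complete the proof.
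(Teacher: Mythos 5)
Your overall strategy mirrors the paper's exactly: run through the cyclic/dihedral factorisations of Theorem~\ref{classgen} with $\{A,B\}=\{X_\alpha,G\}$, use the valency-divisibility consequence $|X_\alpha:X_{\alpha\beta}|\mid |G_\alpha|$ together with Hypothesis~\ref{h:map}(c) to kill the infinite families, and invoke Magma on the finitely many exceptional lines. The surviving examples and the comparison with Table~\ref{class-maps} come out the same.

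One piece of your attribution is confused, though. The two $\SO_5(3)$ rows of Table~\ref{class-maps} do \emph{not} arise from the $\PSp_4(p)$ infinite family in Table~\ref{families} (nor from the $\PSp_4(p)$ entries of the internal Table~\ref{1cfsp}, which have $A\cap B$ of order $p$ and a quite different shape). In the paper they come from the exceptional factorisations of $\PSU_4(2).2\cong\SO_5(3)$ in Tables~\ref{excep-psu} and~\ref{excep-sp} (with $X_\alpha=3^3.A_4.a$ and $G=2^4.S_5$), identified by the Magma pass on the exceptional tables and then interpreted via Lemma~\ref{l:u42}. The infinite family line $X_0=\PSp_4(p)$, $A\cap X_0=\PSp_2(p^2).a$, $B\cap X_0=p^{1+2}.(p^2-1)$ is excluded for \emph{all} $p\geq5$ exactly by the mechanism you describe (the radical $Q=p^{1+2}$ is forced into $X_{\alpha\beta}$ by the index-divisibility constraint and then must equal its own $g$-conjugate since it is the full Sylow $p$-subgroup of $X_\alpha$), but it never produces an example, even for $p=3$; the $p=3$ case is simply siphoned off into the small-group Magma computations. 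Your plan would still catch the correct examples because you also process the exceptional tables computationally, so this is a mislabeling rather than a fatal gap, but the paragraph describing the orthogonal case should be corrected so that the origin of the two $\SO_5(3)$ embeddings is clear.
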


	%\begin{table}[h!]\label{t:maps-families}
	%\caption{Arc-transitive embeddings for classical groups}
	%\[
	%\begin{array}{ccccll}
	%\hline
	%\hbox{Line}&X & G &X_\a& \G & \hbox{Reference} \\
	%\hline
	%1&\PSL_3(8).2 &  73.18& P_1 &  \hbox{incidence graph of ${\rm PG}_2(8)$}& \S~\ref{s:psl-1} \\
	%2& \PSL_3(2).2 &  7.6& P_1 &  \hbox{ incidence graph of ${\rm PG}_2(2)$}&\S~\ref{s:maps-exclass} \\
	%3&\PSL_2(11).2 & 11.10 & A_5& \hbox{ incidence graph of $2-(11,5,2)$ biplane} &\S~\ref{s:maps-exclass}\\
	%4&\SO_5(3) &  2^4.S_5 &3^3.S_4 & \hbox{ orthogonality graph in \S~\ref{ex:u42}(b)} &\S~\ref{s:maps-exclass}\\
	%5&\SO_5(3) &  2^4.S_5 &3^3.A_4 &\hbox{ doubled graph $\widehat{\G}$ in \S~\ref{ex:u42}(c)} &\S~\ref{s:maps-exclass}\\
	%\hline
	%\end{array}
	%\]
	%\end{table}
	
	\no {\bf Proof of Theorem \ref{p:maps-families}}
	
	\vspace{2mm}
	Let $X, G, g,$ $\G = (\O,E)$ be as in Hypothesis~\ref{h:map}  where $X_0$ is a simple group of Lie type, not isomorphic to an alternating group. By Theorem \ref{classgen}, the possibilities for $X,G, X_\a$ are as in Tables  \ref{families} (the families) and \ref{excep-psl} - \ref{excep-sp} (the exceptional factorisations). Note that $\{X_\a,G\} = \{A,B\}$ for $A,B$ in the tables.
	
	%For the rest of this section suppose that Hypothesis~\ref{h:map} holds for $X, G, \G, g$, where $X$ is  an almost simple classical group with socle $X_0$ not isomorphic to an alternating group.

	\subsection{Proof for the tables of exceptional factorisations.}\label{s:maps-exclass}
	%{
		%\color{red} New work added.}
	
	Here we assume that $X$ and the pair $\{X_\a,G\}$ occurs in one of the lines of Tables~\ref{excep-psl}, \ref{excep-psu}, \ref{excep-orthog} or \ref{excep-sp}. For each possibility $(X, \{X_\a,G\})$ in these tables a computation using Magma~\cite{magma} was performed to identify cases where an involution $g$ exists such that Hypothesis~\ref{h:map} holds. This computation yielded exactly two possibilities for linear groups, and two possibilities for unitary groups, and we consider these cases below separately. 
	
	\medskip
	\noindent
	\emph{Line 1 of Table~\ref{excep-psl} with $X=\PSL_2(7).2$,  $X_\a= S_4$,  $G=7.6$, $G_\a=3$, and $G_{\a\b}=1$.}\ \\ 
	Here $X_0=\PSL_2(7)$ and $X_\a<X_0<X=X_0.2$, and it follows that $\G$ is bipartite.  Let $\D_1,\D_2$ denote the two parts of the bipartition of $V\G$ and assume that $\a\in\D_1$ and $\b\in\D_2$.  Since $X=X_\a G=X_0G$, the involution $g\in G\setminus X_0$ and so $g$  induces an outer automorphism of $X_0$ and interchanges $\D_1$ and $\D_2$, Thus the $X_0$-actions on $\D_1$ and $\D_2$ are not equivalent, and hence $X_\a$ has orbits of lengths $3, 4$ in $\D_2$. As $\G$ has valency $|G_\a:G_{\a\b}|=3$, $\b$ lies in the $X_\a$-orbit in $\D_2$ of length $3$. The most convenient way to identify $\G$ is to observe that $X=\PSL_3(2).2$, so that we may identify $\D_1$ and $\D_2$ with the sets of $7$ points and lines of the projective plane $\PG_2(2)$, respectively, we have then shown that a projective point $\a$ is adjacent to a projective line $\b$ if and only if they are incident in $\PG_2(2)$. Thus $\G$ is the incidence graph of $\PG_2(2)$, as in line $1$ of Table~\ref{class-maps}.
	
	\medskip
	\noindent
	\emph{Line 2 of Table~\ref{excep-psl} with $X=\PSL_2(11).2$,  $X_\a= A_5$,  $G=11.10$, $G_\a=5$, and $G_{\a\b}=1$.}\ \\ 
	As in the previous case, $X_0=\PSL_2(11)$ and $X_\a<X_0<X=X_0.2$, so $\G$ is bipartite, $g\in G\setminus X_0$, $g$ induces an outer automorphism of $X_0$, and   the $X_0$-actions on the parts $\D_1$ and $\D_2$ of the bipartition are not equivalent, and hence $X_\a$ (where $\a\in\D_1$) has orbits of lengths $5, 6$ in $\D_2$, and as $\G$ has valency $|G_\a:G_{\a\b}|=5$, $\b$ lies in the $X_\a$-orbit in $\D_2$ of length $5$.  In this case we identify $\D_1$ and $\D_2$ with the sets of $11$ points and blocks of the  $2-(11,5,2)$ biplane, respectively, we have then shown that a  point $\a$ is adjacent to a block $\b$ if and only if they are form a flag of the biplane. Thus $\G$ is the incidence graph of the biplane, as in line $3$ of Table~\ref{class-maps}.
	
	\medskip
	\noindent
	\emph{Line 3 of Table~\ref{excep-psu} or Line 1 of Table~\ref{excep-sp} with $X=\PSU_4(2).2$,  $X_\a= 3^3.A_4.a$,  $G=2^4.S_5$, $G_\a=D_{12a}$, and $G_{\a\b}=1$, where $a\leq 2$.}\ \\ 
	Here  $X_0=\PSU_4(2)\cong \PSp_4(3)\cong \O_5(3)$ and $X=X_0.2=\SO_5(3)$,  and the two possibilities for $X_\a$ are $3^3.A_4$ and $3^3.S_4$. Suppose first that $X_\a = 3^3.S_4$ (so $a=2$). Here a Magma computation shows that the group $X_0$ is transitive on $\O$, and so (see \cite[p. 26]{Atlas}) $X_0\cap X_\a$ is the unique index $2$ subgroup $3^3.A_4$ of the stabiliser in $X_0$ of a singular $1$-space in $V=\F_3^5$. Thus $X_0\cap X_\a$ is the stabiliser of a non-zero singular vector of $V=\F_3^5$, and so $\O$ can be identified with the set of non-zero singular vectors of $V$. The subgroup $X_\a$ has exactly one orbit of length $24=|X_\a:X_{\a\b}|$ in $\O$, namely the set of singular vectors $\b$ orthogonal to $\a$, so the graph $\G$ is as in line $4$ of Table~\ref{class-maps}. 
	
	Now consider the case where $X_\a = 3^3.A_4$ (that is, $a=1$), of index $160$ n $X$. Here the Magma computations show that the group $X_0$ has two orbits in $\O$, say $\O^+$ and $\O^-$,  and that the $X_0$-action on each $\O^\pm$ is equivalent to its action on the vertex set of the $80$-vertex graph of the previous paragraph. Thus $\G$ is bipartite with $\O^+, \O^-$ the parts of the bipartition, and we identify each $\O^\pm$ with the set of non-zero singular vectors in $V=\mathbb{F}_3^5$. 
	Further, Magma computations also show that $X_\a=(X_0)_\a$ has two orbits of length $12$ in the set of $24$ neighbours of $\a$ in the $80$-vertex graph, and that these suborbits are paired with each other. Hence the two corresponding $X_0$-orbitals $\Delta$ and $\Delta'$ in $\O^\pm\times\O^\pm$ are paired with each other. In this case the graph $\G$ has valency $12=|G_\a:G_{\a\b}|$, and for $\a=v^+\in\O^+$, the set of edges adjacent to $v^+$ must be one of the two orbits $\Delta(v)^-$ or $\Delta'(v)^-$ of $(X_0)_\a$ of size $12$ in $\O^-$ (as the graph is connected). By Lemma~\ref{l:u42}(b), the graphs corresponding to each of these choices are isomorphic to the graph $\widehat{\G}$ of Example~\ref{ex:u42}(c). Thus Line 5 of Table~\ref{class-maps} holds.

	\subsection{Proof for the infinite families of classical group factorisations.}
	
	We now assume that $(X,\{X_\a,G\})$ is as in one of the lines of Table~\ref{families}. 
	First we perform Magma computations as described in Subsection \ref{maggie} to identify any possibilities for $(X, X_\a,G,g)$ satisfying Hypothesis~\ref{h:map} for the following small groups in these families: 
	\begin{equation}\label{listsm}
		\begin{array}{l}
			\PSL_2(q)\ (q\leq 11), \PSL_3(4), \PSL_4(3), \PSU_3(3),  \ \PSU_4(q)\ (q\in\{2,3\}), \\
			\Sp_4(q)\ (q\in\{3,4,8\}),\  \Sp_6(2), \Sp_8(2), \Omega_8^+(2).
		\end{array}
	\end{equation}
	No examples were found apart from those analysed in Subsection~\ref{s:maps-exclass},  and so  we assume that $X_0$ is not one of these groups, and $X_0$ is not an alternating group.    
	We examine each of the lines of Table~\ref{families} separately below.

	\subsubsection{Lines in Table~\ref{families}\label{s:psl-1} with  $X_0=\PSL_n(q)$, $n\geq2$. } Let $V=\F_q^n$ and $q=p^f$ for a prime $p$ and $f\geq1$. From the discussion above, if $n=2$ then $q>11$, and $(n,q)\ne (3,4)$ or $(4,3)$. 
	
	\medskip
	\noindent
	\emph{Line 1a: $X_\a \triangleright q^{n-1}.\SL_{n-1}(q)$, $G\cap X_0\leq \frac{q^n-1}{(q-1)d}.n$, and $G_\a\cap X_0 = r$ where $r\mid n$ and $d=(n,q-1)$.}\ \\
	Here, applying a graph automorphism of $X_0$ if necessary, we may take $X_\a \le P_1 = X_{\la v\ra}$ for some $v\in V$. Suppose first that $X\leq \PGaL_n(q)$. Now $X_{\b}\leq P_1^g$, and if $\la v\ra^g=\la v\ra$ then $\la X_\a,g\ra\leq P_1$, which is a contradiction. Hence $\la v\ra^g=\la w\ra\ne \la v\ra$, and as $X_0$ is $2$-transitive on $1$-spaces, this implies that $(q^n-q)/(q-1)$ divides $|X_\a:X_{\a\b}|=|G_\a:G_{\a\b}|$. However $|G_\a:G_{\a\b}|$ divides $|G_\a|$ which divides $nf$. This implies that $(n,q)=(2,4)$, but in this case $X_0\cong A_5$ which is excluded. 
	
	%{\color{red}Deal needs checking by Martin - with graph auts.  }
	Now we deal with the case where $X$ contains a graph automorphism so $X=Y.2$ where $Y:=X\cap \PGaL_n(q)$ and $n\geq3$. Since $X_\a\leq P_1<Y$ it follows that $\G$ is bipartite, and as $X=X_\a G=YG$, the subgroup $G$ must also contain a graph automorphism, while $G_\a\leq X_\a<Y$ so $|G_\a|$ divides $nf$. Let $\D_1,\D_2$ denote the two parts of the bipartition of $V\G$, with $\a\in\D_1$ and $\b\in\D_2$. Now $X$ preserves a partition $\BB=\BB_1\cup\BB_2$ of $V\G$, where $\BB_i$ is a partition of $\D_i$ ($i=1,2$), such that the $Y$-actions on $\BB_1, \BB_2$ are equivalent to its actions on $1$-spaces and hyperplanes of $V$, respectively. Since $X_\a\geq P_1'$ it follows that $X_\a$ has exactly two orbits in $\BB_2$, with lengths $m_1:=q^{n-1}$ and $m_2:=(q^{n-1}-1)/(q-1)$. The point $\b$ lies in a block of one of these orbits, and hence $|X_\a:X_{\a\b}|$ is divisible by $m_i$ for some $i$. This implies that $m_i$ divides $|G_\a|$, so  $m_i$ divides $nf$. The only possibilities are $(n,q,m_i)=(3,2,3)$ and $(3,8,9)$. In the former case, $X_0=\PSL_3(2)\cong \PSL_2(7)$, which is excluded. So $(n,q,m_i)=(3,8,9)$, and we identify  the graph $\G$ as follows.  Since $m_i=9=q+1=nf$, it follows that $|G_\a|=9$ and $|V\G|=2(q^n-1)/(q-1)=2\cdot 73$,  so the partition $\BB$ is trivial. Thus we may identify $\D_1, \D_2$ with the points and lines of the projective plane ${\rm PG}_2(8)$ such that the point $\a$ is joined to the set of $9$ lines incident with it. Thus $\G$ is the incidence graph of ${\rm PG}_2(q)$ as  in Line 2  of Table~\ref{class-maps}. 
	The existence of an involution $g$ satisfying Hypothesis \ref{h:map} is computed using Magma, completing the proof for this case.
	
	%A computation with Magma shows that the group $G$ (the normaliser of a Singer cycle in $\Aut(\PSL_3(q))$) is arc-regular in these cases, as in Table~\ref{t:maps-families}.
	
	\medskip
	\noindent
	\emph{Line 1b: $X_\a\cap X_0\leq \frac{q^n-1}{(q-1)d}.n$, $G \triangleright q^{n-1}.\SL_{n-1}(q)$, and $G_\a\cap X_0 = r$ where $r\mid n$ and $d=(n,q-1)$.}\ \\
	Let $Y:=X\cap \PGaL_n(q)$ so that $a:=|X:Y|\leq 2$, and if $a=2$ then $n\geq 3$. Note that $G\leq Y$, so $X=X_\a G=X_\a Y$ and hence $a=|X_\a:X_\a\cap Y|$ and $X_\a\leq \frac{q^n-1}{(q-1)d}.nf.a$. 
	Now $|X:P_1|=(q^n-1)/(q-1)$ divides $|X:G|=|X_\a:G_\a|$, and hence $(q^n-1)/(q-1)$ divides $|X_\a|$.
	If both $(nf,p)\ne (6,2)$ and $(n,f,p)\ne (2,1,2^c-1)$,  then $p^{nf}-1$ has a primitive prime divisor $s$ by \cite{ZZ}, and we note that $s\equiv 1\pmod{nf}$ so in particular $s$ does not divide $nfa$. Thus $s$ divides $|X_\a\cap X_0|$, so $X_\a$ has an element $x$ of order $s$ such that $\langle x\rangle$ is normal in $X_\a$ and  irreducible on $V$, and hence $X_\a\leq N_X(\la x\ra )\leq  \frac{q^n-1}{(q-1)d}.nf.a$ (see \cite[Satz II.7.3]{H67}).
	It follows from Hypothesis~\ref{h:map}(c) that $x\not\in X_\b$, and this implies that $s$ divides 
	$|X_\a:X_{\a\b}|=|G_\a:G_{\a\b}|$, which in turn divides $|G_\a|$. Thus $nf<s\leq |G_\a|\leq nf$, which is a contradiction. 
	
	If $(nf,p)= (6,2)$ let $s=21$; and if $(n,f,p)= (2,1,2^c-1)$ let $s=2^{c-2}$, and note that in the latter case $c\geq 5$ and $a=1$ (since we assume that $q>11$ when $n=2$). Then again $X_\a\cap Y$ has an element $x$ of order $s$ which is irreducible on $V$ and  $X_\a\leq N_X(\la x\ra )\leq  \frac{q^n-1}{(q-1)d}.nf.a$. If $s=|x|=21$ and $nf=6$, then $|G_\a|$ is not divisible by $7$, and hence $|X_\a:X_{\a\b}|$ is not divisible by $7$. In this case $\langle x^3\rangle$ is the unique normal subgroup of $X_{\a\b}$ of order $7$, contradicting Hypothesis~\ref{h:map}(c). Hence $(n,f,p)= (2,1,2^c-1)$, $s=2^{c-2}$, and $|G_\a|\leq nf=2$. Thus  $|G_\a|$ is not divisible by $4$, and hence $|X_\a:X_{\a\b}|$ is not divisible by $4$, so $\langle x^2\rangle\unlhd X_{\a\b}$. In this case the group $\frac{q^n-1}{(q-1)d}.nf$ is dihedral of order $q+1=2^c$ and  since $a\geq5$, $\langle x^2\rangle$ is the unique cyclic normal subgroup of $X_{\a\b}$ of order $2^{c-3}$, contradicting Hypothesis~\ref{h:map}(c).

	\medskip
	\noindent
	\emph{Line 2a: $(n,q)=(4,p)$,  $X_\a\cap X_0=\PSp_4(p).a$, $G\cap X_0= p^3.\frac{p^3-1}{d}.e$, and $G_\a\cap X_0 = p\times e$, where  $p$ is odd, $e\mid 3$, $d=(4,p-1)$, $ad=4$,  and $X\geq X_0.2$ if $p\equiv 1\pmod{4}$.}\ \\
	Here $p>3$ (as $\PSL_4(3)$ is excluded by (\ref{listsm}), and $X_{\a\b}$ does not contain $\PSp_4(p)$ by Hypothesis~\ref{h:map}(c). Hence, by \cite[Table 5.2.A]{KL},  $|X_\a:X_{\a\b}|\geq (p^4-1)/(p-1)$ since $p>3$; and $|X_\a:X_{\a\b}|$ divides $|G_\a|\leq (3p)\cdot2(p-1)$, which is a contradiction. 
	
	\medskip
	\noindent
	\emph{Line 2b: $(n,q)=(4,p)$,  $X_\a\cap X_0= p^3.\frac{p^3-1}{d}.e$, $G\cap X_0=\PSp_4(p).a$,  and $G_\a\cap X_0 = p\times e$, where  $p$ is odd, $e\mid 3$, $d=(4,p-1)$, $ad=4$,  and $X\geq X_0.2$ if $p\equiv 1\pmod{4}$.}\ \\
	Again $p>3$ by (\ref{listsm}),  and we have $X_\a=QR$ where $Q=[p^3]=O_p(X_\a)$. Now $|X_\a:X_{\a\b}|=|G_\a:G_{\a\b}|$ divides $|G_\a|$, which in turn divides $2(p-1)pe$, and it follows that $|X_{\a\b}|$ is divisible by $p^2(p^3-1)/(p-1)$. This implies both that $X_{\a\b}\cap Q\ne 1$ and that  $X_{\a\b}$ contains a cyclic subgroup of order $(p^3-1)/(p-1)$ which acts irreducibly on $Q$. Hence $Q\leq X_{\a\b}$, and this contradicts Hypothesis~\ref{h:map}(c).
	
	% \medskip
	% \noindent
	% \emph{Lines 3--5\,a: $n=2$, $q>7$,\   $C_{(q+1)/(2,q+1)}\leq X_\a\cap X_0\leq D_{2(q+1)/(2,q+1)}$, $G\cap X_0= P_1\cap X_0$,  and $G_\a\cap X_0 \leq 2$.}\ \\
	% If $X_{\a\b}$ contains an element $x\in C_{q+1}$ of order greater than $2$, then $\langle x\rangle$ is the unique normal subgroup of $X_{\a\b}\cap X_0$ of order $|x|$, and we have a contradiction to Hypothesis~\ref{h:map}(c). It follows that $|X_\a:X_{\a\b}|$ is divisible by $(q+1)/(4,q+1)$, and hence $(q+1)/(4,q+1)$ divides $|G_\a|$, which divides $2f(2,q+1)$. This is a contradiction since $q>7$.  
	
	% \medskip
	% \noindent
	% \emph{Lines 3--5\,b: $n=2$, $q>7$,\   $X_\a\cap X_0= P_1\cap X_0$, $C_{(q+1)/(2,q+1)}\leq G_\a\cap X_0\leq D_{2(q+1)/(2,q+1)}$,   and $G_\a\cap X_0 \leq 2$.}\ \\
	% If  $X_\a$ and $X_\b$ fix the same $1$-space, then $C_{(q+1)/(2,q+1)}$ is the unique normal subgroup of $X_{\a\,\b}$ of order $(q+1)/(2,q+1)$,  contradicting  Hypothesis~\ref{h:map}(c). Hence   $X_\a$ and $X_\b$ fix different $1$-spaces,  and as $X_0$ is $2$-transitive on $1$-spaces, the index $|X_\a:X_{\a\b}|$ is divisible by $q$. Thus $q$ divides $|G_\a|$, which divides $2f(2,q+1)$, and we have a contradiction.  
	
	\subsubsection{Lines in Table~\ref{families} with $X_0=\PSp_{2m}(q)$} Let $V=\F_q^{2m}$ be the natural module for $X$, where $m\geq2$ and  $q=p^f$ for a prime $p$ and $f\geq1$.  By (\ref{listsm}), we are assuming: if $m=2$ then $q\geq 5$ and $q\ne 8$, and also $(2m,q)\ne (6,2), (8,2)$. 
	
	\medskip
	\noindent
	\emph{Lines 1--3\,a: $q$ even, \  $\O_{2m}^\epsilon(q)\unlhd X_\a$, $\Sp_2(q^m)\unlhd G\leq \Sp_2(q^m).(2mf)$,   and $C_{q^m-\epsilon}\leq G_\a\cap X_0\leq D_{2(q^m-\epsilon)}$, where $\epsilon=\pm$.}\ \\
	Note that the largest dihedral subgroup of $\Aut(\SL_2(q^m))$ containing $C_{q^m-\epsilon}$ is $D_{2(q^m-\epsilon)}$, so $G_\a\leq  D_{2(q^m-\epsilon)}$. Thus $|X_\a:X_{\a\b}|$ divides $|G_\a|$ which divides  $2(q^m-\epsilon)$.
	On the other hand,  by \cite[Theorem 1]{I90}, $|X_\a:X_{\a\b}|$ is divisible by either $q^{m-1}(q^m-\epsilon)$ or $(q^m-\epsilon)(q^{m-1}+\epsilon)$, so one of these quantities must divide $2(q^m-\epsilon)$. This implies that $m=2$ and $q\leq 3$, but these cases are excluded.
	
	\medskip
	\noindent
	\emph{Lines 1--3\,b: $q$ even, \ $\Sp_2(q^m)\unlhd X_\a\leq \Sp_2(q^m).(2mf)$,   $\O_{2m}^\epsilon(q)\unlhd G$,  and $C_{q^m-\epsilon}\leq G_\a\cap X_0\leq D_{2(q^m-\epsilon)}$, where $\epsilon=\pm$.}\ \\
	As noted in the previous case,  $G_\a\leq  D_{2(q^m-\epsilon)}$ and $|X_\a:X_{\a\b}|$ divides  $2(q^m-\epsilon)$. Hence $|X_{\a\b}|$ is divisible by $\frac{1}{2}q^m(q^m+\epsilon)$. If $X_{\a\b}\leq N_{X_\a}(q^m+\epsilon)\leq (q^m+\epsilon).4f$ then $q^m$ divides $8f$, but this is impossible for the parameter values being considered. Thus this inclusion does not hold, and so $\epsilon=-$ and $X_{\a\b}$ is contained in a Borel subgroup $B$ of $X_\a$. 
	
	Now $X_\a=\Sp_{2}(q^m).a$ where $a\mid 2mf$, and $\frac{1}{2}q^m(q^m-1)a = |X_\a:D_{2(q^m+1)}|$ divides $|X_\a:G_\a|$   which divides $|X_{\a\b}|$. It follows that $|B:X_{\a\b}|\leq 2$ and $|D_{2(q^m+1)}:G_\a|\leq 2$. Thus $X_{\a\b}=RM.b$ where $R=[q^m]$, $M=q^m-1$, and $b\leq2$.
	
	\smallskip\noindent
	\emph{Next we analyse $N_{X_0}(X_{\a\b})$.}\\ Now $N:=N_{X_0}(X_{\a\b})\leq  N_X(R)\leq P$ for some parabolic subgroup $P$. Since $M=q^m-1$ acts on the module $V$ as $W\oplus W'$ (a direct sum of two totally isotropic $m$-spaces which are not isomorphic as $M$-modules), the subgroup $P=P_m$ is the stabiliser $X_W$ or $X_{W'}$. Hence $P=QL$ where $Q=[q^{\frac{1}{2}m(m+1)}]$, $L=\GL_m(q)$, and $R\leq Q$; and we can take $M\leq L$. Now $Q$ is elementary abelian and, as an $\mathbb{F}_qL$-module, we have $Q\cong S^2 V_m(q)$.
	
	As in the proof of Proposition \ref{caseI3}, if $m$ is odd then the $\F_q$-composition factors of $Q\downarrow M$ all have dimension $m$, while if $m$ is even then there is one composition factor of dimension $m/2$ and the rest have dimension $m$.
	Hence $R$ is one of the composition factors of dimension $m$ and we have $Q\downarrow M=R\oplus Q_1$. 
	
	\smallskip\noindent
	\emph{Claim 1. $C_Q(M)=1$.}\\ 
	As we have seen above, the eigenvalues of $x$ on $S^2 \overline{V}_m$ are $\l^{2q^i}$ and $\l^{q^i+q^j}$ for $1\le i<j\le m$. None of these is equal to 1, and Claim 1 follows.
	
	%Let $M=\langle x\rangle$. Then $Q\cong S^2V_m$,  {\color{red}Martin: this is a different identification for $Q$ from a few lines above?? } and $x$ acts on $\overline{V}_m=V_m\otimes \overline{\mathbb{F}}_q$ as $(\lambda, \lambda^q,\dots,\lambda^{q^{m-1}})$, where $|\lambda|=q^m-1$ and $\overline{\mathbb{F}_q}$ is an algebraic closure of $\mathbb{F}_q$. Thus  $x$ induces a `diagonal action' on $S^2(\overline{V}_m)$ with $ij$ entry $\lambda^{q^i+q^j}$ for $0\leq i,j\leq m-1$. If $x$ fixes a non-zero vector in its action on $S^2(\overline{V}_m)$ then $\lambda^{q^i+q^j}=1$ for some $i,j$ with $0\leq i,j\leq m-1$. This holds if and only if  $q^m-1$ divides $q^i+q^j$, which is impossible. Thus  $x$ has no eigenvalue $1$ in its action on $S^2(\overline{V}_m)$, and so  $C_Q(M)=1$. {\color{red}Martin: Not sure I understand the action properly, please check: your notes say:  $x$ has no eigenvalue $1$ in its action on $S^2(\overline{V}_m)$  as $q^m-1$ does not divide $1+q^j$ for any $j<m$ }
	
	\smallskip\noindent
	\emph{Claim 2.  $N_Q(RM)=R$.}\\ 
	Let $rq_1\in N_Q(RM)$, where $r\in R$ and $q_1\in Q_1$. Then $q_1\in N_Q(RM)$, so as $R$ and $Q_1$ are both $M$-invariant, $[q_1,M]\leq RM\cap Q_1=1$. Hence $q_1\in C_Q(M)$, and so $q_1=1$ by Claim 1. Thus $N_Q(RM)=R$, proving Claim 2.
	
	\smallskip\noindent
	\emph{Claim 3. $N_{X_0}(X_{\a\b})= R N_L(M)=RM.m\leq N_{X_0}(X_\a)$.}\\ 
	Note that $N_{X_0}(X_\a)=\Sp_2(q^m).m$ and that $N_{\Sp_2(q^m).m}(RM)=RM.m$, so in particular $M.m\leq N_{X_0}(RM)$ and $RM.m\leq N_X(X_\a)$. As noted earlier in the proof, $N_{X_0}(X_{\a\b})=N_{X_0}(RM)\leq P=QL$, so $N_{X_0}(RM)=N_P(RM)$.
	Now $N_P(RM)\leq  N_P(QM)$ (since $QM=Q(RM)$ and $Q\unlhd P$), and $N_P(QM)/Q\cong M.m$ (the normaliser in $L=\GL_m(q)$ of the Singer cycle $M$). 
	Similarly, $N_L(M)=M.m$ and we just saw that this is contained in $N_X(RM)$.  It follows that $N_P(QM) = Q N_L(M)$, and then that $N_P(RM)=N_Q(RM)N_L(M)$. Using Claim 2, we obtain $N_P(RM)=RM.m$, which is contained in $N_{X_0}(X_\a)$. This proves Claim 3.
	
	Now the involution $g$ of Hypothesis~\ref{h:map} lies in $N_G(X_{\a\b})$ and $g$ does not normalise $X_\a$.  However, by Claim 3, $N_{X_0}(X_{\a\b})\leq N_X(X_\a)$, and hence $g\not\in X_0$. Thus $g$ induces an involutory field automorphism of $X$, and so it must also induce an involutory field automorphism of $G$. However $\O_{2m}^-(q)$ has no involutory field automorphisms (see for example \cite[Prop. 2.8.2]{KL}), a final contradiction, completing this case.
	%{\color{red}  reference needed for this. Is it still OK if $m=2$ when there might be other auts?}  

	%   $N_X(X_{\a\b})Q/Q$ lies in the normaliser in $\GaL_m(q)$ of a Singer cycle, namely $(q^m-1).m.(\text{field auts})$. If $g$ induces an involutory field automorphism of $X$, then it must induce a field automorphism of $G$. However $\O_{2m}^-(q)$ has no involutory field automorphisms. {\color{red}  reference needed for this.} Thus by Claim 2, $g\in N_{X_0}(RM)=RN_L(M)$ and so, modulo $Q$, $g$ lies in $(q^m-1).m$. In particular this implies that $m$ is even. If $g\in N_L(M)$, then $g$ has Jordan form $J_2^{m/2}$. So, as an element of $\Sp_{2m}(q)$, $g$ is of the form $W(2)^m$ but such elements do not lie in $O_{2m}^-(q)=G$. {\color{red}  reference needed for this.} 
	
	% Hence $g=uv$ where $1\ne u\in R$ and $v\in N_L(M)$. As $|g|=2$, modulo $R$ we have $R=Rg^2=Rv^2$, and hence $v^2\in R\cap L=1$. Thus $v^2=1$, and  as $R$ is an elementary abelian $2$-group also $u^2=1$. 
	
	% Hence $u^2=1=g^2=(uv)^2=u.u^v$, so $v\in $, we  have $u^2=1=(uv)^2=u.u^$ in particular the order of $g$ modulo $R$ is at most $2$, so $v^2$ 

	%\medskip{\color{red} Martin: please check the above. It's very delicate.}
	
	\medskip
	\noindent
	\emph{Lines 4--5\,a: $q$ even, \  $X_\a\cap X_0=\O_{2m}^-(q).a$, $G\cap X_0=[q^m].(q^m-1).m'$,   and $G_\a\cap X_0= a m'$, where $a\leq 2$ and $m'\mid m$.}\ \\
	By Hypothesis~\ref{h:map}(c), $X_{\a\b}$ does not contain $\O_{2m}^-(q)$. Also $|X_\a:X_{\a\b}|$ divides
	$|G_\a|\leq 4mf$, whereas by \cite[Table 5.2.A]{KL}, $|X_\a:X_{\a\b}|$ is much larger than $4mf$.

	\medskip
	\noindent
	\emph{Lines 4--5\,b: $q$ even, \  $X_\a\cap X_0=[q^m].(q^m-1).m'$, $G\cap X_0=\O_{2m}^-(q).a$,   and $G_\a\cap X_0= a m'$, where $a\leq 2$ and $m'\mid m$.}\ \\
	Let $X_\a = RM.m'$ where $R=[q^m]$ and $M=q^m-1$. Now $|X_{\a\b}|$ is divisible by $|X_\a:G_\a|$, which is divisible by $q^m(q^m-1)/2fa$. If $R\cap X_{\a\b}=1$ then this implies that  $q^m$ divides $2fm$, and hence that $m=q=2$, which is excluded. Thus $R\cap X_{\a\b}\ne1$, and as $M$ acts irreducibly on $R$, we must have $R\leq X_{\a\b}$,  contradicting Hypothesis~\ref{h:map}(c). 
	
	\medskip
	\noindent
	\emph{Line 6\,a: $q$ even, \  $X_\a\cap X_0=\O_{2m}^-(q)$, $G\cap X_0=\Sp_2(q^{m/2})\wr S_2$,   
		and $G_\a\cap X_0= D_{2(q^m-1)}$, where $m\equiv 2\pmod{4}$.}\ \\
	As usual, $X_{\a\b}$ does not contain $\O_{2m}^-(q)$ by Hypothesis~\ref{h:map}(c). If $m=2$ (so $q\geq 8$ since $q$ is even), then $X_\a=\PSL_2(q^2).a$    with $a\leq 2$ and $G_\a=D_{2(q^2-1)}$ since there is no larger dihedral subgroup in $X_\a$. Thus
	$|X_{\a\b}|$ is divisible by $|X_\a:G_\a|=q^2(q^2+1)a/2$, but the only such subgroups of $X_\a$ contain $\PSL_2(q^2)$, while $X_{\a\b}$ does not contain $\PSL_2(q^2)$. Thus $m\ne 2$ and so $m\geq 6$. Then  by \cite[Table 5.2.A]{KL}, $|X_\a:X_{\a\b}|\geq (q^m+1)(q^{m-1}-1)/(q-1)$ whereas $|X_\a:X_{\a\b}|$ divides $|G_\a|$, which divides $2(q^m-1)$, giving a contradiction.
	
	\medskip
	\noindent
	\emph{Line 6\,b: $q$ even, \  $X_\a\cap X_0=\Sp_2(q^{m/2})\wr S_2$, $G\cap X_0=\O_{2m}^-(q)$,    
		and $G_\a\cap X_0= D_{2(q^m-1)}$, where $m\equiv 2\pmod{4}$.}\ \\
	Now $X_\a=(\Sp_2(q^{m/2})\wr S_2).a$ for some $a$ dividing $f$, and  the base group of $X_\a\cap X_0$ is $T_1\times T_2$ with each $T_i=\Sp_2(q^{m/2})$ (a nonabelian simple group).
	If $X_{\a\b}$ contains $T_i$, then $G_{\a\b}=G_\a\cap X_{\a\b}\geq D_{2(q^m-1)}\cap T_i\geq (q^{m/2}\pm 1)$, contradicting Hypothesis~\ref{h:map}(a). Thus $X_{\a\b}\cap T_i$ is a proper subgroup of $T_i$ for each $i$. On the other hand, 
	the index $|X_\a:G_\a|=q^m(q^m-1).a$, and this divides $|X_{\a\b}|$. Hence, for each $i$, $|X_{\a\b}\cap T_i|$ is divisible by $q^{m/2}/2$; and for some $i$, say $i=1$,  a primitive prime divisor $r$ of $2^{fm}-1$ (which exists since $(2m,q)\ne (4,8), (6,2)$) divides $|X_{\a\b}\cap T_1|$. The latter condition implies that $X_{\a\b}\cap T_1\leq N_{T_1}(C_r)\leq (q^{m/2}+1).2f$, and hence $q^{m/2}/2$ divides $2f$. This implies that $m=2$ and $q=16$. However, $\Sp_2(16)$ has no proper subgroup of order divisible by $17.8$,  and we have a contradiction.
	
	\medskip
	\noindent
	\emph{Line 7\,a: $q$ even, \  $X_\a\cap X_0=\O_{2m}^-(q)$, $G\cap X_0=\Sp_4(q^{m/4}).2$,   
		and $G_\a\cap X_0= D_{2(q^{m/2}-1)}$, where $m\equiv 4\pmod{8}$ and $f$ is odd.}\ \\
	Arguing as in the case of Line 6.a, $X_{\a\b}$ does not contain $\O_{2m}^-(q)$, and $|X_\a:X_{\a\b}|$ divides $|G_\a|$, which divides $2(q^{m/2}-1).f$; whereas (since $m\geq4$) by   \cite[Table 5.2.A]{KL}, $|X_\a:X_{\a\b}|\geq (q^m+1)(q^{m-1}-1)/(q-1)$, which is a contradiction.
	
	\medskip
	\noindent
	\emph{Line 7\,b: $q$ even, \  $X_\a\cap X_0=\Sp_4(q^{m/4}).2$, $G\cap X_0=\O_{2m}^-(q)$,    
		and $G_\a\cap X_0= D_{2(q^{m/2}-1)}$, where $m\equiv 4\pmod{8}$ and $f$ is odd.}\ \\
	As usual,  $X_{\a\b}$ does not contain $\Sp_4(q^{m/4})'$ and $|X_\a:X_{\a\b}|$ divides $|G_\a|$, which divides $2(q^{m/2}-1).f$, whereas (since $m\geq4$, and $(2m,q)\ne (8,2)$) by   \cite[Table 5.2.A]{KL}, $|X_\a:X_{\a\b}|\geq (q^m-1)/(q^{m/4}-1)$,  giving a contradiction.
	
	\medskip
	\noindent
	\emph{Line 8\,a: $q$ even, \  $X_\a\cap X_0=\O_{2m}^+(q).2$, $G\cap X_0=\Sz(q^{m/2})$,   
		and $G_\a\cap X_0= D_{2(q^{m/2}-1)}$, where $m\equiv 2\pmod{4}$ and $f$ is odd, $f>1$.}\ \\
	If $m\geq 4$, we obtain a contradiction by an exactly similar argument to that for Line 7\,a, so we may assume that $m=2$, and hence that $q>8$ (as $q$ is even). Then $X_\a\cap X_0= O_4^+(q)=\Sp_2(q^{m/2})\wr S_2$ with base group $T_1\times T_2$ (where each $T_i=\Sp_2(q)$), as in Line 6\,b. Arguing as for Line 6\,b,  for some $i$, $X_{\a\b}\cap T_i$ is a proper subgroup of $T_i$ with order divisible by $rq/2$, where $r$ is a primitive prime divisor of $2^{2f}-1$ (note that $r$ exists since $q>8$), and this implies that $q/2$ divides $2f$. This in turn implies that $q=16$, and hence that $r=17$. However, $\Sp_2(16)$ has no proper subgroup of order divisible by $17.8$,  and we have a contradiction. 
	
	\medskip
	\noindent
	\emph{Line 8\,b: $q$ even, \  $X_\a\cap X_0=\, ^2\!B_2(q^{m/2})$, $G\cap X_0=\O_{2m}^+(q).2$,   
		and $G_\a\cap X_0= D_{2(q^{m/2}-1)}$, where $m\equiv 2\pmod{4}$ and $f$ is odd, $f>1$.}\ \\
	In this case  $X_{\a\b}$ does not contain $^2\!B_2(q^{m/2})$ and $|X_\a:X_{\a\b}|$ divides $|G_\a|$, which divides $(q^{m/2}-1).f$. Also, see  \cite[Theorem 7.3.3]{BHRD}, $|X_\a:X_{\a\b}|\geq q^{m}+1$, which is strictly larger than $(q^{m/2}-1).f$,  a contradiction.
	
	\medskip
	\noindent
	\emph{Line 9\,a: $m=2, f=1$, \  $X_\a\cap X_0=\PSp_{2}(p^2).a$, $G\cap X_0=[p^{1+2}](p^2-1)$,   
		and $G_\a\cap X_0= p$ or $D_{2p}$, where  $a\mid 2$, and $X\geq X_0.2$ if $p\equiv 1\pmod{4}$.}\ \\
	Here  $p\geq5$, and by Hypothesis~\ref{h:map}(c), $X_{\a\b}$ does not contain $\Sp_2(p^{2})$. Also $|X_\a:X_{\a\b}|$ divides $|G_\a|$ which divides $4p$, whereas by \cite[Table 5.2.A]{KL}, $|X_\a:X_{\a\b}|\geq p^2+1$, which gives a contradiction.
	
	\medskip
	\noindent
	\emph{Line 9\,b: $m=2, f=1$, \  $X_\a\cap X_0=[p^{1+2}].(p^2-1)$, $G\cap X_0=\PSp_{2}(p^2).a$,   
		and $G_\a\cap X_0= p$ or $D_{2p}$, where  $a\mid 2$, and $X\geq X_0.2$ if $p\equiv 1\pmod{4}$.}\ \\
	Here $p\geq5$, and  $X_\a = QR$ with $Q=[p^{1+2}]$. Also $p^2(p^2-1)/2$ divides $|X_\a:G_\a|$ which divides $|X_{\a\b}|$. Thus $|X_{\a\b}\cap Q|\geq p^2$ and as $(p^2-1)/2$ acts irreducibly on $Q/Q'$ (since $p\geq5$), it follows that $X_{\a\b}$ contains $Q$, which is normal in $X_\a$, and we have a contradiction to Hypothesis~\ref{h:map}(c).
	
	\subsubsection{Line  in Table~\ref{families} with $X_0=\PSU_{2m}(q)$.}\label{sub:mapU} 
	Let $V=\F_{q^2}^{2m}$ be the natural module for $X$, where $m\geq2$ and  $q=p^f$ for a prime $p$ and $f\geq1$. There is only one Line to consider, and it has $q=p$ or $q=4$. Moreover, from our assumptions, if $m=2$ then $q\geq 4$, and in particular $\PSL_{2m-1}(q)$ is a nonabelian simple group. 
	
	\medskip
	\noindent
	\emph{Line 1\,a: $\SU_{2m-1}(q)\leq X_\a\cap X_0\leq N_1$,\ $G\cap X_0 = [q^{2m}].\frac{q^{2m}-1}{(q+ 1)d}.m$, and $G_\a\cap X_0 = [q]$, where $d=(2m,q+1)$ and $q=p$ or $q=4$.}\\
	The index $|X_{\a}:X_{\a\b}|$ divides $|G_\a|$ which divides $qf$. 
	On the other hand, by  Hypothesis~\ref{h:map}(c), $X_{\a\b}$ does not contain $\SU_{2m-1}(q)$, and  by \cite[Table 5.2.A]{KL}, therefore $|X_{\a}:X_{\a\b}|$ is much larger than $qf$.
	
	\medskip
	\noindent
	\emph{Line 1\,b: \ $X_\a\cap X_0 = [q^{2m}].\frac{q^{2m}-1}{(q+ 1)d}.m$, $\SU_{2m-1}(q)\leq G\cap X_0\leq N_1$,\ $G_\a\cap  X_0 = [q]$, where $d=(2m,q+1)$ and $q=p$ or $q=4$.}\\
	Let $Q=[q^{2m}]=O_p(X_\a)$. Since $|G_\a|\leq qf$ and $|X_\a:G_\a|$ divides $|X_{\a\b}|$, it follows that $f\cdot|X_{\a\b}|$ is divisible by $q^{2m-1}(q^{2m}-1)/(q+1)d$. This implies firstly that $Q\cap X_{\a\b}\ne 1$. It also implies that $X_{\a\b}$ acts irreducibly on $Q$, and hence that $Q\leq X_{\a\b}$, which contradicts Hypothesis~\ref{h:map}(c).
	
	\subsubsection{Line  in Table~\ref{families} with  $X_0=\POm_{2m}^+(q)$.} Let $V=\F_{q}^{2m}$ be the natural module for $X$. There is only one line to consider.  Moreover, from our assumptions, if $m=4$ then $q\geq 3$.
	
	\medskip
	\noindent
	\emph{Line 1\,a: $\O_{2m-1}(q)\leq X_\a\cap X_0$,\ $G\cap X_0 \leq [q^{m}].\frac{q^m-1}{d}.m$, $G_\a\cap X_0 = [q]$ or $[2q]$, where  $m\geq4$,  $q=p$ or $q=4$, and $d=(4,q^m-1)$.}\\
	This case gives a contradiction by the same argument as for Line 1a in Subsection~\ref{sub:mapU}, using \cite[Table 5.2.A]{KL} for $\POm_{2m-1}(q)$ with $m\geq4$.
	
	\medskip
	\noindent
	\emph{Line 1\,b: $X_\a\cap X_0 \leq [q^{m}].\frac{q^m-1}{d}.m$, $\O_{2m-1}(q)\leq G\cap X_0$,\ $G_\a\cap X_0 = [q]$ or $[2q]$, where  $m\geq4$,  $q=p$ or $q=4$, and $d=(4,q^m-1)$.}\\
	This case gives a contradiction by the same argument as for Line 1b in Subsection~\ref{sub:mapU}.

	\subsubsection{Lines  in Table~\ref{families} with $X_0=\O_{7}(q)$.}  \label{sub:mapOodd}
	There are two lines to consider, and in both lines  $q=3^f$ for some $f\geq1$.  
	
	\medskip
	\noindent
	\emph{Line 1\,a: $X_\a\cap X_0= \O_6^\epsilon(q)$,\ $G\cap X_0 = \SL_3^{-\epsilon}(q)$, $G_\a\cap X_0 = q^2-1$, with $\epsilon=\pm$.}\\
	The index $|X_{\a}:X_{\a\b}|$ divides $|G_\a|$ which divides $(q^2-1)f$. 
	On the other hand, by  Hypothesis~\ref{h:map}(c), $X_{\a\b}$ does not contain $\O_6^\epsilon(q)$, and  by \cite[Table 5.2.A]{KL}, therefore $|X_{\a}:X_{\a\b}|$ is much larger than $(q^2-1)f$.
	
	\medskip
	\noindent
	\emph{Line 1\,b: $X_\a\cap X_0 = \SL_3^{-\epsilon}(q)$, $G\cap X_0= \O_6^\epsilon(q)$,\ $G_\a\cap X_0 = q^2-1$, with $\epsilon=\pm$.}\\
	If $\epsilon=+$ then this case gives a contradiction by the same argument as for Line 1a, with $ \SL_3^{-\epsilon}(q)$ in place of $\O_6^\epsilon(q)$. If $\epsilon=-$, then
	$|X_{\a\b}|$ is divisible by  $|X_{\a}:G_{\a}|$, which is divisible by $q^3(q^3-1)$. However there is no proper subgroup of $\SL_3(q).2f$ with this property (see \cite[Tables 8.3,8.4]{BHRD}), and hence $X_{\a\b}$ contains $\SL_3(q)$, contradicting Hypothesis~\ref{h:map}(c).
	
	\medskip
	\noindent
	\emph{Line 2\,a: $X_\a\cap X_0= \O_6^+(q).a$,\ $G\cap X_0 = {}^2G_2(q)$, $G_\a\cap X_0 = q-1$ or $D_{2(q-1)}$, where $q=3^f$ with $f$ odd and $a\leq 2$.}\\
	This case gives a contradiction by the same argument as for Line 1a, with $2(q-1)f$ in place of $(q^2-1)f$.
	
	\medskip
	\noindent
	\emph{Line 2\,b: $X_\a\cap X_0 = {}^2G_2(q)$, $G\cap X_0= \O_6^+(q).a$,\ $G_\a\cap X_0 = q-1$ or $D_{2(q-1)}$,  where $q=3^f$ with $f$ odd and $a\leq 2$.}\\
	This case gives a contradiction by the same argument as for Line 1a, with $2(q-1)f$ in place of $(q^2-1)f$, using the fact that each proper subgroup of ${}^2G_2(q)$ has index at least $q^3+1$ for $q>3$, see \cite[Theorem C]{K88},  and at least $9$ for $f=3$ as ${}^2G_2(q)\cong\PSL_2(8)$.

	\subsubsection{Lines  in Table~\ref{families} with $X_0=G_{2}(q)$.} There are two lines to consider, and in both cases $p=3$. % Let $V=\F_{q}^{7}$ be the natural module for $X$, where $q=p^f$ for a prime $p$ and $f\geq1$. 
	
	\medskip
	\noindent
	\emph{Lines 1\,a and 1\,b: $X_\a\cap X_0= \SL_3^\epsilon(q)$,\ $G\cap X_0 = \SL_3^{-\epsilon}(q)$, $G_\a\cap X_0 = q^2-1$, with $\epsilon=\pm$.}\\
	The index $|X_{\a}:X_{\a\b}|$ divides $|G_\a|$ which divides $(q^2-1)f$. 
	On the other hand, by  Hypothesis~\ref{h:map}(c), $X_{\a\b}$ does not contain $ \SL_3^\epsilon(q)$. If $\epsilon=-$ then  by \cite[Table 5.2.A]{KL},  $|X_{\a}:X_{\a\b}|\geq q^3+1$ which is much larger than $(q^2-1)f$. So $\epsilon=+$. In this case the same argument as in Line 1b of Subsection~\ref{sub:mapOodd} gives a contradiction.
	
	\medskip
	\noindent
	\emph{Lines 2\,a and 2\,b:  $\{ X_\a\cap X_0, G\cap X_0\} = \{{}^2G_2(q), \SL_3(q).a\}$, $G_\a\cap X_0 = (q-1)$ or $D_{2(q-1)}$, with $q=3^f$, $f$ odd, and $a\leq 2$.}\\
	By Hypothesis~\ref{h:map}(c), $X_{\a\b}$ does not contain ${}^2G_2(q)$ or $\SL_3(q)$, and by  \cite[Theorem C]{K88} or \cite[Table 5.2.A]{KL},  $|X_{\a}:X_{\a\b}|$ is at least $q^3+1$ or $q^2+q+1$, respectively.  On the other hand $|X_{\a}:X_{\a\b}|$ divides $|G_\a|$ which is at most $2(q-1)$, and we have a contradiction.
	
	\medskip
	This completes the proof of Theorem~\ref{p:maps-families}.

	%%%%%%%%%
	
	\section{Sporadic groups}\label{s:sporadic}
	
	\vspace{4mm} \noindent {\bf Proof of Theorems \ref{sporadic} and \ref{spormaps}}
	
	All factorisations of the sporadic simple groups and their automorphism groups are given in \cite{Giudici}. The simple sporadic groups that factorise are the Mathieu groups (apart from $M_{22}$), together with $J_2$, $HS$, $Ru$, $Suz$, $Fi_{22}$ and $Co_1$; in addition, there are factorisations of the almost simple groups $M_{22}.2$, $J_2.2$, $HS.2$, $Suz.2$ and $He.2$ that do not intersect the simple group in a factorisation. Given the results of \cite{Giudici}, for the largest groups $Fi_{22}$ and $Co_1$, we can read off from \cite{atlas} that the factorisations are not cyclic/dihedral. For the other groups, it is a routine matter to compute using Magma (as discussed in Subsection \ref{maggie}) the factorisations with cyclic or dihedral intersections, and to determine which of these satisfy Hypothesis \ref{h:map}.
	
	% \appendix
	% \section*{Tables of cyclic/dihedral factorisations}\label{tables}
	
	\section{Tables of cyclic/dihedral factorisations}\label{tables}

	This section contains the tables for the Factorisation Theorems \ref{t:Anfactns}, \ref{classgen} and \ref{sporadic}.
	
	There is some notation that we use throughout the tables:
	\begin{itemize}
		\item for $r$ a positive integer, $a_r$ and $b_r$ denote divisors of $r$
		\item as is standard group-theoretic notation, a positive integer $n$ denotes a cyclic group of order $n$, and the symbol $[n]$ denotes a (possibly non-cyclic) group of order $n$
		\item in Table \ref{families} we just give the possibilities for $(X_0,A\cap X_0,B\cap X_0)$; whereas in Tables \ref{excep-psl}-\ref{excep-sp}, we give the possibilities $(X,A,B)$ where $X$ is an almost simple group which is minimal subject to having a factorisation $X=AB$ with cyclic or dihedral intersection.
		\item in Table \ref{spor}, if the socle factorises as $X_0 = A_0B_0$, we do not include factorisations of $X = X_0.2$ of the form $X=AB$, where $A\cap X_0 = A_0, B\cap X_0 = B_0$.
	\end{itemize}
	
	\begin{table}[h!]
		\caption{Cyclic/dihedral factorisations of $A_n$ and $S_n$ }\label{tab:Anfactns}
		\begin{tabular}{|l|l|l|l|}
			\hline
			$k$ & $X$ & $A$ & Conditions on $B$ \\
			\hline
			2   &  $S_n$ or $A_n$    & $(S_{n-2}\times S_2)\cap X$ & $B^\Omega$ $2$-homog.,  $B<X$, $B_{\{x,y\}}$ as in Table~\ref{tab:B2hom} \\
			&  $S_n$ or $A_n$    & $(S_{n-2}\times 1)\cap X$ & $B^\Omega$ $2$-trans., $B<X$, $B_{xy}$ as in Table~\ref{tab:B2hom} \\
			%   &  $S_n$    & $S_{n-2}\times S_2$            & $B^\Omega$ $2$-homogeneous, $B_{\{x,y\}}$ as in Table~\ref{tab:k2} \\
			%   &  $S_n$    & $S_{n-2}\times 1$ & $B^\Omega$ $2$-transitive, $B_{xy}$ as in Table~\ref{tab:k2} \\
			&  $S_n$    & $(S_{n-2}\times S_2)\cap A_{n}$ & $B$ contains an odd perm., $(B\cap A_n)^\Omega$ $2$-homog., \\
			&           &                               & $(B\cap A_n)_{\{x,y\}}$ as in Table~\ref{tab:B2hom} \\
			&  $S_n$    & $A_{n-2}\times 1$ &  $B$ contains an odd perm.,  $(B\cap A_n)^\Omega$ $2$-trans.,\\
			&           &           &$(B\cap A_n)_{xy}$ as in Table~\ref{tab:B2hom} \\
			&  $S_n$    & $A_{n-2}\times S_2$ & $(B\cap A_n)^\Omega$ $2$-trans., $(B\cap A_n)_{xy}$ as in Table~\ref{tab:B2hom} \\
			\hline\hline
			3   &  $S_n$ or $A_n$   & $(S_{n-3}\times S)\cap X$ &  $\PSL_2(q)\leq B\leq \hatB :=\PGaL_2(q)$  with $n=q+1$,  \\
			& & & $S\le S_3$; see Remark~\ref{r:Anfactns}(a)\\
			%$\hatB_{\{x,y,z\}}=D_{6f}$ with $f\leq 2$ \\
			%    &  $S_n$ or $A_n$   & $(S_{n-3}\times A_3)\cap X$ &  $L_2(q)\leq B\leq \hatB :=\PGaL_2(q)$ and $\hatB_{\{x,y, z\}}=C_{3f}$ with $\gcd(f,3)=1$ \\
			%    &  $S_n$ or $A_n$   & $(S_{n-3}\times S_2)\cap X$ &  $L_2(q)\leq B\leq \hatB :=\PGaL_2(q)$ and $\hatB_{x,\{y, z\}}=C_{2f}$ with $\gcd(f,2)=1$ \\
			%    &  $S_n$ or $A_n$   & $(S_{n-3}\times 1)\cap X$ &  $L_2(q)\leq B\leq \hatB :=\PGaL_2(q)$ and $\hatB_{x,y, z}=C_{f}$ \\
			%\hline
			&  $S_{12}$ or $A_{12}$   & $(S_{9}\times 1)\cap X$ &  $B=M_{11}$, $A\cap B=B_{xyz}=D_6$ \\
			&  $S_{8}$ or $A_8$   & $(S_{5}\times S)\cap X$ &  $B=\AGL_3(2)$ and $A\cap B=D_{8}, 2^2$  for $S=S_2, 1$, resp.\\
			%    &  $S_8$ or $A_8$   & $(S_{5}\times 1)\cap X$ &  $B=\AGL_3(2)$ and $A\cap B=B_{x,y, z}=D_{4}$ \\
			&  $S_{8}$ or $A_8$   & $(S_{5}\times S_3)\cap X$ &  $B=\AGL_1(8)$ and $A\cap B=B_{\{x,y, z\}}=1$ \\
			&  $S_8$ or $A_8$   & $(S_{5}\times S)\cap X$ &  $B=\AGaL_1(8)$ and $A\cap B=C_3, 1$  for $S=S_3, S_2$, resp.\\
			%   &  $S_8$ or $A_8$   & $(S_{5}\times S_2)\cap X$ &  $B=\AGaL_1(8)$ and $A\cap B=B_{x,\{y, z\}}=1$ \\
			&  $S_{32}$ or $A_{32}$   & $(S_{29}\times S_3)\cap X$ &  $B=\AGaL_1(32)$ and $A\cap B=B_{\{x,y, z\}}=1$ \\
			\hline\hline
			4   &  $S_{11}$ or $A_{11}$   & $(S_{7}\times S)\cap X$ &  $B=M_{11}$, $B_{xyzw}=1$, and $A\cap B=S\leq S_4$ cyclic/dihedral \\
			&  $S_9$ or $A_9$   & $(S_{5}\times S)\cap X$ &  $B=\PGL_2(8)$ and $(S,A\cap B)=(S_4,2^2)$  or $(S_3,1)$ \\
			%    &  $S_9$ or $A_9$   & $(S_{5}\times S_3)\cap X$ &  $B=\PGL_2(8)$, $A\cap B=B_{x,\{y,z,w\}}=1$\\
			&  $S_9$ or $A_9$   & $(S_{5}\times S)\cap X$ &  $B=\PGaL_2(8)$ and $(S,A\cap B)$ as in Remark~\ref{r:Anfactns}(b)\\
			% &  $S_9$ or $A_9$   & $(S_{5}\times S_2)\cap X$ &  $B=\PGaL_2(8)$, $A\cap B=B_{x,y\{z,w\}}=1$\\
			% &  $S_9$ or $A_9$   & $(S_{5}\times S_2\times S_2)\cap X$ &  $B=\PGaL_2(8)$, $A\cap B=B_{\{x,y\},\{z,w\}}=C_2$\\
			&  $S_{33}$ or $A_{33}$   & $(S_{29}\times S)\cap X$ &  $B=\PGaL_2(32)$  and $(S,A\cap B)=(S_4,2^2)$  or $(S_3,1)$ \\
			\hline
			\hline
			5   &  $S_{12}$ or $A_{12}$   & $(S_{7}\times S)\cap X$ &  $B=M_{12}$, $B_{xyzwv}=1$, $A\cap B=S\leq S_5$ cyclic/dihedral\\ 
			%   &                          &                &with $S\in\{1, C_2, C_3, D_4, D_6, D_8,D_{10}\}$ \\
			\hline
		\end{tabular}
	\end{table}
	
	\vspace{1cm}
	
	\begin{table}[h!]
		\caption{$2$-homogeneous groups $B$ with $B_{xy}$ or $B_{\{x,y\}}$ cyclic or dihedral: notation of 
			Lemma \ref{lem:k2-B2hom}}\label{tab:B2hom}
		\begin{tabular}{|c|llll|}
			\hline
			$n$ & $B^\circ$ & $B^\circ_{xy}$ & $B^\circ_{\{x,y\}}$ & $\exists$ odd perm.  \\ \hline
			$q^2+1$ &  $^2\!B_2(q),\,q=2^{2a+1}$ & $q-1$& $D_{2(q-1)}$ & $\times$  \\
			$q^3+1$  &  $^2\!G_2(q),\,q=3^{2a+1}$ & $q-1$& $D_{2(q-1)}$ & $\times$  \\  
			$q^3+1$  &  $\PSU_3(q)$ & $(q^2-1)/(3,q+1)$  &  $D_{2(q^2-1)/u}$ & $\times$  \\  
			$q+1$  &  $\PSL_2(q)$ & $(q-1)/(2,q-1)$  &  $D_{2(q-1)/u}$ & $\checkmark$ iff $q$ odd  \\  
			%   &&&&  $q=q_0^2$ with $q_0\equiv 3\,(4)$  \\
			%
			$p^2$  &  $\ASL_2(p)$ & $p$  &  $D_{2p}$  & $\checkmark$ iff $p\equiv 3\,(4)$ \\
			%$\exists$ $B=B^\circ.2$ with  
			%     &&&&&$B_{xy}=D_{2p}$, \\
			%     &&&&& $B_{\{x,y\}}=D_{4p}$ \\
			$q$  &  $B\leq \AGaL_1(q)$ & $B_{xy}\leq \langle \sigma\rangle$  &  $B_{\{x,y\}}\leq  $ & 
			$\checkmark$ iff $q$ odd    \\ 
			&& ($\sigma$ field aut.) & $\langle \sigma\rangle\times 2$ &\\ 
			\hline
			$7$  & $\PSL_2(7)$ & $2^2$ & $D_8$ & $\times$  \\
			$11$ & $\PSL_2(11)$ & $D_{6}$ & $D_{12}$ & $\times$  \\
			$3^4$ &$3^4.2^{1+4}.5$& 1 & 2 & $\times$   \\
			& $3^4.2A_5.2$ & 3 & 6 & $\times$  \\
			$3^6$ & $3^6.\SL_2(13)$ & 3 & 6 & $\times$  \\
			$5^2$ &$5^2.2A_4$& $1$ & $2$ & $\checkmark$  \\
			$7^2$ &$7^2.2S_4$ & 1 & 2 & $\times$  \\
			$11^2$ &$11^2.(2A_4\times 5)$ & 1 & 2 & $\checkmark$  \\
			& $11^2.2A_5$ & 1 & 2 & $\times$  \\
			$19^2$ &$19^2.(2A_5 \times 9)$ & 3 & 6 & $\times$  \\
			$23^2$ &$23^2.(2S_4 \times 11)$ & 1 & 2 & $\times$  \\
			$29^2$ &$29^2.(2A_5 \times 7)$ & 1 & 2 & $\times$  \\
			$59^2$ & $59^2.(2A_5 \times 29)$ & 1 & 2 & $\times$  \\   
			\hline
		\end{tabular}
	\end{table}
	
	%%%%%%%%%%%%%%
	
	\begin{table}[h!]
		\caption{Cyclic/dihedral factorisations of groups of Lie type: families}\label{families}
		\[
		\begin{array}{|l|l|l|l|l|}
			\hline
			X_0 & A\cap X_0 & B \cap X_0 & A \cap B \cap X_0 & \hbox{Conditions}  \\
			\hline
			\PSL_n(q), & A \triangleright q^{n-1}.\SL_{n-1}(q) & \le \frac{q^n-1}{(q-1)d}.n & r & r|n,\,d=(n,q-1) \\
			n\ge 2 &&&& \\
			\PSL_4(p) & \PSp_4(p).a & p^3.\frac{p^3-1}{d}.e & p\times e & p\ge 3\hbox{ prime},\,e|3, \\
			& & & & d=(4,p-1),\,ad=4, \\
			& & & & X \ge X_0.2 \hbox{ if } p\equiv 1 \,(4) \\
			%\PSL_2(q) & D_{q+1} & P_1 & 1 & q\equiv 3\,(4) \\
			%         & D_{q+1},C_{\frac{q+1}{2}} & P_1 & 2,1 & q\equiv 1\,(4),\,X\ge X_0.2  \\
			%         & D_{2(q+1)},C_{q+1} & P_1 & 2,1 & q\hbox{ even}  \\
			\hline
			\Sp_{2m}(q), & \Or_{2m}^\e (q) & \Sp_2(q^m) & D_{2(q^m-\e)} & \e = \pm 1 \\
			q\hbox{ even}, & \O_{2m}^\e(q) & \Sp_2(q^m) & q^m-\e & m \hbox{ odd}  \\
			m\ge 2         & \O_{2m}^-(q) & \Sp_2(q^m).2 & D_{2(q^m+1)} & m \hbox{ even}, \,m_2=2  \\
			& \Or_{2m}^-(q) & q^m.(q^m-1).m' & 2m'& m'|m  \\
			& \O_{2m}^-(q) & q^m.(q^m-1).m' & m'& m'|m\,^*  \\
			& \O_{2m}^-(q) & \Sp_2(q^{m/2}) \wr S_2 & D_{2(q^m-1)} & \frac{m}{2}\hbox{ odd}  \\
			&  & \Sp_4(q^{m/4}).2 & D_{2(q^{m/2}-1)} & \frac{m}{4}\hbox{ odd}, \,q = 2^{2f+1}  \\
			& \Or_{2m}^+(q) & ^2\!B_2(q^{m/2}) & D_{2(q^{m/2}-1)} & \frac{m}{2}\hbox{ odd}  \\
			&&&& \\
			\PSp_4(p) & \PSp_2(p^2).a & p^{1+2}.(p^2-1) & p,\,D_{2p} & a|2,\, X \ge X_0.2 \hbox{ if } p\equiv 1 \,(4) \\
			\hline
			\PSU_{2m}(q),& A \triangleright \SU_{2m-1}(q) & \le q^{2m}.\frac{q^{2m}-1}{(q+1)d}.m & [q] & q=p \hbox{ or }q=4,  \\
			m\ge 2 &&&& d=(2m,q+1) \\
			\hline
			P\O_{2m}^+(q), & A \triangleright \O_{2m-1}(q) & \le q^{m}.\frac{q^m-1}{d}.m & [q],\,[2q] & q=p \hbox{ or }q=4,  \\
			m \ge 4 &&&& d=(4,q^m-1) \\
			\hline
			\O_7(q) & \O_6^\e(q) & SL_3^{-\e}(q) & q^2-1 & q=3^f, \,\e = \pm 1 \\
			& \O_6^+(q).a & ^2\!G_2(q) & q-1,\,D_{2(q-1)} & q=3^{2f+1},\,a|2  \\
			\hline
			G_2(q) & SL_3(q) & SU_3(q) & q^2-1 & q=3^f \\
			& ^2\!G_2(q) & SL_3(q).a & q-1,\,D_{2(q-1)}  & q=3^{2f+1},\,a|2 \\
			\hline
		\end{array}
		\]
	\end{table}

	\begin{table*}[h!]
		$*$  Only know an example if  $(m')_2 = m_2$, where $m_2$ denotes the $2$-part of $m$
	\end{table*}

	\begin{table}[h!]
		\caption{Exceptional cyclic/dihedral factorisations, $X_0 = \PSL_n(q)$}\label{excep-psl}
		\[
		\begin{array}{|l|l|l|l|}
			\hline
			X & A & B & A\cap B   \\
			\hline
			\PSL_2(7) & S_4 & 7,\,7.3 & 1,\,3 \\
			\PSL_2(11) & A_5 & 11,\,11.5 & 1,\,5 \\
			& A_4 & 11.5 & 1 \\
			\PSL_2(16).4 & A_5.4 & D_{34}.4 & 2 \\
			\PSL_2(19) & A_5 & 19.9 & 3 \\
			\PSL_2(23) & S_4 & 23.11 & 1 \\
			\PSL_2(29) & A_5 & 29.7,\,29.14 & 1,\,2 \\
			\PSL_2(59) & A_5 & 59.29 & 1 \\
			\hline
			\PSL_3(3) & 13.3 & 3^2.8 & 1 \\
			\PSL_3(4).2 & A_6.2 & \PSL_3(2).2 & D_6 \\
			\PSL_3(4).S_3 & 7.3.S_3 & 2^4.(3\times D_{10}).2 & 1 \\
			\PSL_3(8).3 & 73.9 & 2^{3+6}.7^2.3 & 1 \\
			\PSL_4(3) & S_5,\,4.A_5,\,4.S_5 & 3^3.\SL_3(3) & 3,\,D_6,\,D_{12} \\
			& 2^4.5.a_4 & 3^3.\SL_3(3) & 2,\,4,\,8 \\
			\PSL_4(3).2 & (4\times A_6).2^2 & 3^3.13.3.2 & 1 \\
			\SL_4(4) & \Sp_4(4) & 2^6.63 & 2^2 \\
			\SL_4(4).2 & (a_5\times \SL_2(16)).4 & \SL_3(4).[2a_3] & 1,\,3,\,5,\,15 \\
			& (5\times \SL_2(16)).4 & 2^6.63.6 & 1 \\
			\SL_5(2) & 31.5 & 2^6.(\SL_3(2) \times S_3) & 1 \\
			\SL_6(2) & G_2(2) & \SL_5(2) & D_6 \\
			\PSL_6(3) & \PSL_2(13) & 3^5.\SL_5(3) & 3 \\
			\hline
		\end{array}
		\]
	\end{table}
	
	\begin{table}[h!]
		\caption{Exceptional cyclic/dihedral factorisations, $X_0 = \PSU_n(q)$. Note: exceptional factorisations of $\PSU_4(2) \cong \PSp_4(3)$ given in Table \ref{excep-sp}}\label{excep-psu}
		\[
		\begin{array}{|l|l|l|l|}
			\hline
			X & A & B & A\cap B \\
			\hline
			\PSU_3(3) & \PSL_2(7) & 3^{1+2}.8 & D_6 \\
			\PSU_3(8).3^2 & 57.9 & 2^{3+6}.63.3 & 1 \\
			\PSU_4(3) & \PSL_3(4) & [2^3.3^4] & 4 \\
			\PSU_4(3).2 & \PSL_3(4).2 & [2.3^4.a_8] & 1,\,2,\,2^2,\,D_8 \\
			&    & [2.3^5.a_2] & 3,\,D_6 \\
			&    & 3^4.[10],\,3^4.[20] & 5,\,D_{10} \\
			\PSU_4(3).[4] & \PSL_2(7).[4] & 3^4.A_6.[4] & D_6 \\
			\SU_4(4).4 & (a\times \SL_2(16).4,\,a\le 3 & (a_5 \times \SU_3(4)).4 & 1,\,2,\,3,\,5,\,15,\,D_{10} \\
			& (D_6 \times \SL_2(16)).4 & (a_5 \times \SU_3(4)).4 & D_6,\,D_{30} \\
			\SU_4(8).3 & 513.3.3 & 2^{12}.\SL_2(64).7.3 & 1 \\
			\hline
		\end{array}
		\]
	\end{table}

	\begin{table}[h!]
		\caption{Exceptional cyclic/dihedral factorisations, $X_0$ orthogonal}\label{excep-orthog}
		\[
		\begin{array}{|l|l|l|l|}
			\hline
			X & A & B & A\cap B \\
			\hline
			\O_7(3) & A_9,\,S_9 & 3^3.\SL_3(3) & D_6,\,D_{12} \\
			& \Sp_6(2) & [3^6].13.a_3 & 3,\,9 \\
			\hline
			\O_8^+(2) & \Sp_6(2) & A_5.[2a_4] & 1,\,2,\,2^2 \\
			&   & A_6.[a_4] & 3,\,D_6,\,D_{12} \\
			&   & [2^4.15.a_4] \hbox{ (sol.)} & 2,\,4,\,8 \\
			&  A_9 & 2^4.A_5.[a_8] & 1,\,2,\,2^2,\,4,\,D_8 \\
			&      & 2^4.A_5.[3a_2] & 3,\,D_6 \\
			&      & [2^6.15.a_4] \hbox{ (sol.)} & 1,\,2,\,4 \\
			& A_8,\,S_8 & \SU_4(2),\,\SU_4(2).2 & 3,\,6,\, D_6,\,D_{12} \\
			P\O_8^+(3).a_2 & \O_7(3).b_2 & 3^4.[40c_4] & 3,\,6,\,12 \\
			\O_8^+(4).2 & \Sp_6(4).2 & \SL_2(16).4,\,2^2.\SL_2(16).4 & 1,\,2^2 \\
			& \O_8^-(2).2 & \O_6^-(4).4 & D_{12} \\
			\O_{12}^+(2) & \Sp_{10}(2) & G_2(2) & D_6 \\
			\hline
		\end{array}
		\]
	\end{table}

	\begin{table}[h!]
		\caption{Exceptional cyclic/dihedral factorisations, $X_0$ symplectic}\label{excep-sp}
		\[
		\begin{array}{|l|l|l|l|}
			\hline
			X & A & B & A\cap B \\
			\hline
			\PSp_4(3) & 2^4.A_5 & [3^3a_{12}] & a_{12},\,D_{a_{12}} \\
			& S_5 & 3^{1+2}.Q_8.a_3 & a_3 \\
			& A_6,\,S_6 & 3^{1+2}.Q_8 & 3,\,D_6 \\
			& 2^4.5.a_2 & 3^{1+2}.2A_4 & 2,\,4 \\
			\PSp_4(3).2 & 2^4.5.4 & [3^3].S_3 & 1 \\
			&         & 3^3.A_4.2 & 2^2 \\
			\Sp_4(4) & \SL_2(16) & 2^6.15 & 2^2 \\
			\Sp_4(4).2 & \SL_2(16).4 & A_5.[2a_2] & a_2 \\
			&    & A_6.[2a_2] & D_{6a_2} \\
			\PSp_4(5) & \PSL_2(25).a_2 & 5^{1+2}.Q_8.3b_2 & 5,\,D_{10},\,D_{20} \\
			\PSp_4(7) & \PSL_2(49).a_2 & 7^{1+2}.[48] & 7,\,D_{14} \\
			\PSp_4(11) & \PSL_2(121).a_2 & 11^{1+2}.10A_4 & 11,\,D_{22} \\
			&    & 11^{1+2}.2A_5 & 11,\,D_{22} \\
			\PSp_4(23) & \PSL_2(23^2).a_2 & 23^{1+2}.22S_4 & 23,\,D_{46} \\
			\PSp_4(29) & \PSL_2(29^2).a_2 & 29^{1+2}.14b_2.A_5 & 29,\,D_{58} \\
			\PSp_4(59) & \PSL_2(59^2).a_2 & 59^{1+2}.(2A_5 \times 29) & 59,\,D_{118} \\
			\Sp_6(2) & S_5.[a_6] & \SU_3(3).b_2 & 1,\,2,\,3,\,D_6 \\
			& A_6.[a_4] & \SU_3(3).b_2 & 3,\,6,\,D_6,\,D_{12} \\
			& \SL_3(2).a_2 & \SU_4(2).b_2 & D_6,\,D_{12} \\
			& 2^4.A_5.[a_4] & \SL_2(8).3 & 1,\,2,\,2^2 \\
			& 2^4.A_5.a_2 & \SU_3(3) & 4,\,8 \\
			& 2^5.A_6 & \SL_2(8) & 2^2 \\
			& A_8.a_2 & [3^3].[8b_2] & D_6,\,D_{12} \\
			\PSp_6(3) & \PSL_2(13) & P_1 & 3 \\
			& \PSL_2(27).3 & 3^{1+4}.2^{1+4}.A_5 & 3 \\
			\PSp_6(3).2 & \PSL_2(27).6 &  3^{1+4}.2^{1+4}.D_{10}.2 & 1 \\
			\Sp_6(4).2 & \Or_6^-(4).2 & \SU_3(3).2 & D_6 \\
			& G_2(4).2 & \SL_2(16).4 & 1 \\
			\Sp_8(2) & \Or_8^+(2) & \PSL_2(17) & D_{18} \\
			& \O_8^-(2) & A_6.2^2 & D_6 \\
			& \Or_8^-(2) & A_5.[2a_4] & 1,\,2,\,2^2 \\
			&     & A_5.[a],\,a=6,12,20 & 3,\,D_6,\,D_{10} \\
			&     & A_6.[a_4] & 3,\,6,\,D_6,\,D_{12} \\
			&     & 2^4.A_5.a_2 & D_8,\,D_{16} \\
			\Sp_{12}(2) & \Or_{12}^-(2) & \SU_3(3).2 & D_6 \\
			\hline
		\end{array}
		\]
	\end{table}
	
	%%%%%%%%%%%%
	
	\begin{table}[h!]
		\caption{Cyclic/dihedral factorisations of sporadic groups}\label{spor}
		\[
		\begin{array}{|l|l|l|l|}
			\hline
			X & A & B & A\cap B   \\
			\hline
			M_{11} & M_{10} & 11,\,11.5 & 1,\,5 \\
			& M_9.2 & \PSL_2(11),\,11.5 & D_{12},\,1 \\
			&M_9,\,9.8 & \PSL_2(11) & D_6 \\
			\hline
			M_{12} & M_{11} & \hbox{many poss.} & C_i\,(1\le i \le 6), \\
			&   &  & D_{2j}\, (3\le j\le 6) \\
			& \PSL_2(11) \,\hbox{(max.)} & M_{10},\,M_{10}.2 & 5,\,D_{10} \\
			&      & M_9.2,\,M_9,S_3 & 1,\,3 \\
			&      & S_6 & 5 \\
			\hline
			M_{23} & M_{22} & 23,\,23.11 & 1,\,11 \\
			& M_{21} & 23.11 & 1 \\
			& 2^4.A_7 & 23.11 & 1 \\
			\hline
			M_{24} & M_{23} & \hbox{many poss.} & C_i\,(1\le i \le 8), \\
			&   &  & D_{2j}\, (j=2,4,5,6) \\
			& \PSL_2(23) & M_{22},\,M_{22}.2 & 11,\,D_{22} \\
			&    & \PSL_3(4).2,\,\PSL_3(4).S_3 & 1,\,3 \\
			&    & 2^4.A_7,\,2^4.A_8 & 1,\,D_8 \\
			\hline
			J_2 & \PSU_3(3) & A_5\times 5,\,A_5 \times D_{10} & 3,\,6 \\
			J_2.2 & \PSU_3(3).2 & 5^2.[4a_6],\,(5\times A_5).[a_4] & 1,\,2,\,3,\,6,\,12 \\
			\hline
			HS & M_{22} & A_5 \times 5,\,A_5 \times D_{10} & 3,\,6 \\
			HS.2 & M_{22}.2 & 5^2.[4a],\,a=1,2,4,5 & 1,\,2,\,4,\,5 \\
			&     & 5^{1+2}.(2\times 4) & D_{10} \\
			&     & (5\times A_5).[a_4] & 3,\,6,\,12,\,D_6,\,D_{12} \\
			\hline
			He & \Sp_4(4).a_2 & 7^2.\SL_2(7) & 4,\,8 \\
			He.2 & \Sp_4(4).4 & 7^{1+2}.[6a_6] & 1,\,2,\,3,\,D_6 \\
			&    & 7^2.\SL_2(7).a_2 & 8,\,16 \\
			\hline
			Ru & ^2\!F_4(2).2 & \PSL_2(29) & 3 \\
			\hline
			Suz.2 & G_2(4).2 & 3^5.11.2a_5 & 3,\,15 \\
			%\hline
			%Co_1 & Co_3 & G_2(4).D,\,D\le A_4.2 & {\bf not done yet} \\
			\hline
		\end{array}
		\]
	\end{table}

	\clearpage


\begin{thebibliography}{12}
		
		
		\bibitem{B71}
		N. Biggs, Classification of complete maps on orientable surfaces, {\it Rend. Mat.} (6) {\bf 4} (1971),
		645--655.
		
		\bibitem{magma}
		W.\ Bosma, J.\ Cannon and  C.\ Playoust,
		The {\sc Magma} algebra system I: The user language,
		\emph{J.\ Symbolic Comput.} {\bf 24} (1997), 235--265.
		
		\bibitem{BHRD} J.N. Bray, D.F. Holt and C.M. Roney-Dougal, The maximal subgroups of the low-dimensional finite classical groups, London Mathematical Society Lecture Note Series, vol. 407,
		Cambridge University Press, Cambridge, 2013.
		
		\bibitem{BL}
		T. C. Burness and C. H. Li, 
		On solvable factors of almost simple groups,
		{\it Adv. Math.} {\bf 377} (2021), Paper No. 107499.
		
		\bibitem{CCDKNW}
		D. A. Catalano, M. D. E. Conder, S. F. Du, Y. S. Kwon, R. Nedela and S. Wilson, 
		Classification of regular embeddings of $n$-dimensional cubes, 
		\emph{J. Alg. Comb.} {\bf 33} (2011), 215--238.
		
		%\bibitem{Coop}
		% B. N. Cooperstein, Maximal subgroups of $G_2(2^n)$. J. Algebra 70 (1981), 23--36. {\bf Needed??}
		
		\bibitem{atlas} J. H. Conway, R. T. Curtis, S. P. Norton, R. A. Parker and R. A. Wilson, {\it Atlas of finite groups}, Oxford University Press, 1985.
		
		
		\bibitem{DM}
		J. D. Dixon and B. Mortimer, {\it Permutation groups}, Graduate Texts in Math. {\bf 163}, Springer, 1996.
		
		\bibitem{GNSS}
		A. Gardiner, R. Nedela, J. \v Sir\'a\v n and M. \v Skoviera, Characterisation of graphs which underlie regular maps on closed surfaces, \emph{J. London Math. Soc.} {\bf 59} (1999), 100--108.
		
		
		\bibitem{Giudici}
		M. Giudici, Factorisations of sporadic simple groups, 
		{\it J. Algebra} {\bf 304} (2006), 311--323.
		
		%\bibitem{GGP}
		%M. Giudici, S.P. Glasby and C. E. Praeger. Subgroups of classical groups
		%that are transitive on subspaces, {\it J. Algebra},...  {\bf Needed??}
		
		\bibitem{GR}
		C. Godsil and G.F. Royle, \emph{Algebraic Graph Theory}, Springer-Verlag, New York, 2001. 
		
		\bibitem{GW}
		J. E. Graver and M. E. Watkins, Locally finite, planar, edge-transitive graphs, Mem. Amer. Math.
		Soc. {\bf 126} (1997), no. 601.
		
		\bibitem{G83}
		R. Guralnick, Subgroups of prime power index in a simple group, \emph{J. Algebra} {\bf 81} (1983), 
		304--311.
		
		%\bibitem{GPPS}
		%R. Guralnick, T. Penttila, C.\,E. Praeger and J. Saxl.
		%\newblock Linear groups with orders having certain large prime divisors.
		%\newblock \emph{ Proc. London Math. Soc.} (3) 78 (1999), 167--214.
		
		\bibitem{HLS}
		C. Hering, M. W. Liebeck and J. Saxl,
		The factorizations of the finite exceptional groups of Lie type, \emph{J. Algebra} {\bf 106} (1987), 517--527.
		
		
		\bibitem{H67}
		B. Huppert, \emph{Endliche Gruppen I}, Springer-Verlag, Berlin, Heidelberg, New York, 1967.
		
		\bibitem{I90}
		N. F. J. Inglis,
		The embedding $O(2m,2^k)\leq \Sp(2m,2^k)$, {\it Arch. Math.} {\bf 54} (1990), 327--330.
		
		\bibitem{Ito}
		N. Ito, 
		On doubly transitive groups of degree $n$ and order $2(n-1)n$. {\it Nagoya Math. J.} {\bf 27} (1966), 409--417.
		
		\bibitem{Ja83}
		L. D. James, Imbeddings of the complete graph, {|it Ars Combin.} {\bf 16} (1983), 57--72.
		
		\bibitem{JJ85}
		L. D. James and G. A. Jones, Regular orientable imbeddings of complete graphs, {\it J. Combin.
			Theory Ser. B} {\bf 39} (1985), 353--367.
		
		\bibitem{J05}
		G. A. Jones, Automorphisms and regular embeddings of merged Johnson graphs, \emph{Eur. J. Combin.}
		{\bf 26} (2005), 417--435.
		
		
		\bibitem{J10}
		G. A. Jones, Regular embeddings of complete bipartite graphs: classification and enumeration. \emph{Proc. Lond. Math. Soc.}
		{\bf 101} (2010), 427--453.
		
		\bibitem{J15} 
		G. A. Jones, Bipartite graph embeddings, Riemann surfaces and Galois groups, \emph{Discrete Math.} {\bf 338} (2015), 1801--1813. 
		
		\bibitem{J21}
		G. A. Jones, Edge-transitive embeddings of complete graphs,
		{\it Art Discrete Appl. Math.} {\bf 4} (2021), no. 3, Paper No. 3.03, 12 pp.
		
		%\bibitem{JZ20}
		%G. A. Jones, A. K. Zvonkin,
		%Primes in geometric series and finite permutation groups, preprint, 2020, arxiv: 2010.08023.
		
		%\bibitem{K86}
		%L. S. Kazarin, Groups that can be represented as a product of two solvable subgroups, {\it Commun. Algebra}  {\bf14} (1986), 1001--1066. {\bf Needed??}
		
		\bibitem{KLST}
		W. Kimmerle, R. Lyons, R. Sandling, and D. N. Teague, 
		Composition factors from the group ring and Artin's theorem on orders of simple groups,
		\emph{Proc. London Math. Soc.} {\bf 60} (1990),  89--122.
		
		\bibitem{K88}
		P. B. Kleidman. The maximal subgroups of the Chevalley groups $G_2(q)$ with $q$
		odd, the Ree groups ${}^2G_2(q)$, and their automorphism groups, {\it J. Algebra} {\bf 117}
		(1988), 30--71.
		
		\bibitem{KL}
		P. Kleidman and M. Liebeck, {\it The subgroup structure of the finite classical groups},
		London Math. Soc. Lecture Note Series, Vol. 129, Cambridge Univ. Press, 1990.
		
		\bibitem{LiPS21} 
		C.H. Li, C.E. Praeger and S.J. Song,
		Locally finite vertex-rotary maps and coset graphs with finite valency and finite edge multiplicity, 2024, arxiv: 2202.07100.
		
		\bibitem{LiPS24} 
		C.H. Li, C.E. Praeger and S.J. Song, Arc-transitive embeddings of graphs, preprint, 2024.
		
		\bibitem{LWX}
		C.H. Li, L. Wang and B. Xia, The factorizations of finite classical groups, 2024, arXiv: 2402.18373v2.
		
		\bibitem{LX19}
		C.H. Li and B. Xia,  Factorizations of almost simple groups with a factor having many nonsolvable composition factors, {\it J. Algebra} {\bf 528} (2019), 439--473.
		
		\bibitem{LX22}
		C.H. Li and B. Xia, Factorizations of almost simple
		groups with a solvable factor, and Cayley graphs of solvable groups, Mem. Amer. Math. Soc. {\bf 279}, 2022, No. 1375.
		
		\bibitem{lieb} M.W. Liebeck, The affine permutation groups of rank three, {\it Proc. London Math. Soc.}
		{\bf 54} (1987), 477-516.
		
		\bibitem{LPS90}
		M.W.Liebeck, C. E. Praeger, and J.Saxl. The maximal factorizations of the finite simple groups and their automorphism groups,  Mem. Amer. Math. Soc. {\bf 86}, 1990, No. 432.
		
		\bibitem{LPS10}
		M.W.Liebeck, C. E. Praeger, and J.Saxl. Regular subgroups of primitive permutation groups,  Mem. Amer. Math. Soc. {\bf 203}, 2010, No. 952.
		
		%\bibitem{NeuP}
		%P. M. Neumann and C. E. Praeger, 
		%A recognition algorithm for special linear groups.
		%Proc. London Math. Soc. (3) 65 (1992), 555--603.
		
		%\bibitem{NP}
		% A. C. Niemeyer and C. E. Praeger, A recognition algorithm for classical groups over finite fields,  Proc. London Math. Soc. (3) 77, %117--169.
		
		%\bibitem{PS} 
		%C. E. Praeger and C. Schneider,
		%\emph{Permutation groups and Cartesian decompositions.}
		%London Math. Soc. Lecture Note Ser., 449, 
		%Cambridge University Press, Cambridge, 2018.
		
		\bibitem{S01}
		D. Singerman, Unicellular dessins and a uniqueness theorem for Klein's Riemann surface of genus $3$, \emph{Bull. London Math. Soc.} {\bf 33} (2001), 701--710.
		
		\bibitem{STW}
		J. \v{S}ir\'{a}\v{n}, T. W. Tucker, and M. E. Watkins,
		Realizing finite edge-transitive orientable maps, \emph{J. Graph Theory} {\bf 37} (2001), 1--34.
		
		\bibitem{WW}
		J. Wiegold and A. G. Williamson, The factorisations of the alternating and symmetric groups, {\it Math. Z.} {\bf 175} (1980), 171--179.
		
		\bibitem{ZZ}
		K. Zsigmondy, Zur Theorie der Potenzreste, \emph{Monatshefte f\"ur Mathematik und Physik}  {\bf 3}, (1892), 265--284.
		
	\end{thebibliography}
\end{document}